\newtheorem{theorem}{Theorem}[section]
\newtheorem{lemma}[theorem]{Lemma}
\newtheorem{proposition}[theorem]{Proposition}
\newtheorem{corollary}[theorem]{Corollary}
\numberwithin{equation}{section}
\newtheorem{definition}[theorem]{Definition}
\theoremstyle{remark} \newtheorem{remark}[theorem]{Remark} \numberwithin{equation}{section}
\numberwithin{figure}{section}
\newcommand{\Ab}{\mathbb{A}}
\newcommand{\Cb}{\mathbb{C}}
\newcommand{\Eb}{\mathbb{E}}
\newcommand{\Qb}{\mathbb{Q}}
\newcommand{\Rb}{\mathbb{R}}
\newcommand{\Ub}{\mathbb{U}}
\newcommand{\Zb}{\mathbb{Z}}
\newcommand{\Ac}{\mathcal{A}}
\newcommand{\Lc}{\mathcal{L}}
\newcommand{\con}[1]{ \xleftrightarrow{#1}}
\newcommand{\IP}{\mathbf{P}}
\newcommand{\clust}{\mathscr{C}}
\newcommand{\coin}{\epsilon}
\newcommand{\ep}{\varepsilon}
\newcommand{\set}{{\bf n}}
\newcommand{\sets}{\Omega}
\newcommand{\flows}{\mathcal{F}}
\newcommand{\match}{M}
\newcommand{\cont}{C}
\newcommand{\matchs}{\mathcal{M}}
\newcommand*\bigcdot{\mathpalette\bigcdot@{.5}}
\newcommand*\bigcdot@[2]{\mathbin{\vcenter{\hbox{\scalebox{#2}{$\m@th#1\bullet$}}}}}
\renewcommand{\i}{\mathrm{i}}
\newcommand{\sgn}{\textnormal{sgn}}
\newcommand{\cur}{\mathbf{n}}
\newcommand{\dcurr}{\textnormal{dcur}}
\newcommand{\flow}{\textnormal{flow}}
\newcommand{\weight}{\textnormal{w}}
\newcommand{\odd}{\textnormal{odd}}
\newcommand{\gask}{\mathrm{gask}}
\newcommand{\lp}{\mathcal {L}}
\newcommand{\ol}{\overline}
\newcommand{\n}{{\mathbf n}}
\newcommand{\eps}{\varepsilon}
\definecolor{slightblue}{rgb}{.8, .8, 1}
\definecolor{hair}{RGB}{100,225,190}
\definecolor{ruby}{RGB}{220,50,120}
\definecolor{grass}{RGB}{150,220,110}
\title{Conformal invariance of double random currents I: identification of the limit}
\author{Hugo Duminil-Copin\thanks{Institut des Hautes \'Etudes Scientifiques} \thanks{Universit\'e de Gen\`eve} \and Marcin Lis\thanks{Technische Universit\"{a}t Wien} \and Wei Qian\thanks{City University of Hong Kong} \thanks{CNRS and Laboratoire de Math\'{e}matiques d'Orsay, Universit\'{e} Paris-Saclay}}
\date{\today}
\begin{document}
\maketitle

\begin{abstract}
This is the first of two papers devoted to the proof of conformal invariance of the critical double random current model on the square lattice. More precisely, we show the convergence of loop ensembles obtained by taking the cluster boundaries in the sum of two independent currents with free and wired boundary conditions. The strategy is first to prove convergence of the associated height function to the continuum Gaussian free field, and then to characterize the scaling limit of the loop ensembles as certain local sets of this Gaussian Free Field. In this paper, we identify uniquely the possible subsequential limits of the loop ensembles. Combined with \cite{DumLisQia21}, this completes the proof of conformal invariance.
\end{abstract}

\section{Introduction}\label{sec:intro}

\subsection{Motivation and overview}\label{sec:motivation}

The rigorous understanding of Conformal Field Theory (CFT) and Conformally Invariant random objects via the developments of the Schramm-Loewner Evolution (SLE) and  its relations to the Gaussian Free Field (GFF)
 has progressed greatly in the last twenty-five years. It is fair to say that once a discrete lattice model is proved to be conformally invariant in the scaling limit, most of what mathematical physicists are interested in can be exactly computed using the powerful tools in the continuum.

A large class of discrete lattice models are conjectured to have interfaces that converge in the scaling limit to SLE$_\kappa$ type curves for $\kappa\in(0,8]$.
Unfortunately, such convergence results are only proved for a handful of models, including the loop-erased random walk \cite{MR1776084} and the uniform spanning tree  \cite{LSW04} (corresponding to $\kappa=2$ and $8$), the Ising model \cite{CheSmi12} and its FK representation \cite{smirnov} (corresponding to $\kappa=3$ and $16/3$), Bernoulli site percolation on the triangular lattice \cite{Smi01} (corresponding to $\kappa=6$). 
Known proofs involve a combination of exact integrability\footnote{Only approximately for site percolation on the triangular lattice.} enabling the computation of certain discrete observables, and of discrete complex analysis to imply the convergence in the scaling limit to holomorphic/harmonic functions satisfying certain boundary value problems that are naturally conformally covariant.

To upgrade the result from conformal covariance of these ``witness'' observables to the convergence of interfaces in the system, one needs an additional ingredient. 
In some cases, when properties of the discrete models are sufficiently nice (typically tightness of the family of interfaces, mixing type properties, etc), a clever martingale argument introduced by Oded Schramm enables to prove convergence of interfaces to SLEs and CLEs. This last step involves the \emph{spatial Markov properties} of the discrete model in a crucial fashion. We refer to the proofs of conformal invariance of interfaces in Bernoulli site percolation, the Ising model, the FK Ising model, or the harmonic explorer for examples. Unfortunately, the discrete properties of the model are  sometimes not sufficiently nice to implement this martingale argument and there are still many remaining examples for which the scaling limit of the interfaces cannot be easily deduced from the conformal invariance of certain observables -- most notably for the case of the double dimer model, for which an important breakthrough was performed by Kenyon in \cite{Ken14}, followed by a series of impressive papers \cite{Dub18,BasChe18}. 

In this paper we prove convergence of the nested inner and outer boundaries of clusters in the critical double random current model with free boundary conditions, as well as in its dual model with wired boundary conditions, to level loops of a GFF. In particular, the  outer boundaries of clusters in the critical double random current model with free boundary conditions converge to
CLE$_4$. 
The random current model has proved to be a very powerful tool to understand the Ising model. Its applications range from  correlation inequalities \cite{GHS}, exponential decay in the off-critical regime \cite{AizBarFer87,DumTas15,DumGosRao20}, classification of Gibbs states \cite{Rao17}, continuity of the phase transition \cite{AizDumSid15}, etc. Even in two dimensions, where a number of other tools are available, new developments have been made possible via the use of this representation \cite{DumLis,ADTW,LupWer}. 
In particular, as mentioned at the end of this Section~\ref{intro:drc}, the scaling limit of the double random current gives access to the scaling limit of spin correlations in the Ising model.
For a more exhaustive account of random currents, we refer the reader to \cite{Dum16}.

Convergence to SLE$_4$ type curves were previously proved for the harmonic explorer \cite{SchShe05}, zero contour lines of the discrete GFF \cite{MR2486487} (also in the cable-graph representation~\cite{ALS}), the zero contour lines of the Ginzburg-Landau $\nabla \phi$ interface model \cite{jason1, jason2}, and cluster boundaries of a random walk loop-soup with the critical intensity \cite{Lupu,BCL}.

As mentioned above, our proof does not follow the martingale strategy. Instead, it relies on a coupling between the double random current and a naturally associated height function, and can be decomposed into three main steps (see the next sections for more details):
\begin{itemize}
\item[(i)] Proving the joint tightness of the family of interfaces in the double random current model and the height function, as well as certain properties of the joint coupling.
\item[(ii)] Proving convergence of the height function to the GFF.
\item[(iii)] In the continuum, identifying the scaling limit of the interfaces using properties of the GFF and its local sets. 
\end{itemize}
Each of the three previous steps involves quite different branches of probability. The first one extensively uses percolation-type arguments for dependent percolation models. The second one
concerns a height function studied already by Dub\'{e}dat~\cite{Dub}, and Boutilier and de Tili\`{e}re~\cite{BoudeT}. However, unlike in \cite{Dub,BoudeT}, it
harvests a link between a percolation model (the double random current) and dimers. Moreover, it uses techniques introduced by Kenyon to prove convergence of the dimer height function, but with a new twist as the proof relies heavily on fermionic observables introduced by Chelkak and Smirnov to prove conformal invariance of the Ising model, as well as a delicate result on the double random current model (see Section~\ref{sec:input discrete}) helping identifying the boundary conditions. Finally, the last step relies on properties of the local sets of the GFF introduced by Schramm and Sheffield \cite{MR3101840}, and in particular on the two-valued local sets introduced by Aru, Sep\'ulveda and Werner \cite{MR3936643}. 
This step crucially uses the spatial Markov properties of the interfaces and the associated height function deduced from step (ii), but also establishes a certain spatial Markov property of the outer boundaries of the clusters in the continuum limit (which turn out to be CLE$_4$ of the limiting GFF) which is unknown in the discrete.

Part (i) of the proof is postponed to the second paper \cite{DumLisQia21}. In this paper, we focus on (ii) and (iii).

In the reminder of this introduction,  we state the results of the convergence of the interfaces in the double random current models with free and wired boundary conditions (Section~\ref{intro:drc}) and the convergence of the height function associated with the double random currents (Section~\ref{subsec:2cvg_gff}). 
In reality, the double random currents with free and wired boundary conditions can be coupled on the primal and dual graphs and be associated with the same height function, so that these three objects converge jointly. In particular, we have more precise descriptions on their joint limit, but we postpone these further results to Section~\ref{sec:bah} for simplicity.

\bigbreak
\paragraph{Notation}
Throughout the article we work with planar graphs embedded in the plane.
We will consider \emph{Jordan domains} $D\subsetneq \mathbb C$, i.e., simply connected domains whose boundary $\partial D$ is a Jordan curve. In certain situations we will impose a regularity condition on $\partial D$, namely that it is a $C^1$ curve.

Below, we will speak of convergence of random variables taking values in families of loops contained in $D$, and distributions (generalized functions). While the latter is classical and has a well-defined associated topology, we provide some details on the former. To this end, let $\mathfrak C=\mathfrak C(D)$ be the collection of locally finite families $\mathcal F$ of non-self-crossing loops contained in $D$ that do not intersect each other. 
Inspired by~\cite{AizBur99}, we define a metric on $\mathfrak C$, 
\begin{align} \label{eq:CNdistance}
{\bf d}(\mathcal F,\mathcal F')\le \ep \ \Longleftrightarrow\ \begin{array}{c} \exists f:\mathcal F_\ep\rightarrow \mathcal F'\text{ one-to-one s.t.~}\forall \gamma\in \mathcal F_\ep,d(\gamma,f(\gamma))\le \ep\\
\text{ and }\text{similarly when exchanging $\mathcal F'$ and $\mathcal F$}\end{array},
\end{align}
where, $\mathcal F_\ep$ is the collection of loops in $\mathcal F$ with a diameter larger than $\ep$, and for two loops $\gamma_1$ and $\gamma_2$, we set 
\begin{align} \label{eq:curvedist}
d(\gamma_1,\gamma_2):=\inf\sup_{t\in\mathbb S^1}|\gamma_1(t)-\gamma_2(t)|,
\end{align}
with the infimum running over all continuous bijective parametrizations of the loops $\gamma_1$ and~$\gamma_2$ by $\mathbb S^1$.

We will say that simply connected graphs $D^\delta \subset \delta \mathbb Z^2$ \emph{approximate} $D$ if 
$d(\partial D^\delta , \partial D)\to 0$ as $\delta \to 0 $, where $d$ is as in \eqref{eq:curvedist}.

\subsection{Convergence of interfaces in double random currents}\label{intro:drc}
Consider a finite graph $G=(V,E)$ with vertex set $V$ and edge set $E$. 
A {\em current} $\n$ on $G$ is a function defined on the undirected edges $\{v,v'\}\in E$ and taking values in the natural numbers.  
The current's set of {\em sources} is defined as the set
\begin{equation}
\partial\n \, := \, \Big\{ v\in V :\, \sum_{v'\in V:v'\sim v} \n_{\{ v,v'\}}\text{ is odd}\Big \},
\end{equation}
where $v'\sim v$ means that $\{v,v'\}\in E$.

Let $\sets^B$ be the set of currents with the set of sources equal to $B$. When $B=\emptyset$, we speak of a {\em sourceless} current.
We associate to a current $\n$ the {\em weight} 
\begin{equation} \label{eq:curweight}
\weight_{G,\beta}(\n) :=\prod_{\{v,v'\}\in E}\frac{(\beta J_{\{v,v' \}} )^{\n_{\{v,v'\}}}}{\n_{\{v,v'\}}!}\,,
\end{equation}
which comes from the associated Ising model on $G$~\cite{GHS} (that also has coupling constants $J$ and inverse temperature $\beta$).
For now we focus on the critical parameters on the square lattice 
\[
\beta=\beta_c=\tfrac 12\ln(\sqrt 2 +1),
\]
and $J_{\{v,v'\}}=1$ for every $\{v,v'\}$ which is an edge of $G$, and $0$ otherwise, and drop them from the notation. General models will be considered in Section~\ref{sec:discprem}.

We introduce the probability measure on currents with sources $B\subseteq V$ given by 
\begin{align}\label{eq:rc}
\mathbf P^B_G(\set) := \frac {\weight_{G}(\n)}{{Z}^B_G}, \qquad \text{for every }\set \in \sets^B,
\end{align}
where ${Z}^B_G$ is the partition function. The random variable $\n$ is called a {\em random current configuration on $G$ with free boundary conditions and source-set~$B$}.

We define $\mathbf P^{A,B}_{G,H}$ to be the law of $(\n_1,\n_2)$, where $\n_1$ and $\n_2$ are two independent currents with respective laws $\mathbf P^A_G$ and $\mathbf P^B_H$. 
The \emph{double random current (DRC)} (model) is the law of $\n_1+\n_2$ under $\mathbf P^{A,B}_{G,H}$.
We call a {\em cluster} of any current $\n$ a connected component of the graph with vertex set $V$ and edge set $E(\n):=\{e\in E:\n_e>0\}$. To a given cluster~$\mathcal C$ we associate a loop configuration made of the dual edges $e^*$ where $e=\{v,v'\}$ is such that $v\in \mathcal C$ and $v'\notin \mathcal C$. Note that this loop configuration is made of loops on the dual graph corresponding to the different connected components of $\mathbb Z^2\setminus \mathcal C$. The loop corresponding to the unbounded component is called the {\em outer boundary} of the cluster, and the loops corresponding to the boundaries of the bounded ones (sometimes referred to as \emph{holes}) are called the {\em inner boundaries}.
We define the {\em (nested) boundaries interface configuration} $\eta(\n)$ to be the collection of outer and inner boundaries of the clusters in $\n$.
We note that the inner and outer boundaries of different clusters may share edges (but they do not cross). We will often refer to the elements of $\eta(\n)$ as the interfaces of $\n$.

As before, we fix a simply connected Jordan domain $D\subsetneq \mathbb C$ and consider the double random current on~$D^\delta$. 
To state the following theorem, we will need the notion of two-valued sets $\Ab_{-a,b}$ introduced in \cite{MR3936643}, which is the unique thin local set of the Gaussian free field in $D$ with boundary values $-a$ and $b$. In this work, we use $\lp_{-a,b}$ to denote the collection of outer boundaries (which are SLE$_4$-type simple loops and level loops of the Gaussian free field) of the connected components of $D\setminus \Ab_{-a,b}$.  We refer to Section~\ref{sec:prem} for more details on two-valued sets and related objects.
We define 
\[
\lambda= \sqrt{\pi/8}.
\]

\begin{theorem}[Convergence of  double random current clusters with free boundary conditions]\label{thm:cvg_free_drc_v1}
Let  $D$ be a Jordan domain, and let $D^\delta$ approximate $D$. Moreover, let $\eta^\delta$ be the nested boundaries interface configuration of the critical double random current on $D^\delta$ with free boundary conditions. 
Then as $\delta\to 0$, $\eta^\delta$ converges in distribution to a limit whose law is invariant under all conformal automorphisms of $D$.
More precisely, we have that  (see Fig.~\ref{fig:DRC} Left)
\begin{itemize}
\item The outer boundaries of the outermost clusters converge to a CLE$_4$ in $D$.
\item If the outer boundary of a cluster converges to $\gamma$, then the inner boundaries of this cluster converge to $\lp_{-2\lambda, (2\sqrt{2}-2)\lambda}$ in the domain encircled by $\gamma$.
\end{itemize}
This picture repeats iteratively in each hole of every cluster. In particular,
\begin{itemize}
\item If a loop in the inner boundary of a cluster converges to 
$\gamma$, then the outer boundaries of the 
outermost clusters enclosed by $\gamma$ converge to a CLE$_4$ in the domain encircled by $\gamma$.
\end{itemize}

\end{theorem}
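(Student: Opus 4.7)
The plan is to leverage the coupling between the double random current and its associated height function, and to carry out the identification in the continuum. Assuming the joint tightness of the triple (double current clusters, their boundary contours $\eta^\delta$, height function $h^\delta$) proved in the companion paper \cite{DumLisQia21}, the theorem reduces to a uniqueness statement: every subsequential joint limit $(\eta,h)$ of $(\eta^\delta,h^\delta)$ must agree in law with the description in the theorem. Since $\eta$ will be shown to be deterministically measurable with respect to $h$, identifying the law of $h$ will pin down the law of $\eta$. Conformal invariance will then be inherited from that of the Gaussian free field.

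First, I would carry out step (ii) of the program and prove that any subsequential limit $h$ of the (properly normalized) height function is a Gaussian free field on $D$ with zero boundary conditions. The approach transports the double-current height function to a dimer-type height function via the known combinatorial correspondence, adapting Kenyon's strategy for convergence of the dimer height function. Unlike in \cite{Dub,BoudeT}, there is no direct bijection at our disposal; instead, I would express the relevant characteristic functionals in terms of Chelkak--Smirnov fermionic observables, whose scaling limits are holomorphic functions solving explicit boundary value problems, and extract from them the GFF covariance. The identification of the boundary values as zero is the delicate point, and will rely on the boundary estimates for the double random current established in \cite{DumLisQia21}.

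Second, I would identify $\eta$ as a local set of $h$. In the discrete coupling, $h$ is piecewise constant on the clusters of $\n_1+\n_2$ and their dual complementary components, with deterministic jumps of $\pm 2\lambda$ across the outer boundary of a cluster and $\pm(2\sqrt{2}-2)\lambda$ across an inner boundary. Passing to the limit and using the GFF identification from the previous step, the outer boundaries of the outermost clusters form a local set whose complementary components carry harmonic boundary data in $\{-2\lambda,+2\lambda\}$. By the uniqueness theorem for two-valued sets of Aru--Sep\'{u}lveda--Werner~\cite{MR3936643}, this local set must be $\Ab_{-2\lambda,2\lambda}$, whose outer loop ensemble is CLE$_4$. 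Conditioning on one such outermost cluster boundary and applying the local-set spatial Markov property, the GFF restricted to the encircled domain is again a GFF shifted by $\pm 2\lambda$. Within it, the inner boundaries of the cluster form another two-valued local set with values $-2\lambda$ and $(2\sqrt{2}-2)\lambda$ (reading the jumps in the shifted coordinates), which the same uniqueness theorem identifies as $\Ab_{-2\lambda,(2\sqrt{2}-2)\lambda}$, whose outer loop ensemble is $\lp_{-2\lambda,(2\sqrt{2}-2)\lambda}$. Iterating this argument inside each loop of $\lp_{-2\lambda,(2\sqrt{2}-2)\lambda}$ yields the CLE$_4$ structure of the next generation of outermost clusters, giving the nested description of all three bullet points.

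The main obstacle will be the third step, specifically verifying the local-set and spatial Markov property of $\eta$ with respect to $h$ in the continuum. Since the discrete model does not obey a clean domain-Markov property (and hence resists the Schramm martingale approach used in percolation and FK-Ising), one must show directly in the limit that, after conditioning on a cluster boundary and its $\pm 2\lambda$ jump, the field inside the encircled component is an independent GFF with the appropriate harmonic boundary data and that the loop ensemble inside it is conditionally independent of the outside. Establishing this will require careful joint convergence statements, measurability of $\eta$ with respect to $h$, and two-sided control of the height function across cluster boundaries — all provided by the coupling and regularity results in \cite{DumLisQia21}. Once these inputs are in place, the two-valued set characterization does the rest.
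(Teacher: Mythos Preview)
Your overall architecture (tightness from \cite{DumLisQia21}, convergence of $h^\delta$ to the GFF, identification via two-valued sets) matches the paper, but the crucial identification step contains a genuine gap.

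You assert that in the discrete coupling the height function has ``deterministic jumps of $\pm 2\lambda$ across the outer boundary of a cluster.'' This is false, and it is precisely the obstacle the paper has to overcome. The discrete nesting field takes integer values, and after the normalization of Theorem~\ref{thm:NFconv} the jumps one can read off are multiples of $\sqrt{2}\lambda$, not of $2\lambda$. What one obtains directly from the discrete Markov property (Property~\ref{drc2} of Proposition~\ref{prop:input double random current}) is that the gasket of the \emph{inner} boundaries $A$ is a thin local set with boundary values in $\{-2\sqrt{2}\lambda,0,2\sqrt{2}\lambda\}$; three-valued sets are not unique, and one has no a priori local-set statement for the \emph{outer} boundaries $B$ at all. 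As the paper states explicitly at the start of Section~\ref{sec:bah}, the height function does \emph{not} enjoy the spatial Markov property at outer boundaries of free-boundary clusters, and the value $2\lambda$ ``does not exist in the discrete height function, but only appears in the continuum limit'' (see the discussion preceding Remark~\ref{rmk:2lambda}). So your appeal to the two-valued set uniqueness theorem for $B$ is circular: it presupposes exactly the boundary-value structure that must be established.

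The paper closes this gap by bringing in the \emph{dual} (wired) double random current via the master coupling (Theorem~\ref{thm:mastercoupling}). The wired inner boundaries $\wh A$ do satisfy a clean Markov property and are immediately identified as $\lp_{-\sqrt{2}\lambda,\sqrt{2}\lambda}$ (Theorem~\ref{thm:wired_drc}). One then studies how the primal outer boundaries $B$ sit inside the loops of $\wh A$: the key step (Proposition~\ref{prop:b2_ell}) shows that the $B$-loops touching a given $\ell\in\wh A$ form exactly $\lp^{\pm}_{-\sqrt{2}\lambda,(2-\sqrt{2})\lambda}(h^0|_{O(\ell)})$, using geometric inputs from \cite{DumLisQia21} (disjointness of $B$-loops, finitely many crossings of annuli, and that even holes do not touch $B$; Properties~\ref{lem:B_disjoint}, \ref{lem:fin_cross}, \ref{drc3}). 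Only after this indirect argument does one conclude that $B$ is the CLE$_4$ of $h$ with the $\pm 2\lambda$ boundary values. Your proposal omits this entire mechanism.
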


We will also work with the \emph{random current model with wired boundary conditions} on $G$. For the purpose of the statement below, we define it explicitly for the critical model on the square lattice without referring to the dual model. Later in Sect.~\ref{subsec:2coupling}, a version for a general planar graph will be stated.
Let $G\subset \mathbb Z^2$ be a simply connected subgraph of $\mathbb Z^2$ that is a union of square faces (in particular does not have vertices of degree one). 
Let $\partial G$ be the set of vertices of $G$ that lie on the unbounded face of $G$ and are of degree two or three. 
We define $G^+$ to be the graph with vertex set $V^+:=V\cup\{\mathfrak g\}$ where $\mathfrak g$ is an additional vertex that lies in the unbounded face of $G$, and $E^+:=E\cup\{\{v,\mathfrak g\}:v\in\partial G\}$,
where vertices of degree two contribute two edges. This condition comes from the fact that $G$ is a weak dual graph of some subgraph of the dual square lattice, and in this case $G^+$ is the full dual graph.
The coupling constants on the new edges are the same as on all other edges, and are critical.
Accordingly, we introduce the measures $\mathbf P_{G^+}^{B}$ and $\mathbf P_{G^+,H^+}^{A,B}$ as before.

\begin{figure}[t]
\centering
\includegraphics[width=0.45\textwidth]{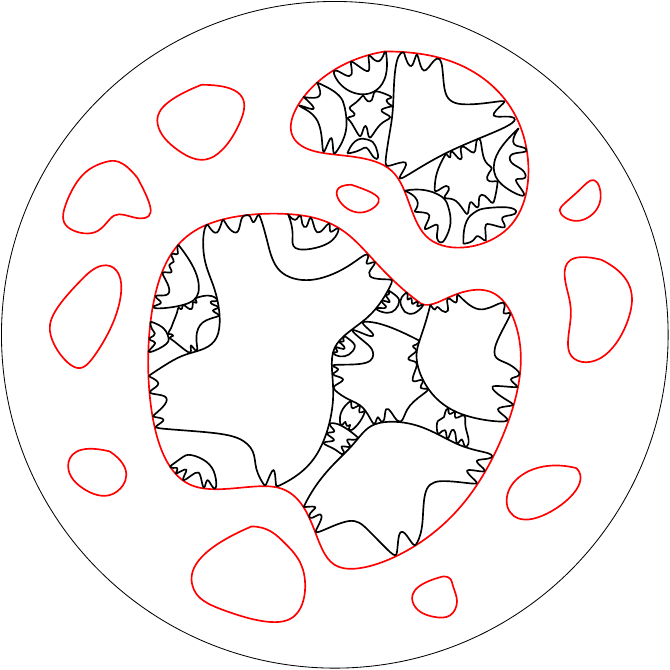}\qquad
\includegraphics[width=0.45\textwidth]{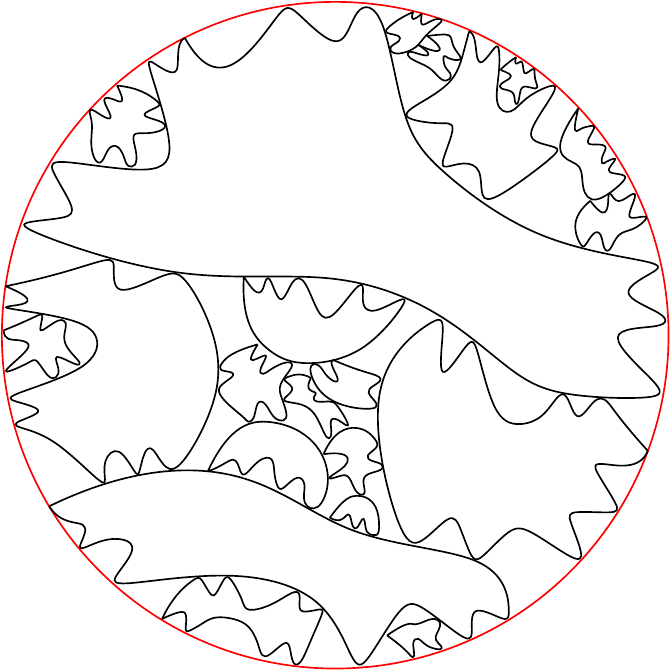}
\caption{\textbf{Left:} We depict the outermost clusters in a double random current with free boundary conditions. The outer boundaries of these clusters are in red (they form a CLE$_4$). The inner boundaries of the clusters are in black.  \textbf{Right:} We depict the unique outermost cluster in a double random current with wired boundary conditions. The inner boundaries of this cluster are in black. \textbf{For both:} In each domain encircled by an inner boundary loop, one has (the scaling limit of) an independent double random current with free boundary conditions. This allows us to iteratively sample the nested interfaces.} 
\label{fig:DRC}
\end{figure}

\begin{theorem}[Convergence of double random current clusters with wired boundary conditions]\label{thm:cvg_wired_drc_v1}
Let  $D$ be a Jordan domain, and let $D^\delta$ approximate $D$. Moreover, let $\eta^\delta$ be the nested boundaries interface configuration of the critical double random current on $D^\delta$ with wired boundary conditions. 
Then as $\delta\to 0$, $\eta^\delta$ converges in distribution to a limit whose law is invariant under all conformal automorphisms of $D$.
More precisely, we have that (see Fig.~\ref{fig:DRC} Right)
\begin{itemize}
\item The inner boundaries of the unique outermost cluster converge to $\lp_{-\sqrt{2}\lambda, \sqrt{2}\lambda}$ in $D$.

\item  If the inner boundary of a cluster converges to $\gamma$, then the outer boundaries of the outermost clusters enclosed by $\gamma$ converge to a CLE$_4$ in the domain encircled by $\gamma$.

\item If the outer boundary of a cluster converges to $\gamma$, then the inner boundaries of this cluster converge to $\lp_{-2\lambda, (2\sqrt{2}-2)\lambda}$ in the domain encircled by $\gamma$.
\end{itemize}
\end{theorem}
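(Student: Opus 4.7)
\emph{Plan of proof.} I follow the same three-step strategy as for Theorem~\ref{thm:cvg_free_drc_v1}. The companion paper~\cite{DumLisQia21} provides joint tightness of the nested contour configuration $\eta^\delta$ together with the associated height function $h^\delta$, and Section~\ref{subsec:2cvg_gff} provides convergence of $h^\delta$ to a Gaussian free field $\Phi$ in $D$. For the wired boundary condition, the unique outermost cluster (the one containing $\mathfrak g$) is pinned to $\partial D$, so $h^\delta$ is deterministic on this cluster and $\Phi$ is a GFF with Dirichlet zero boundary values; this is where the $C^1$ assumption on $\partial D$ is used, to rule out pathologies of $h^\delta$ near the boundary. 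Extracting an arbitrary subsequential joint limit $(\eta, \Phi)$, the goal is to show that $\eta$ has the law prescribed by the three bullets.

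\emph{The key step} is to identify the outermost loops of $\eta$, which are the limits of the inner boundaries of the unique wired cluster (since this cluster carries no outer boundary). Denote this collection by $\mathcal L$. Two facts need to be checked: that $\mathcal L$ is a \emph{thin local set} of $\Phi$, and that the restriction of $\Phi$ to each complementary component of $\mathcal L$ has constant boundary value in $\{-\sqrt{2}\lambda, +\sqrt{2}\lambda\}$. Locality follows from the discrete spatial Markov property of the joint coupling $(\eta^\delta, h^\delta)$ established in part~(i) of the companion paper, passed to the limit; thinness is immediate as each loop is a simple curve. The boundary values are read off from the discrete coupling: $h^\delta$ equals zero on the big cluster and takes a constant value in $\{\pm\sqrt{2}\lambda\}$ on each of its holes, which persists in the limit. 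By the uniqueness of two-valued sets~\cite{MR3936643}, $\mathcal L$ must be distributed as $\lp_{-\sqrt{2}\lambda,\sqrt{2}\lambda}$, proving the first bullet.

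\emph{Recursion and obstacle.} Once the outermost layer is identified, the remaining two bullets follow by iteration: conditionally on the outermost cluster, the restriction of $(\n_1^\delta,\n_2^\delta)$ to each hole $U^\delta$ is an independent double random current with \emph{free} boundary conditions on $U^\delta$, so Theorem~\ref{thm:cvg_free_drc_v1} applies directly inside each hole. This yields the CLE$_4$ of outer boundaries and the $\lp_{-2\lambda,(2\sqrt{2}-2)\lambda}$ of inner boundaries, and iterating completes the full nested description; conformal invariance is then inherited from that of $\Phi$, of its two-valued sets, and of CLE$_4$. The main technical obstacle is the identification of the boundary values $\pm\sqrt{2}\lambda$ (rather than the $\pm 2\lambda$ seen in the free case): the asymmetry between the pinned wired cluster and the free holes produces an irrational height jump, and justifying this quantitatively relies on the fermionic-observable and boundary analysis underlying the convergence of $h^\delta$ to $\Phi$ in Section~\ref{subsec:2cvg_gff}, which is also what forces the $C^1$ regularity assumption.
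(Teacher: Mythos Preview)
Your overall strategy matches the paper's: identify the outermost loops $\mathcal L$ as a thin local set of $\Phi$ with boundary values in $\{\pm\sqrt2\lambda\}$, invoke uniqueness of two-valued sets (Lemma~\ref{lem:tvs}) to get $\lp_{-\sqrt2\lambda,\sqrt2\lambda}$, then recurse inside each hole using the free-boundary result (Theorem~\ref{thm:cvg_free_drc_v1}). Two points need correction, though neither is fatal to the approach.

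First, you misattribute the role of the $C^1$ hypothesis. It is \emph{not} used for the height-function convergence or any fermionic boundary analysis---Theorem~\ref{thm:NFconv} already holds for arbitrary Jordan domains and covers the wired nesting field. Rather, $C^1$ enters only through Remark~7.3 of~\cite{DumLisQia21}, which is what provides tightness of the \emph{loop ensemble} in the wired case; without it you do not even have a subsequential limit $\eta$ to analyse.

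Second, your thinness argument has a gap. The set that must be shown to be thin is not the union of the loops in $\mathcal L$ but the \emph{gasket} $\gask(\mathcal L)=\ol D\setminus\bigcup_{\gamma\in\mathcal L}O(\gamma)$, which is the scaling limit of the entire wired cluster and is a priori a two-dimensional fractal, not a countable union of simple curves. Simplicity of the individual loops says nothing about whether this larger set carries GFF mass. The paper handles this (see the proof of Property~\ref{drc2} in Proposition~\ref{prop:input double random current}, invoked verbatim in the proof of Theorem~\ref{thm:wired_drc}) by a direct Cauchy--Schwarz estimate showing that $\int g\, h^\delta$ restricted to an $\alpha$-neighbourhood of the discrete loops vanishes as $\alpha\to0$, uniformly in $\delta$; this relies on RSW-type crossing bounds from~\cite{DumLisQia21}, not on the geometry of the limiting loops.
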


\begin{remark}
The values of $a$ and $b$ in $\lp_{-a,b}$ that we obtain in our results are combinations of $\sqrt{2}\lambda$ and $2\lambda$. The mechanism for the generation of each of these gaps in the scaling limit is very different, and this realisation is one of the 
main (and possibly surprising) insights of this work. The appearance of multiplies of $\sqrt{2}\lambda$ is directly related to the value of the multiplicative constant in front the Gaussian free field that arises as the scaling limit of the associated height function (see Section~\ref{subsec:2cvg_gff} and Theorem~\ref{thm:NFconv} therein). This is the same constant as the one in the scaling limit of height functions in the dimer model~\cite{Ken01}. Moreover, the inner boundaries of clusters posses a Markov property already at the discrete level as can be easily seen from the definition of the double random current. This means that the gap $\sqrt{2}\lambda$ is in some sense present already in the discrete.
On the other hand, $2\lambda$ is the height gap between the two sides of a level line in the Gaussian free field \cite{MR2486487}, which only emerges in the continuum. We have identified it using properties of two-valued sets~\cite{MR3936643, MR3827968} (see Section~\ref{sec:prem}) and properties of the scaling limit of the model (in particular how the interfaces intersect in the continuum, which we derive in our companion paper~\cite{DumLisQia21}).
Also, the outer boundaries do not have any apparent Markov property at the discrete level, and hence one can think of the value $2\lambda$ as an \emph{emergent} or \emph{effective} gap.
\end{remark}

Theorems~\ref{thm:cvg_free_drc_v1} and~\ref{thm:cvg_wired_drc_v1} have the following applications.
\begin{itemize}
\item The Hausdorff dimension of a double random current cluster in the scaling limit (for both free and wired boundary conditions) is $7/4$ \cite{SSV}. 

\item (Difference in log conformal radii) The difference of log conformal radii between two successive loops that encircle the origin in the scaling limit of double random current interfaces  is equal to $T_1+T_2$, where $T_1$ is the first time that a standard Brownian motion exits $[-\pi, (\sqrt{2}-1)\pi]$ and $T_2$ is the first time that a standard Brownian motion exits $[-\pi, \pi]$ (see \cite[Proposition 20]{MR3936643}).

\item (Number of clusters) Let $N(\ep)$ be the number of double random current clusters in the unit disk surrounding the origin such that their outer boundaries have a conformal radius w.r.t.\ the origin at least $\ep$. 
We will show in Lemma~\ref{lem:drc_radii} that almost surely,
\begin{align*}
N(\ep) / \log (\ep^{-1}) \underset{\ep\to 0}{\longrightarrow} \frac{1}{\sqrt{2} \pi^2}.
\end{align*}

\item  (Scaling limit of the magnetization in domains) With a little bit of additional work, one may derive from our results the conformal invariance of the $n$-point spin-spin correlations of the critical Ising model already obtained in \cite{CHI} as these correlations are expressed in terms of connectivity properties of $\n_1^\delta+\n_2^\delta$. The additional technicalities would consist in relating the point-to-point connectivity in $\n_1^\delta+\n_2^\delta$ to the probabilities that the $\ep$-neighborhoods of the points are connected. Such reasonings have been implemented repeatedly when proving conformal invariance, and we omit the details here as it would lengthen the paper even more. Even though the result is already known, we still wished to mention this corollary as our paper uses only the convergence of certain fermionic observables to obtain convergence of the nesting field height function to the GFF. Unlike the spinor observables used in \cite{CHI}, these are local functions of the Kadanoff--Ceva fermions.
The convergence of such fermionic observables has been obtained for the critical Ashkin--Teller model (which is a combination of two interacting Ising models) in \cite{GiuMasTon15} via renormalization arguments using the crucial fact that the observables are local. Notoriously, the spin-spin correlations are not of this kind, which makes renormalization arguments much more difficult to implement. We believe that the strategy of this paper may be of use to extend the universality results from \cite{GiuMasTon15} to non-local Grassmann observables.
\end{itemize}

Finally, we remark that Theorems~\ref{thm:cvg_free_drc_v1} and~\ref{thm:cvg_wired_drc_v1} are simplified versions of more detailed results (see Theorems~\ref{thm:ABh} and~\ref{thm:Qh}) that we will prove in Section \ref{sec:bah}. We do not include all details in the introduction in order to facilitate the reading, but let us make some comments on the additional properties that we can obtain:

\begin{itemize}
\item The proofs of Theorems~\ref{thm:cvg_free_drc_v1} and~\ref{thm:cvg_wired_drc_v1} rely on the coupling of the models with a height function that we will present in the next subsection. In fact, the primal and dual double random currents can be coupled together with the same height function (see Theorem~\ref{thm:mastercoupling}). Consequently, the limiting interfaces of the primal and dual models are also coupled with the same GFF, so that we fully understand the nesting and intersecting behavior of their limiting interfaces.

\item Theorems~\ref{thm:cvg_free_drc_v1} and~\ref{thm:cvg_wired_drc_v1} state the convergence of the boundaries of double random current clusters. 
To identify the cluster of a current, one only needs to know the edges where the current is strictly positive. 
However, apart from the shape of the clusters, we also have an additional information on whether the current is even and positive or odd on each edge. A hole of a double random current cluster is called odd if the flux of the cluster around this hole is an odd number, and otherwise it is called even. Here the flux is the total current flowing across any dual path that connects any face in the hole to the boundary of the graph. In the discrete, given the shape of the clusters, there is additional randomness to determine the parity of the holes. However, in the continuum limit, as we will show in Theorem~\ref{thm:ABh}, the parity of each hole in a double random current cluster with free b.c.\ is a \emph{deterministic function} of the shape of the cluster.
\end{itemize}

\subsection{Convergence of the nesting field of the double random current to the Gaussian free field}\label{subsec:2cvg_gff}
As mentioned above, a central piece in our strategy is a new convergence result dealing with the so-called nesting field of the double random current introduced by two of the authors in~\cite{DumLis}.
Let $G=(V,E)$ be a generic planar graph. For a current $\n$, let
\begin{itemize}
\item   $\n_{\rm odd}$ be the set of edges with an odd value in $\n$ (called the \emph{odd part} of $\n$)
\item  $\n_{\rm even}$ be the set of edges with an even and strictly positive value of $\n$ (called the \emph{even part} of $\n$).
\end{itemize}
We clearly have $\n_{\rm odd}\cup \n_{\rm even} = E(\n)$, and hope that no confusion will arise from the fact that the zero values are not included in the even part of a current.
In what follows we will often identify a current $\n$ with the pair $(\n_{\rm odd},\n_{\rm even})$ as it carries all the relevant information for our considerations.

A nontrivial connected component of the graph 
$(V,\cur_{\rm odd})$ will be called a \emph{contour}. In particular, each contour $\cont$ is contained in a unique cluster of $\cur$, and each
cluster $\clust$ is associated to a contour configuration $\clust \cap \cur_{\rm odd}$. Each contour configuration gives rise to a $\pm1$ spin configuration on the faces of $G$,
where the external unbounded face is assigned spin $+1$, and where the spin changes whenever one crosses an edge of a contour.
We call a cluster $\clust$ \emph{odd around a face} $u$ if the spin configuration associated with the contour configuration $\clust \cap \cur_{\rm odd}$ assigns spin $-1$ to $u$ (this is the same as asking for the total 
flux of the current in the cluster to be odd across any dual path connecting $u$ to infinity).

For a current $\n$, let $\mathfrak C(\n)$ be the collection of all clusters of $\n$, and let $(\coin_{\clust})_{\clust \in \mathfrak C(\n)}$ be i.i.d.\ random variables equal to $+1$ or $-1$ with probability $1/2$ indexed by $\mathfrak C(\n)$. These random variables are called the {\em labels} of the clusters. The \emph{nesting field with free boundary conditions} of a current $\n$ on $G$ evaluated at a face $u$ of $G$ is defined by
\begin{align} \label{def:nestingfield}
h_G(u) := \sum_{\clust \in \mathfrak C(\n)  }\mathbf 1\{\clust \text{ odd around } u \}  \coin_{\clust}.
\end{align}
Analogously, the \emph{nesting field with wired boundary conditions} of a current $\n$ on $G^+$ evaluated at a face $u$ of $G^+$ is defined by
\begin{equation}\label{def:nestingfield+}
h^+_{G^+}(u) :=(\mathbf 1\{\clust_{\mathfrak g} \text{ odd around } u \}-1/2)  \coin_{\clust_{\mathfrak g}}+ \sum_{\clust\neq \clust_{\mathfrak g}  }\mathbf 1\{\clust \text{ odd around } u \}  \coin_{\clust},
\end{equation}
where $\clust_{\mathfrak g}$ is the cluster containing the external vertex $\mathfrak g$, and where the sum is taken over all remaining clusters of $\n$. Here, whether $\clust_{\mathfrak g}$ is odd around a face of $G$ or not depends on the embedding of the graph $G^+$.
However, one can see that the distribution of $h^+_{G^+}(u)$ is independent of this embedding.

Note that due to the term corresponding to $\clust_{\mathfrak g}$, the nesting field with wired boundary conditions takes half-integer values, whereas the one with free boundary conditions is integer-valued. Such definition is justified by the next result, and by the joint coupling of $h_G$ and~$h^+_{G^*}$ via a dimer model described in Section~\ref{sec:DD}. We note that the global shift of $1/2$ between $h_G$ and $h^+_{G^*}$ is the same as in the work 
of Boutilier and de Tili\`{e}re~\cite{BoudeT}.

\bigbreak
The following is the main result of this part of the argument. We identify the function $h_{D^\delta}$ defined on the faces of $D^\delta$ with a distribution on $D$ in the following sense: extend $h_{D^\delta}$ to all points in $D$ by setting it to be equal to $h_{D^\delta}(u)$ at every point strictly inside the face $u$, and 0 on the complement of the faces in $D$. Then, we view $h_{D^\delta}$ as a \emph{distribution} (\emph{generalized function}) by setting
\[
h_{D^\delta}(f):=\int_Df(x)h_{D^\delta}(x)dx,
\]
where $f$ is a {\em test function}, i.e.~a smooth compactly supported function on $D$. We proceed analogously with the field $h^+_{(D^{\delta})^*}$ and extend it to all points within the faces of $(D^{\delta})^*$.
We will say that a sequence of random generalized functions $X_n$ \emph{converges weakly to} a random generalized function $X$, if $X_n(f)$ converges in distribution to $X(f)$ for every test function $f$.

The \emph{Gaussian free field (GFF)} $h_D$ with zero boundary conditions in $D$ is a random distribution such that for every smooth function $f$ with compact support in $D$, we have
\begin{align}\label{eq:GFF}
\Eb  \bigg[ \bigg(\int_D f(z) h_D(z) dz \bigg) ^2 \bigg]  = \int_D \int_D f(z_1) f(z_2) G_D(z_1, z_2) dz_1 dz_2,
\end{align}
where $G_D$ is the Green's function on $D$ with zero boundary conditions satisfying $\Delta G_D(x, \cdot) = -\delta_x(\cdot)$, where $\delta_x$ denotes the Dirac mass at $x$.
This normalization means e.g.\ that for the upper half plane $\mathbb H$, we have 
\[
G_{\mathbb H} (x,y)=\frac1{2 \pi} \log |(x-\bar y) / (x-y)|.
\]

Given a planar graph $G$, we write $G^\dagger$ for its \emph{weak dual}, i.e.\ the planar dual graph with the vertex corresponding to the outer boundary of $G$ removed.

\begin{theorem}[Convergence of the nesting field] \label{thm:NFconv}
Let $D$ be a Jordan domain, and let $D^\delta$ approximate $D$.
Denote by $h_{D^\delta}$ the nesting field of the critical double random current model on $D^{\delta}$ with free boundary conditions, and by $h^+_{(D^{\delta})^\dagger}$ the nesting field of the critical double random current model on the weak dual graph $(D^{\delta})^\dagger$ with wired boundary conditions.
Then
\[
\lim_{\delta\rightarrow 0}h_{D^\delta} =\lim_{\delta\rightarrow 0}h^+_{(D^{\delta})^\dagger}=\frac{1}{\sqrt{\pi}} h_D,
\]
where $h_D$ is the GFF in $D$ with zero boundary conditions, and where the convergence is in distribution in the space of generalized functions.
\end{theorem}

We want to mention that $h_{D^\delta}$ and $h^+_{(D^{\delta})^\dagger}$ can be coupled together as one random height function $H_{D^\delta}$ defined on the faces of a planar graph $C_{D^{\delta}}$ (whose faces correspond to both the faces of $D^{\delta}$
and $(D^{\delta})^\dagger$; see Fig.~\ref{fig:graphs}) in such a way that $\lim_{\delta\rightarrow 0}H_{D^\delta}=\frac{1}{\sqrt{\pi}} h_D$, and moreover the values of $h_{D^\delta}$ and $h^+_{(D^{\delta})^\dagger}$ differ locally by an additive constant. 
More properties of this coupling are described in Section~\ref{subsec:2coupling}.

Our proof is based on the relationship between the nesting field of double random currents on a graph~$G$ 
and the height function of a dimer model on decorated graphs $G^d$ and $C_G$ established in~\cite{DumLis}.
We will first explicitly identify the inverse Kasteleyn matrix associated with these dimer models with the correlators of real-valued Kadanoff--Ceva 
fermions in the Ising model~\cite{KadCev}. 
This is valid for arbitrary planar weighted graphs, and can also be derived from the bozonization identities of Dub\'{e}dat~\cite{Dub}. For completeness of exposition, we choose to present an 
alternative derivation that uses arguments similar to those of~\cite{DumLis}. Compared to~\cite{Dub}, rather than using the connection with the six-vertex model, we employ the double random current  model.
We then express the real-valued observables on general graph in terms of the complex-valued observables of Smirnov~\cite{smirnov}, 
Chelkak and Smirnov~\cite{CheSmi12} and Hongler and Smirnov~\cite{HonSmi}.
This is a well-known relation that can be e.g.\ found in~\cite{CCK}.
We also state the relevant scaling limit results for the critical observables on the square lattice obtained in~\cite{smirnov,CheSmi12,HonSmi}. 

All in all, we identify the scaling limit  of the inverse Kasteleyn matrix on graphs $C_{D^\delta}$ as $\delta\to 0$.
This is an important ingredient in the computation of the limit of the moments of the height function which is done by modifying an argument of Kenyon~\cite{Ken00}. 
Another crucial and new ingredient is a class of delicate estimates on the critical random current model from \cite{DumLisQia21} that allow us to do two things: 
\begin{itemize}
\item to identify the boundary conditions of the limiting GFF to be zero boundary conditions;
\item to control the behaviour of the increments of the height function
between vertices at small distances.
\end{itemize}
The first item is particularly important as handling boundary conditions directly in the dimer model is notoriously difficult. Here, the identification of the limiting boundary conditions is made possible by the connection with the double random current as well as the main result of \cite{DumLisQia21} stating that large clusters of the double random current with free boundary conditions do not come close to the boundary of the domain (see Theorem~\ref{thm:crossing free} below). We see this observation and its implication for the nesting field as one of the key innovation of our paper.

We stress the fact that Theorem~\ref{thm:NFconv} does not follow from the scaling limit results of Kenyon \cite{Ken00,Ken01}  as the boundary conditions considered in these papers are related to
Temperley's bijection between dimers and spanning trees~\cite{Temp,KPW,KenShe}, whereas 
those considered in this paper correspond to the double Ising model~\cite{DumLis,Dub,BoudeT}.
Moreover we note that the infinite volume version of~Theorem~\ref{thm:NFconv} was obtained by de Tili\`{e}re~\cite{DeT07}. Finally it can also be shown that the hedgehog domains of Russkikh~\cite{Rus}
are a special case of our framework, where the boundary of $D^\delta$ makes turns at each discrete step.

\paragraph{Organization} The paper is organized as follows. 
In Section~\ref{sec:input discrete} we state the main results from our second paper~\cite{DumLisQia21}.
In Section~\ref{sec:discprem} we recall the relationship between different discrete models and derive a connection between the inverse Kasteleyn matrix and complex-valued fermionic observables. While some (but not all) of these results are not completely new, they are scattered around the literature and we therefore review them here. In Section~\ref{sec:moments} we derive Theorem~\ref{thm:NFconv}. 
Section~\ref{sec:prem} presents more preliminaries on the continuum objects. Section~\ref{sec:bah} is devoted to the identification of the scaling limit of double random currents.

\paragraph{Acknowledgements}  We are grateful to Juhan Aru for pointing out a mistake in the previous version of the paper.  The beginning of the project involved a number of people, including Gourab Ray, Benoit Laslier, and Matan Harel. We thank them for inspiring discussions. The project would not have been possible without the numerous contributions of Aran Raoufi to whom we are very grateful.
We thank Pierre Nolin for useful comments on an earlier version of this paper. 
We thank Wendelin Werner for reading a later version of this paper, and for insightful comments which improved the paper.
Finally, we are indebted to the anonymous referees for the careful reading of the article and the valuable comments that were incorporated in the final version.
The first author was supported by the NCCR SwissMap from the FNS. This project has received funding from the European Research Council (ERC) under the European Union's Horizon 2020 research and innovation program (grant agreement No.~757296).

\section{Input from the second paper of the series}\label{sec:input discrete}

In this section we briefly recap some inputs from \cite{DumLisQia21} that are used in this paper.
 We refer to \cite{DumLisQia21} for the proofs. We only mention the main tools from \cite{DumLisQia21} that we will use and refer, later in the proof, to the precise statements of \cite{DumLisQia21} when they were not mentioned in this section.

\paragraph{Results for the double random current model}
We will need tightness results for several families of loops, notably for the outer and inner boundaries of the double random current clusters. This is done using an Aizenman--Burchard-type criterion for the double random current. Below, for a subset $A$ of vertices, an {\em $A$-cluster} is a cluster for the current configuration restricted to~$A$. A {\em domain} $D$ is a subgraph of $\mathbb Z^2$ whose boundary is a self-avoiding polygon in $\mathbb Z^2$.  Let $\Lambda_r:=[-r,r]^2$ and ${\rm Ann}(r,R):=\Lambda_R\setminus \Lambda_{r-1}$. Call an ${\rm Ann}(r,R)$-cluster (i.e., an $A$-cluster with $A={ \rm Ann}(r,R)$) {\em crossing} if it intersects both $\partial\Lambda_r$ and $\partial \Lambda_R$. 
For an integer $k\ge1$, let $A_{2k}(r,R)$ be the event\footnote{The subscript $2k$ instead of $k$ is meant to illustrate that there are $k$ ${\rm Ann}(r,R)$-clusters from inside to outside separated by $k$ dual clusters separating them.} that there are $k$ distinct  ${\rm Ann}(r,R)$-clusters crossing ${\rm Ann}(r,R)$. 

\begin{theorem}[Aizenman--Burchard criterion for the double random current model]
\label{prop:tight number crossings}
There exist sequences $(C_k)_{k\geq 1},(\lambda_k)_{k\geq 1}$ with $\lambda_k$ tending to infinity as $k\to \infty$, such that for every domain $D$, every $k\ge1$ and all $r,R$ with $1\le r\le R/2$,
\begin{align}
\label{eq:tight number XOR}{\bf P}_{D,D}^{\emptyset,\emptyset}[A_{2k}(r,R)]&\le C_k(\tfrac rR)^{\lambda_k}.\end{align}
If the domain has a $C^1$ boundary, the same holds for the model with wired boundary conditions but with the constants $C_k$ and $\lambda_k$ depending on $D$.
\end{theorem}

We will also need some a priori properties of possible subsequential scaling limits. These will be obtained using estimates in the discrete on certain four-arm type events. We list them now.
Let \begin{align*}
A_4^\square(r,R)&:=\{\text{there exist two $\Lambda_R$-clusters crossing $\mathrm{Ann}(r,R)$}\}\end{align*}
and let $A_4^\square(x,r,R)$ be the translate of $A_4^\square(r,R)$ by $x$.

\begin{theorem}\label{thm:absence closed pivotal expectation white}
There exists $C>0$ such that for all $r,R$ with $1\le r\le R$,
\begin{align}
\label{eq:closed pivotal0}{\bf P}_{\mathbb Z^2,\mathbb Z^2}^{\emptyset,\emptyset}[A_4^\square(r,R)]&\le C(r/R)^2.
\end{align}
Furthermore, for every $\ep>0$, there exists $\eta=\eta(\ep)>0$ such that for all $r,R$ with $1\le r\le \eta R$ and every domain $\Omega\supset\Lambda_{2R}$,
\begin{align}
\label{eq:closed pivotal existence0}{\bf P}_{\Omega,\Omega}^{\emptyset,\emptyset}[\exists x\in \Lambda_R:A_4^\square(x,r,R)]&\le \ep.
\end{align}
\end{theorem}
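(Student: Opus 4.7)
For part (a), the plan is to show that $A_4^\square(r,R)$ encodes a universal polychromatic multi-arm configuration in $\mathrm{Ann}(r,R)$. Indeed, if two distinct $\Lambda_R$-clusters of $\n_1+\n_2$ cross the annulus, they provide two disjoint primal arms; by planar duality combined with the master coupling of Theorem~\ref{thm:mastercoupling}, a dual crossing of the annulus in the coupled dual double random current $\n_1^\dagger+\n_2^\dagger$ must sit between them. This assembles into a polychromatic five-arm configuration of alternating primal/dual type, whose probability is bounded by $C(r/R)^2$ by the universal Kesten-type five-arm bound, provided that a decoupling between the disjoint primal arms is available. The needed decoupling is implemented through the switching lemma, which allows disjoint current-connected paths to be reorganised into independent pieces, yielding the appropriate BK-type inequality. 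RSW-type lower bounds for crossings of rectangles by the double random current --- which follow from the polynomial decay of Ising correlations at $\beta_c$ together with the switching principle, and whose counterparts for the critical double random current are developed in~\cite{DumLisQia21} --- then close the argument and produce the quantitative bound \eqref{eq:closed pivotal0}.

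For part (b), the naive union bound over a cover of $\Lambda_R$ at scale $r$ fails: there are $O((R/r)^2)$ covering points, exactly enough to saturate the bound from \eqref{eq:closed pivotal0}. The plan is therefore to either upgrade (a) to a genuine six-arm estimate of the form $C(r/R)^{2+\alpha}$ with $\alpha>0$ --- exploiting the fact that in the double random current the two primal clusters come with at least one additional dual separator beyond the five-arm skeleton --- or to employ the mixing and finite-energy estimates of \cite{DumLisQia21} in a dyadic multi-scale iteration that decorrelates the bad events at different scales $r = 2^j\rho$. Either route yields, after union bound, a total probability of order $(r/R)^\alpha$, which is smaller than any prescribed $\ep$ as soon as $\eta(\ep)>0$ is chosen small enough; moreover the resulting estimate is uniform in the ambient domain $\Omega\supseteq \Lambda_{2R}$ because all inputs are monotone in the domain or are controlled by the mixing from~\cite{DumLisQia21}.

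The main obstacle lies in the quantitative six-arm improvement behind (b): there is no explicit formula for the exponent $\alpha$, and the available arguments proceed via the universal strict separation between the 5- and 6-arm exponents in two-dimensional percolation models with full RSW theory. Transferring this separation to the double random current, where Bernoulli independence is replaced by the more delicate combinatorial symmetries of the switching lemma together with the mixing and finite-energy estimates developed in \cite{DumLisQia21}, is precisely the technical content of the proof that is imported here as an input.
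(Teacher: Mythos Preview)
The paper does not prove this theorem: it is listed in Section~\ref{sec:input discrete} explicitly as an input from the companion paper~\cite{DumLisQia21}, with the sentence ``We refer to \cite{DumLisQia21} for the proofs.'' So there is no proof here to compare against, and your proposal should be read as a standalone attempt.

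That said, your sketch has a genuine gap in part~(a). You claim that the master coupling of Theorem~\ref{thm:mastercoupling} upgrades the natural four-arm event $A_4^\square(r,R)$ to a five-arm event, by inserting a crossing of the dual double random current $\n^\dagger$ between the two primal clusters. But Property~\ref{itm:cp5} of Theorem~\ref{thm:mastercoupling} only says that $\n_e>0$ forces $\n^\dagger_{e^*}=0$; it does \emph{not} say that $\n_e=0$ forces $\n^\dagger_{e^*}>0$. So the region separating the two primal clusters is a region where $\n^\dagger$ is \emph{allowed} to be open, not one where it is \emph{guaranteed} to cross. You therefore do not obtain a fifth arm for free, and without it the universal five-arm bound $(r/R)^2$ is unavailable. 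The event $A_4^\square$ is genuinely a four-arm event, and a $(r/R)^2$ bound for it is a model-specific statement (consistent with CLE$_4$ loops being simple and disjoint), not a consequence of universal arm exponents.

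For part~(b) you correctly observe that the union bound saturates, and you list two plausible escape routes (a strict six-arm improvement, or a multiscale decorrelation argument). But you do not carry either out, and you yourself describe the required separation of exponents as ``precisely the technical content of the proof that is imported here as an input.'' That is an honest admission that the proposal is an outline rather than a proof; the actual work---establishing the needed arm-event estimates for the double random current via the tools of~\cite{DumLisQia21}---remains to be done.
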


The result is coherent with the fact that the scaling limit of the outer boundaries of large clusters in the double random current model with free boundary conditions is given by CLE$_4$, which is known to be made of simple loops that do not touch each other. Interestingly, to derive the convergence to the continuum object it will be necessary to first prove this property at the discrete level.

We turn to a second result of the same type. For a current $\n$, let $\n^*$ be the set of dual edges $e^*$ with $\n_e=0$. For a dual path $\gamma=(e_1^*,e_2^*,\dots,e_k^*)$, call the $\n$-flux through $\gamma$ the sum of the $\n_{e_i}$. 
Call an {\em ${\rm Ann}(r,R)$-hole in $\n$} a connected component of $\n^*$ restricted to ${\rm Ann}(r,R)^*$ (note that it can be seen as a collection of faces). An ${\rm Ann}(r,R)$-hole is said to be {\em crossing} ${\rm Ann}(r,R)$ if it intersects $\partial\Lambda_r^*$ and $\partial\Lambda_R^*$.  Consider the event
\begin{align*}
A_4^\blacksquare(r,R)&:=\Big\{\begin{array}{c}\text{there exist two ${\rm Ann}(r,R)$-holes crossing ${\rm Ann}(r,R)$ and the}\\
\text{shortest dual path between them has even $(\n_1+\n_2)$-flux}\end{array}\Big\}.\end{align*}

Denote its translate by $x$ by $A_4^\blacksquare(x,r,R)$.

\begin{theorem}\label{thm:absence closed pivotal expectation black}
There exists $C>0$ such that for all $r,R$ with $1\le r\le R$,
\begin{align}
\label{eq:open pivotal}{\bf P}_{\mathbb Z^2,\mathbb Z^2}^{\emptyset,\emptyset}[A_4^\blacksquare(r,R)]&\le C(r/R)^2.
\end{align}
Furthermore, for every $\ep>0$, there exists $\eta=\eta(\ep)>0$ such that for all $r,R$ with $1\le r\le \eta R$ and every domain $D\supset\Lambda_{2R}$,
\begin{align}
\label{eq:open pivotal existence}{\bf P}_{\Omega,\Omega}^{\emptyset,\emptyset}[\exists x\in \Lambda_R:A_4^\blacksquare(x,r,R)]&\le \ep.
\end{align}
\end{theorem}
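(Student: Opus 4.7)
The strategy is to derive Theorem~\ref{thm:absence closed pivotal expectation black} as the dual counterpart of Theorem~\ref{thm:absence closed pivotal expectation white}, via the master coupling of Theorem~\ref{thm:mastercoupling} and the Kramers--Wannier self-duality at $\beta_c$. My starting observation is that, by Property~\ref{itm:cp3} of the master coupling, the XOR-spin $\tau^\dagger$ changes sign across a primal edge precisely when the edge carries an odd value of $\n_1+\n_2$, so the parity of the $(\n_1+\n_2)$-flux along any dual path equals the number of $\tau^\dagger$ sign changes along that path. Consequently, $A_4^\blacksquare(r,R)$ is exactly the event that there exist two crossing ${\rm Ann}(r,R)$-holes of $(\n_1+\n_2)^*$ that lie in the same $\tau^\dagger$-region.

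Next I would use Properties~\ref{itm:cp3} and~\ref{itm:cp5} of the master coupling, together with planar duality, to match this reinterpretation with $A_4^\square(r,R)$ applied to the dual double random current $(\n_1^\dagger,\n_2^\dagger)$. By Property~\ref{itm:cp5}, $\n_1^\dagger+\n_2^\dagger$ is supported on dual edges where $\n_1+\n_2=0$, so $\n^\dagger$-clusters live inside the holes of $\n_1+\n_2$; moreover the primal walls separating the two crossing holes carry only edges with $\n_1+\n_2>0$, forcing $\n_1^\dagger+\n_2^\dagger=0$ on the corresponding dual edges. Combined with the hypothesis that the two holes share a $\tau^\dagger$-value, a topological argument then shows that each of the two holes must contain a distinct crossing $\n^\dagger$-cluster of ${\rm Ann}(r,R)^*$, which is precisely the event $A_4^\square$ for $(\n_1^\dagger,\n_2^\dagger)$. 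Since at criticality the dual double random current on $(\mathbb Z^2)^*$ has the same law as the primal one on $\mathbb Z^2$, Theorem~\ref{thm:absence closed pivotal expectation white} applied to $\n^\dagger$ yields~(\ref{eq:open pivotal}); the second estimate~(\ref{eq:open pivotal existence}) follows in the same fashion from the second half of that theorem, after checking that the free primal boundary condition in $\Omega$ transfers to the appropriate wired boundary condition for the dual model under the master coupling.

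The main obstacle, and the place where this plan needs most care, is the second step: rigorously verifying that two crossing holes carrying matching $\tau^\dagger$-values really produce two distinct crossing $\n^\dagger$-clusters, rather than dual components joined through \emph{even but positive} values of $\n_1+\n_2$ which leave $\tau^\dagger$ unchanged but can destroy the simple hole structure. The ``even flux'' condition in the definition of $A_4^\blacksquare$ is exactly what rules out the ambiguous configurations, but translating it into a clean statement about $\n^\dagger$-clusters requires careful combinatorial bookkeeping of how even currents create sub-hole partitions within a common $\tau^\dagger$-region, and of how the primal walls between the holes close up the $\n^\dagger$-topology from the outside.
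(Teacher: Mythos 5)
The statement you are trying to prove is not actually proved in this paper: Section~\ref{sec:input discrete} explicitly introduces Theorem~\ref{thm:absence closed pivotal expectation black}, alongside Theorems~\ref{prop:tight number crossings},~\ref{thm:absence closed pivotal expectation white} and~\ref{thm:crossing free}, as \emph{inputs imported from the companion paper}~\cite{DumLisQia21}, with the proofs referred there. The surrounding text even says that the four-arm estimates are obtained \emph{from} Theorem~\ref{thm:crossing free} (the statement that large free-boundary clusters avoid the boundary), so the intended argument goes through direct crossing / RSW-type estimates for the double random current rather than through planar duality.

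Independently of that, your duality-based route has a genuine gap at its central step. Property~\ref{itm:cp5} of the master coupling (Theorem~\ref{thm:mastercoupling}) is \emph{one-sided}: $\n_e>0$ forces $\n^\dagger_{e^*}=0$ and vice versa, but $\n_e=0$ does \emph{not} force $\n^\dagger_{e^*}>0$. In fact the proof of Theorem~\ref{thm:mastercoupling} makes this explicit: the computation
\[
\frac{2x_e^2}{1+x_e^2}+\frac{2(x_e^*)^2}{1+(x_e^*)^2}<1
\]
shows that with positive probability a given edge has $\n_e=0$ \emph{and} $\n^\dagger_{e^*}=0$ simultaneously. Holes of $\n_1+\n_2$ are connected components of the set of dual edges on which the primal double current vanishes, and $\n^\dagger$-clusters are merely \emph{subsets} of such holes; a crossing hole of $\n_1+\n_2$ need not contain any crossing $\n^\dagger$-cluster, since the hole could be crossed by a dual path on which \emph{both} currents vanish. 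The even-flux condition controls only the odd part of $\n_1+\n_2$ (equivalently the $\tau^\dagger$-sign), and says nothing about where $\n^\dagger$ is positive inside the holes, so the ``topological argument'' you invoke — that two crossing holes with matching $\tau^\dagger$-sign must each contain a distinct crossing $\n^\dagger$-cluster — does not exist. Consequently the implication $A_4^\blacksquare(\n)\Rightarrow A_4^\square(\n^\dagger)$, on which your entire reduction to Theorem~\ref{thm:absence closed pivotal expectation white} rests, is not valid, and the argument cannot be completed along these lines.
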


Let us mention that the previous results are obtained using the following key statement, which is of independent interest and is also directly used in this paper.
For a set $D$, let $\partial_rD$ be the set of vertices in~$D$ that are within a distance $r$ from $\partial D$. 

\begin{theorem}[Connection probabilities close to the boundary for double random current]\label{thm:crossing free}
There exists $c>0$ such that for all $r,R$ with $1\le r\le R$ and every domain $D$ containing $\Lambda_{2R}$ but not $\Lambda_{3R}$, 
$$\frac{c}{\log(R/r)}\le{\bf P}_{D,D}^{\emptyset,\emptyset}[\Lambda_R\stackrel{\n_1+\n_2}\longleftrightarrow\partial_r D]\le \epsilon(\tfrac rR),$$
where $x\mapsto \epsilon(x)$ is an explicit function tending to 0 as $x$ tends to 0. 
\end{theorem}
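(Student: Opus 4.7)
The plan is to prove the lower and upper bounds separately, using as the central tool the switching-lemma identity
\[
\mathbf{P}^{\emptyset,\emptyset}_{D,D}[x \leftrightarrow y \text{ in } \n_1+\n_2] \;=\; \langle \sigma_x \sigma_y \rangle_D^2,
\]
which holds for the critical Ising model on any finite domain $D$ with free boundary conditions: it follows from the switching lemma applied with sources $\{x,y\}$ in both currents (the source-difference is empty), together with the standard identity $Z^{xy}_D/Z^{\emptyset}_D = \langle\sigma_x\sigma_y\rangle_D$. This turns the connection probability into a statement about critical Ising two-point functions in $D$.

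For the \textbf{lower bound} $c/\log(R/r)$, I would apply the Paley--Zygmund inequality to
\[
N := \sum_{y\in\partial_r D}\mathbf{1}_{y\leftrightarrow\Lambda_R},
\]
so that $\mathbf{P}[\Lambda_R\leftrightarrow\partial_rD]\ge \mathbf{E}[N]^2/\mathbf{E}[N^2]$. By FKG and the identity above, the first moment is bounded below by $\sum_y\max_{x\in\Lambda_R}\langle\sigma_x\sigma_y\rangle_D^2$. Using the bulk behavior $\langle\sigma_x\sigma_y\rangle_D\asymp|x-y|^{-1/4}$ for $x,y$ at macroscopic distance from $\partial D$, combined with monotonic comparisons (GKS) between $D$ and larger reference domains to reach into the boundary strip, one obtains a quantitative lower bound on $\mathbf{E}[N]$. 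The second moment reduces to a sum of four-point Ising correlations, which can be controlled via Griffiths / tree-graph inequalities. The $1/\log(R/r)$ factor is expected to emerge as the ratio of these two moments, reflecting the two-dimensional harmonic-measure behavior inside the boundary strip of width $r$ in a domain of inradius $\asymp R$.

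For the \textbf{upper bound} $\epsilon(r/R)$, I would run a multiscale dual blocking argument based on three ingredients: (i) uniform RSW-type crossing estimates for the double random current, established in the companion work~\cite{DumLisQia21}; (ii) near-independence of $\n_1+\n_2$ across well-separated scales (the mixing-type estimates of~\cite{DumLisQia21}); and (iii) the master coupling of Theorem~\ref{thm:mastercoupling} between $\n$ with free boundary conditions and its dual current $\n^\dagger$ with wired boundary conditions, so that clusters of $\n^\dagger$ produce dual blocking circuits for primal $\n_1+\n_2$ connections by Property~\ref{itm:cp5}. Concretely, one slices the annular region separating $\Lambda_R$ from $\partial_r D$ into $\asymp\log(R/r)$ dyadic shells at distances $\asymp 2^k r$ from $\partial D$, shows that each shell contains a blocking $\n^\dagger$-circuit around $\Lambda_R$ with probability uniformly bounded away from $0$, and combines these quasi-independent blocking events to obtain a bound of the form $(r/R)^c$ for some $c>0$.

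The main obstacle is the boundary behavior. For the upper bound, the RSW crossings and the lower bound on the probability of a blocking circuit in each dyadic shell must hold uniformly in the shape of $D$ near $\partial D$, where the only control is $\Lambda_{2R}\subseteq D$ and $\Lambda_{3R}\not\subseteq D$; making each blocking attempt succeed with a positive probability independent of how $\partial D$ crosses $\mathrm{Ann}(2R,3R)$ is the most delicate step. For the lower bound, the difficulty is to control the four-point Ising correlation in the thin boundary strip with enough precision to recover the correct $1/\log(R/r)$ rate.
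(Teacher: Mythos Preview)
This theorem is not proved in the present paper at all: it is explicitly listed in Section~\ref{sec:input discrete} as input from the companion paper~\cite{DumLisQia21}, with the comment that ``the previous results are obtained using the following key statement''. In particular, Theorem~\ref{thm:crossing free} is \emph{upstream} of the Aizenman--Burchard criterion (Theorem~\ref{prop:tight number crossings}) and the mixing/four-arm estimates you invoke. Your upper-bound argument therefore has a circularity problem: you appeal to ``uniform RSW-type crossing estimates'' and ``near-independence across scales'' from~\cite{DumLisQia21}, but in that paper these are consequences of the very theorem you are trying to prove, not independent inputs.

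There is a second, more substantive issue with the upper bound. Your multiscale blocking argument, if it worked with a uniformly positive success probability per shell, would yield a polynomial bound $(r/R)^c$. But immediately after the statement the paper remarks that even the much weaker bound $\epsilon(x)=C/\log(1/x)$ is only \emph{predicted}, not proved. This is a strong signal that the naive blocking scheme fails: the double random current has no FKG property, and near a free boundary the probability that a dual circuit separates $\Lambda_R$ from $\partial D$ in a given dyadic shell is \emph{not} uniformly bounded below. Indeed, the whole point of the theorem is that the boundary behaviour is delicate (clusters approach $\partial D$ only with vanishing probability, in contrast with FK for $q<4$), and establishing \emph{any} decay requires new arguments specific to this model rather than the generic RSW-plus-independence template. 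The master coupling does give disjointness of primal and dual currents, but it does not by itself produce the quantitative circuit estimate you need.

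Your lower-bound strategy via the switching identity $\mathbf{P}^{\emptyset,\emptyset}_{D,D}[x\leftrightarrow y]=\langle\sigma_x\sigma_y\rangle_D^2$ and a second-moment computation is more promising in spirit and does not suffer from the circularity; however, extracting the precise $1/\log(R/r)$ rate requires sharp control of free-boundary Ising correlations in the strip $\partial_r D$ (where $\langle\sigma_x\sigma_y\rangle_D$ degenerates), and you have only sketched this as ``expected to emerge''.
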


We predict that the upper bound should be true for $\epsilon(x):=C/\log(1/x)$ 
but we do not need such a precise estimate here. Again, the result is coherent with the fact that the scaling limit of the outer boundary of large clusters in the double random current with free boundary conditions is given by CLE$_4$.

The lower bound is to be compared with recent estimates \cite{DumSidTas17,DumManTas20} obtained for another dependent percolation model, namely the critical Fortuin--Kasteleyn random cluster model with cluster-weight $q\in[1,4)$. There, it was proved that the crossing probability is bounded from below by a constant $c=c(q)>0$ uniformly in $r/R$. We expect that the behaviour of the critical random cluster model with cluster weight $q=4$ on the other hand is comparable to the behaviour presented here: large clusters do not come close to the boundary of domains when the boundary conditions are free.

\section{Preliminaries on discrete models}\label{sec:discprem}
The main two goals of this section are the following. First of all we describe a coupling between double random currents (both primal and dual) and the associated nesting fields.
This is stated in Theorem~\ref{eq:KWdual}, and the properties of the coupling are crucial in the proofs of our main theorems (they exactly mimic the structure of level sets in the continuum GFF discussed in Section~\ref{sec:prem}). 
For the proof, we study three auxiliary and related to each other discrete models: random alternating flows (introduced in~\cite{LisT}), and two bipartite dimer models 
on two different modifications $G^d$ and $C_G$ of the underlying graph $G$ (introduced in~\cite{DumLis} and~\cite{FanWu,Dub} respectively). These are described in Section~\ref{sec:mapping between discrete models}. We stress the fact that the alternating flow model and the dimer model on $G^d$ are not used
outside this section, but they are a convenient tool to relate the double random current model with the dimer model on $C_G$. This is then used in Section~\ref{sec:moments} to show convergence of the nesting field to the GFF.
The main new result on the dimer model on $C_G$ contained in this section is the fact that the associated inverse Kasteleyn matrix is \emph{exactly} equal to the fermionic observable of Chelkak and Smirnov~\cite{CheSmi12}.

\subsection{A coupling between the primal and dual double random current}\label{subsec:2coupling}
Let $G=(V,E)$ be a graph as in Section~\ref{subsec:2cvg_gff}.
In this section we discuss the joint coupling of the double random current on $G$ and the double random current on the dual graph $G^*$
together with a height function that restricts to both the nesting field of the primal and the dual random current (see Fig.~\ref{fig:MC} for an illustration).
The coupling constants for the dual model satisfy the Kramers--Wannier duality relation
\begin{align} \label{eq:KWdual}
\exp(-2\beta^* J^*_{e^*}) = \tanh (\beta J_e).
\end{align}
We note that if $J_e=J^*_{e^*}=1$ for all $e$, and $\beta=\beta_c$, then $\beta^*=\beta_c$ (the critical point is self dual).
Properties of this coupling will be used in Section~\ref{sec:bah} to identify the scaling limit of the boundaries of the double random current clusters.
We will provide a proof of this result at the end of Section~\ref{sec:DD} using a relation with the dimer model. 

\begin{theorem}[Master coupling] \label{thm:mastercoupling}
One can couple the following objects:
\begin{enumerate}
\item[i)]\label{itm:obj1} a double random current $\n$ with free boundary conditions on the primal graph $G=(V,E)$, together with i.i.d.\ $\pm1$-valued spins $(\tau_{\mathcal C}:\mathcal C\in\mathfrak C(\n))$ associated to each cluster of $\n$,

\item[ii)]\label{itm:obj3} the dual double random current $\n^\dagger$ with free boundary conditions on the full dual graph $G^*=(U,E^*)$ (that we will refer to as the \emph{wired boundary conditions} on the weak dual graph $G^\dagger$) and with the dual coupling constants, together with i.i.d.\ $\pm 1$-valued spins $(\tau^\dagger_{\mathcal C}:\mathcal C\in\mathfrak C(\n^\dagger))$ associated with each cluster of $\n^\dagger$,

\item[iii)]\label{itm:obj5} a height function $H$ defined on $V\cup U$,
\end{enumerate}
in such a way that the following properties hold:
\begin{enumerate}
\item \label{itm:cp5} The configurations $\n$ and $\n^\dagger$ are disjoint in the sense that $\n_e>0$ implies $\n^{\dagger}_{e^*}=0$ and  $\n^\dagger_{e^*}>0$ implies $\n_{e}=0$,
where $e^*$ is the dual edge of $e$.
\item \label{itm:cp3} The odd part (the set of edges with odd values) of $\n$ is equal to the collection of interfaces of $\tau^\dagger$ (the set of primal edges separating dual clusters with $+1$ and $-1$ spins of $\tau^\dagger$), and the odd part of $\n^\dagger$ is equal to the collection of interfaces of $\tau$ (the set of dual edges separating primal clusters with $+1$ and $-1$ spins of $\tau$).
\item \label{itm:cp6}
For a face $u\in U$ and a vertex $v\in V$ incident on $u$, we have
\[
H(u)-H(v) =\tfrac 12 \tau^\dagger_u \tau_v.
\] 
Moreover, the height function $H$ restricted to the faces of $G$ (resp.\ $G^*$) has the law of the nesting field of $\n$ with free boundary conditions (resp.\ $\n^\dagger$ with wired boundary conditions) as denoted by $h$ (resp.~$h^\dagger$).
\item \label{itm:bc}
When exploring a cluster of $\n$ from the outside, inside each of its holes the dual current $\n^\dagger$ has wired boundary conditions, where each inner boundary of the hole (as defined in Sec.~\ref{intro:drc}) 
is identified as a single dual vertex (note that a single hole can have multiple inner boundaries since the inner boundaries by definition do not cross primal edges whose both endpoints are in the cluster of $\n$). To be more precise, let $\tilde G$ 
be a connected component (that is not the component of the boundary) of $G$ obtained after removing a cluster $\mathcal C$ of $\n$ and all its adjacent edges. Then $\n^\dagger$ restricted to $\tilde G^*$ is a double random current with wired boundary conditions 
(here we disregard the state of $\n^\dagger$ on edges dual to a primal edge that is adjacent to $\mathcal C$).
By duality, the same holds with the roles of $\n$ and $\n^\dagger$ exchaged.
\end{enumerate}
\end{theorem}

\begin{figure} 
		\begin{center}
			\includegraphics[scale=0.7]{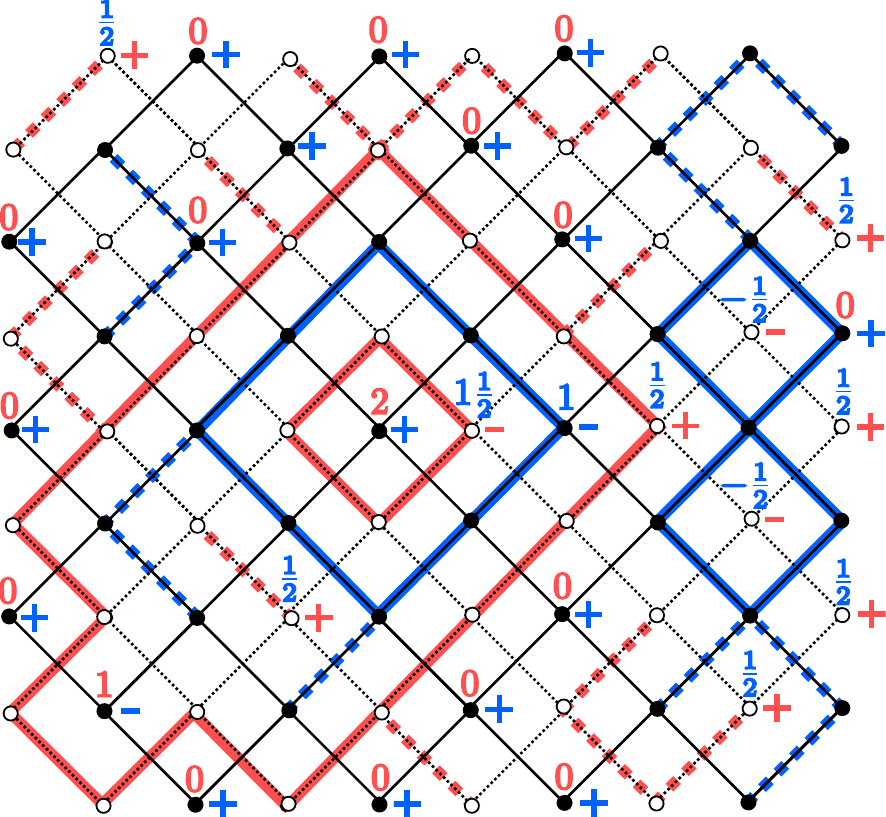}  
		\end{center}
		\caption{An illustration of the coupling from Theorem~\ref{thm:mastercoupling}. A piece of the (rotated) primal square lattice with white vertices, and its dual square lattice with black vertices is shown. The primal and dual double random current clusters are drawn in blue and red respectively. The odd parts of the current are marked with solid lines, whereas the nonzero even parts are marked with dashed lines. \\
Each vertex (primal black vertex) and a face (dual white vertex) carries both a $\pm 1$ spin ($\tau$ and $\tau^\dagger$ respectively) and the value of the height function $H$. The height function takes integer values in $\mathbb Z$ on the black vertices and in $\tfrac 12+\mathbb Z$ on the white vertices as implied by property~\ref{itm:cp6} of the master coupling. \\
Property \ref{itm:cp6} and the fact that the spins $\tau$ and $\tau^\dagger$ are constant on the primal and dual clusters respectively imply that the height function 
is also constant on both the primal and dual clusters. This is why in the figure we marked the values of the spins and height only at the rightmost vertices of the clusters (including isolated vertices). \\
 }	\label{fig:MC}
\end{figure}

We stress the fact that the interfaces of $\tau$ and $\tau^\dagger$ are disjoint in the sense of property~\ref{itm:cp5} appears already in the works of Dub\'{e}dat~\cite{Dub}, and Boutilier and de Tili\`{e}re~\cite{BoudeT}. However, property~\ref{itm:cp5} is a stronger statement as it concerns the full double random current, and not only its odd part.

We note that the laws of $\tau$ and $\tau^\dagger$ are those of a XOR Ising model and the dual XOR Ising model respectively (See Corollary~\ref{cor:XOR} below). However, we will not use this fact in the rest of the article, and our main results do not have direct implications for the scaling limit of the interfaces in the XOR Ising model. An extension of this coupling to the Ashkin--Teller model can be found in the works~\cite{LisHF, LisAT} that appeared before but were based on 
the current article. As mentioned, we will provide a different proof that uses the associated dimer model representation (see Section~\ref{sec:DD}).

The following statement identifies the labels introduced in the definition~\eqref{subsec:2cvg_gff} that correspond to the two nesting fields encoded by $H$.
\begin{corollary}
In the coupling as above, each cluster $\mathcal C$ of $\n$ (resp.\ a cluster of $\n^\dagger$ different from the cluster of the ghost vertex $\mathfrak g$) can be assigned a well-defined dual spin $\tau^{\dagger}_{\mathcal C}$ (resp.\ $\tau_{\mathcal C}$). This is the spin assigned to any face of $G$ (resp.\ $G^*$) incident on $\mathcal C$ from the outside. 
For the cluster of $\mathfrak g$ we set this spin to be $+1$.
With this definition, the independent labels associated to the clusters as in the definition~\eqref{subsec:2cvg_gff} are given by 
\begin{equation}\label{eq:expression coin}
\coin_{\mathcal C} = \tau_{\mathcal C}\tau^\dagger_{\mathcal C}.
\end{equation}
\end{corollary}

\begin{proof}
We first argue that $\tau^\dagger_{\mathcal C}$ is well defined.
By property~\ref{itm:cp3}, for each primal cluster $\mathcal C$, all the dual spins at the faces adjacent to the outer boundary of $\mathcal C$ (the innermost dual circuit surrounding $\mathcal C$) have the same value. Indeed, otherwise there would exist two consecutive dual vertices along the outer boundary of $\mathcal C$ with opposite $\tau^\dagger$ spins. However by property~\ref{itm:cp3}, the corresponding primal edge would then belong to $\mathcal{C}$, and hence the two dual vertices could not be consecutive on the outer boundary of $\mathcal C$. This justifies the definition \eqref{eq:expression coin}.

We also need to argue that given $\n$, the spins $(\coin_{\mathcal C})_{\mathcal C\in\mathfrak C(\n)}$ are independent (as in the definition of the nesting field). As mentioned, we will not use this result in the rest of the article.
This follows easily since given $\n$, $\tau^\dagger_{\mathcal C}$ is a deterministic function of $\n$ (by property~\ref{itm:cp3}), and~$\tau_{\mathcal C}$ are independent by definition.
\end{proof}

\begin{figure}
\centering
\includegraphics[width=0.8\textwidth]{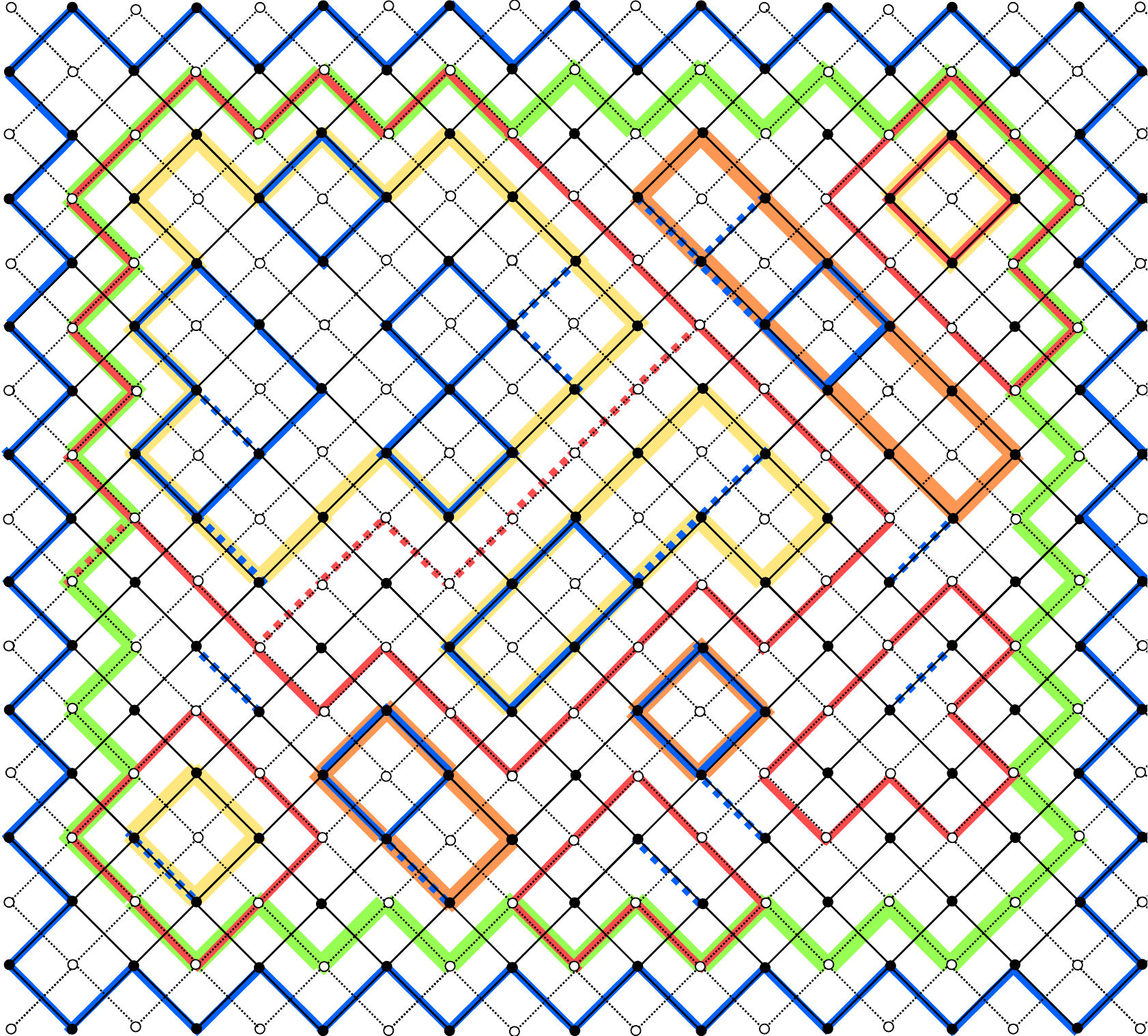}
\caption{A configuration of primal (red) and dual (blue) double random currents $\n$ and~$\n^\dagger$. The outermost blue circuit is part of a cluster of the boundary in $\n^\dagger$ whose remainder is not shown here.
The green edges denote the inner boundary loop $\ell$ of this cluster (i.e. a loop in $Q_0$ as defined in Sec.~\ref{sec:mainres}). The primal vertices on this loop are identified with each other in the exploration process described in property~\eqref{itm:bc} of the master coupling from Theorem~\ref{thm:mastercoupling}. After this identification the primal current $\n$ has wired boundary conditions. \\
The clusters of the modified current $\n_\ell$ defined in Sec.~\ref{sec:mainres} are given by the union of the green loop and the red clusters surrounded by it. Finally, $ Q_{1}(\ell)$ is defined as the collection of loops in the inner boundary of the external most cluster (touching $\ell$) of this modified current~$\n^\ell$. 
These loops come in two types, the yellow loops that are part of $ A_0(\ell)$, and the orange loops are in $ Q_1(\ell) \setminus A_0(\ell)$.
Each orange loop traces the red clusters from the outside and/or the green loop from the inside. This property is used in Lemma~\ref{lem:QH1} to obtain precompactness of the the orange loops given precompactness
of the red and green loops.
\\
Inside each yellow loop of the inner boundary of the primal clusters, the procedure is repeated and now the primal clusters surrounded by each such loop have wired boundary conditions. }  
\label{fig:nestedMarkov}
\end{figure}

Finally, for the sake of independent interest, we establish a connection with the XOR Ising model.
Recall that the XOR Ising model is just the pointwise product of two i.i.d.~Ising models.
\begin{corollary}\label{cor:XOR}
In the master coupling described above:
\begin{itemize}
\item The spins $ \tau_v$, $v\in V$, where we define $\tau_v=\tau_\mathcal C$, with $\mathcal C$ being the cluster containing~$v$, have the law of the XOR Ising model on $G$ with free boundary conditions, coupling constants $J_e$, and inverse temperature $\beta$.
\item The spins $ \tau_u$, $u\in U$, where we define $\tau^\dagger_u=\tau^\dagger_\mathcal C$, with $\mathcal C$ being the dual cluster containing~$u$, conditioned on the spin of the outer vertex $\mathfrak g$ being~$+1$, have the law of the XOR Ising model on the dual graph $G^*$ with $+$ boundary conditions and dual parameters as in~\eqref{eq:KWdual}.
\end{itemize}
\end{corollary}
\begin{proof}
We prove the first statement as the second one follows by duality. 
To this end, let $\mathbb P_{G,\beta}$ denote the master coupling probability measure and $\mathbb E_{G,\beta}$ its expectation. Moreover, let $ E_{G,\beta}^{\text{Ising}}$ be the expectation with respect to the Ising model on $G$ with free boundary conditions, coupling constants $J_e$, and inverse temperature $\beta$. 
For every $A\subseteq V$, since the spins $\tau$ are independent for all clusters, we have 
\[
\mathbb E_{G,\beta}\Big[\prod_{v\in A}\tau_v\Big] = \mathbb P_{G,\beta}[\mathcal F_A]= \mathbf P^\emptyset_{G,\beta}\otimes \mathbf P^\emptyset_{G,\beta} [\n_1+\n_2\in \mathcal F_A],
\] 
where $\n\in \mathcal F_A$ is the event that each cluster of $\n$ contains an even number (possibly zero) of vertices of $A$. 
Now, the classical switching lemma of Griffiths, Hurst and Sherman for double random currents~\cite{GHS} (see also \cite{DumLisQia21}) gives that 
\[
 \mathbf P^\emptyset_{G,\beta}\otimes \mathbf P^\emptyset_{G,\beta} [\n_1+\n_2\in \mathcal F_A]= E_{G,\beta}^{\text{Ising}}\Big[\prod_{v\in A}\sigma_v\Big]^2,
 \]
 where $\sigma$ denotes the Ising spins.
 The last expression is by definition the correlation function of XOR Ising spins at $A$.
Since the spins are $\pm 1$-valued, this implies that the law of $\tau$ under $\mathbb P_{G,\beta}$ is the law of the XOR-Ising model (e.g.~one can look at the characteristic function of the random vector $\tau$ and expand it into a finite sum of correlation functions as above).
\end{proof}

\subsection{Mappings between discrete models}\label{sec:mapping between discrete models}
In this section we recall the combinatorial equivalences between double random currents, alternating flows and bipartite dimers established in~\cite{LisT,DumLis}. 
We will later use them to derive a version of Dub\'{e}dat's bosonization identity \cite{Dub}. An additional black-white symmetry for correlators of monomer insertions is established that is not apparent in~\cite{Dub}. 
This will yield a representation of the inverse Kasteleyn matrix as the fermionic observable of Chelkak and Smirnov~\cite{CheSmi12}.

The results here are stated for general Ising models on arbitrary planar graphs $G=(V,E)$ and with arbitrary coupling constants $(J_e)_{e\in E}$. 
We focus on the free boundary conditions case and the wired boundary conditions can be treated analogously, replacing $G$ with $G^+$. We will actually mostly consider wired boundary conditions on the dual graph $G^*$ which
one can think of as $(G^{\dagger})^+$, where $G^\dagger$ is the weak dual of $G$ whose vertex set does not contain the unbounded face of $G$. 

We start by describing the relevant decorated graphs: the double random current model on a graph $G$ will be related to the 
alternating flow model on a directed graph $\vec G$, and the dimer model on two different bipartite graphs $G^d$ and $C_G$. All these graphs are planar and weighted, and their local structure together with the corresponding edge weights are shown in Fig.~\ref{fig:graphs}.
We now describe their construction in detail. Even though this is ultimately not relevant, we note that the structure of $G^d$ and $\vec G$ is determined by $G$ \emph{together with} a choice of an orientation for each edge.

\begin{figure} 
		\begin{center}
			\includegraphics[scale=0.9]{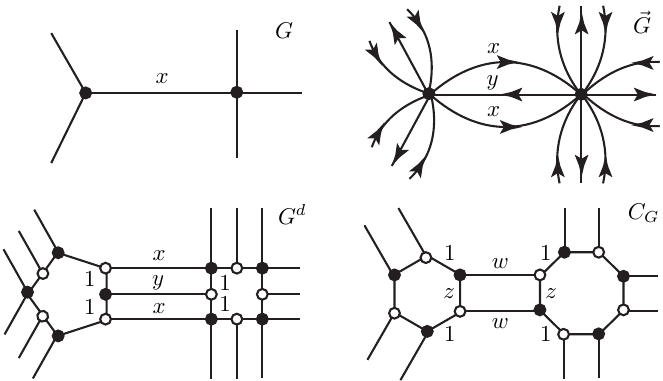}  
		\end{center}
		\caption{One can construct the graphs $\vec G$, $G^d$ and $C_G$ ($\vec G$ is formally a multigraph) locally around each vertex of $G$. The weights satisfy $y=\tfrac{2x}{1-x^2}$, $w=\tfrac{2x}{1+x^2}$, $z=\tfrac{1-x^2}{1+x^2}$.
	Here $x=x_e$ is the high-temperature weight equal to $\tanh( \beta J_e$). The edges carrying weight $1$ in $G^d$ (resp.\ in $C_G$) are called short (resp.\ roads), and the remaining edges are called long (resp.\ streets).}		\label{fig:graphs}
\end{figure}

Given $G$, $\vec G$ is obtained by replacing each edge $e$ of $G$ by three parallel directed edges $ e_{s1}$, $e_m$,  $e_{s2}$ such that the orientation of the side (or outer) edges $ e_{s1}$ and $ e_{s2}$ is opposite to the
orientation of the middle edge $ e_m$. The orientation of the middle edge can be chosen arbitrarily.  

To obtain $G^d$ from $\vec G$, we replace each vertex $v$ of $\vec G$ by a cycle of vertices of even length which is given by the number of times the orientation of edges in 
$\vec G$ incident on $v$ changes when going around $v$. We colour the new vertices black if the corresponding edges are incoming into $v$ and white otherwise. 
We then connect the white vertices in a cycle corresponding to $v$ with the appropriate black vertices in a cycle corresponding to $v'$, where $v$ and $v'$ are adjacent in~$\vec G$. 
We call \emph{long} all the edges of $G^d$ that correspond to an edge of~$\vec G$, and \emph{short} the remaining edges connecting the vertices in the cycles.

The last graph $C_G$ can be constructed directly from $G$ by replacing each edge of $G$ by a quadrangle of edges, and then connecting two quadrangles by an edge if the corresponding 
edges of $G$ share a vertex and are incident to the same face (see Fig.~\ref{fig:graphs}). Following~\cite{Dub}, we call \emph{streets}  the edges in the quadrangles and  \emph{roads}
those connecting the quadrangles (which represent cities).

We note that the set of faces $U$ (resp.\ vertices $V$) of $G$ naturally embeds into the set of faces of $\vec G$, $G^d$ and $C_G$ (resp.\ $G^d$ and $C_G$).
We therefore think of $U$ and $V$ as subsets of the set of faces of the respective decorated graphs (e.g.,\ when we talk about equality in distribution of the height function on $C_G$ and the nesting field on $G$).

In the remainder of this section we describe the mappings between the different models in the following order:
In Section~\ref{sec:DRCAF}, alternating flows on $\vec G$ are mapped under a map $\theta$ to a pair composed of the odd and even part of a double random current on $G$.
In Section~\ref{sec:AFD}, dimers on $G^d$ are mapped under a map $\pi$ to alternating flows on $\vec G$.
In Section~\ref{sec:DD}, dimers on $G^d$ are mapped to dimers on $C_G$. The corresponding statements for wired boundary conditions can be recovered by replacing $G$ with $G^+$.

The first two maps yield relations between configurations of the associated models, and the last map is described as a sequence of local transformations (urban renewals) of the graphs $C_G$ or $G^d$
that does not change the distribution of the height function on a certain subset of the faces of these two graphs.

We first describe relations on the level of distributions on configurations where no sources or disorders are imposed. Later on (in Section~\ref{sec:source insertion}) we increase the complexity by introducing sources.

\subsubsection{Double random currents on $G$ and alternating flows on $\vec G$} \label{sec:DRCAF}

A \emph{sourceless alternating flow} $F$ is a set of edges of the directed graph $\vec G$
satisfying the \emph{alternating condition}, i.e., for each vertex $v$, the edges in $F$ that are incident to $v$ alternate between being oriented towards and away from $v$ when going around $v$ (see Fig.~\ref{fig:flowcurrs}). In particular, the same number 
of edges enters and leaves~$v$. We denote the set of sourceless alternating flows on $\vec G$ by $\flows^\emptyset$, and following~\cite{LisT}, we
define a probability measure on $\flows^\emptyset$ by the formula, for every $F\in\mathcal F^\emptyset$,
\begin{align}
\label{eq:altflow}
\IP^{\emptyset}_{\flow} (F) := \frac1{Z^{\emptyset}_{\flow} }\weight_{\flow}(F), 
\end{align}
where $Z^{\emptyset}_{\flow}$ is the partition function of sourceless flows and, if $V(F)$ denotes the set of vertices in the graph $(V,F)$ that have at least one incident edge, 
\begin{align} \label{eq:floww}
\weight_{\flow}(F):=2^{|V|-|V(F)|} \prod_{ e\in F} x_{ e},
\end{align}
with the weights $x_{\vec e}$ as in Fig.~\ref{fig:graphs}.
We also define the {\em height function} of a flow $F$ to be a function $h=h_F$ defined on the faces of $\vec G$ in the following way:
\begin{itemize}
\item[(i)] $h(u_0)=0$ for the unbounded face $u_0$,
\item[(ii)] for every other face $u$, choose a path $\gamma$ connecting $u_0$ and $u$, and define $h(u)$ to be total flux of $F$ through $\gamma$, i.e., the number of edges in $F$ crossing $\gamma$ from left to right
minus the number of edges crossing $\gamma$ from right to left.
\end{itemize}
The function $h$ is well defined, i.e., independent of the choice of $\gamma$, since at each $v\in V$, the same number of edges of $F$ enters and leaves $v$ (and so the total flux of $F$ through any
closed path of faces is zero).

We are ready to state the correspondence between double random currents and alternating flows.
To this end consider a map $\theta : \flows^{\emptyset} \to \Omega^{\emptyset}$ defined as follows.
For every $F \in \flows^{\emptyset}$ and every $e \in E$, count the number of the corresponding directed edges $ e_m$, $ e_{s1}$, $ e_{s2}$ that are present in $F$.
Let $F_{\rm odd}\subseteq E$ be the set with one or three such present edges, and $F_{\rm even} \subseteq E$ the set with exactly two such edges, and set 
\[
\theta(F):=(F_{\rm odd} ,F_{\rm even}).\]
Denote by $\theta_* \IP_{\flow}^{\emptyset}$ the pushforward measure on $\Omega^{\emptyset}$. The following result was first proved in~\cite{LisT}. 

\begin{lemma}[Corollary 4.3 of \cite{LisT}] \label{thm:firstmapping}
Let $\n$ be distributed according to $\IP_{\dcurr}^{\emptyset}$, and let $h_\n$ be its nesting field. Let $F$ be distributed according to $\IP_{\flow}^{\emptyset}$.
Then
\[
(F_{\rm odd},F_{\rm even},h_F)= (\n_{\rm odd},\n_{\rm even},h_\n) \qquad \textnormal{in law.}
\]
\end{lemma}
\begin{proof}
This is a consequence of the fact that the total weight of all alternating flows corresponding to a cluster in the double random current, and whose outer 
boundary is oriented clockwise is the same as those oriented counterclockwise (see also the proof of Lemma~\ref{lem:symmetry}). 
This corresponds to the fact that
the nesting field is defined using symmetric coin flip random variables $\coin_\clust$. Moreover, the sum of these two weights is the same as the weight of the cluster in the double random current model.
More details are provided in the proof of Theorem 2.1 in~\cite{DumLis}.
\end{proof}

\subsubsection{Alternating flows on $\vec G$ and dimers on $G^d$} \label{sec:AFD}
\begin{figure}
		\begin{center}
			\includegraphics[scale=1]{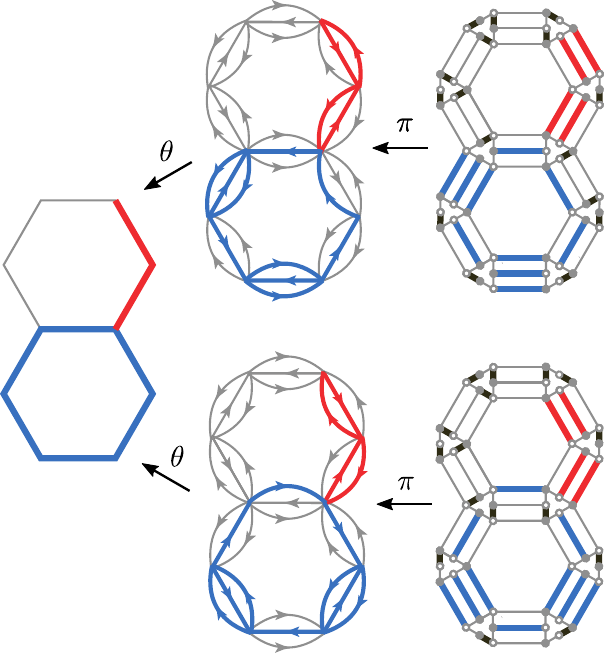}  
		\end{center}
		\caption{Left: A configuration $(\n_{\rm odd},\n_{\rm even})$ on a piece of the hexagonal lattice~$G$. The blue edges represent $\n_{\rm odd}$ and the red edges represent $\n_{\rm even}$. The blue and red edges together form one cluster~$\clust$.
		Middle: Two alternating flow configurations on $\vec G$ mapped to $(\n_{\rm odd},\n_{\rm even})$ under $\theta$. The two clusters have opposite 
		orientations of the outer boundary. Depending on this orientation the height function either increases or decreases by one when going from the outside to the inside of the 
		lower hexagon. This corresponds to two different outcomes for the label $\coin_\clust$ in the definition of the nesting field~\eqref{def:nestingfield}.
		Right: Two dimer configurations on $G^d$ that map to the corresponding alternating flows under $\pi$. Note that the parity of the height function on $G^d$ restricted to the vertices of $\clust$ 
		and shifted by $1/2$ changes whenever the sign of $\coin_\clust$ changes. This can be seen from the placement of the dimers on the short edges. This property is used in the proof of Theorem~\ref{thm:mastercoupling}. On the other hand the parity of the height function on the faces of $G$ is independent of $\coin_\clust$. \\ 
		We also note that both $\pi$ and $\theta$ are many-to-one maps.}
	\label{fig:flowcurrs}
\end{figure}

We first shortly recall the dimer model in its full generality. To this end, consider a finite weighted graph $\mathcal G$. Recall that a \emph{dimer cover} (or \emph{perfect matching}) $M$ of $\mathcal G$ is a subset of edges such that every vertex of the graph is incident to exactly one edge of $M$.
We write $\mathcal M(\mathcal G)$ for the set of all dimer covers of $\mathcal G$.
The \emph{dimer model} is a probability measure on $\mathcal M(\mathcal G)$ which assign a probability to a dimer cover that is proportional to the product of the edge-weights over the dimer cover. 

To each dimer cover $M$ on a bipartite planar finite graph $\mathcal G$ (colored in black and white in a bipartite fashion), one can associate a \emph{1-form} $f_M$ (i.e.\ a function defined on directed edges which is antisymmetric under a change of orientation) satisfying 
$f_M((v,v'))=-f_M((v',v))=1$ if $\{v,v'\} \in M$ and $v$ is white, and $f_M((v,v'))=0$ otherwise. 
For a 1-form $f$ and a vertex $v$, let $df(v)=\sum_{v'\sim v} f((v,v'))$ be the \emph{divergence} of~$f$ at $v$.
Note that for a dimer cover $M$, $df_M(v)=1$ if $v$ is white, and $df_M(v)=-1$ if $v$ is black.
Fixing a reference 1-form $f_0$ with the same divergence, we define the \emph{height function} $h=h_M$ by
\begin{itemize}
\item[(i)]  $h(u_0)=0$ for the unbounded face $u_0$, 
\item[(ii)] for every other face $u$, choose a dual path $\gamma$ connecting $u_0$ and $u$, and define $h(u)$ to be the total flux of $f_M-f_{0}$ 
through $\gamma$, i.e., the sum of values of $f_M-f_{0}$ over the edges crossing $\gamma$ from left to right.
\end{itemize}
The height function is well defined, i.e.\ independent of the choice of $\gamma$, since $f_M-f_{0}$ is a divergence-free flow, i.e.\ $d(f_M-f_0)=0$.

We now go back to the specific caseof $\mathcal G= G^d$. We will write $\IP^{\emptyset}_{G^d}$ for the dimer model measure on $G^d$ with weights as in Fig.~\ref{fig:graphs}.
We also fix a {reference 1-form} $f_0$ on $G^d$
given by
\begin{itemize}
\item $f_0((w,b))=-f_0((b,w))=1/2$ if $\{w,b\}$ is a short edge and $w$ is white,
\item  $f_0((w,b))=f_0((b,w))=0$ if $\{w,b\}$ is a long edge.
\end{itemize}

We now describe a straightforward map $\pi$ from the dimer covers on $G^d$ to alternating flows on $\vec G$ that preserves the law of the height function.
We note that one could carry out the same discussion and make a connection with double random currents directly, without introducing alternating flows.
However, we find the language of alternating flows particularly convenient to express some of the crucial steps discussed later on (especially Lemmata~\ref{lem:symmetry} and~\ref{lem:dualsurface}).
To this end, to each matching $\match \in \matchs(G^d)$,  associate a flow $\pi(\match)\in \flows^{\emptyset}$ by replacing each long edge in $\match$ by the corresponding directed edge in $\vec G$.
One can check that this always produces an alternating flow. Indeed, assuming otherwise, there would be two consecutive edges in $F(\match)$ of the same orientation,
and therefore the path of short edges connecting them in a cycle would be of odd length and therefore could not have a dimer cover, which is a contradiction. 
Let $\pi_*\IP_{G^d}^{\emptyset}$ be the pushforward measure on $\flows^{\emptyset}$ under the map $\pi$.

\begin{lemma}[\cite{DumLis}]  \label{thm:secondmapping}
We have $\pi_*\IP_{G^d}^{\emptyset}=\IP_{\flow}^{\emptyset}$. Moreover, under this identification, the restriction to $U$ of the height function of the dimer model is exactly the height function of the resulting alternating flow.
\end{lemma}
\begin{proof}
This is a consequence of the fact that the reference 1-form vanishes on the long edges, and hence its contribution to the increment of the height function across a long edge of $G^d$
is equal to zero, and the fact that the weights of the edges of $\vec G$ and the long edges of $G^d$ are the same. Moreover, if a vertex $v$ has zero flow through it, i.e, $v\in V\setminus V(F)$, then 
there are exactly $2$ dimer covers of the cycle of short edges of $G^d$ corresponding to $v$. 
Since both of these covers have total edge-weight $1$, this accounts for the factor $2^{|V|-|V(F)|}$ in~\eqref{eq:floww}.
\end{proof}

\subsubsection{Dimers on $G^d$ and on $C_G$} \label{sec:DD}

We will write $\IP^{\emptyset}_{C_G}$ for the dimer model measure on $C_G$ with weights as in Fig.~\ref{fig:graphs}.
The dimer models on $G^d$ and $(G^*)^d$ are closely related to the dimer model on $C_G$ (as was described in~\cite{DumLis}) using standard dimer model transformations called the vertex splitting and urban renewal, see Fig.~\ref{fig:urbanrenewal}. 
 The main two results of this section are Proposition~\ref{cor:CGH} below where we relate the height functions on $G^d$ and $C_G$, and the proof Theorem~\ref{thm:mastercoupling} (existence of the master coupling) that relies on Proposition~\ref{cor:CGH}.
\begin{proposition} \label{cor:CGH}
The height function on $C_G$ restricted to the faces and vertices of $G$
is distributed as the the height functions on $G^d$ and $(G^*)^d$ restricted to the faces and vertices of~$G$. In particular, the height function on $C_G$ restricted to the faces of $G$ has the law of the nesting field of the double random current with free boundary conditions on $G$,
and restricted to the vertices of $G$ has the law of the nesting field of the double random current with wired boundary conditions on $G^\dagger$ (or free boundary conditions on $G^*$).
\end{proposition}

\begin{figure}
		\begin{center}
			\includegraphics[scale=1.1]{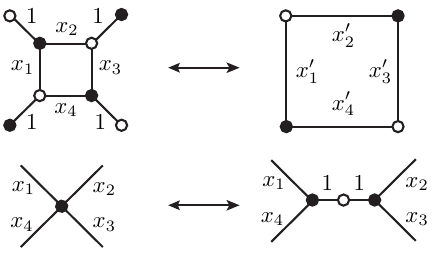}  
		\end{center}
			\caption{Urban renewal and vertex splitting  are transformations of weighted graphs preserving the distribution of dimers and the height function outside the modified region. 
		The weights in urban renewal satisfy $x_1'=\tfrac{x_3}{x_1x_3+x_2x_4}$, 
		$x_2'=\tfrac{x_4}{x_1x_3+x_2x_4}$, $x_3'=\tfrac{x_1}{x_1x_3+x_2x_4}$, $x_4'=\tfrac{x_2}{x_1x_3+x_2x_4}$.}
	\label{fig:urbanrenewal}
\end{figure}

To prove the proposition we start with a crucial lemma that first appeared in~\cite{DumLis}.
\begin{lemma}[\cite{DumLis}] \label{lem:graphtransformation}
One can transform $G^d$ and $(G^*)^d$ to $C_G$ (and the other way around) using urban renewals and vertex splittings.
\end{lemma}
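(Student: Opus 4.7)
The plan is constructive and essentially diagrammatic: I would exhibit an explicit finite sequence of vertex splittings and urban renewals that carries $G^d$ into $C_G$. Since each of these moves preserves both the dimer partition function and the distribution of the height function outside the modified region, it suffices to perform the transformation locally around each edge and each vertex of $G$ and then paste the local modifications together, without having to track any global consistency beyond the weights themselves.

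First, I would treat each edge $e$ of $G$ separately. In $G^d$ the edge $e$ is encoded by three parallel long edges of weight $y=2x/(1-x^2)$ (the middle edge and the two side edges) connecting two pairs of black/white vertices sitting on the cycles at the endpoints of $e$. The idea is to apply a vertex splitting at the four endpoints of these long edges in order to isolate a quadrilateral face whose four sides carry weights built from $y$, and then perform urban renewal on this quadrilateral. Using the urban-renewal formulas in Fig.~\ref{fig:urbanrenewal} together with the identity $y(1+x^2)=2x$, one checks that the new weights are exactly the street weight $w=2x/(1+x^2)$ and the road weight $z=(1-x^2)/(1+x^2)$ that appear in $C_G$; in particular a city (the quadrangle of streets) is produced and four roads of weight $z$ emerge.

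Second, one must reconcile, around each vertex $v$ of $G$, the cycle of short (unit-weight) edges of $G^d$ with the local road configuration of $C_G$ at $v$. This is done by a further sequence of vertex splittings that absorb the short edges of the cycle into the freshly created roads connecting the consecutive cities around $v$. A careful check vertex-by-vertex shows that the resulting local picture coincides with the local structure of $C_G$ at $v$. The same construction applied to $G^*$ yields the transformation from $(G^*)^d$ to $C_G$, which in turn gives the reverse transformation $C_G \to G^d$ by inverting each urban renewal (it is involutive up to weight transformation) and each vertex splitting.

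The main obstacle is essentially bookkeeping: tracking how the bipartite colouring, the orientation convention used to define $\vec G$, and the edge weights interact under each local move, and verifying that no auxiliary edges or degree-two vertices are left over after the full sequence of moves. Once the local pictures at an edge and at a vertex of $G$ are drawn explicitly and the algebraic identities relating $x$, $y$, $w$, $z$ are recorded, the global statement follows by applying the local procedure simultaneously at every edge and vertex of $G$.
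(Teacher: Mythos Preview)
Your high-level strategy---perform local moves edge by edge---matches the paper's, but the concrete local picture you describe is wrong in several respects, and this would prevent the argument from going through as written.

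First, the three long edges of $G^d$ over an edge $e$ of $G$ do \emph{not} all carry weight $y=2x/(1-x^2)$: the middle one has weight $y$ while the two side edges have weight $x$ (this is forced by the identity $x_{e_{s1}}+x_{e_{s2}}+x_{e_{s1}}x_{e_{s2}}x_{e_m}=x_{e_m}$ used for the symmetry between sinks and sources). Consequently the identity you invoke, $y(1+x^2)=2x$, is false (it is $y(1-x^2)=2x$), and the output weights you announce are misattributed: in $C_G$ the \emph{roads} have weight $1$, not $z$; it is the \emph{streets} (the sides of the quadrangle) that carry $w=2x/(1+x^2)$ and $z=(1-x^2)/(1+x^2)$.

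Second, and more structurally, the three long edges of $G^d$ over $e$ bound \emph{two} quadrilateral faces (each with two long sides and two short sides), whereas $C_G$ has a single quadrangle over $e$. The paper's move is: pick the one of these two $G^d$-quadrilaterals whose bipartite colouring is opposite, split off the vertices it shares with neighbouring quadrilaterals, apply urban renewal to it, and collapse the resulting doubled edge. This single move already turns the pair of $G^d$-quadrilaterals into the one $C_G$-quadrangle with the correct street weights $w,z$; repeating it at every edge of $G$ produces $C_G$ directly. No separate ``vertex step'' reconciling the short-edge cycles is required---the short edges are automatically accounted for once all edge-local urban renewals are done. Your second step is therefore superfluous, and your first step, as described, does not identify which quadrilateral is being renewed or why the resulting weights are the $C_G$ ones.
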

\begin{proof}
We will describe how to transform $G^d$ to $C_G$ (see Fig.~\ref{fig:gdtocg} for an illustration). The second part follows since $C_G$ is symmetric with respect to $G$ and $G^*$. 

To this end, fix a bipartite black-white colouring of both $G^d$ and $C_G$. Note that for each edge $e$ in $G$, there is one quadrilateral $\mathcal Q$ in $C_G$ and two quadrilaterals $\mathcal Q_1$, $\mathcal Q_2$ in $G^d$ 
corresponding to $e$. For each such edge $e$,
choose for the internal quadrilateral of urban renewal the quadrilateral $\mathcal Q_i$ in $G^d$ with the opposite colors of vertices when compared to $\mathcal Q$. Then, split each vertex that the chosen quadrilateral shares
with a quadrilateral corresponding to a different edge of $G$. In this way we find ourselves in the situation from the upper left panel in Fig.~\ref{fig:urbanrenewal}.
After performing urban renewal and collapsing the doubled edge, we are left with one quadrilateral as desired. One can check that the weights that we obtain
match those from Fig.~\ref{fig:graphs}. We then repeat the procedure for every edge of $G$. The resulting graph is~$C_G$. 
\end{proof}

\begin{figure}
		\begin{center}
			\includegraphics[scale=0.7]{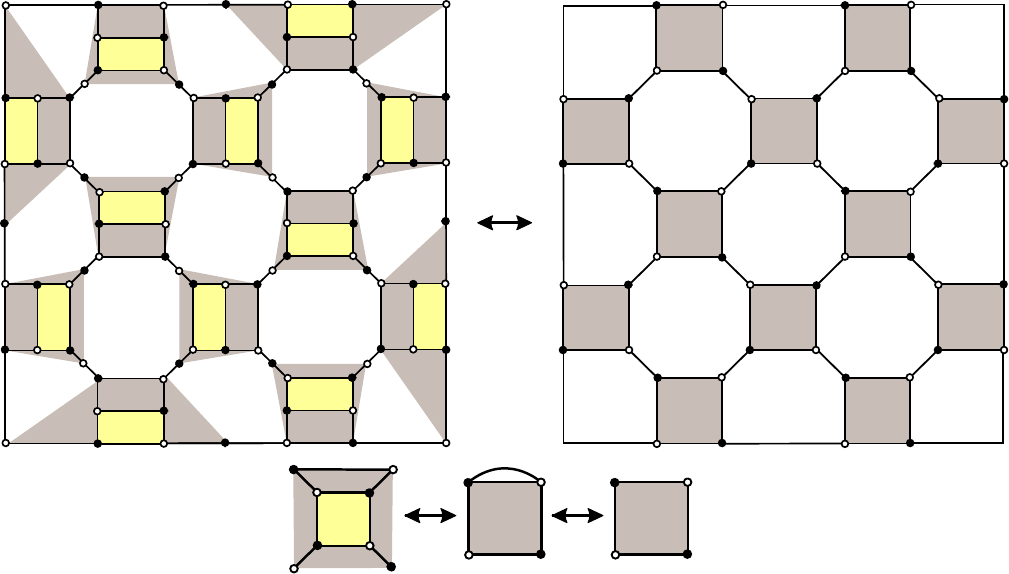}  
		\end{center}
			\caption{An example of the correspondence between dimer models on $G^d$ and $C_G$. The yellow quadrilaterals within grey quadrilaterals are transformed using urban renewal moves, and then collapsing one doubled edge to a single edge
			as shown at the bottom of the figure.
			The underlying graph $G$ is a $3\times 3$ piece of the square lattice.}
	\label{fig:gdtocg}
\end{figure}

 \begin{figure}
		\begin{center}
			\includegraphics[scale=0.7]{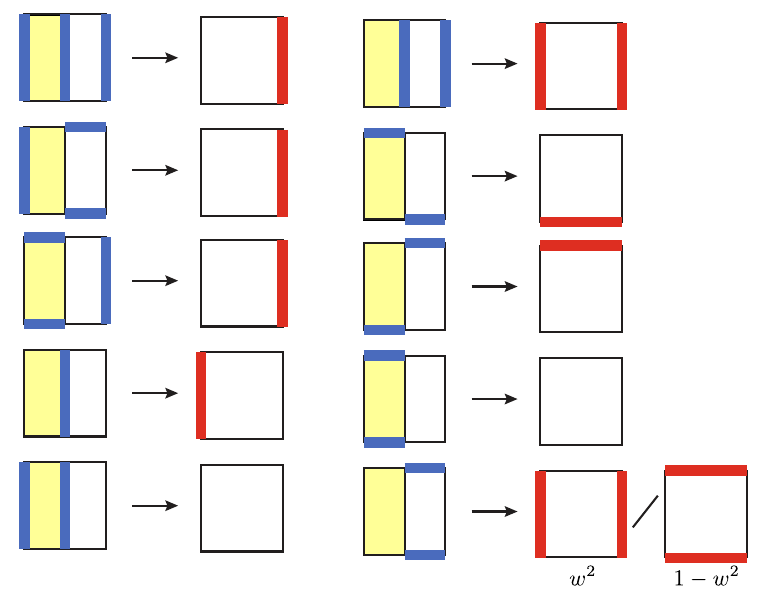}
		\end{center}
	\caption{The figure shows the measure preserving mapping of local configurations on $G^d$ (corresponding to a single edge $e$ of $G$) 
	to local configurations on the streets of $C_G$ under urban renewal performed on the left-hand-side quadrilateral in $G^d$. The last case involves additional random choice between two possible configurations. These choices are independent for local configurations corresponding to different 
	edges of $G$ and the probabilities are as in the figure with $w=2x/(1+x^2)$.}
	\label{fig:duality}
\end{figure}

A choice of quadrilaterals where urban renewals are applied for a rectangular piece of the square lattice is depicted in Fig.~\ref{fig:gdtocg}. In this way, the double random current model on the square lattice is related to a (weighted) dimer model on the square-octagon lattice. In Fig.~\ref{fig:duality}, we illustrate the behaviour of local dimer configurations under one urban renewal performed in the construction described in the lemma above.

As the reference 1-form for the dimer model on $C_G$ we choose the canonical one given by 
\begin{align} \label{eq:refflow}
f_0((w,b))=-f_0((b,w))=\IP^{\emptyset}_{C_G}(\{w,b\} \in M),
\end{align}
where $w$ is a white vertex. Note that this makes the height function centered as all its increments become centered by definition.
This is the same 1-form as used in~\cite{BoudeT} on the infinite square-octagon lattice $C_{\mathbb Z^2}$. In~\cite{KenLaplacian}, two crucial properties of $f_0$ were established 
when $G$ is an infinite isoradial graph and the Ising model on $G$ is critical. In the next lemma we show that both of these properties hold for arbitrary Ising weights on general finite planar graphs.
\begin{lemma} \label{lem:refflow}
We have
\begin{itemize}
\item $\IP^{\emptyset}_{C_G}(e \in M)=1/2$, if $e$ is a road, i.e., $e$ corresponds to a corner of $G$,
\item  $\IP^{\emptyset}_{C_G}(e \in M)=\IP^{\emptyset}_{C_G}(e' \in M)$, if $e$ and $e'$ are two parallel streets corresponding to the same edge of $G$ (or of the dual $G^*$).
\end{itemize}
\end{lemma}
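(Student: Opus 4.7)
Both properties will follow from tracking local symmetries of the dimer measure through the chain of measure-preserving bijections $C_G\leftrightarrow G^d\leftrightarrow \vec G\leftrightarrow$ double random currents given in Sections~\ref{sec:DRCAF}--\ref{sec:DD}, rather than by computing the Kasteleyn inverse.

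For the second bullet, the plan is to exploit the involution on alternating flows which, for each edge $e\in E$, swaps the presence of the two side directed edges $e_{s1}$ and $e_{s2}$ in $F$. These two directed edges share endpoints and orientation in $\vec G$, so the swap preserves both the alternating condition at every vertex and the weight $\weight_{\flow}(F)=2^{|V|-|V(F)|}\prod_{e'\in F}x_{e'}$; hence the marginals of $\{e_{s1}\in F\}$ and $\{e_{s2}\in F\}$ under $\IP^\emptyset_\flow$ agree. Through Lemma~\ref{thm:secondmapping}, $e_{s1}$ and $e_{s2}$ are identified with the two long edges of $G^d$ flanking $e$, and tracking the urban renewal of Lemma~\ref{lem:graphtransformation} via Fig.~\ref{fig:duality} identifies these with one of the two pairs of parallel streets in the quadrilateral of $C_G$ associated to $e$. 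To handle the remaining pair (corresponding to the dual edge $e^*$), I would use that $C_G=C_{G^*}$ as a weighted graph, since Kramers--Wannier duality gives $z_e=w_{e^*}$ and $w_e=z_{e^*}$; applying the same involution argument to alternating flows on $\vec{G^*}$ with the dual Ising weights yields the equality for the dual pair.

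For the first bullet, I would unwind the bijections to identify a road of $C_G$ at a corner of $G$ around a vertex $v\in V$ with a specific short edge $s$ of $G^d$ lying in the cycle at $v$ and outside all urban-renewed quadrilaterals, so that urban renewal identifies $\{s\in M_{G^d}\}$ with $\{\text{road}\in M_{C_G}\}$. Under Lemma~\ref{thm:secondmapping}, whether $s$ belongs to a dimer cover of $G^d$ is determined by the local alternating-flow configuration at $v$, which decomposes as the pair (double current $\theta(F)\in\Omega^\emptyset$, orientation data at $v$): either the orientation of the cluster of $\theta(F)$ containing $v$ (when $v\in V(F)$), or the choice between the two dimer covers of the cycle of short edges at $v$ (when $v\notin V(F)$, producing the factor $2^{|V|-|V(F)|}$ in~\eqref{eq:floww}). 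The cluster-orientation flip is precisely the $\pm 1$ involution responsible for the symmetric coin flips $\coin_\clust$ in the proof of Lemma~\ref{thm:firstmapping}; both it and the two-cover alternation at an isolated vertex preserve the law of $F$ and toggle the parity of $s$, forcing $\IP^\emptyset_{C_G}(\text{road}\in M)=\tfrac12$.

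The main obstacle is the parity claim just above: one must verify, via the explicit local pictures at $v$ in $\vec G$ and $G^d$ (cf.\ the right-hand side of Fig.~\ref{fig:flowcurrs}), that the cluster-orientation involution and the isolated-vertex cover toggle indeed flip the presence of the specific short edge $s$ corresponding to the road. Once this purely combinatorial fact is established, the symmetry alone forces the marginal to be $1/2$ without any computation.
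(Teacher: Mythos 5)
Your proposal takes a genuinely different route from the paper (which computes both marginals explicitly via Corollary~\ref{cor:main}, expressing $\IP^{\emptyset}_{C_G}(\{w,b\}\in M)$ in terms of the Ising two-point function $\mu_G[\sigma_v\sigma_{v'}]$ and its Kramers--Wannier dual and then adding), but the key step in your argument for the second bullet is incorrect. The involution that swaps the presence of $e_{s1}$ and $e_{s2}$ in $F$ does \emph{not} in general preserve the alternating condition. The reason is that $e_{s1}$ and $e_{s2}$ occupy different positions in the cyclic order at $v$ (they sit on opposite sides of $e_m$), and the alternating condition is a constraint on the cyclic order of $F$-edges, not merely on their orientations. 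Concretely: suppose $e_{s1},e_m\in F$, $e_{s2}\notin F$, with cyclic $F$-neighbours $f$ (in, just before $e_{s1}$) and $g$ (out, just after $e_m$), so that $f,e_{s1},e_m,g$ reads in--out--in--out. After the swap one has $f,e_m,e_{s2},g$, which reads in--in--out--out and violates alternation. Since configurations with $\{e_{s1},e_m\}\subseteq F$, $e_{s2}\notin F$ occur with positive probability, the map you describe does not act on $\flows^{\emptyset}$, and the symmetry argument collapses. (Put differently: the swap is an automorphism of $\vec G$ as an abstract multigraph, but not of $\vec G$ as a planar graph, and the alternating-flow weight depends on the planar structure.) The paper's symmetry between $e_{s1}$ and $e_{s2}$ is more subtle: it enters only through the weight identity $x_{e_{s1}}+x_{e_{s2}}+x_{e_{s1}}x_{e_{s2}}x_{e_m}=x_{e_m}$ used in Lemma~\ref{lem:symmetry}, which compares total weights of flow families rather than providing a configuration-level involution.

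For the first bullet you correctly flag the remaining verification yourself, but I would stress that this is not a routine check. When $v\in V(F)$, flipping the orientation of the cluster of $v$ changes which \emph{long} edges at $v$'s cycle are matched, and the resulting change to the \emph{short} edges of that cycle is not a uniform toggle of every short edge; one must really track (via the local pictures) how the two long-edge patterns at $v$ induce the two short-edge patterns, and that the specific short edge $s$ surviving to $C_G$ after urban renewal lies in the symmetric difference. This is exactly the kind of bookkeeping that the paper sidesteps: it instead proves $\IP^{\emptyset}_{C_G}(\{w,b\}\in M)+\IP^{\emptyset}_{C_G}(\{w,b'\}\in M)=\tfrac12$ by combining the primal and dual Ising energy correlators through the low- and high-temperature expansions, and then concludes by the complementarity of a road with the two adjacent streets. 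Your observation that $C_G=C_{G^*}$ as weighted graphs (via $w^*=z$, $z^*=w$) is correct and is indeed the reason the primal and dual pairs of streets can be treated symmetrically, but on its own it does not replace the flawed involution.
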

In the proof, which is postponed to the end of Section~\ref{sec:source insertion}, we actually compute the probability from the second item in terms of the underlying Ising measure.
However, the exact value will not be important for our considerations. We note that the first bullet of the lemma above is the reason why the nesting field with free boundary conditions on $G$ is defined to be integer-valued
and the one with wired boundary conditions on $G^*$ to be half-integer valued.

A crucial observation now is that the height function on the faces of $G^d$ corresponding to the faces and vertices of $G$ is not modified by vertex splitting and urban renewal. 
This follows from basic properties of these transformations, and the fact that the reference 1-form on the short edges of $G^d$ is the same as the one on the roads of $C_G$ (by the first item of the lemma above).
Indeed, one can compute the height function on the faces of $G^d$ and $C^G$ corresponding to the faces and vertices of $G$ using only increments across short edges and roads respectively.
This means that the resulting height function on these faces of $C_G$ has the same distribution as the one on $G^d$. 
Since $C_G$ plays the same role with respect to $G^*$ as to $G$, we immediately conclude Proposition~\ref{cor:CGH}.

This observation is at the heart of the proof of the master coupling from Theorem~\ref{thm:mastercoupling}.
However, one has to be careful since there is loss of information between the dimer model on $G^d$ and the one on $C_G$. Indeed, we have already seen that knowing a dimer configuration on~$G^d$ allows one to fully recover the 
triple $(\n_{\rm odd}, \n_{\rm even}, h_{\n})$. 
However, a dimer configuration $M$ on $C_G$ only gives access to $(\n_{\rm odd}, h_{\n})$ (since $M$ determines the height function, and $\n_{\rm odd}$ are the edges where the height function has a nontrivial increment) 
and does not contain information about $\n_{\rm even}$. To recover it, one needs to add additional randomness in the form of independent coin flips for each edge of $G$ with an appropriate success probability.

 \begin{figure}
		\begin{center}
			\includegraphics[scale=0.7]{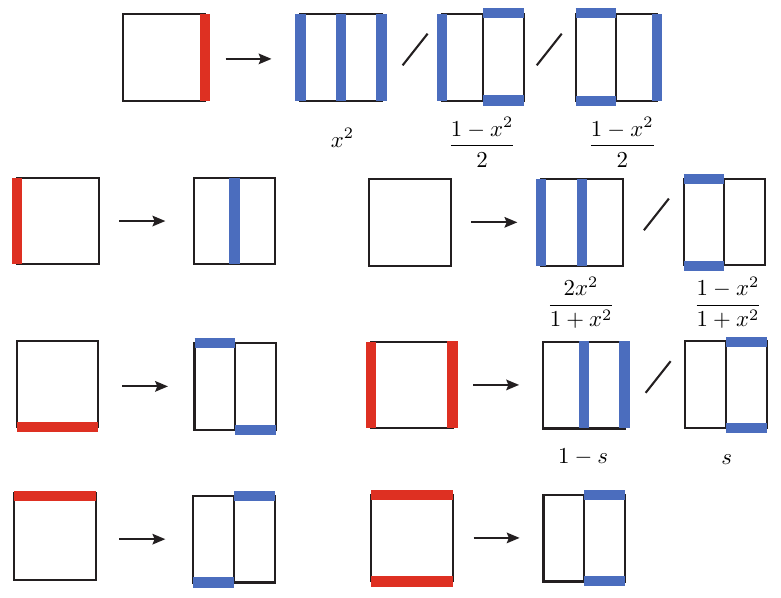}
		\end{center}
	\caption{The reverse mapping to that in Fig.~\ref{fig:duality}. Again, urban renewal is performed on the left-hand side quadrangle of the local configuration on $G^d$. Whenever there is ambiguity, we use additional randomness which is independent for each local configuration and with probabilities as in the figure
	with $s=\frac{2(1-x^2)}{3+x^4}$. These probabilities are simply obtained from Fig.~\ref{fig:duality} using the definitions of the weights in both dimer models on $C_G$ and $G^d$ and elementary conditional probability computations.}
	\label{fig:duality2}
\end{figure}

\begin{proof}[Proof of Theorem~\ref{thm:mastercoupling}]
We will use a procedure reverse to that from the proof of Lemma~\ref{lem:graphtransformation}. 
This procedure induces a measure preserving mapping between local configurations on $C_G$ and $G^d$, see Fig.~\ref{fig:duality2}, where in certain cases additional randomness 
is used to decide on the exact configuration on $G^d$.

As mentioned, the graph $C_G$ plays a symmetric role with respect to $G$ and $G^*$. Hence, taking the Kramers--Wannier dual parameters $x_e^*={(1-x_e)}/{(1+x_e)}$ and rotating the local configuration on $C_G$ by $\pi/2$, one can use the same mapping
from Fig.~\ref{fig:duality2} to generate local dimer configurations on $({G^*})^d$ that will correspond to dual random current configurations.
Recall that part of our aim is to couple the double random current on $G$ with its dual on $G^*$ so that no edge and its dual are open at the same time. 
The idea is to first sample a dimer configuration on $C_G$, and then using the rules from Fig.~\ref{fig:duality2} to choose, possibly introducing additional randomness, the dimer configurations on both $G^d$ and $(G^*)^d$. The desired property of the coupling will follow from the way we use the additional randomness
for $G^d$ and $(G^*)^d$.

We now explain this in more detail. In the coupling between double random currents and dimers on $G^d$, an edge in the current has value zero if and only if there is no long edge present in the
corresponding local dimer configuration. From Fig.~\ref{fig:duality2}, we see that the only possibility to have nonzero values of double currents for both a primal edge $e$ and its dual $e^*$ is when the quadrangle in $C_G$
that corresponds to both $e$ and $e^*$ has no dimer in the dimer cover. In that case we have a probability of ${2x_e^2}/({1+x_e^2})$ to get a non-zero (and even) value of the primal double current 
and a probability of ${2{(x_e^*)}^2}/{(1+{(x_e^*)}^2)}$ to get a non-zero (and even) value of the dual double current. However, since these choices are independent of the possible choices for other local configurations, and since
\[
\frac{2x_e^2}{1+x_e^2} + \frac{2{(x_e^*)}^2}{1+{(x_e^*)}^2} = 1- \frac{2x_e(1-x_e)}{1+x_e^2} <1
\]
we can couple the results so that the primal and dual currents are never both open (nonzero) at $e$. Together establishes Property~\ref{itm:cp5} from the statement of the theorem. 

We now focus on Property~\ref{itm:cp3}. Note that the spins $\tau^{\dagger}$ defined by the interfaces of odd current in $\n$ satisfy
\begin{align} \label{eq:tau1}
\tau^{\dagger}_u = (-1)^{H(u)}
\end{align}
for $u\in U$, where $H$ is the height function on $C_G$. By Proposition~\ref{cor:CGH} we already know that $H$ restricted to $U$ has the law of the height function on $(G^*)^d$ restricted to~$U$. 
From the relationship between the double random current $\n^\dagger$ on $G^*$ and the alternating flow model on $\vec G^*$, one can see that the parity of this height function at a face $u$ changes with the change of the orientation of the outer boundary of the cluster of $\n^\dagger$ containing $u$ (see Fig.~\ref{fig:flowcurrs} for a dual example). Therefore $(-1)^{H(u)}$ is distributed as an independent assignment of a sign to each cluster of~$\n^\dagger$.
This yields Property~\ref{itm:cp3}. A dual argument for 
\begin{align} \label{eq:tau2}
\tau_v =i (-1)^{H(v)}
\end{align}
with $v\in V$, and $i$ the imaginary unit, yields the dual correspondence. Here, the factor~$i$ appears due to the fact that the height function takes half-integer values on $V$.

Furthermore, \eqref{eq:tau1} and \eqref{eq:tau2} together imply Property~\ref{itm:cp6}. 

Finally, for Property~\ref{itm:bc} we make the following observations. First of all, when an edge is empty (has zero current) in $\n$, then
in the dimer model on $G^d$, the corresponding three long edges are not part of the dimer configuration. We can therefore remove them, and proceed similarly for all other empty edges encountered during the exploration of
a cluster of~$\n$.
This means that the unexplored part dimer configuration on $\tilde G^d$ is independent of the explored part, 
and moreover is in a measure preserving correspondence with double random currents with 
free boundary conditions on $\tilde G$. Furthermore, the (random) maps from Fig.~\ref{fig:duality} and Fig.~\ref{fig:duality2}, when composed together, map from dimers on $\tilde G^d$ to dimers on $(\tilde G^*)^d$ (and hence to double random currents with wired boundary conditions on the weak dual $(\tilde G^\dagger)^d$) are local. Therefore the distribution of $\n^\dagger$ inside $(\tilde G^*)^d$ is not affected by the explored part of the primal current $\n$ outside $\tilde G$, and is that of an independent double random current with wired boundary conditions on $(\tilde G^\dagger)^d$. 
We also note that a proof without using the dimer representation can also be given using the construction from~\cite{LisHF}.
\end{proof}

We leave it to the interested reader to check that the resulting coupling of the primal and dual double random current model is the same as the 
one described in~\cite{LisHF} (where no connection with the dimer model is used, and where all the properties above can as well be deduced).

\subsection{Disorder and source insertions}\label{sec:source insertion}

It will be important for our analysis to introduce the so-called sources in dimers, alternating flows, and double random currents, and to see how they relate to order-disorder variables in the Ising model.

A \emph{corner} $c=(u,v)$ of a planar graph $G$ is a pair composed of a face $u=u(c)$ (also seen as a vertex of the dual graph) and a vertex $v=v(c)$ bordering $u$. 
One can visualize corners as segments from the center of the face $u$ to the vertex $v$ (see Fig.~\ref{fig:insertions}). In this section we discuss correlations of disorder insertions, by which we mean  modifications of the state space of the appropriate model that are localized at the corners
of $G$, and describe their mutual relationships. In what follows, consider two corners $c_i$ and $c_j$, and a simple dual path $\gamma$ connecting $u(c_i)$ to $u(c_j)$. 
For a collection of edges $E_0$ of $G$, $\vec G$, $G^d$ or $C_G$, we define $\sgn_\gamma(E_0)=-1$ if the number of edges in $E_0$ crossed by $\gamma$ is odd and $\sgn_\gamma(E_0)=1$
otherwise. 

In the following subsections we introduce correlation functions of corner insertions in the relevant models and relate them to each other.

 \begin{figure}
		\begin{center}
			\includegraphics[scale=0.9]{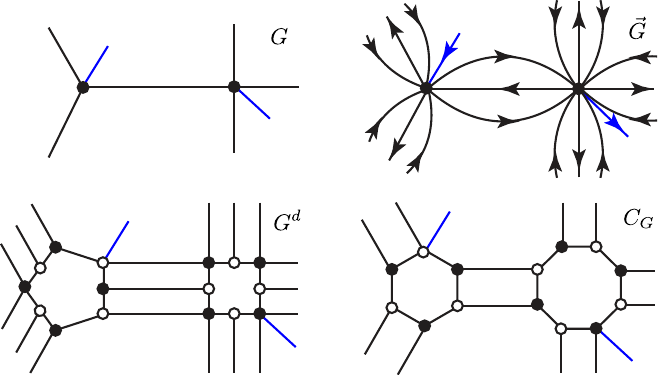}
		\end{center}
	\caption{Corner insertions in the relevant models can be realized by considering additional edges connecting a vertex and a neighbouring face.}
	\label{fig:insertions}
\end{figure}

\subsubsection{Kadanoff--Ceva fermions via double random currents} 
The two-point correlation function of \emph{Kadanoff--Ceva fermions} is defined by
\begin{align} \label{def:KCF}
\langle \chi_{c_i}\chi_{c_j} \rangle^\gamma :=\frac1{Z^{\emptyset}_{\text{hT}}} \sum_{\eta \in \mathcal E^{\{v(c_i),v(c_j)\}}} \textnormal{sgn}_\gamma(\eta)  \prod_{e\in \eta} x_e,
\end{align}
where $Z^{\emptyset}_{\text{hT}} :=\sum_{\eta \in \mathcal E^{\emptyset}} \prod_{e\in \eta} x_e$.
Here, $\mathcal E^\emptyset$ is the collection of sets of edges $\eta\subseteq E$ such that each vertex in the graph $(V,\eta)$ has even degree, and $\mathcal E^{\{v(c_i),v(c_j)\}}$ is the collection of sets of edges such that each vertex has  even 
degree except for $v(c_i)$ and $v(c_j)$ that have odd degree.
We note that the sign of this correlator depends on the choice of $\gamma$. However, its amplitude depends only on the corners $c_i$ and $c_j$.

The next lemma was proved in~\cite[Lemma 6.3]{ADTW}. It expresses Kadanoff--Ceva correlators in terms of double currents for which $u(c_i)$ is 
connected to $u(c_j)$ in the dual configuration. Below, for $\n\in \Omega^B$, let
\[
\weight_{\dcurr} (\n):=\mathop{\sum_{\n_1\in\Omega^B, \n_2\in\Omega^\emptyset}}_{\n_1+\n_2=\n}\weight(\n_1)\weight(\n_2),
\]
where $\weight=\weight_G$ is the random current weight defined in~\eqref{eq:curweight}.
For a current $\n$, let $\n^*$ be the set of dual edges $e^*$ with $\n_e=0$. For two faces $u$ and~$u'$, let $u \con{\cur^*} u'$ mean that $u$ is connected to~$u'$ in $\cur^*$, i.e., that
$u$ and~$u'$ belong to the same connected component of the graph $(U,\cur^*)$.

\begin{lemma}[Fermions via double currents~\cite{ADTW}] \label{lem:KCswitching}
We have 
\begin{align*}
\langle \chi_{c_i}\chi_{c_j} \rangle^\gamma=\frac{1}{ Z^{\emptyset}_{\dcurr}}  \sum_{\n \in \Omega^{\{v(c_i),v(c_j)\}} }\sgn_\gamma(\cur_{\odd})\weight_{\dcurr}(\n)
\mathbf{1}\{ u(c_i) \con{\cur^*} u(c_j)\}.
\end{align*}
\end{lemma}

\subsubsection{Sink and source insertions in alternating flows}
Consider the graph $\vec G$ with two additional directed edges $ c_i=(u(c_i),v(c_i))$ and  $-c_j=(v(c_j),u(c_j))$, and let $\mathcal{F}^{c_i, - c_j}$ be the set of alternating flows on this graph that contain both $ c_i$ and $-c_j$. By an alternating flow here we mean a subset of edges of the extended graph that satisfies the alternating condition at every vertex of $\vec G$.
The weights of $c_i$ and $-c_j$ are set to~$1$.
With $\gamma$ defined as above, introduce 
\[
Z_{\textnormal{flow}}^{\gamma}(c_i,-c_j) := \sum_{F \in \mathcal{F}^{c_i,  -c_j}}\textnormal{sgn}_\gamma(F) \weight_{\flow}(F).
\]
Here, $c_i$ plays the role of the \emph{source} and $-c_j$ is the \emph{sink} of the flow $F$.

Recall that $\theta:\mathcal{F}^{\emptyset}\rightarrow\Omega^{\emptyset}$ is the measure preserving map sending sourceless alternating flows on $\vec G$ to sourceless double current configurations on $G$, where as before, we identify a current $\n$ with the pair $(\n_{\rm odd}, \n_{\rm even})$.
With a slight abuse of notation, we also write $\theta$ for the analogous map from $\mathcal F^{c_i,-c_j}$ to the set $\Omega^{\{v(c_i),v(c_j)\}}$ of currents on $G$ 
with sources at $v(c_i)$ and $v(c_j)$ (for currents there is no distinction between sources and sinks). 

The next lemma is closely related to~\cite[Theorem 4.1]{LisT}.

\begin{lemma}[Symmetry between sinks and sources] \label{lem:symmetry} We have
\[
Z_{\textnormal{flow}}^{ \gamma}(c_i,- c_j)=Z_{\textnormal{flow}}^{\gamma}(c_j, -c_i).
\]
\end{lemma}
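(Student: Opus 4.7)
The plan is to construct a weight- and sign-preserving bijection $\sigma:\mathcal F^{c_i,-c_j}\to \mathcal F^{c_j,-c_i}$ obtained by reversing all arrows in the unique ``source--sink cluster'' of $F$. This matches the strategy used to prove~\cite[Theorem 4.1]{LisT} and draws on the same CW/CCW symmetry that underlies Lemma~\ref{thm:firstmapping}.

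First, I would argue that for any $F\in\mathcal F^{c_i,-c_j}$, the connected component of $V(F)$ containing $v(c_i)$ in the underlying undirected graph also contains $v(c_j)$. Indeed, the alternating condition together with the corner edges $c_i$ and $-c_j$ forces the number of outgoing minus incoming edges at each vertex to be $0$ away from $\{v(c_i),v(c_j)\}$, $+1$ at $v(c_i)$, and $-1$ at $v(c_j)$; standard flow-conservation then produces a directed trail from $v(c_i)$ to $v(c_j)$ inside $F$. Call the resulting common cluster $\mathcal C(F)$.

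Next, define $\sigma(F)$ by reversing every directed edge that lies inside $\mathcal C(F)$, leaving the remainder of $F$ unchanged. After the flip, the corner edge $c_i=(u(c_i),v(c_i))$ becomes $-c_i$ and the corner edge $-c_j=(v(c_j),u(c_j))$ becomes $c_j$, so $\sigma(F)\in \mathcal F^{c_j,-c_i}$. Weight preservation follows from two observations: (i) $V(\sigma(F))=V(F)$, so the prefactor $2^{|V|-|V(F)|}$ is unchanged; and (ii) for any edge $e$ of $G$ one has $x_{e_m}=x_{e_{s1}}=x_{e_{s2}}=x_e$, so the product $\prod_{\vec e\in F}x_{\vec e}$ depends only on the underlying $G$-edges used by $F$. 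The sign is also preserved, since $\textnormal{sgn}_\gamma$ depends only on the parity of the number of $G$-edges underlying $F$ that are crossed by $\gamma$, and the three parallel directed edges $e_m,e_{s1},e_{s2}$ over a common edge $e$ of $G$ all cross $\gamma$ simultaneously.

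The main obstacle is that a naive pointwise reversal does not send $\vec G$ to itself: between any two adjacent $G$-vertices, $\vec G$ carries two directed edges in one direction and one in the other, so reversing a single arrow forces a reassignment among $\{e_m,e_{s1},e_{s2}\}$ that must remain compatible with the alternating condition at each endpoint. I would handle this the same way as in the proof of Lemma~\ref{thm:firstmapping} from~\cite{DumLis}: rather than realizing $\sigma$ configuration-by-configuration, one writes the sum $Z_\textnormal{flow}^{\gamma}(c_i,-c_j)$ as a sum over underlying double-current configurations $(\cur_{\odd},\cur_{\even})\in \Omega^{\{v(c_i),v(c_j)\}}$ of partial weights $W_{c_i,-c_j}(\cur_{\odd},\cur_{\even})$ obtained by summing over flow orientations on each cluster. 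Using the fact that a cluster's total weight splits symmetrically between the two possible orientations of its outer boundary, and that within the source--sink cluster the two orientations of the source--sink path correspond respectively to $\mathcal F^{c_i,-c_j}$ and $\mathcal F^{c_j,-c_i}$, one concludes that $W_{c_i,-c_j}=W_{c_j,-c_i}$ term by term, multiplied by the common factor $\textnormal{sgn}_\gamma(\cur_{\odd})$. Summing yields the claimed identity.
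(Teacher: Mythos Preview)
Your claim (ii) in the third paragraph is false: the three directed edges over a single $G$-edge do \emph{not} all carry the same weight. From Fig.~\ref{fig:graphs}, the side edges have weight $x=x_e$ while the middle edge has weight $y=\tfrac{2x}{1-x^2}$. So even if the reversal map $\sigma$ could be made well-defined, it would not be weight-preserving edge by edge: a flow that uses $e_m$ alone (weight $y$) would be sent to a flow using a single side edge (weight $x$). This is the genuine obstruction, and it is more fundamental than the reassignment issue you raise in the next paragraph.

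Your final paragraph is essentially the paper's proof, and it is correct---but the reason it works is precisely the algebraic identity you never write down. The paper's argument is: the weights satisfy $x_{e_{s1}}+x_{e_{s2}}+x_{e_{s1}}x_{e_{s2}}x_{e_m}=x_{e_m}$ by construction, and this identity says exactly that the total weight of all flow configurations on the triple $\{e_{s1},e_m,e_{s2}\}$ realising a given value of $\theta$ is invariant under reversing the direction of flow through that edge. Hence for fixed $(\mathbf n_{\rm odd},\mathbf n_{\rm even})\in\Omega^{\{v(c_i),v(c_j)\}}$ one has
\[
\sum_{\substack{F\in\mathcal F^{c_i,-c_j}\\ \theta(F)=(\mathbf n_{\rm odd},\mathbf n_{\rm even})}}\weight_{\flow}(F)=\sum_{\substack{F\in\mathcal F^{c_j,-c_i}\\ \theta(F)=(\mathbf n_{\rm odd},\mathbf n_{\rm even})}}\weight_{\flow}(F),
\]
and one sums over $(\mathbf n_{\rm odd},\mathbf n_{\rm even})$ using that $\textnormal{sgn}_\gamma(F)$ depends only on $\theta(F)$. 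This is the ``CW/CCW symmetry'' you allude to, but you should state the weight identity explicitly: it is the entire content of the lemma, and without it the summing-over-$\theta$-fibers argument has no teeth.
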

\begin{proof}
Note that the flow's weights on $\vec G$ are invariant under the reversal of direction of the flow, i.e., the weights of the three directed edges $ e_{s1},  e_m,  e_{s2}$ of $\vec G$ 
corresponding to a single edge $e$ of $G$ satisfy $x_{ e_{s1}}+x_{ e_{s2}}+x_{ e_{s1}}x_{ e_{s2}}x_{ e_{m}}=x_{ e_{m}}$ by construction.
Hence, for a fixed $(\n_{\rm odd},\n_{\rm even})\in \Omega^{\{v(c_i),v(c_j)\}}$, we have
\[
\sum_{F \in \mathcal{F}^{c_i, - c_j}: \ \theta(F)=(\n_{\rm odd},\n_{\rm even})} \weight_{\flow}(F) = \sum_{F \in \mathcal{F}^{c_j,  -c_i}:\ \theta(F)=(\n_{\rm odd},\n_{\rm even})} \weight_{\flow}(F).
\]
We finish the proof by summing both sides of this identity over $(\n_{\rm odd},\n_{\rm even})\in \Omega^{\{v(c_i),v(c_j)\}}$, and using the fact that $\textnormal{sgn}_\gamma(F)$ depends only on $\theta(F)$.
\end{proof}

The next result is a direct analog of~Lemma~\ref{lem:KCswitching} with an additional factor of $1/2$ that corresponds to the fact that the connected component of the flow that connects $c_i$ to $-c_j$ has a fixed orientation.
\begin{lemma}[Dual connection in alternating flows] \label{lem:dualsurface} 
We have
\[
\theta(\mathcal{F}^{ c_i, -c_j}) =\{\n \in \Omega^{\{v(c_i),v(c_j)\}}:  u(c_i) \con{\cur^*} u(c_j)  \},
\] 
and moreover
\[
Z_{\textnormal{flow}}^{\gamma}( c_i,  -c_j) = \tfrac 12\sum_{\n \in \Omega^{\{v(u_i),v(c_j)\}}} \sgn_\gamma(\cur_{\odd})\weight_{\textnormal{dcurr}}(\n) \mathbf{1}\{ u(c_i) \con{\cur^*} u(c_j)\}.
\]
\end{lemma}
\begin{proof}
We first argue that for each $(\n_{\rm odd},\n_{\rm even})=\theta(F)$ with $F\in\mathcal{F}^{ c_i, -c_j}$, we have that $ u(c_i) \con{\cur^*} u(c_j)$.
This follows from topological arguments and the alternating condition for flows. 
Indeed, assume by contradiction that there is a cycle of edges in $F$ separating $u(c_i)$ from $u(c_j)$, and choose the innermost such cycle surrounding $u(c_i)$. 
Consider the vertex $v$ of this cycle that is first visited on a path from $c_i$ to $-c_j$. The alternating condition implies that the edges of the cycle on both sides of $v$ 
should be oriented away from $v$. Following that orientation around the cycle, we must arrive at another vertex $v'$ of the cycle where both incident edges are oriented towards $v'$.
That is in contradiction with the alternating condition and the fact that the cycle is minimal.
The fact that the image of the map is $\{ u(c_i) \con{\cur^*} u(c_j)  \}$ follows from the same arguments as in~\cite[Lemma~5.4]{LisT}.

The second part of the statement follows from the proof of~\cite[Theorem 4.1]{LisT} or \cite[Theorem~1.7]{DumLis} (the weights of flows in~\cite{LisT} are the same as ours up to a global factor).
The multiplicative constant $1/2$ is a consequence of the fact that the orientation of the cluster containing the corners is fixed to one of the two possibilities, and in 
the double random current measure there is an additional factor of $2$ for each cluster (see \cite[Theorem~3.2]{LisT}).
\end{proof}

\begin{corollary}\label{cor:KCflow}  We have
\[
\langle \chi_{c_i}\chi_{c_j} \rangle^\gamma=2\frac{Z_{\textnormal{flow}}^{\gamma}( c_i, - c_j) }{Z_{\textnormal{flow}}^{\gamma} }=2\frac{Z_{\textnormal{flow}}^{\gamma}( c_j,  -c_i) }{Z_{\textnormal{flow}}^{\gamma} }.
\]
\end{corollary}
\begin{proof}
This follows directly from Lemmata~\ref{lem:KCswitching} and \ref{lem:dualsurface}.
\end{proof}

\subsubsection{Monomer insertions on $G^d$ and $C_G$} 

We identify the faces and vertices of the graphs $G$ and $\vec G$ with the corresponding subsets of the faces of the dimer graphs $G^d$ and $C_G$. 
We say that a vertex of $G^d$ or $C_G$ is a \emph{corner (vertex)} corresponding to $c= vu$ if it is incident both on the vertex $v$ and the face $u$ of $G$ in this identification.  
Analogously to the discussion above, for $\mathcal G\in \{ G^d, C_G\}$ and $v,v'$ two vertices of $\mathcal G$, we define $Z_{\mathcal G}^\gamma$ to be the partition function of dimer covers of the graph $\mathcal G$ with $v$ and $v'$ removed, where moreover each dimer crossed by the path $\gamma$ contributes 
an additional factor of $-1$ to the weight of the cover.

\begin{lemma}[Symmetry between white and black corners] \label{lem:ssm}
Let $b_i$ and $w_i$ (resp.~$b_j$ and $w_j$) be a black and white corner vertex of $G^d$ corresponding to the corner $c_i$ (resp.~$c_j$).
If there is no such vertex of the chosen colour, we modify $G^d$ by splitting the corner vertex of the opposite colour (using the vertex splitting operation from Figure~\ref{fig:urbanrenewal}).
Then
\[
Z_{G^d}^{\gamma}(b_i, w_j)=Z_{G^d}^{\gamma}(w_i, b_j)=Z_{\textnormal{flow}}^{\gamma}( c_i,  c_j).
\]
\end{lemma}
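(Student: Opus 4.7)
The plan is to extend the map $\pi$ from Section~\ref{sec:AFD} to dimer covers of $G^d$ carrying monomers at a pair $(b_i,w_j)$ of opposite-colour corner vertices, so that the image lies in the set $\mathcal F^{c_i,-c_j}$ of alternating flows with source $c_i$ and sink $-c_j$. Once this extended $\pi$ is shown to preserve both the edge weights and the sign $\sgn_\gamma$, the identity $Z_{G^d}^{\gamma}(b_i,w_j)=Z_{\textnormal{flow}}^{\gamma}(c_i,-c_j)$ follows immediately. The analogous argument with the colours swapped gives $Z_{G^d}^{\gamma}(w_i,b_j)=Z_{\textnormal{flow}}^{\gamma}(c_j,-c_i)$, and Lemma~\ref{lem:symmetry} identifies these two right-hand sides, closing the chain of equalities.

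Before constructing the extension, I handle the preliminary reduction: whenever the required corner colour is absent at $c_i$ or $c_j$, I apply the vertex-splitting move from Figure~\ref{fig:urbanrenewal} to the existing corner vertex. Since vertex splitting introduces only unit-weight edges and preserves both the dimer partition function and the height function on the unchanged faces, I may assume that both a black and a white corner vertex exist at each of $c_i$ and $c_j$. To extend $\pi$, I read the monomer at the black corner $b_i$ as an unmatched incoming end that is filled by the artificial source edge $c_i=(u(c_i),v(c_i))$, and dually the monomer at $w_j$ as supplying the sink edge $-c_j=(v(c_j),u(c_j))$. Replacing each long edge of the dimer cover by its associated directed edge of $\vec G$ and adjoining these two boundary half-edges yields a configuration on the extended directed graph that satisfies the alternating condition at every $v\in V$ by the same local parity argument as in the proof of Lemma~\ref{thm:secondmapping}; the only modification is that at $v(c_i)$ and $v(c_j)$ the new source and sink absorb the single odd end created by the monomer.

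Measure preservation then follows exactly as in Lemma~\ref{thm:secondmapping}: long edges of $G^d$ carry the flow weights $x_e$ of the corresponding directed edges of $\vec G$, short edges have weight $1$, and the combinatorial factor $2^{|V|-|V(F)|}$ from~\eqref{eq:floww} arises because each inactive vertex admits exactly two dimer covers of its short cycle. The sign $\sgn_\gamma$ is preserved by construction: $\gamma$ crosses a long edge in the dimer cover if and only if it crosses the corresponding directed edge in the image flow, while short edges contribute trivially since $\gamma$ is chosen as a dual path between $u(c_i)$ and $u(c_j)$. The hardest part, in my view, will be the bookkeeping of the local alternating pattern around the monomer insertion, especially after possible vertex splitting: one must verify that the colour of the surviving corner vertex at $b_i$ or $w_j$ genuinely matches the orientation of the source or sink half-edge being attached. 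This reduces to a finite case analysis once one recalls that the colours of corner vertices in $G^d$ are dictated precisely by the orientations of the incident edges of $\vec G$, and the half-edges $c_i$ and $-c_j$ have been chosen compatibly with these orientations.
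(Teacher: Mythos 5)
Your proposal is correct and follows essentially the same route as the paper's own proof: the paper notes that corner monomer insertions on $G^d$ correspond to source/sink insertions in alternating flows under the measure-preserving map (giving $Z_{G^d}^{\gamma}(b_i,w_j)=Z_{\textnormal{flow}}^{\gamma}(c_i,-c_j)$ and symmetrically $Z_{G^d}^{\gamma}(w_i,b_j)=Z_{\textnormal{flow}}^{\gamma}(c_j,-c_i)$) and then invokes Lemma~\ref{lem:symmetry}. You have merely unpacked the implicit details of that correspondence (the extended $\pi$, the alternating-condition check at $v(c_i),v(c_j)$, and the weight/sign bookkeeping) which the paper leaves to the reader.
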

\begin{proof}
By the definition of the measure preserving map $F_*$ between dimers and alternating flows,
a corner monomer insertion in dimers is a source or sink insertion in alternating flows, which yields \[Z_{\textnormal{flow}}^{\gamma}( c_i,  c_j) = Z_{G^d}^{\gamma}(b_i, w_j).\]
The statement then follows immediately from Lemma~\ref{lem:symmetry}.
\end{proof}

\begin{figure}
		\begin{center}
			\includegraphics[scale=0.6]{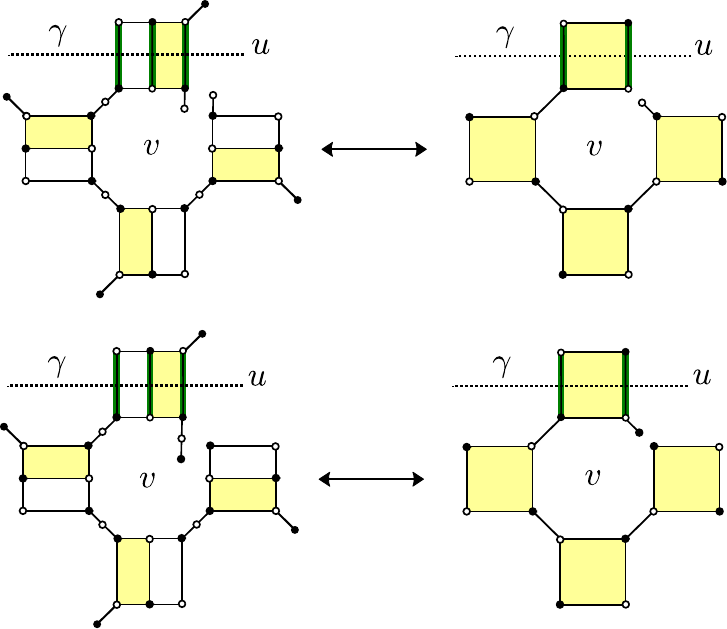}  
		\end{center}
		\caption{Behaviour of corner monomer insertions under urban renewal. Insertion of a monomer is modelled by the addition of edges with weight one into the dimer model: 
		above (resp.\ below), the insertion of a black (resp.\ white) monomer at the corner $c=uv$ with a disorder operator at $u$.
		The green edges crossing $\gamma$ are assigned negative weights. Urban renewal is applied to the yellow quadrilaterals on the left-hand side yielding the yellow
		quadrilaterals on the right-hand side. Note that the colour of the monomer insertions on the left-hand and right-hand sides agree.} 		\label{fig:urbanmonomer}
\end{figure}

\begin{lemma}[Monomer insertions in $G^d$ and $C_G$] \label{lem:gdcg}
Let $b$ and $w$ be respectively black and white corner vertices of $G^d$, and let $\tilde b$ and $\tilde w$ be the corresponding black and white vertices of~$C_G$. Then
\[
Z_{G^d}^{\gamma}(b, w)=Z_{C_G}^{\gamma}(\tilde b, \tilde w).
\]
\end{lemma}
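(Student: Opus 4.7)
The plan is to leverage the graph transformation of Lemma~\ref{lem:graphtransformation}, which turns $G^d$ into $C_G$ through a finite sequence of vertex splittings and urban renewal moves, and to verify that this sequence preserves the signed partition functions with corner monomer insertions. Concretely, I would model the monomer insertion at $b$ (resp.~$w$) as the addition of an auxiliary weight-one edge incident to $b$ (resp.~$w$) and a new external degree-one vertex, as illustrated in Figure~\ref{fig:insertions}. Then $Z_{G^d}^\gamma(b,w)$ becomes the signed dimer partition function of this augmented graph, with the signs coming from $\sgn_\gamma$ on the edges crossed by the chosen dual path~$\gamma$.

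Next I would run the moves of Lemma~\ref{lem:graphtransformation} on the augmented graph and track the fate of the two auxiliary edges through each local move. The key observation, summarised in Figure~\ref{fig:urbanmonomer}, is that when urban renewal is applied to a quadrilateral of $G^d$ adjacent to the corner monomer, the insertion transforms into a corner monomer insertion \emph{of the same colour} in the resulting graph. Thus the black corner $b$ in $G^d$ is mapped to a black corner $\tilde b$ in $C_G$, and similarly for white; no colour swap ever occurs. The weight rules in Figure~\ref{fig:urbanrenewal}, applied to the short-edge quadrilaterals of $G^d$ (all with weight~$1$), produce exactly the street/road weights of $C_G$ from Figure~\ref{fig:graphs}, with the same multiplicative prefactors appearing on both sides of the identity with and without the monomer insertions. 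Hence the equality between signed partition functions holds without any spurious overall constant.

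I would then deal with the signs by choosing $\gamma$ to traverse only roads of $C_G$ (equivalently, short edges of $G^d$), routing it so as to avoid the interiors of the quadrilaterals undergoing urban renewal. Since the first item of Lemma~\ref{lem:refflow} is consistent with this choice and since the parity of crossings is a local invariant of the moves, $\sgn_\gamma$ is preserved by the bijection between dimer configurations on the two sides, so signed contributions match term by term.

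The main obstacle is the local case check near the corners $c_i$ and $c_j$ themselves, where the auxiliary edge representing a monomer interacts with the urban renewal quadrilateral and where ambiguities can in principle arise between different local configurations (compare Figure~\ref{fig:duality2}). This amounts to a finite verification, summarised by the two rows of Figure~\ref{fig:urbanmonomer}, that the colour-of-corner and the sign-on-crossing rules propagate correctly through the move. Once this is in place, the overall equality $Z_{G^d}^\gamma(b,w)=Z_{C_G}^\gamma(\tilde b,\tilde w)$ follows by composing the local identities over the whole sequence of moves used to go from $G^d$ to $C_G$.
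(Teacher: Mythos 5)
Your overall approach---augment $G^d$ with monomer-insertion edges, push the configuration through the transformations of Lemma~\ref{lem:graphtransformation}, and use Figure~\ref{fig:urbanmonomer} to see that the corner colour is preserved---is the same as the paper's. The point of departure is in how you handle the signs. The paper does \emph{not} reroute $\gamma$: it runs urban renewal directly on a quadrilateral whose (long) edges may carry negative weights, and observes that opposite edges of that quadrilateral are forced to have the same sign, since a dual path traverses a quadrilateral across a pair of opposite edges; hence $x_1 x_3 + x_2 x_4 > 0$, the move is well-defined, and the negative weights land exactly on the $C_G$ edges that $\gamma$ crosses. Your proposal of rerouting $\gamma$ so that it only crosses roads is a workable alternative, but it carries an obligation you do not discharge: you must argue that replacing the given $\gamma$ by a roads-only lift leaves $\sgn_\gamma$ unchanged on every dimer configuration. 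The detour around each quadrilateral encircles an even number of vertices, so parity is preserved in the bulk, but this needs to be stated, and one should also check the boundary cases at the inserted monomers $b,w$.

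A few inaccuracies. First, the quadrilaterals undergoing urban renewal in $G^d$ are \emph{not} ``short-edge quadrilaterals, all with weight~$1$''; they contain long edges of nontrivial weight, which is precisely why the paper has to worry about negative weights inside them. Second, the appeal to the first item of Lemma~\ref{lem:refflow} is a non sequitur: that statement concerns the canonical reference 1-form for the height function and plays no role in the routing of $\gamma$ or in how urban renewal treats signs. Third, urban renewal preserves the signed partition function but is not a term-by-term bijection of dimer configurations (it changes the graph and the multiplicities), so ``signed contributions match term by term'' misdescribes the mechanism; the identity holds at the level of the signed sums, not configuration by configuration.
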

\begin{proof}
We use urban renewal as in Fig.~\ref{fig:urbanmonomer} to transform $G^d$ with monomer insertions to $C_G$ with monomer insertions.
Note that here we use urban renewal with some of the long edges having negative weight. However, this is not a problem since  the opposite edges in a quadrilateral being transformed
by urban renewal always have the same sign, which results in a non-zero multiplicative constant for the partition functions. The resulting weights of $C_G$ are negative if and only if the edge crosses~$\gamma$. This implies the claim readily.
\end{proof}

We finally combine the previous results to obtain the following identity. We note that it can also be derived using the approach of~\cite{Dub} after taking into
account the symmetry of the underlying six-vertex model (that we do not discuss here and that is also not discussed in~\cite{Dub}).
\begin{corollary} \label{cor:main} In the setting of Lemma~\ref{lem:ssm}, we have
\[
\langle \chi_{c_i} \chi_{c_j} \rangle^\gamma = 2 \frac{Z^{\gamma}_{C_G}(w_i,b_j)}{Z_{C_G}}=2 \frac{Z^{\gamma}_{C_G}(w_j,b_i)}{Z_{C_G}}.
\]
\end{corollary}
\begin{proof}
This follows from Lemmata~\ref{lem:gdcg} and~\ref{lem:ssm}, as well as Corollary~\ref{cor:KCflow}.
\end{proof}

The final item of this section is the proof of Lemma~\ref{eq:refflow} which explicitly computes the canonical reference 1-form \eqref{lem:refflow} on $C_G$ in terms of the underlying Ising measures.
\begin{proof}[Proof of Lemma~\ref{lem:refflow}]
By the corollary above, for a street $\{w,b\}$ of $C_G$ corresponding to an edge $e=\{v,v'\}$ of $G$, we have 
\begin{align} \label{eq:dimfer}
\IP^{\emptyset}_{C_G}(\{w,b\}\in M) = \frac{2x}{1+x^2} \frac{Z^{\gamma}_{C_G}(w,b)}{Z_{C_G}} =  \frac{x}{1+x^2} \langle \chi_{c} \chi_{c'} \rangle^\gamma,
\end{align}
where $x=x_e=\tanh \beta J_e$ is the high-temperature Ising weight, $\tfrac{2x}{1+x^2}$ is the weight of the edge $\{w,b\}$ in the dimer model on $C_G$ as in Fig.~\ref{fig:graphs}, and where $c$ and $c'$ are the two corners of $G$ corresponding to the two roads of $C_G$
that are incident on $w$ and $b$ respectively. Indeed, the first identity is a consequence of the fact that in this case the path $\gamma$ can be chosen empty and therefore
the numerator $Z^{\gamma}_{C_G}(w,b)$ is actually the \emph{unsigned} partition function of dimer covers of the graph where $w$ and $b$ are removed.

We now compute $\langle \chi_{c} \chi_{c'} \rangle^\gamma$ in terms of the Ising two-point function $\mu_{G}[ \sigma_v\sigma_{v'}]$. To this end, recall that $\mathcal E^\emptyset$ is the collection of sets of edges $\eta\subseteq E$ such that each vertex in the graph $(V,\eta)$ has even degree, and $\mathcal E^{\{v,v'\}}$ is the collection of sets of edges such that each vertex has  even 
degree except for $v$ and $v'$ that have odd degree.
Let \[
Z_+:=\mathop{\sum_{\eta \in \mathcal E^{\emptyset}}}_{ e\in \eta} \prod_{ e'\in \eta} x_{e'}, \qquad \text{and} \qquad Z_-:=\mathop{\sum_{\eta \in \mathcal E^{\emptyset}}}_{ e\notin \eta} \prod_{ e'\in \eta} x_{ e'},
\]
and $Z=Z^{\emptyset}_{\text{hT}}$.
By definition~\eqref{def:KCF} of Kadanoff--Ceva fermions with $\gamma$ empty, the high-temperature expansion of spin correlations, and the fact that $\eta \mapsto \eta\triangle \{ e\}$ is a bijection between $\mathcal E^\emptyset$ and $\mathcal E^{\{v,v'\}}$, \eqref{eq:dimfer} gives
\begin{align} \label{eq:Z+-}
\IP^{\emptyset}_{C_G}(\{w,b\}\in M)= \frac{x}{1+x^2} \frac{1}{Z}(x^{-1}Z_++xZ_-)=  \frac{x}{1+x^2}  \mu_{G}[ \sigma_v\sigma_{v'}]. 
\end{align}
The same argument applied to the other street $\{w',b'\}$ corresponding to the same edge $e$ yields $\IP^{\emptyset}_{C_G}(\{w,b\}\in M)=\IP^{\emptyset}_{C_G}(\{w',b'\}\in M)$ as the last displayed expression depends only on $e$. Moreover by the Kramers--Wannier duality and the same computation for the dual Ising model on the dual graph $G^*$, we have 
\begin{align} \label{eq:Z+-1}
\IP^{\emptyset}_{C_G}(\{w,b'\}\in M)= \IP^{\emptyset}_{C_G}(\{w',b\}\in M)= \frac{x^*}{1+(x^*)^2}  \mu_{G^*}[ \sigma_u\sigma_{u'}]=\ \frac{1-x^2}{2(1+x^2)}  \mu_{G^*}[ \sigma_u\sigma_{u'}], 
\end{align}
where $x^*:=(1-x)/(1+x)$ is the dual weight, and where $\{u,u'\}$ is the dual edge of $\{v,v'\}$. This yields the second bullet of the lemma. 

To prove the first bullet of the lemma, we need to relate the dual energy correlators $ \mu_{G}[ \sigma_v\sigma_{v'}]$ and $\mu_{G^*}[ \sigma_u\sigma_{u'}]$ with each other. Interpreting the graphs in $\mathcal E^\emptyset$ as interfaces between spins of different value on the vertices of $G^*$, and using the low-temperature expansion we
get
\[
\mu_{G^*}[ \sigma_u\sigma_{u'}] =  \frac{Z_--Z_+}{Z}.
\]
This together with the second equality of \eqref{eq:Z+-}, and the fact that $Z_++Z_-=Z$, yields 
\[
2x \mu_{G}[ \sigma_v\sigma_{v'}]+(1-x^2)\mu_{G^*}[ \sigma_u\sigma_{u'}]=1+x^2.
\]
Therefore adding \eqref{eq:Z+-} and \eqref{eq:Z+-1} gives
\[
\IP^{\emptyset}_{C_G}(\{w,b\}\in M)+\IP^{\emptyset}_{C_G}(\{w,b'\}\in M)=1/2.
\]
This means that the probability of seeing the road containing $w$ in the dimer configuration is $1/2$. By symmetry this is true for all roads of $C_G$. This finishes the proof.
\end{proof}

\subsection{Kasteleyn theory and complex-valued fermionic observables} \label{sec:Kasteleyn and fermionic}

In this section, we introduce a Kasteleyn orientation which will be directly related to complex-valued observables introduced by Chelkak and Smirnov \cite{CheSmi12}.

\subsubsection{A choice of Kasteleyn's orientation}\label{sec:Kasteleyn}

A \emph{Kasteleyn weighting} of a planar bipartite graph is an assignment of complex phases $\varsigma_e\in\mathbb C$ with $|\varsigma_e|=1$ to the edges of the graph satisfying the \emph{alternating product condition} 
meaning that for each cycle $e_1,e_2,\ldots,e_{2k}$ in the graph, we have
\begin{align}\label{eq:KastW}
\prod_{i=1}^{k} \varsigma_{e_{2i-1}}\varsigma^{-1}_{e_{2i}}=(-1)^{k+1}.
\end{align}
Note that it is enough to check the condition around every bounded face of the graph.

\begin{figure}
		\begin{center}
			\includegraphics[scale=0.7]{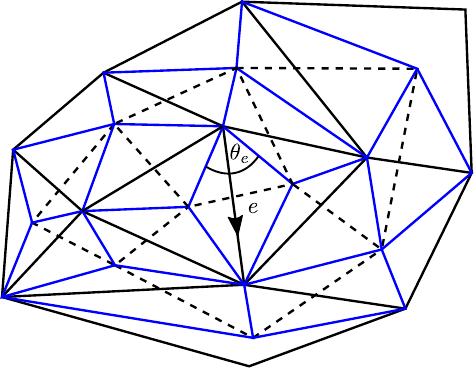}  
		\end{center}
		\caption{A piece of the primal graph $G$ and its dual $G^*$ (black solid and dashed edges respectively) and the corresponding diamond graph (blue edges) used to define the Kasteleyn weighting. 
		We assume that the edges of $G$ and $G^*$ are drawn as straight line segments.}
		Each street $e$ of $C_G$ can be identified with a directed edge of $G$ or $G^*$. Then, the angle $\theta_e$ is the angle in the diamond graph at the origin of this directed edge as depicted in the figure. By definition, these angles sum up to $2\pi$ around every vertex and face of $G$, and around every face of the diamond graph. This guarantees that the associated weighting satisfies the Kasteleyn condition.
			\label{fig:diamond}
\end{figure} 
To define an explicit Kasteleyn weighting for $C_G$, consider the \emph{diamond graph} of~$G$, i.e., the graph whose vertices are the vertices and faces of~$G$, and whose edges 
are the corners of~$G$ (see Fig.~\ref{fig:diamond}). 
Recall that the edges of $C_G$ that correspond to the corners of $G$ are called roads and the remaining edges (forming the quadrangles) are called streets. To each street there is assigned an angle $\theta_e$ between the two neighbouring corners in the diamond graph. We now define
\begin{itemize}[noitemsep]
\item $\varsigma_e=-1$ if $e$ is a road,
\item $\varsigma_e=\exp(\frac{\i}{2}\theta_e )$ if $e$ is a street that crosses a primal edge of $G$,
\item $\varsigma_e=\exp(-\frac{\i}{2}\theta_e )$ if $e$ is a street that crosses a dual edge of $G^*$.
\end{itemize}
That $\varsigma$ is a Kasteleyn orientation of $C_G$ follows from the fact that the angles sum up to $2\pi$ around every vertex and face of $G$, and around every face of the diamond graph.
Note that if $G$ is a finite subgraph of an embedded infinite graph $\Gamma$, then one can as well use the angles from the diamond graph of $\Gamma$ since, as already mentioned, 
one needs to check condition~\eqref{eq:KastW} only on the bounded faces of $C_G$.
In particular, for subgraphs of the square lattice with the standard embedding, we will take $\theta_e=\pi/2$ for all edges $e$.

Fix a bipartite coloring of $C_G$, and let $K=K_{C_G}$ be a Kasteleyn matrix for a dimer model on the bipartite graph $C_G$ with the weighting as above, i.e.,
the matrix whose rows are indexed by the black vertices and the columns by the white vertices, and whose entries are
\[
K(b,w):= \varsigma_{bw} x_{bw}
\]
if $bw$ is an edge of $C_G$ and $K(b,w)=0$ otherwise,
where $b$  and $w$ are respectively black and white vertices, and $x$ is the edge weight for $C_G$ as in Fig.~\ref{fig:graphs}.

We assume that the set of corners of $G$ comes with a prescribed order $c_1,\ldots, c_m$, and we order the rows and columns of $K$ 
according to this order (for each white and black vertex of $C_G$, there is exactly one corner of $G$ that the vertex corresponds to). We denote by $b_i$ and~$w_i$ the black and white vertex of $C_G$ 
corresponding to~$c_i$.

The following lemma is a known observation.

\begin{lemma}
We have that 
\begin{align} \label{eq:inverseKmonomer}
K^{-1}(w_i,b_j) =\i \kappa_\gamma \frac{Z^{\gamma}_{C_G}(w_i,b_j)}{Z_{C_G}},
\end{align}
where $\gamma$ is any dual path connecting a face $u_i$ adjacent to $b_i$ with a face $u_j$ adjacent to $w_j$, $\kappa_\gamma$ is a complex phase depending only on $\gamma$, $w_j$ and $b_i$
(see the proof for a concrete formula), and $Z^{\gamma}_{C_G}(b_i,w_j)$ is, as before, the 
partition function of dimers on $C_G$ with $b_i$ and $w_j$ removed, and with negative weights 
assigned to the edges crossing $\gamma$. 
\end{lemma}

The factor $\i$ is due to an arbitrary choice of $\kappa_\gamma$ which is made for later convenience. 
We will now justify~\eqref{eq:inverseKmonomer} and explicitly identify the complex phase $\kappa_\gamma$ in this expression.

\begin{proof}
To compute the inverse matrix, we use the cofactor representation as a ratio of determinants:
\begin{align*}
K^{-1}(w_i,b_j) = (-1)^{i+j} \frac{\det K^{b_j,w_i}}{\det K},
\end{align*}
where $ K^{w_i,b_j} =: \tilde K$ is the matrix $K$ with the $j$-th row and $i$-th column removed.

By definition of the determinant, we have 
\[
\det K =\sum_{\pi \in S_m} \sgn(\pi) \prod_{k=1}^m \varsigma_{b_kw_{\pi(k)}} x_{b_kw_{\pi(k)}}.
\]
In this sum, only terms where $\pi$ corresponds to a perfect matching on $C_G$ are nonzero. Moreover, by a  classical theorem of Kasteleyn~\cite{Kasteleyn}, the complex phase $\sgn(\pi) \prod_{i=1}^m \varsigma_{b_iw_{\pi(i)}}$
is constant for such $\pi$.
In particular, we can take $\pi$ to be the identity. Since $\varsigma_{b_iw_i}=-1$, we get that \[
\det K=(-1)^{N}Z_{C_G},\]
where $N$ is the number of corner edges in $C_G$.

We now want to interpret $\tilde K$ as a Kasteleyn matrix for the graph $\tilde C_G$ obtained from $C_G$ by removing the vertices $w_i$ and $b_j$.
To this end, if $w_i$ and $b_j$ are not incident on the same face $u_i=u_j$, we need to introduce a sign change to the Kasteleyn weighting along a dual path $\gamma$ which connects $u_i$ to~$u_j$.
We do it as follows. Define modified weights $\tilde \varsigma$ and $\tilde x$ by $\tilde \varsigma_e = -\varsigma_e$ (resp. $\tilde x_e=-x_e$), if $e$ is crossed by $\gamma$, and $\tilde \varsigma_e = \varsigma_e$ (resp. $\tilde x_e=x_e$) otherwise.
Then $\varsigma_ex_e=\tilde \varsigma_e \tilde x_e$, and hence $\tilde K(b,w) = \tilde \varsigma_{bw} \tilde x_{bw}$ if $bw$ is an edge of $\tilde C_G$, and  $\tilde K(b,w)=0$ otherwise.
We leave it to the reader to verify that $\tilde \varsigma$ is indeed a Kasteleyn weighting for $\tilde C_G$. 

We can therefore again apply Kasteleyn's theorem to obtain 
\[
\det \tilde K =\sum_{\pi \in S_{m-1}} \sgn(\pi) \prod_{k=1}^{m-1} \tilde \varsigma_{\tilde b_k \tilde w_{\pi(k)}} \tilde x_{b_kw_{\pi(k)}} = \tilde \kappa_{\gamma} {Z^{\gamma}_{C_G}(w_i,b_j)},
\]
where $\tilde b_1,\ldots,\tilde b_{m-1}$ (resp.\ $\tilde w_1,\ldots, \tilde w_{m-1}$) is an order preserving renumbering of the black (resp.\ white) vertices where $b_j$ (resp.\ $w_i$) is removed, and 
\[
\tilde \kappa_{\gamma}= \sgn(\pi) \prod_{k=1}^{m-1} \tilde \varsigma_{\tilde b_k \tilde w_{\pi(k)}}
\] 
is a constant complex factor independent of the permutation $\pi$ defining a perfect 
matching of $\tilde C_G$.
Setting
\begin{align} \label{eq:kappa}
\kappa_\gamma=(-1)^{i+j+1+N}  \i \tilde \kappa_\gamma
\end{align}
 justifies \eqref{eq:inverseKmonomer}. \end{proof}

We now proceed to giving $\kappa_\gamma$ a concrete representation in terms of the winding angle of~$\gamma$.
To this end, we first need to introduce some complex factors.
We follow \cite{CCK} and for each directed edge or corner $e$, we fix a square root of the corresponding direction in the complex plane
and denote by $\eta_e$ its \emph{complex conjugate}. Recall that we always assume that a corner~$c$ is oriented towards its vertex $v(c)$, and we write $-c$ whenever we consider the opposite orientation. For two directed edges or corners $e,g$ that do not point in opposite directions, we define $\angle(e,g)$ to be the \emph{turning angle} from $e$ to $g$, i.e., the number in $(-\pi,\pi)$ satisfying
\[
e^{-\i \angle(e,g)}=(\overline{\eta_e} \eta_g)^2.
\] 

\begin{lemma}\label{lem:Ksign}
Let $c_i$, $c_j$, and $\gamma$ be as above. Define $\tilde \gamma$ to be the extended path starting at $-c_j$, following $\gamma$, and ending at $c_i$. Then,
\[
\kappa_{\gamma} =  \exp(\tfrac \i 2 \textnormal{wind} (\tilde \gamma)),
\]
where $\textnormal{wind}(\tilde \gamma)$ is the total \emph{winding angle} of the path $\tilde \gamma$, i.e., the sum of all turning angles along the path.
\end{lemma}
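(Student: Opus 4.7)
The plan is to evaluate $\kappa_\gamma$ by choosing a specific perfect matching $\tilde M$ of $\tilde C_G$ and computing the signed product $\sgn(\pi_{\tilde M})\prod_{\tilde b\tilde w\in\tilde M}\tilde\varsigma_{\tilde b\tilde w}$ directly, since by Kasteleyn's theorem this value is the same for every perfect matching of $\tilde C_G$ and therefore equals the constant in \eqref{eq:kappa}.

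To construct a convenient $\tilde M$, I would start from the \emph{all-roads} matching $M_0$ of $C_G$, in which each road $b_k w_k$ is a dimer. Removing $b_j$ and $w_i$ leaves their $M_0$-partners $w_j$ and $b_i$ unmatched, so I would build an alternating path $P$ in $C_G$ from $w_j$ to $b_i$ that shadows $\gamma$: whenever $\gamma$ crosses a street, $P$ uses that street as a non-$M_0$ edge, and successive streets are linked through one road of $M_0$ each (road-crossings of $\gamma$ are handled analogously through short local detours). Flipping the matching status along $P$ produces the desired $\tilde M$.

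The signed product for $\tilde M$ then splits into three pieces: (i) the untouched roads of $M_0\setminus P$, contributing a sign $(-1)^{N-r}$ where $r$ is the number of roads traversed by $P$; (ii) the streets of $P$, contributing $\prod\exp(\pm\tfrac{\i}{2}\theta_e)$ with $\theta_e$ the diamond angle of the crossed street and the $\pm$ determined by whether the crossed edge belongs to $G$ or to $G^*$; and (iii) the sign flips $\tilde\varsigma=-\varsigma$ along $\gamma$, contributing $(-1)^{|\gamma|}$. Piece (ii) is the geometric heart of the argument: by construction of the weighting in Fig.~\ref{fig:diamond}, the angle $\theta_e$ coincides with the turning angle of $\tilde\gamma$ at the corresponding transition between diamond edges, and the $\pm$ matches exactly the orientation convention underlying $\tilde\gamma$. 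Consequently the product in (ii) equals $\exp(\tfrac{\i}{2}\textnormal{wind}(\tilde\gamma))$, once the contributions of the first and last streets of $P$ (incident to $w_j$ and $b_i$) are identified with the turning angles from $-c_j$ into $\gamma$ and from $\gamma$ into $c_i$, respectively.

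The main obstacle is combinatorial sign bookkeeping. One must verify that $\sgn(\pi_{\tilde M})$, the sign $(-1)^{N-r}$ from (i), the sign $(-1)^{|\gamma|}$ from (iii), and the overall prefactor $(-1)^{i+j+1+N}\i$ present in \eqref{eq:kappa} combine to exactly $1$. The key input is that $M_0\triangle\tilde M$ is precisely the path $P$, whose length has parity determined by $|\gamma|$; meanwhile the parity of the renumbering permutation obtained by deleting the $i$-th column and $j$-th row is controlled by $i+j$. Putting these together shows that all non-geometric factors cancel, leaving only the winding exponential. The angular identification in (ii) is tautological once these signs are pinned down, so the genuine work of the proof is entirely in this parity accounting.
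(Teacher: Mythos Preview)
Your high-level strategy---pick a convenient perfect matching of $\tilde C_G$ and compute the constant $\kappa_\gamma$ from it---is the same as the paper's. The difference is in which matching is chosen: the paper builds $M_\rho$ along a \emph{primal} path $\rho$ from $v(c_i)$ to $v(c_j)$, computes that the street phases along $S_\rho$ produce $\exp(-\tfrac{\i}{2}\textnormal{wind}(\tilde\rho))$ via an explicit angle calculation (the $\alpha_k,\alpha^*_k,\beta_k$ paragraph around Fig.~\ref{fig:winding}), and then invokes Whitney's formula $\exp(\tfrac{\i}{2}\textnormal{wind}(\ell))=(-1)^{t(\ell)+1}$ on the concatenation $\ell=\tilde\rho\cdot\tilde\gamma$ to convert this into the winding of $\tilde\gamma$. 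Your proposal tries to shortcut Whitney by building the matching along $\gamma$ itself; in principle this is the natural dual construction, but two genuine gaps remain.

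First, the construction of the alternating path $P$ is not well-defined as stated. When $\gamma$ (viewed in $C_G$) traverses a quadrilateral, it crosses \emph{two} parallel streets, and these two streets share no vertex; an alternating path of the form street--road--street--$\cdots$ can use only one of them per quadrilateral. So ``$P$ uses every street that $\gamma$ crosses'' is not consistent with $P$ being an alternating path in $M_0$. The correct dual analogue of the paper's $S_\rho$ would take streets lying to one side of $\gamma$, which then requires you to say how consecutive quadrilaterals are linked when the edges $\gamma$ crosses do not share a vertex---this is the ``local detours'' you allude to but do not specify, and it is exactly where the combinatorics becomes delicate.

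Second, and more importantly, your claim that ``the angle $\theta_e$ coincides with the turning angle of $\tilde\gamma$ at the corresponding transition'' is not tautological. In the paper's primal version, establishing that $\prod_{bw\in S_\rho}\varsigma_{bw}=-\i\exp(-\tfrac{\i}{2}\textnormal{wind}(\tilde\rho))$ requires summing three families of diamond angles and using that they total $\pi$ plus the turning angle at each vertex; the dual version you would need is no simpler. Calling this identification ``tautological once the signs are pinned down'' inverts the difficulty: the sign bookkeeping in the paper is one paragraph, but the geometric identity is the substance. Without doing that computation (or without appealing to Whitney's identity as the paper does), your sketch does not actually produce $\exp(\tfrac{\i}{2}\textnormal{wind}(\tilde\gamma))$.
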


\begin{figure} 
		\begin{center}
			\includegraphics[scale=0.9]{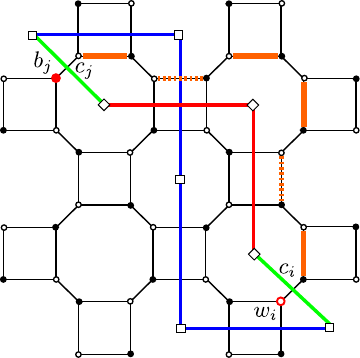}  
		\end{center}
		\caption{An illustration of the proof of Lemma~\ref{lem:Ksign} in the case where $G$ is a piece of the square lattice. The green lines represent corners $c_i$ and $c_j$, the red lines represent the primal path 
		$\rho$ from $v(c_i)$ to $v(c_j)$, and the blue lines show the dual path $\gamma$ from $u(c_j)$ to $u(c_i)$. The red vertices $w_i$ and $b_j$ are removed in the graph $C_{\tilde G}$.
		The matching $M_\rho$ corresponding to $\rho$ contains the orange streets and all remaining roads. The dashed (resp.\ solid) orange edges carry a phase 
		$\exp({\tfrac {\i \pi} 4 })$ (resp.\ $\exp(-{\tfrac {\i\pi} 4 })$) in the original Kasteleyn weighting $\varsigma$ of $C_G$. The orange edge crossed by $\gamma$ gets an 
		additional $-1$ sign in the Kasteleyn weighting $\tilde\varsigma$ of $\tilde C_G$.
		}	\label{fig:Ksign}
\end{figure}
\begin{proof}
Let $\rho$ be a simple primal path starting at $v(c_i)$ and ending at $v(c_j)$, and let $\tilde \rho$ be the extended path that starts at $c_i$, then follows $\rho$, and ends at $-c_j$.
We will define a perfect matching $M_{\rho}$ of $\tilde C_G$ that corresponds to $\rho$ in a natural way (see Fig.~\ref{fig:Ksign}).
Note that there is a unique sequence of streets $S_{\rho}$ such that the first edge contains $b_i$ and the last edge contains $w_j$, and where all the edges are directly to the right of the 
oriented path $\tilde \rho$ (the orange edges in Fig.~\ref{fig:Ksign}). We define $M_\rho$ to contain $S_{\rho}$ and all the remaining roads denoted by $R_{\rho}$.

Moreover, let $\ell$ be the loop (closed path) which is the concatenation of $\tilde \rho$ and $\tilde \gamma$.
We claim that
\begin{align} \label{eq:lrho}
 \prod_{bw\in S_{\rho}} \tilde \varsigma_{bw}= (-1)^{t(\ell)}\prod_{bw\in S_{\rho}}  \varsigma_{bw}=(-1)^{t(\ell)+1}\i\exp(-\tfrac \i2 \textnormal{wind}(\tilde \rho )),
\end{align}
where $t(\ell)$ is the number of self-crossings of $\ell$. 
Indeed, the first identity follows since the self-crossings of $\ell$ only come from a crossing between $\gamma$ and $\rho$, and each such edge gets an additional $-1$ factor in the 
Kasteleyn weighting $\tilde \sigma$.
We now argue for the second inequality by inspecting the contribution of the phases $\varsigma$ at each turn of $\tilde \rho$.
\begin{figure} 
		\begin{center}
			\includegraphics[scale=1.2]{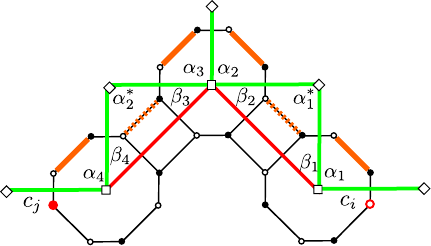}  
		\end{center}
		\caption{An illustration of the proof of~\eqref{eq:lrho}. The path $\rho$ goes from $c_i$ to $-c_j$, and is composed of the two red edges. The orange edges represent $S_{\rho}$.
		}	\label{fig:winding}
\end{figure}
To this end we consider all the corners adjacent to $\rho$. We denote by $\alpha_k$ (resp.\ $\alpha^*_k$), $k=1,2,\ldots,$ 
the unsigned angles between two consecutive corners that share a vertex (resp.\ a face) of $G$,
and by $\beta_k$ we denote the angles between the edges of $\rho$ and the corners (see Fig.~\ref{fig:winding}). Note that there is exactly $|\rho|$ angles of type $\alpha^*$, 
and $2|\rho|$ angles of type $\beta$ (there can be more angles of type $\alpha$).
Moreover, $\alpha^*_k=\pi-\beta_{2k-1}-\beta_{2k}$ for each $k\in\{1,\ldots,|\rho|\}$. 
Finally, the sum of all angles of type $\alpha$ and $\beta$ around a vertex of $G$ is by definition equal to $\pi$ plus the turning angle of $\rho$ at that vertex. 
Writing $A$ (resp.\ $B$) for the sum of all angles of type $\alpha$ (resp.\ $\beta$), and using the definition of~$\varsigma$, we find
\[
\prod_{bw\in S_{\rho}}  \varsigma_{bw}= \prod_{k}e^{-\tfrac{\i \alpha_k}2}\prod_{k}e^{\tfrac{\i \alpha^*_k}2}= e^{{-\tfrac{\i }2(A+B-|\rho|\pi)}}=
 e^{{-\tfrac \i2 (\textnormal{wind}(\tilde \rho )+\pi )}}=-\i \exp({-\tfrac \i2 \textnormal{wind}(\tilde \rho  )}),
\]
which justifies~\eqref{eq:lrho}.

On the other hand, a classical fact due to Whitney~\cite{Whi} (see also~\cite[Lemma 2.2]{CCK}) says that 
\begin{align}\label{eq:crossing}
\exp(\tfrac \i2\textnormal{wind}(\ell))=(-1)^{t(\ell)+1}.
\end{align}
Factorizing the left-hand side into the contributions coming from $\tilde \rho$ and $\tilde \gamma$, we get
\[
\exp(\tfrac \i2\textnormal{wind}(\ell)) =  \kappa_\gamma \exp(\tfrac \i2\textnormal{wind}(\tilde \rho)).
\]
Combining with~\eqref{eq:lrho} we arrive at
\begin{align*}
  \prod_{bw\in M_{\rho}} \tilde \varsigma_{bw}=\prod_{bw\in S_{\rho}} \tilde \varsigma_{bw}\prod_{bw\in R_{\rho}} \tilde \varsigma_{bw}=(-1)^{|R_{\rho}|}\i \kappa_\gamma ,
\end{align*}
where the second equality holds true since roads have complex phase $\varsigma=-1$.
On the other hand, by \eqref{eq:kappa} we have
\[
\kappa_\gamma = (-1)^{i+j+1+N} \sgn(\pi) \i \prod_{k=1}^{m-1} \tilde \varsigma_{b_kw_{\pi(k)}}=(-1)^{i+j+1+N}   \sgn(\pi) \i \prod_{bw\in M_{\rho}} \tilde \varsigma_{bw},
\] 
where $\pi\in S_{k-1}$ is the permutation defining the matching $M_\rho$, and $N$ is the number of all corner edges in $C_G$. 
Therefore to finish the proof, it is enough to show that 
\begin{align} \label{eq:signcheck}
\sgn(\pi)=(-1)^{i+j+N+|R_{\rho}|}.
\end{align} 
To this end, first note that $M_{\rho}$ naturally defines a bijection $\tilde \pi$ of the set of corners of $G$ with the two corners $c_i$ and $c_j$ identified as one corner, called from now on $\tilde c$, where 
$\tilde \pi(c)=c'$ if the black vertex corresponding to $c$ is connected by an edge in $M_\rho$ to the white vertex corresponding to $c'$. 
This bijection can be thought of as a permutation of $\{1,\ldots,k-1\}$ where the index corresponding to $\tilde c$ is $m-1$, and where the first $m-2$ indices 
respect the original order on the remaining corners of~$C_G$. Clearly $\tilde \pi$ has only one nontrivial cycle whose length is $|S_{\rho}|+1$, and hence $\sgn(\tilde \pi)=(-1)^{|S_{\rho}|}$. 
Without loss of generality, let $j>i$ and for an index $l\in \{1,\ldots,k-1\}$, let $p_l\in S_{k-1}$ be the permutation such that $p_l(l)=k-1$ and that does not change the order of the remaining indices. Note that 
$\sgn(p_l)=(-1)^{k-1-l}$ as $p_l$ is a composition of $k-1-l$ transpositions.
One can check that $\pi = p_{i}^{-1}\tilde \pi p_{j-1}$, and as a result $\sgn(\pi) = (-1)^{i+j-1+|S_{\rho}|}$. 
To show~\eqref{eq:signcheck} and finish the proof, we count the roads whose both endpoints are covered by a street in $S_{\rho}$, to get that $N= |S_{\rho}|+1+|R_{\rho}|$.
\end{proof}

All in all, from \eqref{eq:inverseKmonomer} together with Corollary~\ref{cor:main} we obtain the following statement.
\begin{corollary}\label{cor:KastKC} We have
\begin{align} \label{eq:KastKC}
K^{-1}(w_i,b_j)  = \tfrac12 \i \kappa_\gamma \langle \chi_{c_i} \chi_{c_j} \rangle^\gamma,
\end{align}
where the complex phase $\kappa_{\gamma}$ is as in Lemma~\ref{lem:Ksign}.
\end{corollary}

\subsubsection{Complex-valued fermionic observables} 
In this section we rewrite $\langle \chi_{c_i} \chi_{c_j} \rangle$, and hence the right-hand side of~\eqref{eq:KastKC},
in terms of complex-valued fermionic observables of Chelkak--Smirnov~\cite{CheSmi12}, and 
Hongler--Smirnov~\cite{HonSmi}.
This correspondence is well-known (and can be e.g.\ found in~\cite{CCK}) but we choose to present the details for completeness of exposition.
In the next section, we will use it together with the available scaling limit results 
to derive the scaling limit of $K^{-1}$ for the critical model on $C_{D^{\delta}}$.

We first define the complex version of the Kadanoff--Ceva observable for two corners $c_i$ and~$c_j$ by
\begin{align} \label{eq:cornerobs}
f(c_i,c_j) :=\frac{1}{ Z^{\emptyset}_{\text{hT}} } \sum_{\eta \in \mathcal E^{v(c_i),v(c_j)}}  \exp(-\tfrac \i2\textnormal{wind}(\rho_\eta))  \prod_{e\in \eta} x_e,
\end{align}
where $\textnormal{wind}(\rho_\eta)$ is again the total {winding angle} of the path $\rho_\eta$, i.e.~the sum of all turning angles along the path, and where $\rho_\eta$ is a simple path contained in $\eta\cup \{ c_i,c_j\}$ that starts at $ c_i$ and ends at $- c_j$, and is defined as follows: for each vertex $v$ of degree larger than two in $\eta$, one connects the edges around $v$ into pairs in a non-crossing way,
thus giving rise to a collection of non-crossing cycles $\mathcal C_\eta$ and a path from $ c_i$ to $- c_j$ that we call $\rho_\eta$.

It is a standard fact that the definition of $f(c_i,c_j)$ does not depend on the way  the connections at each vertex of $\eta$ are chosen (as long as they are noncrossing). Moreover, for all $\eta\in  \mathcal E^{v(c_i),v(c_j)}$, we have
\begin{align} \label{eq:signwind}
-\overline{ \kappa}_\gamma \exp(-\tfrac \i2\textnormal{wind}(\rho_\eta)) = \textnormal{sgn}_\gamma(\eta),
\end{align}
where as before, $\gamma$ is a fixed dual path connecting $u(c_i)$ and $u(c_j)$, and $\kappa_\gamma =  \exp(\tfrac \i2\textnormal{wind}(\tilde \gamma))$, with $\tilde \gamma$ being the path starting at $-c_j$, then following $\gamma$, and ending at $c_i$. To justify this identity, we consider the loop $\ell$ which is the concatenation of $\rho_\eta$ and the path $\tilde \gamma$, and write
\[
\exp(-\tfrac \i2\textnormal{wind}(\ell)) =\overline{\kappa}_{\gamma} \exp(-\tfrac \i2\textnormal{wind}(\rho_\eta)) .
\]
We then again use Whitney's identity~\eqref{eq:crossing} and the fact that the collection of cycles $\mathcal C_\eta$ must, by construction, cross $\gamma$ an even number of times (since $\mathcal C_\eta$ does not cross $\rho_\eta$, and $\mathcal C_\eta$ crosses $\ell$ an even number of times for topological reasons).
This justifies~\eqref{eq:signwind} and implies that 
\begin{align*}
\langle \chi_{c_i}\chi_{c_j} \rangle^\gamma =-\overline{ \kappa}_\gamma f(c_i,c_j), 
\end{align*}
which together with~Corollary~\ref{cor:KastKC} gives the following proposition.

\begin{proposition}\label{prop:Kfinal} We have
\begin{align} \label{eq:Kfinal}
K^{-1}(w_i,b_j) =- \tfrac\i2  f(c_i,c_j).
\end{align}
\end{proposition}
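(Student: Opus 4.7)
The plan is to combine Corollary~\ref{cor:KastKC}, which reads $K^{-1}(w_i,b_j)=\tfrac{\i}{2}\kappa_\gamma\langle\chi_{c_i}\chi_{c_j}\rangle$, with the identity
\[
\langle\chi_{c_i}\chi_{c_j}\rangle=-\overline{\kappa}_\gamma f(c_i,c_j)
\]
relating the real- and complex-valued Kadanoff--Ceva observables. Since $\kappa_\gamma=\exp(\tfrac{\i}{2}\mathrm{wind}(\tilde\gamma))$ is a unit complex phase by Lemma~\ref{lem:Ksign}, we have $\kappa_\gamma\overline{\kappa}_\gamma=1$, and substituting yields $K^{-1}(w_i,b_j)=-\tfrac{\i}{2}f(c_i,c_j)$ directly. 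So the only real content is to establish the displayed relation between $\langle\chi_{c_i}\chi_{c_j}\rangle$ and $f(c_i,c_j)$.

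To prove this, I would compare the two expansions~\eqref{def:KCF} and~\eqref{eq:cornerobs} term by term over the same state space $\mathcal{E}^{\{v(c_i),v(c_j)\}}$. For a fixed $\eta\in\mathcal{E}^{\{v(c_i),v(c_j)\}}$, the weights $\prod_{e\in\eta}x_e$ and the partition function $Z^{\emptyset}_{\mathrm{hT}}$ match on both sides. It thus suffices to check that for every $\eta$,
\[
\mathrm{sgn}_\gamma(\eta)=-\overline{\kappa}_\gamma\exp(-\tfrac{\i}{2}\mathrm{wind}(\rho_\eta)),
\]
where $\rho_\eta$ is the simple path from $c_i$ to $-c_j$ obtained from any non-crossing pairing at vertices of odd degree. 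Recalling that $f(c_i,c_j)$ is independent of this choice of pairing, one can fix one non-crossing pairing once and for all.

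The main (and only nonroutine) step is this sign identity. The idea is to consider the closed loop $\ell$ obtained by concatenating $\rho_\eta$ with the extended dual path $\tilde\gamma$ (from $-c_j$ through $\gamma$ to $c_i$). Factoring the winding,
\[
\exp(-\tfrac{\i}{2}\mathrm{wind}(\ell))=\overline{\kappa}_\gamma\exp(-\tfrac{\i}{2}\mathrm{wind}(\rho_\eta)).
\]
Whitney's formula~\eqref{eq:crossing} gives $\exp(\tfrac{\i}{2}\mathrm{wind}(\ell))=(-1)^{t(\ell)+1}$, hence $\exp(-\tfrac{\i}{2}\mathrm{wind}(\ell))=(-1)^{t(\ell)+1}$ as this quantity is real. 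The self-crossings of $\ell$ come from two sources: crossings of $\rho_\eta$ with $\gamma$, and crossings of the auxiliary non-crossing cycles $\mathcal{C}_\eta$ with $\gamma$. By construction $\mathcal{C}_\eta$ does not cross $\rho_\eta$, so each cycle in $\mathcal{C}_\eta$ must cross the closed loop $\ell$ an even number of times; hence the cycles in $\mathcal{C}_\eta$ contribute an even number of crossings with $\gamma$. This means $(-1)^{t(\ell)+1}=-\mathrm{sgn}_\gamma(\eta)$, where $\mathrm{sgn}_\gamma(\eta)$ tracks the parity of the number of edges of $\eta$ crossed by $\gamma$. Putting these pieces together yields the required identity and completes the proof.

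The step requiring care is the topological argument that $\mathcal{C}_\eta$ crosses $\gamma$ an even number of times. This is clean once one observes that each cycle of $\mathcal{C}_\eta$ is disjoint from $\rho_\eta$, so its crossings with $\ell$ all come from crossings with $\tilde\gamma$, and a closed curve crosses a path between two distinct simply connected regions an even number of times (or more simply, a closed curve crosses any path connecting two points in the plane an even number of times provided the endpoints are not on the curve). Everything else is a direct substitution.
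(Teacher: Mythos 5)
Your proposal is correct and follows essentially the same route as the paper: combining Corollary~\ref{cor:KastKC} with the sign identity~\eqref{eq:signwind}, itself established via Whitney's formula applied to the loop formed by concatenating $\rho_\eta$ with $\tilde\gamma$, together with the observation that the cycles $\mathcal{C}_\eta$ cross $\gamma$ an even number of times. One small imprecision: you write that the \emph{self-crossings} of $\ell$ come from both crossings of $\rho_\eta$ with $\gamma$ and crossings of $\mathcal{C}_\eta$ with $\gamma$, but $\mathcal{C}_\eta$ is not a part of $\ell$, so $t(\ell)$ counts only the former; the even-parity argument about $\mathcal{C}_\eta$ is needed instead to identify $\mathrm{sgn}_\gamma(\eta)$ with $\mathrm{sgn}_\gamma(\rho_\eta)=(-1)^{t(\ell)}$. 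Your final computation is unaffected by this slip.
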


To make the connection with the scaling limit results of~\cite{HonSmi}, we still need to introduce an observable that is indexed by two directed edges of $G$ instead of two corners.
To this end, for each edge $e$ of $G$, let $z_e$ be its midpoint. Also, for a directed edge $e=(v_1,v_2)$, let $h(e)$ be the \emph{half-edge} 
$\{z_e, v_2\}$, let $-e=(v_2,v_1)$ be its reversal, and let $\bar e=\{v_1,v_2\}$ be its undirected version.
Moreover, for two directed edges $e=(v_1,v_2)$ and $g=(\tilde v_1,\tilde v_2)$, let 
$\mathcal E^{e,g}$ be the collections of edges $\tilde \eta\in\mathcal E^{v_2,\tilde v_1}$ that \emph{do not contain} $\bar e$ and $\bar g$.
We define
\[
f(e,g) :=\frac{1}{ Z^{\emptyset}_{\text{hT}} }\sum_{\tilde \eta \in \mathcal E^{e,g}}  \exp(-\tfrac \i2\textnormal{wind}(\rho_{\tilde \eta}))  \prod_{e\in \tilde \eta} x_e ,
\]
where $\rho_{\tilde \eta}$ is a simple path in $\tilde\eta \cup \{ h(e), h(-g)\}$ that starts at $z_e$ and ends at $z_g$, and is analogous to $\rho_\eta$ from \eqref{eq:cornerobs}.
Note that the winding of $\rho_{\tilde \eta}$ is constant (independent of $\tilde \eta$) modulo $2\pi$ and equal to $\angle (e,g)$, and therefore
\begin{align}\label{eq:conswind}
f(e,g) \in    \overline{\eta}_{e} {\eta}_{g} \mathbb R.
\end{align}

\section{Convergence of the nesting field (proof of Theorem~\ref{thm:NFconv})}\label{sec:moments}
Let $D\subset \mathbb C$ be a Jordan domain, and let $D^\delta$ approximate $D$, i.e.~$d(\partial D^\delta, \partial D)\to 0$ as $\delta \to 0$ (where $d$ is as in~\eqref{eq:curvedist}). We consider the critical double random current model with free boundary conditions on $D^{\delta}$,
and the corresponding dimer model on Dub\'{e}dat's square-octagon graph $C_{D^{\delta}}$. We call $U^{\delta}$ and $V^{\delta}$ the set of faces of $C_{D^\delta}$ that correspond to the faces and vertices of $D^{\delta}$ respectively.
In this section we show that the moments of the associated height function $h^{\delta}$ converge to the moments of $\frac{1}{\sqrt{\pi}}$ times the Dirichlet GFF.

\subsection{Scaling limit of inverse Kasteleyn matrix} \label{sec:scaling}
We start by establishing the scaling limit of the inverse Kasteleyn matrix on $C_{D^{\delta}}$. This is crucial for the computation of the moments of the height function that is done in the next section.

Our method is to use Proposition~\ref{prop:Kfinal} obtained in the previous section, as well as the existing scaling limit results for discrete s-holomorphic observables in the Ising model~\cite{CHI,HonSmi}.
It is important to note that to prove our main results, we need to work with continuum domains $D$ with an arbitrary (possibly fractal) boundary.
Therefore, we state a generalized version of the scaling limit results of Hongler and Smirnov~\cite{HonSmi} for the critical fermionic observable with two points in the bulk of the domain. 
Their result, as stated, is valid only for domains whose boundary is a rectifiable curve (see also~\cite{hongler}). 
Even though the stronger result that we need is most likely known to the experts, for the sake of completeness, we will 
outline its proof, which is a direct consequence of the robust framework of Chelkak, Hongler and Izyurov~\cite{CHI} that was used to establish scaling limits for critical spin correlations.

From now on, we assume that the observables are critical, i.e., the weight $x_e$ is constant and equal to $x_c=\sqrt{2}-1$ so that $  \prod_{e\in \eta} x_e=x_c^{|\eta|}$.
Also, we define
\begin{align}\label{eq:obssmi}
f(e,z_{g}):= x_c(f(e,g)+f(e,- g)),
\end{align}
which is the observable of Hongler and Smirnov~\cite{HonSmi} (when $e$ is a horizontal edge pointing to the right) that is indexed by a directed edge $e$ and a midpoint of an edge $z_g$.
The next lemma relates this observable to the corner observable in a linear fashion. This type of identities is well known (see e.g.~\cite{CCK}) and is closely related to the notion of \emph{s-holomorphicity} introduced by Smirnov~\cite{smirnov} for the square lattice, and generalized by Chelkak and Smirnov~\cite{CheSmi12}, and Chelkak~\cite{Che17,Che20}. We omit the proof. 
\begin{lemma} \label{lem:projections}
Let $c_i$ and $c_j$ be two corners that do not share a vertex, and let $e$ and $g$ be directed edges incident to $v(c_i)$ and $v(c_j)$ respectively. Then
\[
f(c_i,c_j) =\frac 1{\sqrt{2}}\sum_{e'\in\{e,-{e}\}}\big(1+ (\overline{\eta_{c_i}} \eta_{e'})^2\big) \big(f(e',z_{g}) -  (\overline{\eta_{e'}} {\eta_{ c_j}})^2 \overline{f(e',z_{g}) }\big).
\]
\end{lemma}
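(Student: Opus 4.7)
The identity is a standard s-holomorphicity relation tying the Kadanoff--Ceva corner correlator $f(c_i,c_j)$ to Smirnov's half-edge observables $f(e,z_g)$, and my plan is to verify it by expanding both sides according to the behaviour of the path $\rho_\eta$ at the endpoints $v(c_i)$ and $v(c_j)$, and then matching winding phases term by term.

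The backbone is a pair of elementary projection identities, both relying on \eqref{eq:conswind} together with $|\eta_\bullet|=1$. First, on the real line $\overline{\eta_{e'}}\eta_g\mathbb R$ where $f(e',g)$ lives, the operation $z\mapsto z-(\overline{\eta_{e'}}\eta_{c_j})^{2}\bar z$ reduces to $2i\,\mathrm{Im}(\eta_g\overline{\eta_{c_j}})$ times the real scalar representation of $z$ in the direction $\overline{\eta_{e'}}\eta_{c_j}$; this will follow at once from the algebraic identity $\eta_g-\eta_{c_j}^{2}\overline{\eta_g}=2i\,\mathrm{Im}(\eta_g\overline{\eta_{c_j}})\,\eta_{c_j}$. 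Second, on the direction $\overline{\eta_{e'}}\eta_{c_j}$, the factor $1+(\overline{\eta_{c_i}}\eta_{e'})^{2}$ equals $2\,\mathrm{Re}(\eta_{e'}\overline{\eta_{c_i}})\cdot\eta_{e'}\overline{\eta_{c_i}}$, so multiplication sends $\overline{\eta_{e'}}\eta_{c_j}$ to $2\,\mathrm{Re}(\eta_{e'}\overline{\eta_{c_i}})\,\overline{\eta_{c_i}}\eta_{c_j}$. Taken together, these two operations turn each summand of the right-hand side into a purely imaginary multiple of $\overline{\eta_{c_i}}\eta_{c_j}$, weighted by the appropriate sine and cosine of half-turning angles at the two endpoints.

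With these reductions in hand, the main step is to match the resulting coefficients with the expansion of $f(c_i,c_j)$ obtained by grouping configurations $\eta\in\mathcal E^{v(c_i),v(c_j)}$ according to the first directed edge $e''$ used by $\rho_\eta$ at $v(c_i)$ and the last directed edge $g''$ at $v(c_j)$. The endpoint windings contribute a factor of the form $\overline{\eta_{c_i}}\eta_{e''}\cdot\overline{\eta_{g''}}\eta_{c_j}$ (with an overall sign controlled by \eqref{eq:signwind} and Whitney's formula \eqref{eq:crossing}) times the interior integrand of $f(e'',g'')$. Using $\eta_{-e}=\pm i\eta_e$ to sum over $e'\in\{e,-e\}$, the two scalars $\mathrm{Re}(\eta_{e'}\overline{\eta_{c_i}})$ yield exactly the real and imaginary parts of $\eta_{e''}\overline{\eta_{c_i}}$ for an arbitrary directed edge $e''$ at $v(c_i)$, and thus reconstruct the full change of basis from incidence edges at $v(c_i)$ to the corner $c_i$; the analogous reconstruction at $v(c_j)$ comes from the two orientations of $g$ built into $f(e',z_g)=x_c(f(e',g)+f(e',-g))$. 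The numerical prefactor $1/\sqrt 2$ then reconciles $x_c=\tan(\pi/8)$ with the two projection factors of $2$.

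The main obstacle will be the coherent bookkeeping of complex phases: the square roots $\eta_\bullet$ on directed edges, half-edges and corners, together with the $\pm$ signs in $e^{-\i\angle(e,g)/2}=\pm\overline{\eta_e}\eta_g$, in \eqref{eq:signwind}, and in Whitney's formula, must all be chosen compatibly. Once the conventions of \cite{CCK} are fixed, the identity reduces to the linear-algebra check sketched above, and the fact that the right-hand side is independent of the initial choice of edge $\bar e$ at $v(c_i)$ is precisely Kadanoff--Ceva s-holomorphicity at that vertex, which is also the conceptual reason such an identity exists at all.
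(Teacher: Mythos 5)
The paper itself offers no proof of this lemma, stating only that ``This type of identities is well known (see e.g.\ \cite{CCK}) and is closely related to the notion of s-holomorphicity \dots We omit the proof.'' So there is no proof in the paper to compare against; the relevant question is whether your sketch would, if completed, establish the identity.

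Your two algebraic reductions are correct: indeed $1+(\overline{\eta_{c_i}}\eta_{e'})^2=2\,\mathrm{Re}(\eta_{e'}\overline{\eta_{c_i}})\,\overline{\eta_{c_i}}\eta_{e'}$ and $\eta_g-\eta_{c_j}^2\overline{\eta_g}=2i\,\mathrm{Im}(\eta_g\overline{\eta_{c_j}})\,\eta_{c_j}$, and once $f(e',z_g)$ is decomposed accordingly, each summand becomes an $i\,\overline{\eta_{c_i}}\eta_{c_j}\,\mathbb R$-valued quantity, which matches the phase constraint on $f(c_i,c_j)$ coming from the $c_i$-to-$-c_j$ winding (the corner analogue of~\eqref{eq:conswind}). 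Your framing via s-holomorphicity and the projection/winding calculus is the approach the paper is pointing at.

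That said, the sketch leaves the load-bearing step largely undone, and some of the phrasing papers over real issues. The phrase that the two scalars $\mathrm{Re}(\eta_{e'}\overline{\eta_{c_i}})$ ``yield exactly the real and imaginary parts of $\eta_{e''}\overline{\eta_{c_i}}$ for an arbitrary directed edge $e''$'' does not parse as stated: these two numbers depend only on $e$, not on $e''$, and what actually has to be shown is that configurations $\eta\in\mathcal E^{v(c_i),v(c_j)}$ whose path exits $v(c_i)$ along \emph{any} incident edge $e''$ (including $e''\ne e,-e$) are correctly accounted for after applying the propagation identity around $v(c_i)$. This is exactly s-holomorphicity, but it has to be invoked or re-proved explicitly; your ``grouping by first/last edge'' step does not, by itself, produce only the terms your two projection identities anticipate. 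Similarly, the bijection between $\mathcal E^{v(c_i),v(c_j)}$ and the sets $\mathcal E^{e'',g''}$ (which exclude $\bar e''$, $\bar g''$, forcing a case split over whether those edges belong to $\eta$, and accounting for the two factors of $x_c$) is asserted but not carried out; this is where your claim that $1/\sqrt{2}$ ``reconciles $x_c$ with the two projection factors of $2$'' actually needs a trigonometric computation involving $\cos(\pi/8)$, $\sin(\pi/8)$ on the square lattice. You rightly flag the coherent choice of square roots and signs in~\eqref{eq:signwind} and in Whitney's formula~\eqref{eq:crossing} as ``the main obstacle,'' but identifying an obstacle is not the same as overcoming it. In short: the skeleton is sound, the elementary algebra you wrote is correct, the phase-line check on both sides works, but the combinatorial/winding bookkeeping that constitutes the actual content of the lemma has not been performed.
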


We also need to introduce the continuum counterparts of the discrete holomorphic observables. To this end,
let $D\subsetneq \mathbb C$ be a simply connected domain different from $\mathbb C$, and let $\psi_w=\psi^{D}_w$ be the unique conformal map from $D$ to the unit disk  with $\psi_w (w)=0$ and $\psi'_w (w)>0$.
For $w,z\in D$, we define
\[
f_-^D(w,z):=\frac1{2\pi} \sqrt{\psi_w'(w)\psi_w'(z)} \quad  \text{and} \quad f_+^D(w,z):=  \frac1{2\pi} \sqrt{\psi_w'(w)\psi_w'(z)}\frac{1}{\psi_w(z)}.
\]

\begin{lemma}[Conformal covariance of $f^D_{\pm}$] \label{lem:confcov}
Let $\varphi: D \to  D'$ be a conformal map. Then
\begin{align*}
f_-^D(w,z) &= {\overline{\varphi'(w)}}^{\tfrac 12}{\varphi'(z)}^{\tfrac 12} f_-^{ D'}(\varphi(w),\varphi(z)), \\
f_+^D(w,z) &={{\varphi'(w)}}^{\tfrac 12}{\varphi'(z)}^{\tfrac 12} f_+^{ D'}(\varphi(w),\varphi(z)).
\end{align*}
Moreover, for the upper half-plane $\mathbb H$, we have 
\[
f_-^{\mathbb{H}}(w,z) =  \frac{\i}{2\pi(z-\overline w)} \quad \text{and} \quad f_+^{\mathbb{H}}(w,z) =  \frac{1}{2\pi(z- w)}.
\]
\end{lemma}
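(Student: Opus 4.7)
The plan is to first identify the uniformizing map $\psi^\HH_w$ explicitly in the upper half-plane, and then establish the conformal covariance by comparing $\psi^D_w$ with $\psi^{D'}_{\varphi(w)}\circ\varphi$.

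First I would compute the half-plane formulas. The Möbius transformation $z\mapsto (z-w)/(z-\bar w)$ maps $\HH$ to the unit disk and sends $w$ to $0$; its derivative at $w$ is $1/(w-\bar w)=-\i/(2\,\mathrm{Im}(w))$. To make the derivative at $w$ positive, we multiply by $\i$, giving $\psi^\HH_w(z)=\i(z-w)/(z-\bar w)$, so that
\[
(\psi^\HH_w)'(w)=\tfrac{1}{2\,\mathrm{Im}(w)},\qquad (\psi^\HH_w)'(z)=-\tfrac{2\,\mathrm{Im}(w)}{(z-\bar w)^2}.
\]
Multiplying these and taking the square root (with branch fixed so that the factor at $w$ is positive) gives $\sqrt{(\psi^\HH_w)'(w)(\psi^\HH_w)'(z)}=\i/(z-\bar w)$, which yields the stated formula for $f_-^\HH(w,z)$. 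The formula for $f_+^\HH(w,z)$ then follows by dividing by $\psi^\HH_w(z)=\i(z-w)/(z-\bar w)$ and simplifying.

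Next I would establish the conformal covariance. The key observation is that both $\psi^D_w$ and $\psi^{D'}_{\varphi(w)}\circ\varphi$ are conformal maps from $D$ onto the unit disk sending $w$ to $0$, so they differ by a rotation:
\[
\psi^D_w(z)=e^{\i\alpha}\,\psi^{D'}_{\varphi(w)}(\varphi(z))
\]
for some real $\alpha$. Differentiating at $w$ and using $(\psi^D_w)'(w)>0$ and $(\psi^{D'}_{\varphi(w)})'(\varphi(w))>0$ forces $e^{\i\alpha}\varphi'(w)>0$, i.e.\ $e^{\i\alpha}=\overline{\varphi'(w)}/|\varphi'(w)|$. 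Plugging back, one finds
\[
(\psi^D_w)'(w)\,(\psi^D_w)'(z)=\overline{\varphi'(w)}\,\varphi'(z)\,(\psi^{D'}_{\varphi(w)})'(\varphi(w))\,(\psi^{D'}_{\varphi(w)})'(\varphi(z)).
\]
Taking square roots (with the branches determined by the positivity of the $w$-derivatives and by continuity for the second factor) immediately gives the covariance of $f_-^D$.

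For $f_+^D$ the only additional ingredient is the factor $1/\psi^D_w(z)=e^{-\i\alpha}/\psi^{D'}_{\varphi(w)}(\varphi(z))$. Combining with the previous identity and using that $e^{-\i\alpha}\,\overline{\varphi'(w)}^{1/2}=\varphi'(w)^{1/2}$ (a direct computation writing $\varphi'(w)=|\varphi'(w)|e^{\i\beta}$) yields the covariance of $f_+^D$. The only mild subtlety is the consistent choice of square root branches, which is the main bookkeeping issue but presents no conceptual obstacle; everything else is purely a consequence of the Schwarz lemma characterization of $\psi^D_w$ up to a rotation.
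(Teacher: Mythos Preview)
Your proof is correct and follows essentially the same approach as the paper: both arguments identify $\psi^D_w$ and $\psi^{D'}_{\varphi(w)}\circ\varphi$ as differing by the rotation $\overline{\varphi'(w)}/|\varphi'(w)|$, and both use the explicit uniformizer $\psi^{\mathbb H}_w(z)=\i(z-w)/(z-\bar w)$ for the half-plane formulas. The only cosmetic difference is the order of presentation (you do the half-plane case first, the paper does covariance first) and your slightly more explicit bookkeeping of the square-root branches.
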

\begin{proof} To prove the first part, note that $\psi^{ D'}_{\varphi(w)}(z)= \psi_w^D(\varphi^{-1}(z))\frac{\varphi'(w)}{|\varphi'(w)|}$. Indeed, the right-hand side is a 
conformal map with a positive derivative $(\psi_w^D)'(w)/|\varphi'(w)|$ and vanishing at $\varphi(w)$. Hence we have
\begin{align*}
f_-^{ D'} (\varphi(w),\varphi(z)) &= [ (\psi^{ D'}_{\varphi(w)})'(\varphi(w))(\psi^{ D'}_{\varphi(w)})'(\varphi(z))]^{\tfrac 12}\\
&= [(\psi^D_w)'(w)(\psi^D_w)'(z)]^{\frac 1 2} [{\varphi'(w)\varphi'(z)]^{-\tfrac12}} \frac{\varphi'(w)}{|\varphi'(w)|}\\
&= f_-^{ D} (w,z)  {\overline{\varphi'(w)}^{-\tfrac12}\varphi'(z)^{-\tfrac12}},
\end{align*}
and similarly for $f^D_+$. 
The second part follows from the fact that $\psi^{\mathbb{H}}_w(z) = i \frac{z-w}{z-\overline w}$ and the definition of $f_{\pm}^{\mathbb{H}}$.
\end{proof}

We now proceed to the generalization of~\cite[Theorem 8]{HonSmi} mentioned at the beginning of the section.
In the proof we will very closely follow the proof of~\cite[Theorem 2.16]{CHI} dealing with the convergence of discrete s-holomorphic spinors.

\begin{theorem}\label{lem:HonSmi}
Let $D\subset \mathbb C$ be a bounded simply connected domain, and let $D^{\delta}$ approximate $D$ as $\delta\to0.$
Fix $w,z\in D$, and let $e=e^{\delta}$ and $g=g^{\delta}$ be edges of $D^{\delta}$ whose midpoints converge to $w$ and $z$ respectively as $\delta \to 0$. Then
\[
f^{{\delta}}(e,z_{g}) = \delta\big(f_-^D(w,z)+  {\overline{\eta}_{e}^2}f_+^D(w,z) + o(1)\big)\qquad \text{as } \delta \to 0,
\]
where $f^\delta$ is the observable from~\eqref{eq:obssmi} defined on $D^\delta$.
Moreover the convergence is uniform on compact subsets of $\{(w,z) \in D^2: w\neq z\}$.
\end{theorem}
Before giving a sketch of the proof of this theorem, we state a corollary that will be convenient for us when computing moments of the height function in the next section.

\begin{corollary} \label{cor:fermscale} Consider the setting from the theorem above and let $c_i=c_i^{\delta}$ and $c_j=c_j^{\delta}$ be two corners of $D^{\delta}$ whose vertices converge to $w$ and $z$ respectively.
Then
\begin{align*}
K^{-1}(w_i,b_j)&=-\frac{1 }{\sqrt2} {\delta \i}\big(f^D_-(w,z)- \overline{\eta}_{c_i}^2\eta_{c_j}^2 \overline{f^D_-(w,z)} +  \overline{\eta}^2_{c_i}f^D_+(w,z)  - {\eta^2_{c_j}}\overline{f^D_+(w,z)}  +o(1)\big),
 \end{align*}
 where $K^{-1}$ is the inverse Kasteleyn matrix on $C_{D^{\delta}}$.
\end{corollary}
\begin{proof}
To simplify the notation, we drop $D$ from the superscripts. We combine Lemma~\ref{lem:HonSmi} and Lemma~\ref{lem:projections} to get that $\frac {\sqrt{2}}\delta f(c_i,c_j)$ equals to
\begin{align*}
& \sum_{e'\in\{e,- e\}}\big(1+ (\overline{\eta}_{c_i} \eta_{e'})^2) (f(e',z_{g}) -  (\overline{\eta}_{e'}{\eta_{ c_j}})^2 \overline{f(e',z_{g}) }) \\
&= \sum_{e'\in\{e,- e\}}\big(1+ (\overline{\eta}_{c_i} \eta_{e'})^2) (f_-(w,z)+  \overline{\eta}_{e'}^2f_+(w,z)-  (\overline{\eta}_{e'} {\eta_{ c_j}})^2\overline{f_-(w,z)} -\eta_{ c_j}^2\overline{f_+(w,z)})+o(1) \\
&=2 \big( f_-(w,z)+ \overline{\eta}_{c_i}^2f_+(w,z) - \overline{\eta}_{c_i}^2{\eta^2_{c_j}\overline{f_-(w,z)}}- 
{\eta_{c_j}^2\overline{f_+(w,z)}} \big) +o(1),
\end{align*}
where the last equality holds due to cancellations resulting from $\eta_{e}^2=-\eta_{- e}^2$.
On the other hand, by \eqref{eq:Kfinal}, $K^{-1}(w_i,b_j)=-\tfrac\i2  f(c_i,c_j)$
which finishes the proof.
\end{proof}

\begin{proof}[Sketch of proof of Theorem~\ref{lem:HonSmi}]
Based on the scaling limit results of Hongler--Smirnov~\cite{HonSmi}, we first argue that the statement holds true for a domain $D$ with a smooth boundary.
Indeed, in~\cite{HonSmi} it is assumed that $\eta_e^2=1$ and hence, in that case, the result follows directly from \cite[Theorem~8]{HonSmi}.
Applying this to a rotated domain together with the conformal covariance properties from~Lemma~\ref{lem:confcov} yields the statement for a general direction of $e$.

We now briefly describe how to use the robust framework of Chelkak, Hongler and Izyurov to extend this to general simply connected domains. In~\cite[Theorem~2.16]{CHI}, 
a scaling limit result was established for a discrete holomorphic spinor $F^\delta$ defined on an approximation $D^\delta$ of an arbitrary bounded simply connected domain $D$.
The two observables $F^\delta$ and $f^\delta$ satisfy the same 
boundary conditions (of~\cite[Proposition~18]{HonSmi} and \cite[(2.7)]{CHI}). 
Moreover, both observables are s-holomorphic away from the diagonal. The difference however is their singular behaviour near the diagonal. 
In~\cite{CHI}, the full plane version $F_{\mathbb C}^\delta$ (the discrete analog of $1/\sqrt{z-w}$) of the observable is subtracted from $F^\delta$ in order to cancel out the discrete-holomorphic singularity on the diagonal.
The details of the proof of \cite[Theorem~2.16]{CHI} can be carried out verbatim for $f^\delta$ instead of $F^\delta$ and its full plane version $f_{\mathbb C}^\delta$ (the discrete analog of $1/(z-w)$) introduced 
in~\cite{HonSmi} instead of $F_{\mathbb C}^\delta$. Indeed, the arguments in~\cite{CHI} depend only on the fact that the observables in question are 
s-holomorphic and satisfy the correct boundary value problem. 

Since the scaling limit is conformally invariant and was uniquely identified for domains with a smooth boundary by the argument above, this finishes the proof.
\end{proof}

\subsection{Moments of $h^{\delta}$}
Throughout this section, and as before, let $D\subset \mathbb C$ be a Jordan domain, and let $D^\delta$ approximate $D$, i.e.~$d(\partial D^\delta, \partial D)\to 0$ as $\delta \to 0$ (where $d$ is as in~\eqref{eq:curvedist}).
For simplicity of exposition, we only consider the height function on $C_{D^{\delta}}$ restricted to $U^{\delta}$ which has the same distribution as the nesting field of the critical double random current on $D$ with free boundary conditions. The case of mixed moments (for the joint height function on both the faces and vertices of $D^{\delta}$) follows in the same manner as the faces and 
vertices of $D^{\delta}$ play a symmetric role in the graph $C_{D^\delta}$.
To this end, let $a_1, a_2, \ldots ,a_n$ be distinct points in $D$, and let $h^{\delta}(a_i)$ $(i=1,\ldots,n)$ be the height 
function evaluated at the face $u^\delta_i=u^\delta_i(a_i)\in U^{\delta}$ of $D^\delta$, in which the point $a_i$ lies
(we choose a face arbitrarily if $a_i$ lies on an edge of $D^\delta$).                                     

Let $G_D(z,w)$ be the Dirichlet Green's function in $D$, i.e., the Green's function of standard Brownian motion in $D$ killed upon hitting $\partial D$.
In particular for the upper-half plane $\mathbb{H}$, we have 
\[
G_{\mathbb H}(z,w)= \frac{1}{2\pi} \ln \Big| \frac {z-\overline{w}}{z-w}\Big|.
\]
This section is devoted to the proof of the following theorem. Below,  $\mathbf{P}^{\emptyset,\emptyset}_{D^{\delta}, D^{\delta}}$ denotes the probability measure of the double random current model 
with free boundary conditions together with the independent labels used to define the nesting field.
\begin{theorem} \label{thm:moments} For every even integer $n$ and any distinct points $a_1,a_2, \ldots,a_n\in D$, we have
\[
\lim_{\delta\rightarrow0}\mathbf{E}^{\emptyset,\emptyset}_{D^{\delta}, D^{\delta}}\Big [ \prod_{i=1}^n h^{\delta}(a_i) \Big ] =\sum_{\pi \text{ pairing of } \{ a_1,\dots,a_n\}} \prod_{\{z,w \} \in \pi } \tfrac1{\pi} G_D(z,w),
\]   
where a pairing is a partition into sets of size two.
\end{theorem}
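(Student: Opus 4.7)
I would follow the moment method of Kenyon~\cite{Ken00} adapted to Dub\'{e}dat's square-octagon graph $C_{D^\delta}$ with the Kasteleyn weighting of Section~\ref{sec:Kasteleyn}. By Corollary~\ref{cor:CGH}, the nesting field $h^\delta(a_i)$ at the face $u_i^\delta\in U^\delta$ in which $a_i$ lies has the same law as the dimer height function on $C_{D^\delta}$ at $u_i^\delta$. Using the canonical reference 1-form of~\eqref{eq:refflow} (which is centered thanks to Lemma~\ref{lem:refflow}), one may write
\begin{equation*}
h^\delta(a_i)\;=\;\sum_{e\in \gamma_i^\delta}\big(\mathbf{1}\{e\in M\}-\mathbf{P}^\emptyset_{C_{D^\delta}}(e\in M)\big),
\end{equation*}
where $\gamma_i^\delta$ is a fixed dual path from the unbounded face of $C_{D^\delta}$ to $u_i^\delta$.

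Expanding $\prod_{i=1}^n h^\delta(a_i)$ and invoking Kenyon's determinantal formula for inclusion probabilities in a bipartite dimer model, the $n$-th joint centered moment decomposes into a signed sum over cyclic structures (permutations $\sigma$ of $\{1,\dots,n\}$ without fixed points) of products of the form $\prod_k K(b_{e_k},w_{e_k})K^{-1}(w_{e_k},b_{e_{\sigma(k)}})$. Substituting the asymptotics of Corollary~\ref{cor:fermscale}, each factor $K^{-1}(w_{e_k},b_{e_{\sigma(k)}})$ becomes $\delta$ times a linear combination of the four continuum kernels $f_-^D,\overline{f_-^D},f_+^D,\overline{f_+^D}$, with coefficients involving oscillating phases $\overline{\eta}^2_{c_k},\eta^2_{c_k}$ depending on the local edge directions. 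The factors of $\delta$ are absorbed by the Riemann sums over the $\gamma_i^\delta$, turning each contribution into a continuum contour integral.

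The next step is to identify the surviving terms as $\delta\to 0$. A Riemann--Lebesgue type argument shows that summing the phases $\overline{\eta}^2$ or $\eta^2$ along $\gamma_i^\delta$ yields an $o(1)$ contribution unless all such phases cancel pairwise in the product, and a short combinatorial analysis shows that this pairwise cancellation forces $\sigma$ to be a product of transpositions. The sum therefore reduces to a Wick-type sum over pairings $\pi$ of $\{a_1,\dots,a_n\}$. For each surviving pair $\{a_i,a_j\}$, the non-oscillating part of $K^{-1}(w_{e_i},b_{e_j})K^{-1}(w_{e_j},b_{e_i})$ is an explicit bilinear combination of $f_\pm^D$; a direct computation on the upper half-plane using the explicit formulas of Lemma~\ref{lem:confcov}, together with conformal covariance, identifies its double primitive along $\gamma_i\times\gamma_j$ as $\tfrac{1}{\pi}G_D(a_i,a_j)$. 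Summing over pairings yields the claimed formula.

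The main obstacle is the identification of the boundary conditions, which is the classical weak point of dimer scaling limits. The paths $\gamma_i^\delta$ originate on $\partial D^\delta$, and one must show that boundary contributions to the double primitive vanish, so that the limiting kernel is genuinely the Dirichlet Green's function $G_D$ rather than a Green's function up to some unknown harmonic correction. This is where the crossing estimate for critical double random currents from~\cite{DumLisQia21} (Theorem~\ref{thm:crossing free}) plays a decisive role: it guarantees that large clusters do not approach $\partial D$, which forces $h^\delta$ to be small on $\partial D$ and thus pins the boundary value of the limiting primitive to zero. A secondary technical point is the control of the Riemann-sum error when edges come close to the diagonal $a_i=a_j$ or to $\partial D$, for which the small-scale regularity estimates of the same companion paper furnish the necessary uniform bounds.
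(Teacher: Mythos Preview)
Your overall strategy---Kenyon's moment computation via Kasteleyn theory, substitution of the $K^{-1}$ asymptotics from Corollary~\ref{cor:fermscale}, and the use of Theorem~\ref{thm:crossing free} to pin down the boundary conditions---matches the paper. But there are two genuine gaps.

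First, your claimed reduction to pairings is at the wrong stage and invokes the wrong mechanism. You write that a Riemann--Lebesgue argument on the phases $\eta^{\pm 2}$ forces $\sigma$ to be a product of transpositions. This is not what happens. The phases $\eta_{c_i}^{\pm 2}$ are not oscillatory quantities to be averaged out; on the embedded lattice one has $\eta_{c_i}^{\pm 2}=\sqrt{2}\,\delta^{-1}\,dc_i^{(\mp 1)}$, so they become the differentials $dz_i,\,d\bar z_i$ of the emerging contour integrals. The cancellation that does occur (over the two choices of corner at each step of the path) kills the terms in which some $c_i$ carries a net phase exponent $0$, leaving for every permutation $\iota$ exactly $2^n$ surviving terms indexed by $\epsilon\in\{-1,1\}^n$. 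The continuum limit is therefore the full determinant $\det[f_{\epsilon_i,\epsilon_j}(z_i,z_j)]$ summed over $\epsilon$ (Proposition~\ref{prop:intdet}), not a sum over pairings. The Wick structure only appears afterwards, in the upper half-plane, via the Cauchy-type determinant identity of \cite[Lemma~3.1]{Ken01} applied to the explicit kernels of Lemma~\ref{lem:confcov}.

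Second, writing $h^\delta(a_i)$ as a flux along a path $\gamma_i^\delta$ from the unbounded face to $u_i^\delta$ is problematic. The convergence of $K^{-1}$ in Corollary~\ref{cor:fermscale} is only uniform on compact subsets of $\{(w,z)\in D^2:w\neq z\}$, so you cannot substitute the asymptotics at edges touching $\partial D$. The paper avoids this by a two-step decoupling: one introduces auxiliary interior points $a_i^0$, computes $\mathbf{E}[\prod_i(h^\delta(a_i)-h^\delta(a_i^0))]$ using paths $\gamma_i^\delta$ that stay in the bulk (Proposition~\ref{prop:intdet}), and then separately shows that the remainder in the telescoping identity~\eqref{eq:sumsplit} vanishes as $a_i^0\to\partial D$ (Proposition~\ref{prop:remaining terms}). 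The latter is where the random-current input enters, not as a Riemann-sum error bound but through an explicit estimate on the nesting field: one controls $\mathbf{E}[\prod_i h^\delta(a_i^{t_i})]$ by the number of relevant clusters and the probability that such clusters reach near $\partial D$ (Lemmata~\ref{lem:relevantcluster}--\ref{cor:not touching boundary}). Your proposal gestures at this but does not articulate the splitting, which is the device that makes the boundary argument go through.
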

Note that the field $h^{\delta}$ is symmetric, and therefore the corresponding moments for $n$ odd vanish.

Kasteleyn theory classically allows to compute all moments of the height function in terms of the inverse Kasteleyn matrix $K^{-1}$.
In the proof of the theorem, we follow the line of computation due to Kenyon~\cite{Ken00} but with several adjustments to our setting. 
In particular, we start with an algebraic manipulation to take care of the behaviour of $K^{-1}$ near the boundary of $D^{\delta}$: for $a_1^0,\ldots, a_n^0 \in D$, write
\begin{align} \label{eq:sumsplit}
\mathbf{E}^{\emptyset,\emptyset}_{D^{\delta}, D^{\delta}}\Big [ \prod_{i=1}^n h^{\delta}(a_i) \Big ]& = \mathbf{E}^{\emptyset,\emptyset}_{D^{\delta}, D^{\delta}}\Big [\prod_{i=1}^n (h^{\delta}(a_i)-h^{\delta}(a_{i}^0) )\Big] - \mathop{\sum_{t\in\{ 0,1\}^n}}_{t\neq(1,\ldots,1)}(-1)^{\sum_i (1-t_i)}\mathbf{E}^{\emptyset,\emptyset}_{D^{\delta}, D^{\delta}}\Big [ \prod_{i=1}^n h^{\delta}(a_{i}^{t_i})\Big],  
\end{align}
where $a_i^1=a_i$ for $i=1,\dots,n$.

The advantage of this formulation is that the first term on the right-hand side can be computed using Kasteleyn theory, and that the others are small when $a_1^0, \ldots, a_n^0$ are close to the boundary. This latter fact is not obvious and is relying on discrete properties of the double random current obtained in \cite{DumLisQia21} (note that it is basically saying that the field is uniformly small -- in terms of moments -- near the boundary).

We start by proving that the remaining terms are small.

\begin{proposition}\label{prop:remaining terms}
 For any $\varepsilon >0$ and $a_1,\dots,a_n\in D$, 
one may choose $a_1^0,\ldots, a_n^0\in D$ so that 
\begin{align} \label{eq:boundaryapprox}
\Big |\mathbf{E}^{\emptyset,\emptyset}_{D^{\delta}, D^{\delta}}\Big [ \prod_{i=1}^n h^{\delta}(a_i) \Big ]- \mathbf{E}^{\emptyset,\emptyset}_{D^{\delta}, D^{\delta}}\Big [\prod_{i=1}^n (h^{\delta}(a_i)-h^{\delta}(a_{i}^0) )\Big]\Big | < \varepsilon
\end{align}
uniformly in $\delta>0$.
\end{proposition}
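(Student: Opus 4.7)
The plan is to exploit the multilinear expansion obtained by writing each $h^\delta(a_i)$ as $\bigl(h^\delta(a_i)-h^\delta(a_i^0)\bigr)+h^\delta(a_i^0)$. Setting $a_i^1:=a_i$ and expanding the product yields
\begin{align*}
\prod_i \bigl(h^\delta(a_i) - h^\delta(a_i^0)\bigr) - \prod_i h^\delta(a_i) = \sum_{t \in \{0,1\}^n\setminus \{(1,\ldots,1)\}} (-1)^{\sum_i (1-t_i)} \prod_i h^\delta(a_i^{t_i}),
\end{align*}
and every term on the right-hand side involves at least one factor $h^\delta(a_j^0)$ with $t_j=0$. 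Since the sum has $2^n-1$ terms, independent of $\delta$, it suffices to show that for every such $t$ and every $\eta>0$, the points $a_i^0$ can be chosen close to $\partial D$ and pairwise well-separated from each other and from the $a_i$, so that $\bigl|\mathbf E^{\emptyset,\emptyset}_{D^\delta,D^\delta}[\prod_i h^\delta(a_i^{t_i})]\bigr|<\eta/2^n$ uniformly in $\delta$.

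To analyse one such moment, first integrate over the labels $(\epsilon_\mathcal C)$. They are independent $\pm 1$-valued with vanishing mean, so only cluster tuples $(\mathcal C_1,\dots,\mathcal C_n)$ in which each distinct value appears an even number of times contribute, and one obtains
\begin{align*}
\mathbf E\Big[\prod_i h^\delta(a_i^{t_i})\Big] = \sum_{P} \mathbf E\Big[\sum_{(\mathcal C_B)_{B\in P}\text{ distinct}} \prod_{B\in P} \mathbf 1\{\mathcal C_B \text{ odd around every } a_k^{t_k},\ k\in B\}\Big],
\end{align*}
the outer sum running over partitions $P$ of $\{1,\dots,n\}$ into blocks of even size. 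Bounding the inner sum by its unrestricted version gives the dominating quantity $\sum_P \mathbf E\bigl[\prod_{B\in P} M_B\bigr]$, where $M_B:=\#\{\mathcal C\in\mathfrak C(\n^\delta):\mathcal C\text{ odd around every }a_k^{t_k},\,k\in B\}$. For $t\neq(1,\dots,1)$ and every partition $P$, at least one block $B^\star$ contains an index $j$ with $a_j^{t_j}=a_j^0$ near $\partial D$.

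The key geometric observation is that any cluster counted by $M_{B^\star}$ must simultaneously encircle $a_j^0$, hence intersect the $\dist(a_j^0,\partial D)$-neighborhood of $\partial D$, and encircle every other $a_k^{t_k}$ for $k\in B^\star$. Having arranged the $a_i^0$ so that all chosen points are pairwise separated by some fixed $d_0>0$, this forces the diameter of any such cluster to be at least $d_0$. Consequently $M_{B^\star}=0$ off the event
\begin{align*}
A_\eta:=\bigl\{\exists\text{ a cluster of }\n^\delta\text{ of diameter}\geq d_0\text{ intersecting the $\eta$-neighborhood of }\partial D\bigr\},
\end{align*}
with $\eta:=\max_j \dist(a_j^0,\partial D)$. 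The crucial input and main obstacle is Theorem~\ref{thm:crossing free} of \cite{DumLisQia21}, which asserts precisely that $\mathbf P^{\emptyset,\emptyset}_{D^\delta,D^\delta}[A_\eta]\to 0$ as $\eta\to 0$ uniformly in $\delta$; this is the estimate that will ultimately identify the zero boundary conditions in the limit.

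To conclude, we write $\mathbf E\bigl[\prod_B M_B\bigr]=\mathbf E\bigl[\prod_B M_B\,\mathbf 1_{A_\eta}\bigr]$ and apply Cauchy--Schwarz to bound it by $\mathbf E\bigl[\prod_B M_B^2\bigr]^{1/2}\cdot\mathbf P[A_\eta]^{1/2}$. For any block $B$ with $|B|\geq 2$, $M_B$ is dominated by the number of clusters of $\n^\delta$ enclosing a fixed pair of points in $B$ at distance $\geq d_0$, which has moments of every order uniformly bounded in $\delta$ by standard RSW-type estimates for double random current clusters (available via their coupling with FK-Ising). H\"older's inequality then makes the first factor $O(1)$, while the second factor can be made arbitrarily small by pushing the $a_i^0$ toward $\partial D$. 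Summing over the finitely many partitions $P$ and the $2^n-1$ choices of $t$ completes the argument.
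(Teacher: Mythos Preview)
Your proposal follows essentially the same strategy as the paper's proof: the multilinear expansion over $t\in\{0,1\}^n$, the decomposition of the moment according to even partitions after integrating out the labels $\epsilon_{\mathcal C}$, the identification of a large cluster forced near $\partial D$, and the Cauchy--Schwarz split into a moment factor and a small-probability factor. The paper organises these ingredients through the auxiliary quantities $\mathsf R^\delta(A)$ (number of \emph{relevant} clusters) and the event $\mathsf I^\delta(A)$ (every point surrounded by a relevant cluster), proving $\mathbf E[\prod_i h^\delta]\le P_n\sqrt{\mathbf E[\mathsf R^\delta(A)^n]\,\mathbf P[\mathsf I^\delta(A)]}$ in Lemma~\ref{lem:relevantcluster}, whereas you work block-by-block with the counts $M_B$; the content is the same.

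One point deserves correction. The uniform moment bound you need for $M_B$ (equivalently for $\mathsf R^\delta(A)$ when the points are at pairwise distance $\ge d_0$) is \emph{not} a consequence of ``standard RSW via the coupling with FK-Ising''. The double random current with free boundary conditions does not reduce to FK-Ising in a way that delivers these crossing estimates; the bound on the number of distinct clusters crossing an annulus is Theorem~\ref{prop:tight number crossings} from the companion paper \cite{DumLisQia21}, and the paper packages its consequence as the logarithmic moment bound of Lemma~\ref{cor:tightness number logarithmic}. Similarly, the passage from Theorem~\ref{thm:crossing free} (which concerns a fixed box reaching $\partial_r D$) to your event $A_\eta$ (existence of a macroscopic cluster near $\partial D$) requires the covering argument of Lemma~\ref{cor:not touching boundary}. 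With these references substituted, your argument is correct and matches the paper's.
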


\begin{remark}\label{rmk:boundary conditions}This proposition, which basically claims that the second term on the right-hand side of \eqref{eq:sumsplit} is close to zero provided the $a_i^0$ are close enough to the boundary, is a restatement of the fact that boundary conditions for the limiting height function are zero. It is therefore the main place where we identify boundary conditions. Note that this proposition relies heavily on the main results from \cite{DumLisQia21} (Theorem~\ref{prop:tight number crossings} and
Theorem~\ref{thm:crossing free} from Section~\ref{sec:input discrete}), and is as such non-trivial.\end{remark}

To prove this proposition, we need to introduce some auxiliary notions. We say that a cluster of 
the double random current is \emph{relevant} for $ A =\{a_1, \ldots, a_n\} \subsetneq D$ if it is odd around $u_i^\delta$ for at least two different 
$i\in \{1,\ldots,n\}$ (it is possible that $u^\delta_i=u^\delta_j$ 
even though $a_i\neq a_j$).
We denote by $\mathsf R^{\delta}({ A})$ the number of relevant clusters  for $ A$ in $D^{\delta}$, and by $\mathsf I^{\delta}( A)$ the event that all faces 
$u^{\delta}_1,\ldots,u^{\delta}_n$ are surrounded by at least one relevant  cluster for~$ A$. We start with three lemmata.
\begin{lemma} \label{lem:relevantcluster}
For every $n\geq 2$ even, there exists $P_n\in(0,\infty)$ such that for all sets of points $ A= \{ a_1,\ldots,a_n \} \subsetneq D$, we have
\begin{align*}
\mathbf{E}^{\emptyset,\emptyset}_{D^{\delta}, D^{\delta}}\Big [ \prod_{i=1}^n h^{\delta}(a_i)\Big ]   \leq P_n \sqrt{\mathbf{E}^{\emptyset,\emptyset}_{D^{\delta}, D^{\delta}} [\mathsf R^{\delta}( A )^n] \mathbf{P}^{\emptyset,\emptyset}_{D^{\delta}, D^{\delta}}[\mathsf I^{\delta}( A)]}.
\end{align*}
\end{lemma}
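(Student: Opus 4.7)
The plan is to condition on the current $\n$, use the independence of the cluster labels $(\epsilon_{\mathcal C})$ to discard the contribution of irrelevant clusters, and then bound the remaining random variable via Khintchine and Cauchy--Schwarz.

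First, I would decompose $h^\delta(a_i) = h_{\rm rel}^\delta(a_i) + h_{\rm irr}^\delta(a_i)$, where the two summands gather, respectively, the contributions from relevant and irrelevant clusters for $A=\{a_1,\ldots,a_n\}$ to the defining sum~\eqref{def:nestingfield}. The key observation is that, by definition, an irrelevant cluster $\mathcal C$ is odd around $u_k^\delta$ for at most one index $k$, so its label $\epsilon_{\mathcal C}$ enters at most one of the sums $h^\delta(a_k)$. Conditionally on $\n$, the variables $h_{\rm irr}^\delta(a_1),\ldots,h_{\rm irr}^\delta(a_n)$ are therefore mutually independent, each centered, and independent of the relevant part. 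Expanding the product $\prod_{i}\bigl(h_{\rm rel}^\delta(a_i) + h_{\rm irr}^\delta(a_i)\bigr)$ and taking the conditional expectation over $(\epsilon_{\mathcal C})$, every monomial involving a nonempty collection of irrelevant factors vanishes, so
\[
\mathbf{E}^{\emptyset,\emptyset}_{D^\delta,D^\delta}\Big[\prod_{i=1}^n h^\delta(a_i)\,\Big|\,\n\Big]
=\mathbf{E}^{\emptyset,\emptyset}_{D^\delta,D^\delta}\Big[\prod_{i=1}^n h_{\rm rel}^\delta(a_i)\,\Big|\,\n\Big].
\]

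Second, I would estimate the right-hand side. Conditionally on $\n$, each $h_{\rm rel}^\delta(a_i)$ is a sum of at most $\mathsf R^\delta(A)$ independent Rademacher variables, and Khintchine's inequality gives a bound of the form $\mathbf{E}_\epsilon[|h_{\rm rel}^\delta(a_i)|^n\mid\n]\le C_n\,\mathsf R^\delta(A)^{n/2}$ for some combinatorial constant $C_n$. On the complement of $\mathsf I^\delta(A)$ at least one face $u_j^\delta$ is not surrounded by any relevant cluster, which forces $h_{\rm rel}^\delta(a_j)=0$ and hence makes the whole product vanish pointwise. Combining this with $n$-fold H\"older then yields
\[
\Big|\mathbf{E}_\epsilon\Big[\prod_{i=1}^n h_{\rm rel}^\delta(a_i)\,\Big|\,\n\Big]\Big|
\le C_n\,\mathsf R^\delta(A)^{n/2}\,\mathbf{1}_{\mathsf I^\delta(A)}.
\]

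Finally, I would take expectation over $\n$ and apply Cauchy--Schwarz to the product $\mathsf R^\delta(A)^{n/2}\mathbf{1}_{\mathsf I^\delta(A)}$, getting the desired inequality with $P_n:=C_n$. I do not foresee any real obstacle: the whole argument is essentially routine once the correct decomposition is in place, and the only genuinely substantive point is the first step, where the independent fair coins $(\epsilon_{\mathcal C})$ kill every contribution featuring an irrelevant cluster exactly because of the combinatorial definition of ``relevant''.
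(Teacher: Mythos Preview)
Your proof is correct and follows essentially the same strategy as the paper: both condition on $\n$, use the Rademacher structure of the labels to show that only relevant clusters contribute (yielding the indicator $\mathbf 1_{\mathsf I^\delta(A)}$), bound the surviving term by a constant times $\mathsf R^\delta(A)^{n/2}$, and finish with Cauchy--Schwarz. The only difference is bookkeeping: where you invoke Khintchine and $n$-fold H\"older, the paper expands $\prod_i h^\delta(a_i)$ directly into a sum over even partitions of $\{1,\dots,n\}$ (so that their $P_n$ is the number of such partitions rather than a Khintchine constant), but this is purely cosmetic.
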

\begin{proof}
For a cluster $\mathcal C$ of the double random current, let 
\[
\text{Odd}(\mathcal C):=\{ a_i \in  A: \mathcal C \text{ is odd around } u^\delta_i\}.\]
We denote a partition of $ A$ by $\{ A_1,\ldots,A_k\}$. We call such a partition even if all its elements have even cardinality.
Using the correspondence with the nesting field of the critical double random current on $D^{\delta}$ with free boundary conditions defined in~\eqref{def:nestingfield}, we have
\begin{align*}
\mathbf{E}^{\emptyset,\emptyset}_{D^{\delta}, D^{\delta}}\Big [ \prod_{i=1}^n h^{\delta}(a_i)\Big ] & = \mathbf{E}^{\emptyset,\emptyset}_{D^{\delta}, D^{\delta}}\Big [\prod_{i=1}^n \Big( \sum_{\mathcal{C}_i } \coin_{\mathcal C_i} \mathbf 1_{\{\mathcal C_i \text{ odd around } u^\delta_i \}} \Big)\Big ] \\
&= \mathbf{E}^{\emptyset,\emptyset}_{D^{\delta}, D^{\delta}}\Big [\sum_{(\mathcal{C}_1,\ldots, \mathcal C_n) } \prod_{i=1}^n  \coin_{\mathcal C_i}\mathbf 1_{\{\mathcal C_i \text{ odd around } u^\delta_i \}}\Big ] \\
&= \sum_{\{ A_1,\ldots,A_k\} \textnormal{ even}}\mathbf{E}^{\emptyset,\emptyset}_{D^{\delta}, D^{\delta}}\Big [\sum_{(\mathcal{C}_1,\ldots, \mathcal C_k) }  \mathbf{1}_{\{ A_i \subseteq \text{Odd}(\mathcal C_i),\ \mathcal C_i \text{ distinct } \forall i\in\{1,\ldots,k\}\}}\Big]\\ 
&\leq \sum_{\{ A_1,\ldots,A_k\} \textnormal{ even}}\mathbf{E}^{\emptyset,\emptyset}_{D^{\delta}, D^{\delta}}\Big [\sum_{(\mathcal{C}_1,\ldots, \mathcal C_{k}) } \mathbf{1}_{\{ \mathcal C_i \text{ relevant for }  A \}} \mathbf{1}_{\mathsf I^{\delta}( A)} \Big] \\
&\leq P_n\mathbf{E}^{\emptyset,\emptyset}_{D^{\delta}, D^{\delta}}\big [\mathsf R^{\delta}( A )^{n/2}  \mathbf{1}_{\mathsf I^{\delta}( A) } \big] \\
&\leq P_n \sqrt{\mathbf{E}^{\emptyset,\emptyset}_{D^{\delta}, D^{\delta}} [\mathsf R^{\delta}( A )^n]\mathbf{P}^{\emptyset,\emptyset}_{D^{\delta}, D^{\delta}}[\mathsf I^{\delta}( A)]},
\end{align*}
where $P_n$ is the number of even partitions of a set of size $n$ (we used that $k\le n/2$), and where in the last inequality we used the Cauchy--Schwarz inequality.
\end{proof}

\begin{lemma}[Logarithmic bound on the number of clusters]\label{cor:tightness number logarithmic}
There exists $C\in(0,\infty)$ such that for every bounded domain $D$ and every $A=\{a_1,\dots,a_n\}\subsetneq D$ and $N\geq 1$,
\[
{\bf E}_{D^\delta,D^\delta}^{\emptyset,\emptyset}[\mathsf R^\delta(A)^N]\le  \frac{1}{N!}\Big [C n\log\Big(\frac{{\rm diam}(D)}{\min_{i\neq j} |a_i-a_j|}\Big)\Big]^{N},
\]
uniformly in $\delta>0$.
\end{lemma}

\begin{proof}Consider the constant $C$ given by Theorem~\ref{prop:tight number crossings}. Set $\kappa:=\tfrac12\min_{i\neq j} |a_i-a_j|$ and $d:={\rm diam}(D)$. 

Consider the family $\mathcal B=(\Lambda_{r_k}(x_k):k\in \mathcal K)$ containing the boxes $\Lambda_{\tfrac r{4C}}(x)$ with $r:=2^j\kappa$, $x\in \tfrac r{4C}\mathbb Z^2\cap{\rm  Ann}(a_i,r,2r)$ for every $1\le i\le n$ and $0\le j\le \lfloor\log_2 (d/\kappa)\rfloor$.
One may easily check that every cluster that surrounds at least two vertices in $A$ must contain, for some $k\in\mathcal K$, a crossing from $\Lambda_{r_k}(x_k)$ to $\Lambda_{2Cr_k}(x_k)$. We deduce that if $X_k$ is the number of disjoint $\Lambda_{Cr_k}(x_k)$-clusters crossing ${\rm Ann}(x_k,r_k,2Cr_k)$ from inside to outside, then
\[
\mathsf R^\delta(A)\le \sum_{k\in \mathcal K}X_k.
\]
Now, for each $k\in \mathcal K$, $\Lambda_{3Cr_k}(x_k)$ intersects at most $O(C^2)$ boxes $\Lambda_{3Cr_l}(x_l)$ for $l\in\mathcal K$. We may therefore partition $\mathcal K$ in $I=O(C^2)$ disjoint sets $\mathcal K_1,\dots,\mathcal K_I$ for which the $\Lambda_{3Cr_k}(x_k)$ with $k\in \mathcal K_i$ are all disjoint. Set $S_i:=\sum_{k\in\mathcal K_i}X_k$. H\"older's inequality implies that
\[
{\bf E}_{D^\delta,D^\delta}^{\emptyset,\emptyset}[\mathsf R^\delta(A)^N]\le {\bf E}_{D^\delta,D^\delta}^{\emptyset,\emptyset}[(S_1+\dots+S_{|I|})^N]\le |I|^{N-1}\sum_{i=1}^{|I|}{\bf E}_{D^\delta,D^\delta}^{\emptyset,\emptyset}[S_i^N].
\]
The mixing property of the double random current proved in \cite{DumLisQia21} and Theorem~\ref{prop:tight number crossings} imply the existence of $C_{\rm mix}\in(0,\infty)$ (independent of everything) such that $S_i$ is stochastically dominated by $C_{\rm mix}\widetilde S_i$, where $\widetilde S_i$ is the sum of $|\mathcal K_i|$ independent Geometric random variables $(\widetilde X_k:k\in \mathcal K_i)$ of parameter $1/2$. 
We deduce that 
\[
{\bf E}_{D^\delta,D^\delta}^{\emptyset,\emptyset}[S_i^N]\le C_{\rm mix}^N\times \frac{(C_0|\mathcal K_i|)^N}{N!}.
\]
Since $|\mathcal K_i|\le |\mathcal K|\le C_1 n\log(d/\kappa)$, we deduce that 
\[
{\bf E}_{D^\delta,D^\delta}^{\emptyset,\emptyset}[\mathsf R^\delta(A)^N]\le \frac{(C_2n\log(d/\kappa))^N}{N!}.
\]
This concludes the proof.
\end{proof}

We now turn to the third lemma that we will need. Let $\partial_\alpha \Omega$ be the set of points in~$\Omega$ that are exactly at a Euclidean distance equal to $\alpha$ away from $\partial \Omega$.
\begin{lemma}[Large double random current clusters do not come close to the boundary]\label{cor:not touching boundary}
For every $C,\alpha,\ep>0$, there exists $\beta=\beta(C,\alpha,\ep)>0$ such that for every $D\subseteq \Lambda_{C}$,
\begin{equation}
{\bf P}_{D^\delta,D^\delta}^{\emptyset,\emptyset}[\partial_{\alpha}D\stackrel{\n_1+\n_2}\longleftrightarrow \partial_\beta D]\le \ep.
\end{equation}
\end{lemma}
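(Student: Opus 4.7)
My plan is to prove this by a covering-and-union-bound argument that localises the question near each boundary point and then invokes the main result of \cite{DumLisQia21}, stated as Theorem~\ref{thm:crossing free}, which directly rules out that a large cluster in $\n_1+\n_2$ can come close to the boundary.

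The first step is to fix $\beta<\alpha/8$ (to be specified later) and build an $(\alpha/8)$-net $\{z_1,\dots,z_N\}\subseteq\partial D$ with $N=N(C,\alpha)$ (the cardinality depends only on $C$ and $\alpha$, since $\partial D\subseteq\Lambda_C$). Every $y\in\partial_\beta D$ satisfies $\dist(y,\partial D)=\beta<\alpha/8$, so it lies within distance $\alpha/4$ of some $z_i$.

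The second step is a geometric pigeonhole. Assume the event $\{\partial_\alpha D\con{\n_1+\n_2}\partial_\beta D\}$ holds and pick $x\in\partial_\alpha D$ and $y\in\partial_\beta D$ in the same cluster. Choose $i$ with $|y-z_i|<\alpha/4$. Since $z_i\in\partial D$ and $\dist(x,\partial D)=\alpha$, we have $|x-z_i|\geq \alpha$, so the cluster of $y$ must cross the annulus ${\rm Ann}(z_i,\alpha/4,\alpha)$ while touching the $\beta$-neighborhood of $\partial D$. A union bound yields
\[
\mathbf{P}^{\emptyset,\emptyset}_{D^\delta,D^\delta}\big[\partial_\alpha D\con{\n_1+\n_2}\partial_\beta D\big]\ \leq\ \sum_{i=1}^N \mathbf{P}^{\emptyset,\emptyset}_{D^\delta,D^\delta}\Big[B_{\alpha/4}(z_i)\cap\partial_\beta D\con{\n_1+\n_2}\partial B_\alpha(z_i)\Big].
\]

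The third step bounds each summand. Each one is the probability that a cluster in the critical double random current with free boundary conditions crosses a macroscopic annulus (of aspect ratio $4$) centred at a boundary point $z_i$, while simultaneously reaching within distance $\beta$ of $\partial D$. This is precisely the phenomenon controlled by Theorem~\ref{thm:crossing free}: under free boundary conditions, the probability that any cluster of diameter at least $\alpha/2$ comes within distance $\beta$ of $\partial D$ tends to $0$ as $\beta\to 0$, uniformly in $z_i\in\partial D$, in $D\subseteq \Lambda_C$, and in $\delta$. Choosing $\beta$ small enough to make each of the $N$ summands at most $\varepsilon/N(C,\alpha)$ completes the proof.

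The main point requiring care is the uniformity invoked in the third step, because the local geometry of $\partial D$ near each $z_i$ can be very irregular. This is exactly where the delicate input of \cite{DumLisQia21} is essential: the statement of Theorem~\ref{thm:crossing free} is designed to hold uniformly over simply connected domains $D\subseteq\Lambda_C$, so no further reduction is needed. Should a formulation only for a canonical geometry be available, one would supplement the argument with the mixing and annulus-crossing estimates for the double random current already used in the proof of Lemma~\ref{cor:tightness number logarithmic}, combined with the FKG inequality for the Ising coupling, to transfer the bound to arbitrary boundary shapes.
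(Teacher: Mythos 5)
Your strategy is the same as the paper's — cover the relevant part of the domain by finitely many boxes, union bound, and invoke Theorem~\ref{thm:crossing free} — but there is a genuine misapplication of that theorem in the centring step. Theorem~\ref{thm:crossing free} bounds $\mathbf{P}[\Lambda_R\con{\n_1+\n_2}\partial_r D]$ for a box $\Lambda_R$ (after translation, $\Lambda_R(x)$) under the hypothesis that $D\supseteq\Lambda_{2R}(x)$ but $D\not\supseteq\Lambda_{3R}(x)$; in particular the reference point $x$ must lie strictly inside $D$, at distance comparable to $R$ from $\partial D$. You instead centre your annuli at points $z_i\in\partial D$, for which no such box is contained in $D$, so the quantities in your union bound, $\mathbf{P}[B_{\alpha/4}(z_i)\cap\partial_\beta D\con{}\partial B_\alpha(z_i)]$, are not what the theorem controls. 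Your statement that the theorem bounds, uniformly over boundary points $z_i$, the probability that a macroscopic cluster comes within $\beta$ of $\partial D$ is an overstatement of what is actually proved in \cite{DumLisQia21}.

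The fix is precisely the paper's choice of reference points: take $x_i\in\frac{\alpha}{3}\mathbb Z^2$ with $\Lambda_{2\alpha/3}(x_i)\subseteq D$, $\Lambda_{\alpha}(x_i)\not\subseteq D$, and $\partial_\alpha D\subseteq\bigcup_i\Lambda_{\alpha/3}(x_i)$ (there are $O((C/\alpha)^2)$ of them since $D\subseteq\Lambda_C$). Then, for each $i$, the rescaled Theorem~\ref{thm:crossing free} with $R=\alpha/3$ and $r=\beta$ applies directly to $\mathbf{P}[\Lambda_{\alpha/3}(x_i)\con{}\partial_\beta D]$ and the union bound over $i$ finishes the argument. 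Your closing caveat gestures at a patch but does not supply it, and FKG is not the right tool here: the needed transfer is purely geometric (relocate the reference box from the boundary to an interior point at the correct distance), not a monotonicity statement.
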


\begin{proof}
Assume that $\partial_\alpha D$ is not empty otherwise there is nothing to prove. Since $D\subseteq \Lambda_{C}$, one may find a collection of $k=O((C/\alpha)^2)$ vertices $x_1,\dots,x_k\in \tfrac13\alpha\mathbb Z^2$ such that 
\begin{itemize}[noitemsep]
\item $\Lambda_{2\alpha/3}(x_i)\subseteq D$  for $1\le i\le k$;
\item $\Lambda_{\alpha}(x_i)\not\subseteq D$ for $1\le i\le k$;
\item $\partial_\alpha D\subseteq \Lambda_{\alpha/3}(x_1)\cup\dots\cup \Lambda_{\alpha/3}(x_k)$.
\end{itemize}
Then, Theorem~\ref{thm:crossing free} implies that
\begin{equation}
{\bf P}_{D^\delta,D^\delta}^{\emptyset,\emptyset}[\partial_{\alpha}D\stackrel{\n_1+\n_2}\longleftrightarrow \partial_{\beta}D]\le \sum_{i=1}^k {\bf P}_{D^\delta,D^\delta}^{\emptyset,\emptyset}[\Lambda_{\alpha/3}(x_i)\stackrel{\n_1+\n_2}\longleftrightarrow \partial_{\beta}D]\le k \epsilon(\beta/\alpha).
\end{equation}
We then choose $\beta$ so that the right-hand side is smaller than $\ep$.
\end{proof}

These ingredients are enough for the proof of Proposition~\ref{prop:remaining terms}.

\begin{proof}[Proof of Proposition~\ref{prop:remaining terms}]
First, Lemma~\ref{cor:tightness number logarithmic} shows that 
for every $n\geq 2$, there exist $C_n,M_n<\infty$ such that for all sets of points $ A= \{ a_1,\ldots,a_n \} \subsetneq D$, we have
\begin{equation}\label{eq:logmoment}
\mathbf{E}^{\emptyset,\emptyset}_{D^{\delta}, D^{\delta}} [\mathsf R^{\delta}( A)^n] \leq C_n\big |\log(\min_{i\neq j} |a_i-a_j|)\wedge \log\tfrac1\delta)\big|^{M_n}.
\end{equation}
Lemma~\ref{cor:not touching boundary} implies that for every $n\geq2$ and every $\eta>0$, there exists a function $\rho:[0,\infty) \to [0,\infty)$ satisfying $\rho(0)=0$ and continuous at $0$, and such that for all $\delta$ and all sets of points $ A= \{ a_1,\ldots,a_n \} \subsetneq D$ that are pairwise at least $\eta$ away from each other, we have
\[
\mathbf{P}^{\emptyset,\emptyset}_{D^{\delta}, D^{\delta}}[\mathsf I^{\delta}( A)] \leq \rho(\min_i\textnormal{dist}(u_i,\partial D)).
\]
The proof is then a direct combination of these two inequalities with Lemma~\ref{lem:relevantcluster} and \eqref{eq:sumsplit}.\end{proof}

We now turn to the computation of the first term on the right-hand side of \eqref{eq:sumsplit} using the approach of Kenyon~\cite{Ken00}. 
The next result is an analog of \cite[Proposition 20]{Ken00}.
\begin{proposition}\label{prop:intdet} Let $a_1,a_1^0,\dots,a_n,a_n^0$ be distinct points in $D$, and let $\gamma_1,\ldots,\gamma_n$ be pairwise disjoint curves in $D$ connecting $a_i^0$ to $a_i$ for $i=1,\ldots,n$.
Then,
\[
\lim_{\delta\rightarrow0}\mathbf{E}^{\emptyset,\emptyset}_{D^{\delta}, D^{\delta}}\Big [\prod_{i=1}^n (h^{\delta}(a_i)-h^{\delta}(a_{i}^0) )\Big]= \i^n \sum_{\epsilon \in\{ \pm 1\}^n} \prod_{k=1}^n \epsilon_k\int_{\gamma_1} \cdots\int_{\gamma_n} \det \big[ f_{\epsilon_i,\epsilon_j} (z_i,z_j)\big]_{1\leq i,j \leq n}  dz_1^{(\epsilon_1)} \cdots dz_n^{(\epsilon_n)},
 \]
where $dz_i^{(1)}=dz_i$, $dz_i^{(-1)}= d\overline{z_i}$, 
and 
\begin{align*}
f_{\epsilon_i,\epsilon_j}(z_i,z_j)=\begin{cases}
0 & \text{if } i=j, \\
 f_-(z_i,z_j) & \text{if } (\epsilon_i,\epsilon_j)=(-1,1), \\
  f_+(z_i,z_j) & \text{if } (\epsilon_i,\epsilon_j)=(1,1), \\
   \overline{f_-(z_i,z_j)} & \text{if } (\epsilon_i,\epsilon_j)=(1,-1), \\
    \overline{f_+(z_i,z_j)} & \text{if } (\epsilon_i,\epsilon_j)=(-1,-1). \\
\end{cases}
\end{align*}
Moreover the limit is conformally invariant.
\end{proposition}

\begin{proof}
We start by proving a stronger version of the conformal invariance statement. Namely, if one expands the determinant under the integrals as a sum of terms over permutations $\iota$,
then each multiple integral of the term $T_{\epsilon,\iota}$ corresponding to a fixed $\epsilon$ and $\iota$ is conformally invariant. This follows from the conformal covariance of the functions $f_{\pm}(z_i,z_j)$ stated in
Lemma~\ref{lem:confcov} and an integration by substitution. Indeed, it is enough to notice that $T_{\epsilon,\iota}$ is a product of $n$ functions $f_{\pm}(z_i,z_j)$ or their conjugates 
with the property that each variable $z_i$ appears in it exactly twice and in a way that, under a conformal map $\varphi$, it contributes a factor $\varphi'(z_i)$ if $\epsilon_i=1$ and $\overline{\varphi'(z_i)}$ if $\epsilon_i=-1$.

We now turn to the convergence part.
To this end, for $i=1,\ldots,n$ and every $\delta$ small enough, we fix a dual path  $\gamma^{\delta}_i$ connecting $(u_{i}^0)^{\delta}$ with $u_{i}^{\delta}$ that converges uniformly to $\gamma_i$.
It will be convenient to choose the paths $\gamma^\delta_i$ in such a way that:
\begin{itemize}
\item the faces of $C_{D^\delta}$ visited by each $\gamma^\delta_i$ alternate with each step between $U^{\delta}$ and $V^{\delta}$ (by definition, the paths start and end in $U^{\delta}$),
\item the restriction of each $\gamma^\delta_i$ to $U^{\delta}$ is a path in the dual of $D^{\delta}$, meaning that consecutive faces share an edge in $D^{\delta}$,
\item the restriction of each $\gamma^\delta_i$ to $V^{\delta}$ is a path in $D^{\delta}$ given by the left endpoints of the edges of $D^{\delta}$ crossed by the path.
\end{itemize}
Note that such paths exist (for $\delta$ small enough), and they only cross corner edges of $C_{D^\delta}$.

We enumerate the edges crossed by $\gamma^{\delta}_i$ (there is always an even number of them) using the symbols $c_{i,1}^+,c_{i,1}^-,\ldots,c_{i,l_i}^{+},c_{i,l_i}^{-}$.
With a slight abuse of notation we will also write $c_{i,t}^{\pm}$ for the indicator functions that the edge belongs to the dimer cover, and 
 $\hat{c}_{i,t}^{\pm}:=c_{i,t}^{\pm} -\mathbf E[c_{i,t}^{\pm}]$ for the centred version.
Since the height increments are centered by the choice of the reference 1-form $f_0$~\eqref{eq:refflow} and since $|f_0|= 1/2$ on all roads, we find
\begin{align}
\mathbf{E}^{\emptyset,\emptyset}_{D^{\delta}, D^{\delta}}\Big[\prod_{i=1}^n (h^{\delta}(a_i)-h^{\delta}(a_{i}^0))\Big]  &=\mathbf{E}^{\emptyset,\emptyset}_{D^{\delta}, D^{\delta}}\Big[ \prod_{i=1}^n \sum_{t=1}^{l_i} (c_{i,t}^{+} -{c_{i,t}^{-}} )\Big]\nonumber \\ 
&= \mathbf{E}^{\emptyset,\emptyset}_{D^{\delta}, D^{\delta}}\Big[ \prod_{i=1}^n \sum_{t=1}^{l_i} (\hat{c}_{i,t}^{+} -\hat{c}_{i,t}^{-} )\Big] \nonumber \\
&=\sum_{t_i=1}^{l_1}\cdots \sum_{t_n=1}^{l_n} \sum_{s\in\{ \pm \}^n}  (-1)^{\#_-(s)} \mathbf{E}\Big[  \prod_{i=1}^n \hat{c}_{i,t_i}^{s_i}\Big], \label{eq:plugdet}
\end{align}
where $\#_-(s)$ is the number of minuses in $s$.

Fix $t_1,\ldots,t_n$ and $s\in \{\pm\}^n$, and let $\hat c_i:=\hat c_{i,t_i}^{s_i}$.
By   \cite[Lemma~21]{Ken00}, the determinant of the inverse Kasteleyn matrix gives correlations of height increments, hence
\begin{align} \label{eq:Kenyon}
\mathbf{E}^{\emptyset,\emptyset}_{D^{\delta}, D^{\delta}}\Big[  \prod_{i=1}^n \hat{c}_i\Big] &=\Big( \prod_{i=1}^n K(b_i,w_i) \Big) \det \hat C =(-1)^n \det \hat C=\det \hat C,
\end{align}
where $\hat C$ is the $n\times n$ matrix given by
\begin{align*}
\hat{C}_{i,j}=\begin{cases} K^{-1}(w_i,b_j) &\text{if } i\neq j, \\
0 &\text{otherwise}.
\end{cases}
\end{align*}
Here we used that the edges of $C^{\delta}$ (roads) corresponding to the corners in $D^{\delta}$ are assigned weight $-1$ in the Kasteleyn weighting as defined in Section \ref{sec:Kasteleyn}.

Let $e_i$ be the edge satisfying $c^{\pm}(e_i)=c_{i,t_i}^{\pm}$, and let $z_i$ be its midpoint. We write $f_{\pm} :=f^D_{\pm}$ and $f^{\delta}:=f^{D_{\delta}}$. Proposition~\ref{cor:fermscale} gives
\begin{align*}
K^{-1}(w_i,b_j)&=-\frac{\delta \i}{\sqrt2 }\big(f_-(z_i,z_j)- \overline{\eta}_{c_i}^2\eta_{c_j}^2 \overline{f_-(z_i,z_j)} +  \overline{\eta}^2_{c_i}f_+(z_i,z_j)  - {\eta^2_{c_j}}\overline{f_+(z_i,z_j)}  +o(1)\big).
\end{align*}
We now expand the determinant from \eqref{eq:Kenyon} as a sum over permutations.
Let us investigate the term in this expansion coming from a fixed permutation~$\iota$, and for simplicity of notation, let us assume that $\iota$ is the cycle $\iota(i)=i+1\ (\text{mod} \ n)$. The case of a general permutation 
will follow in a similar manner. The term under consideration reads
\begin{align}
&\textnormal{sgn}(\iota) \frac{\delta^n}{\sqrt{2}^n} \i^n\prod_{i=1}^n \Big(  f_-(z_i,z_{i+1})+  \overline{\eta}^2_{c_i}f_+(z_i,z_{i+1}) - \nonumber \\ 
&\qquad \qquad \qquad \qquad \overline{\eta}_{c_i}^2\eta_{c_{i+1}}^2 \overline{f_-(z_i,z_{i+1})}- {\eta^2_{c_{i+1}}}\overline{f_+(z_i,z_{i+1})} \Big) +o(\delta^n) \nonumber \\ 
&=\textnormal{sgn}(\iota) \frac{\delta^n}{\sqrt{2}^n}\i^n \prod_{i=1}^n \Big(  f_{-1,1}(z_i,z_{i+1})+  \eta_{c_i}^{-2}f_{1,1}(z_i,z_{i+1}) - \nonumber \\  \label{eq:expandprod} 
&\qquad \qquad \qquad \qquad  {\eta}_{c_i}^{-2}\eta_{c_{i+1}}^2 
{f_{1,-1}(z_i,z_{i+1})}-\eta_{c_{i+1}}^2{f_{-1,-1}(z_i,z_{i+1})} \Big) +o(\delta^n). 
\end{align}
We can now expand the product into a sum of $4^n$ terms. Note that for each corner~$c_i$, the factors ${\eta_{c_i}^2}$ and $\eta_{c_i}^{-2} $ appear in exactly one 
out of $n$ brackets, meaning that each final term contains a multiplicative factor of $\eta_{c_i}^{r_{c_i}}$, where $r_{c_i} \in\{-2,0,2\}$. 

The first important observation is that 
the terms for which there exists $i$ such that $r_{c_i}=0$ cancel out to $o(\delta^n)$ after summing over all sign choices $s\in\{-1,1\}^n$ in \eqref{eq:plugdet}. Indeed, for each such term,
take the smallest $i$ for which $r_{c_i}=0$ and consider the corresponding term assigned in~\eqref{eq:plugdet} to a different sign choice $s'$ which differs from $s$ only at the coordinate $i$. 
By \eqref{eq:expandprod} the two terms differ by $o(\delta^n)$, and the cancellation in \eqref{eq:plugdet} is caused by the fact that $\#_-(s)=-\#_-(s')$.

There are exactly $2^n$ remaining terms indexed by $\epsilon\in\{-1,1\}^n$ that satisfy $r_{c_i}= -2\epsilon_i$ for all~$i$. Note that in the embedding of the square lattice $\delta\mathbb Z^2$,
all corners have length $\delta\sqrt2/2$, and therefore
\[
\eta_{c_i}^{\pm 2}=\sqrt{2}\delta^{-1} dc_i^{(\mp 1)},
\] 
where 
$dc_i^{(1)}:=dc_i$ and $dc_i^{(-1)}:=\overline{dc_i}$. 
Hence, the $\sqrt2$-terms cancel out, and each such term is of the form
\begin{align} \label{eq:sumsigns}
\textnormal{sgn}(\iota) \i^n \Big(\prod_{i=1}^n \epsilon_i\Big) \Big(\prod_{i=1}^n f_{\epsilon_i,\epsilon_{i+1}}(z_i,z_{i+1})\Big) dc_i^{(\epsilon_1)} \cdots dc_n^{(\epsilon_n)} +o(\delta^n).
\end{align}
The term $\prod_{i=1}^n \epsilon_i$ arises as the product of the signs from the expansion of~\eqref{eq:expandprod}.

Since
\[
d(c_{i,t_i}^{+})^{(\epsilon_i)} - d(c_{i,t_i}^{-})^{(\epsilon_i)} = d(z_i^{\delta})^{(\epsilon_i)},
\] 
 keeping the permutation $\iota$ and the signs $\epsilon$ fixed, and summing \eqref{eq:sumsigns} over all $s\in \{-1,1\}^n$, we obtain
\[
\textnormal{sgn}(\iota)\i^n \Big(\prod_{i=1}^n \epsilon_i\Big) \Big(\prod_{i=1}^n f_{\epsilon_i,\epsilon_{i+1}}(z_i,z_{i+1})\Big) d(z_1^{\delta})^{(\epsilon_1)} \cdots d(z_n^{\delta})^{(\epsilon_n)} +o(\delta^n) .
\]
Finally, summing back over all permutations and using that $\gamma^\delta_i\to \gamma_i$ as $\delta\to 0$, we obtain that \eqref{eq:plugdet} is equal to 
\begin{align}
\i^n\sum_{t_i=1}^{l_1}\cdots \sum_{t_n=1}^{l_n} \Big(\sum_{\epsilon \in\{ \pm \}^n}\Big( \prod_{i=1}^n \epsilon_i \Big)\det \big[ f_{\epsilon_i,\epsilon_j} (z_i,z_j)\big]_{1\leq i,j \leq n} d(z_1^{\delta})^{(\epsilon_1)} \cdots d(z_n ^{\delta})^{(\epsilon_n)}+o(\delta^n)\Big) \nonumber\\
= \i^n\sum_{\epsilon \in\{ \pm \}^n}\Big( \prod_{i=1}^n \epsilon_i\Big)  \int_{\gamma_1} \cdots\int_{\gamma_n} \det \big[ f_{\epsilon_i,\epsilon_j} (z_i,z_j)\big]_{1\leq i,j \leq n}  dz_1^{(\epsilon_1)} \cdots dz_n^{(\epsilon_n)}+o(1).\label{eq:detpfaff}
\end{align}
This concludes the proof of Proposition~\ref{prop:intdet}.
\end{proof}

\begin{proof}[Proof of Theorem~\ref{thm:moments}]
We already proved in Proposition~\ref{prop:intdet} that the desired limit exists and is conformally invariant. Hence, it is enough to identify it for the upper half-plane $\mathbb{H}$.
In this case, by Lemma~\ref{lem:confcov} we have an explicit formula
\[
f_{\epsilon_i,\epsilon_j}(z_i,z_j)=  \frac{\mathrm{i}^{\tfrac{\epsilon_j-\epsilon_i}{2}}}{2\pi\big(z_j^{(\epsilon_j)} - z_i^{(\epsilon_j)}\big)},
\]
where $z_i^{(1)}= z_i$ and $z_i^{(-1)}= \overline{z_i}$. Up to conjugation by a diagonal matrix with entries~$\i^{\tfrac{\epsilon_i}2}$, this is the same matrix as in \cite[Lemma~3.1]{Ken01}, and hence  
\[
\det \big[ f_{\epsilon_i,\epsilon_j} (z_i,z_j)\big]_{1\leq i,j \leq n} = \frac{1}{(2\pi)^n}\sum_{\pi \text{ pairing of } \{1,\ldots, n\}} \prod_{\{i,j\}\in \pi} \frac{1}{\big(z_j^{(\epsilon_j)} - z_i^{(\epsilon_i)}\big)^2}.
\]
This means that, after exchanging the order of summations, integrals and products, \eqref{eq:detpfaff} is equal to
\begin{align*}
& \frac{\i^n}{(2\pi)^n } \sum_{\pi \text{ pairing of } \{1,\ldots, n\}} \prod_{\{i,j\}\in \pi}2\Re e\Big[\int_{\gamma_j} \int_{\gamma_i}  \frac{dz_idz_j}{(z_j-z_i)^2} - \frac{d\overline{z_i}dz_j}{(z_j-\overline{z_i})^2} \Big] \\
  &={{\pi}^{-\tfrac n 2}} \sum_{\pi \text{ pairing of } \{1,\ldots, n\}} \prod_{\{i,j\}\in \pi} \frac1{2\pi} \ln\Bigg|\frac{(u_j-u_i)(u_j^0-u_i^0)(u_j^0-\overline{u_i})(u_j-\overline{u_i^0})}{(u_j^0-u_i)(u_j-u_i^0)(u_j-\overline{u_i})(u_j^0-\overline{u_i^0})}\Bigg|.
\end{align*}

Note that the terms in the product above converge to $G_{\mathbb H}(u_i,u_j)$ as $u_i^0$ and $u_j^0$ get close to $\partial \mathbb H$. 
This together with \eqref{eq:boundaryapprox} implies that, up to the explicit multiplicative constant, the moments have the same scaling limit as in~\cite{Ken01}, which ends the proof.
\end{proof}

\subsection{Convergence of $h^{\delta}$ as a random distribution}\label{sec:convergence h}
Recall that for $a\in D$ we write $h^\delta(a)$ for the evaluation of the nesting field at a face $u^\delta=u^\delta(a)$ of $D^\delta$ containing $a$. (Here we talk only about the graph $D^\delta$ where the nesting field is defined, and not about $C_{D^\delta}$ which is used as a intermediate tool to prove this convergence.)
For a test function $g:D\to \mathbb R$, define
\begin{align} \label{eq:defh}
h^{\delta}(g) :=\int_D g(a) h^\delta(a) da.
\end{align}

\begin{theorem} \label{thm:fieldmoments}
Let $h_D$ be the GFF in $D$ with zero boundary conditions, and let $g_1,\ldots,g_k: D\to \mathbb R$ be continuous and bounded test functions. Then, for $l_1,\ldots,l_k \in \mathbb{N}$,
\[
\lim_{\delta\rightarrow0}\mathbf{E}^{\emptyset,\emptyset}_{D^{\delta}, D^{\delta}} \Big[ \prod_{i=1}^k h^{\delta}(g_i)^{l_i}\Big]=\mathbf{E}\Big[ \prod_{i=1}^k (\tfrac{1}{\sqrt{\pi}} h_D(g_i))^{l_i} \Big],
\]
\end{theorem}

\begin{proof}
We first note that if $\sum_{i=1}^k l_i$ is odd, then the corresponding moments of $h^\delta$ and $h$ vanish and there is nothing to prove. Moreover,
to simplify notation, we only consider moments $\mathbf{E} [ h^{\delta}(g)^{l}]$ of one test function $g$ for $l$ even. The general case follows in a similar way. 
To this end, we fix $l\geq2$, and define 
\[
D^l_\delta := \{(a_1,\ldots,a_l)\in D^l: |a_i-a_j|<\delta \textnormal{ for some } i\neq j \}.
\]
Then by Lemma~\ref{lem:relevantcluster} and \eqref{eq:logmoment} we have 
\begin{align*}
\int_{D} \cdots \int_D \mathbf{E}^{\emptyset,\emptyset}_{D^{\delta}, D^{\delta}} \Big[\prod_{i=1}^l g(a_i)h^{\delta}(a_i)\Big] \mathbf 1_{(a_1,\ldots,a_l)\in D^l_\delta}da_1\cdots da_l &\leq C \|g \|^l_{\infty} (\log\tfrac1\delta)^{lM} \pmb\lambda^{2l}(D^l_\delta) \\
&\leq C' \|g \|^l_{\infty}
\pmb\lambda^{2}(D)^{l-1}(\log\tfrac1\delta)^{l M}\delta^2
\end{align*}
for some constants $C,C'$ and $M$ that depend on $l$, where $\pmb\lambda^{2l}$ is the $2l$-dimensional Lebesgue measure. 
Note that the right-hand side tends to zero as $\delta\to 0$.
The function 
\[
(a_1,\ldots,a_l) \mapsto |\log(\min_{i\neq j} |a_i-a_j|)|^{lM}
\] is integrable over $D^l$, and hence by dominated convergence, Lemma~\ref{lem:relevantcluster} and~\eqref{eq:logmoment} again, we have
\begin{align*}
\lim_{\delta \to 0}\mathbf{E}^{\emptyset,\emptyset}_{D^{\delta}, D^{\delta}} \big[ h^{\delta}(g)^{l}\big] &= \lim_{\delta \to 0} \int_{D} \cdots \int_D\mathbf{E}^{\emptyset,\emptyset}_{D^{\delta}, D^{\delta}} \Big[\prod_{i=1}^l g(a_i)h^{\delta}(a_i)\Big] da_1\cdots da_l  \\
& =\lim_{\delta \to 0} \int_{D} \cdots \int_D\mathbf{E}^{\emptyset,\emptyset}_{D^{\delta}, D^{\delta}}\Big[\prod_{i=1}^l g(a_i)h^{\delta}(a_i)\Big] \mathbf 1_{(a_1,\ldots,a_l)\in D^l\setminus D^l_\delta}da_1\cdots da_l  \\
& = \int_{D} \cdots \int_D\Big(\prod_{i=1}^l g(a_i)\Big) \lim_{\delta \to 0}\mathbf{E}^{\emptyset,\emptyset}_{D^{\delta}, D^{\delta}} \Big[\prod_{i=1}^n h^{\delta}(a_i)\Big] \mathbf 1_{(a_1,\ldots,a_l)\in D^l\setminus D^l_\delta}da_1\cdots da_l  \\
&= \int_D \cdots\int_D \Big(\prod_{i=1}^l g(a_i)\Big) \sum_{\pi \text{ pairing }} \prod_{\{ i,j\} \in \pi} \frac{1}{\pi} G_D (a_i,a_j)da_1\cdots da_l \\
&= \mathbf{E} [ (\tfrac{1}{\sqrt{\pi}}h_D(g))^{l} ],
\end{align*}
where the second last equality follows from Theorem~\ref{thm:moments}.
\end{proof}

\begin{remark}
We note that the same convergence as in Theorem~\ref{thm:fieldmoments} holds if the height function is considered as a function on all faces of $C_{G^\delta}$ and not only on the faces of $G^\delta$.
\end{remark}

We are now ready to conclude the proof the main theorem of this section.
\begin{proof}[Proof of Theorem~\ref{thm:NFconv}]
By Theorem~\ref{thm:fieldmoments}, all moments of $h^{\delta}$ converge to the corresponding moments of $\tfrac{1}{\sqrt{\pi}}h_D$. Since $h_D$ is a Gaussian process, its moments identify its law uniquely. Since convergence of the second moment implies tightness, we conclude that $h^{\delta}$ tends to $ \tfrac{1}{\sqrt{\pi}}h_D$ in distribution as $\delta$ tends to $ 0$ in the space of 
generalized functions acting on continuous test functions with compact support.
\end{proof}

\section{Further preliminaries}\label{sec:prem}
In this section, we recall some background on the continuum side.

In this section, we recall some background on the continuum side, notably on the Gaussian free field, the local sets and the two-valued sets. Throughout, let $D\subsetneq\Cb$ be a simply connected domain whose boundary is a Jordan curve.

The Schramm-Loewner evolution (SLE) was introduced by Schramm in \cite{MR1776084}. It is a family of non self-crossing random curves which depend on a parameter $\kappa> 0$. 
For many discrete models,  free or wired/monochromatic boundary conditions force the interfaces to take the form of loops. 
The loop interfaces are conjectured (and sometimes proved) to converge to a conformal loop ensemble (CLE) in the continuum,  which is a random collection of loops contained in $D$ that do not cross each other. 
The family of CLE  was  introduced by Sheffield in \cite{MR2494457} and further studied by Sheffield and Werner in  \cite{MR2979861}. It depends on a parameter $\kappa\in(8/3,8)$ and can be constructed using variants of SLE$_\kappa$.

In \cite{MR2486487, MR3101840}, Schramm and Sheffield made the important discovery that level lines of the discrete Gaussian free field (GFF) converge in the scaling limit to  SLE$_4$ curves, and that the limiting SLE$_4$ curves are coupled with the continuum GFF as its \emph{local sets} (i.e., a set with a certain spatial Markov property, see Definition~\ref{def:local_set}).  
More generally, the theory of local sets developed in \cite{MR3101840} allows one to couple SLE$_\kappa$ with the GFF for all $\kappa\in(0,8)$.
The coupling between SLE$_\kappa$ and GFF 
was further developed in \cite{MR2525778,MR3477777,MR3502592,MR3548530,MR3719057} (also, see references therein). 

In this work, we are only concerned with the case $\kappa=4$. 
It was shown in \cite{MR3101840} that SLE$_4$-type curves are coupled with the GFF with a height gap $2\lambda$   in such a way that they are local sets of the GFF with boundary values respectively $a-\lambda$ and $a+\lambda$ on the left- and right-hand sides of the curve. A crucial property shown in  \cite{MR3101840} is that such SLE$_4$-type curves are deterministic functions of the GFF. We call these curves level lines, to keep the same terminology as in the discrete. The value $a\in\Rb$ is called the \emph{height} of the level line. 
The coupling between SLE$_4$ and GFF was extended to CLE$_4$ and GFF by Miller and Sheffield \cite{MS} (a more general coupling between CLE$_\kappa$ and GFF for all $\kappa\in(0,8)$ was established in \cite{MR3708206}; a proof for the case $\kappa=4$ was also provided in \cite{MR3936643}).
 
Let us fix some notation that will be used throughout this work.
For any simply connected domain $U$, we say that its boundary $\partial U$ is a \emph{contour}.
If $\gamma$ is a simple loop, then let $O(\gamma)$ denote the domain encircled by $\gamma$, which is equal to the unique bounded connected component of $\Cb\setminus \gamma$. 
Let $\ol O(\gamma)$ be the closure of $O(\gamma)$.
Every simple loop is a contour, but a contour need not be a loop or a curve. 
Let $h$ be a zero boundary GFF in $D$. 
 For every simply connected domain $U\subseteq D$, let $h|_{U}$ denote the restriction of $h$ to the domain $U$. 
 If $h|_{U}$ is equal to a GFF in $U$ with constant boundary conditions, say equal to $c$, then let $h^0|_{U}$ be the zero boundary GFF so that $h|_{U}$ is equal to $h^0|_{U}$ plus~$c$. This constant $c$ is also called the \emph{boundary value} of $U$, or the \emph{boundary value} of $\partial U$. Let $\Gamma$ denote a collection of simple loops which do not cross each other. Let $\gask(\Gamma)$ denote the \emph{gasket} of $\Gamma$, which is equal to $\ol D\setminus \cup_{\gamma\in \Gamma} O(\gamma)$.
Given a connected set $A\subseteq\ol D$ such that $\partial D\subseteq A$, let $\lp(A)$ denote the collection of outer boundaries of the connected components of $D\setminus A$.

The Miller-Sheffield coupling between the GFF and CLE$_4$ states that $h$ a.s.\ uniquely determines a random collection $\Gamma$ of simple loops which do not cross each other and satisfy the following property (see Fig.~\ref{fig:CLE4}, left): conditionally on $\gask(\Gamma)$, for each loop $\gamma \in \Gamma$, there exists $\epsilon(\gamma)\in\{-1, 1\}$ such that $h|_{O(\gamma)}$ is equal to $\epsilon(\gamma) 2\lambda$ plus a zero-boundary GFF. In addition, the fields $h|_{O(\gamma)}$ for different $\gamma$'s are (conditionally) independent of each other.
In other words, $\gask(\Gamma)$ is a {\em local set} of $h$ with boundary values in $\{-2\lambda, 2\lambda\}$.
It turns out that $\Gamma$ has the law of a CLE$_4$. In addition,  $\gask(\Gamma)$ carries no mass of the GFF: for all test function $f$ on $D$, we have
\begin{align}\label{eq:thin}
\int_D f(x) h(x) dx = \sum_{\gamma\in\Gamma} \int_{O(\gamma)} f(x) h|_{O(\gamma)}(x) dx.
\end{align}
Each loop $\gamma$ in CLE$_4$ is a level line (we also call it a level loop) of the GFF with boundary value $\epsilon(\gamma) 2\lambda$ on the inner side of the loop and $0$ on the outer side of the loop (so it is at height $\epsilon(\gamma) \lambda$).

\begin{figure}
\centering
\includegraphics[width=0.4\textwidth, page=1]{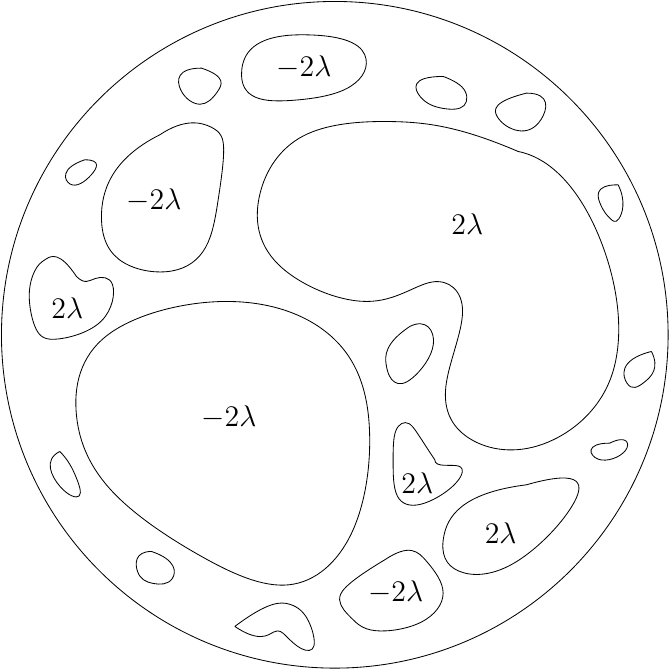}\qquad
\includegraphics[width=0.4\textwidth, page=2]{CLE4}
\caption{\textbf{Left:}  A sketch of CLE$_4$ coupled with the GFF. The loops have boundary values $-2\lambda$ or $2\lambda$. \textbf{Right:} We depict a few layers of the nested CLE$_4$ coupled with the same GFF. We mark in red the outermost loops that have boundary values $-4\lambda$ or $4\lambda$, which belong to $\lp_{-4\lambda, 4\lambda}$.}
\label{fig:CLE4}
\end{figure}

It is also natural to consider level loops of $h$ at other heights than those of CLE$_4$. For example, the previous coupling can be extended to the nested CLE$_4$ (by sampling the CLE$_4$ coupled to $h^0|_{O(\gamma)}$ for each $\gamma\in\Gamma$), so that the further layers of CLE$_4$ loops are at heights $(2k+1)\lambda$ for $k\in\Zb$.
For $a\in (-\lambda, \lambda)$, the outermost  level loops of $h$ at height $a$  are given by boundary conformal loop ensembles (BCLE)  \cite{MR3708206}, and one can then also consider nested versions of BCLE to obtain level loops of $h$ at a continuum range of heights.

The gaskets of CLEs and BCLEs belong to a particular class of local sets called \emph{two-valued sets} introduced by Aru, Sep\'ulveda and Werner in \cite{MR3936643}: a two-valued set is a \emph{thin local set} (a terminology in \cite{MR4010952} meaning that the local set carries no mass of the GFF, described by~\eqref{eq:thin}) with two boundary values in $\{-a, b\}$, denoted by $\Ab_{-a,b}$. 
For example, the gasket of CLE$_4$ is equal to $\Ab_{-2\lambda, 2\lambda}$, and the gaskets of BCLEs  correspond to $\Ab_{-a,b}$ with $a+b=2\lambda$. 
It was shown in  \cite{MR3936643}  that the sets $\Ab_{-a,b}$ exist for $a,b>0$ with $a+b\ge 2\lambda$, and are a.s.\ unique and determined by $h$. 
Let us use $\lp_{-a,b}$ to denote $\lp(\Ab_{-a,b})$. Throughout, we denote by $\lp^+_{-a,b}$ (resp.\ $\lp^-_{-a,b}$) the set of loops in $\lp_{-a,b}$ with boundary value $b$ (resp.\ $-a$). We will also use notations like CLE$_4(h)$ and $\lp_{-a,b}(h)$ to represent these sets coupled to $h$ (especially when there are different GFFs involved).

The loops in $\lp_{-a,b}$ are composed of SLE$_4$-type curves which are level lines of $h$, hence are a.s.\ simple and do not cross each other (but can intersect each other). 
The law of $\lp_{-a,b}$ is invariant under all conformal automorphisms from $D$ onto itself, since $h$ is invariant under those conformal maps.
The geometric properties of the loops in $\lp_{-a,b}$ are well understood (see e.g.~\cite{MR3936643,MR3827968,SSV} and Lemma~\ref{lem:adjacency}).

Let us now give a simple and intuitive explanation of the two-valued sets, and postpone more details to the next subsection.
As pointed out in  \cite{MR3936643}, $\Ab_{-a,b}$ is a 2D analogue for GFF of the stopping time of a 1D Brownian motion upon exiting $[-a,b]$, and is intuitively the set of points that are connected to the boundary by a path on which the values of $h$ remain in $[-a,b]$.
Let us illustrate this by the following construction of $\Ab_{-2n\lambda, 2n\lambda}$ via iterated CLE$_4$s (see Fig.~\ref{fig:CLE4}, right). 
For each point $z\in D$, the boundary values of the successive loops that encircle $z$ in the nested CLE$_4$ perform a symmetric random walk with steps  $\pm 2\lambda$. 
The loops in $\lp_{-2n\lambda, 2n\lambda}$ correspond to the first time that we obtain a nested CLE$_4$ loop with boundary value equal to $- 2n \lambda$ or $2n\lambda$. 

\medbreak

Let us give more details on Gaussian free field, local sets and two-valued sets. Here, we look at a GFF in the unit disk $\Ub$. For any other simply connected domain $D$, one can simply map $D$ conformally onto $\Ub$.
Let $\Gamma$ be the space of all closed nonempty subsets of $\overline \Ub$. We view $\Gamma$ as a metric space, endowed by the Hausdorff metric induced by the Euclidean distance. Note that $\Gamma$ is naturally equipped with the Borel $\sigma$-algebra on $\Gamma$ induced by this metric. Given $A \in \Gamma$, let $A_\delta$ denote the closure of the $\delta$-neighborhood of $A$ in $\Ub$. Let $\Ac_\delta$ be the smallest $\sigma$-algebra in which $A$ and the restriction of $h$ to the interior of $A_\delta$ are measurable. Let 
\[
\Ac:=\bigcap_{\delta\in\Qb, \delta>0} \Ac_\delta.\] Intuitively, this is the smallest $\sigma$-algebra in which $A$ and the values of $h$ in an infinitesimal neighborhood of $A$ are measurable.

\begin{definition}[Local set \cite{MR3101840}]\label{def:local_set}
Let $h$ be a GFF in $\Ub$. 
We say that a random set $A$ is a {\em local set} of $h$ if $A$ is a closed subset of $\overline\Ub$ and one can write $h=h_A + h^A$, where 
\begin{itemize}
\item $h_A$ is an $\Ac$-measurable random distribution which is a.s.\ haromonic on $\Ub \setminus A$.
\item conditionally on $\Ac$, $h^A$ is a random distribution which is independent of $\Ac$. It is a.s.\ zero on $A$ and equal to an independent zero boundary GFF in each connected component of $\Ub\setminus A$.
\end{itemize}
\end{definition}

Two-valued sets  were introduced by Aru, Sep\'ulveda and Werner in \cite{MR3936643}. More precisely, they denote thin local sets with two prescribed boundary values. 
In Section~\ref{subsec2:background}, we have mentioned the examples of  CLE$_4$ (whose gasket is a thin local set of a GFF with two boundary values in $\{-2\lambda, 2\lambda\}$) and BCLE$_4(-1)$ (whose gasket is a thin local set of a GFF with two boundary values in $\{-\lambda, \lambda\}$).

In \cite{MR3936643}, the authors first defined the more general family of \emph{bounded type thin local sets} (denoted by BTLS), as follows.
\begin{definition}[Bounded type thin local sets,  \cite{MR3936643}]\label{def:btls}
Let $h$ be a GFF in $D$. Let $A$ be a relatively closed subset of $D$.  For $K>0$, we say that $A$ is a $K$-BTLS of $h$ if 
\begin{enumerate}
\item (boundedness) $A$ is a local set of $h$ such that $|h_A(x)|\le K$ for all $x\in D\setminus A$.
\item (thinness) for any smooth function $f$, we have $(h_A, f)=\int_{D\setminus A} h_A(x) f(x) dx$.
\end{enumerate}
\end{definition}

It was shown in \cite{MR3936643} that a BTLS must be connected to the boundary of the domain.

\begin{lemma}[Proposition 4, \cite{MR3936643}]\label{lem:btls_connected}
If $A$ is a BTLS, then $A \cup \partial D$ is a.s.\ connected.
\end{lemma}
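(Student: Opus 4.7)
The plan is to argue by contradiction. Suppose on a positive-probability event that $A\cup\partial D$ is disconnected: then there exists a connected component $S$ of $A\cup\partial D$ disjoint from $\partial D$, so that $S\subset A$ and $S$ is compactly contained in $D$, with $\textnormal{dist}(S,\partial D\cup(A\setminus S))>\alpha$ for some $\alpha>0$ (measurable in $A$ via a standard countable-covering/pigeonhole argument). The open neighborhood $V:=\{z\in D:\textnormal{dist}(z,S)<\alpha/2\}$ then satisfies $\overline V\subset D$, $\overline V\cap A=S$, and $\partial V\subset D\setminus A$; in particular, $h_A$ is a bounded harmonic function, of $L^\infty$-norm at most $K$, on the annular region $V\setminus S$.

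The central input is a circle-average variance identity. For $z\in V\setminus S$ and $\epsilon<\textnormal{dist}(z,A)$, harmonicity of $h_A$ gives $h_\epsilon(z)=h_A(z)+h^A_\epsilon(z)$, where $h_\epsilon(z)$ denotes the $\epsilon$-circle average. Since $h_A$ is $\mathcal A$-measurable while $h^A$ is independent of $\mathcal A$, the two summands are independent; conditional on $\mathcal A$, the variable $h^A_\epsilon(z)$ is the $\epsilon$-circle average of a zero-boundary GFF on the connected component $O(z)$ of $D\setminus A$ containing $z$. Taking variances and using the standard formula $\textnormal{Var}(h_\epsilon(z))=\tfrac{1}{2\pi}\log(R(z,D)/\epsilon)+o(1)$ (with $R(z,\cdot)$ the conformal radius) yields
\[
\textnormal{Var}(h_A(z))=\textnormal{Var}(h_\epsilon(z))-\textnormal{Var}(h^A_\epsilon(z))=\tfrac{1}{2\pi}\,\mathbb E\!\left[\log\frac{R(z,D)}{R(z,O(z))}\right],
\]
where the $\log(1/\epsilon)$ contributions cancel. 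The uniform bound $|h_A|\le K$ then forces $\mathbb E[\log(R(z,D)/R(z,O(z)))]\le 2\pi K^2$ for every $z\in D$.

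To conclude, one exhibits a situation where this expectation can be made arbitrarily large. By Koebe's theorem, $R(z,O(z))\le 4\,\textnormal{dist}(z,\partial O(z))$, so a point $z$ close to the interior island $S$ forces $R(z,O(z))\to 0$ while $R(z,D)$ remains bounded below (as $V$ is a definite positive distance from $\partial D$). Since $z$ is fixed and $S$ is random, I plan to use the integrated version: Fubini gives $\mathbb E\!\int_V\log(R(z,D)/R(z,O(z)))\,dz\le 2\pi K^2|V|$, and on the positive-probability event of a nondegenerate island $S$ the inner integral is bounded below by a strictly positive multiple of the logarithmic capacity of $S$, which can be forced to be arbitrarily large by extracting islands of arbitrarily small diameter (but fixed capacity-to-diameter ratio) via a dyadic decomposition and absolute-continuity argument for the law of $A$. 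The main obstacle I anticipate is precisely this last step---transferring an island configuration observed at a fixed scale to arbitrarily small scales with a uniform positive probability---which requires careful use of the restriction/absolute-continuity properties of the GFF, or alternatively a direct argument along a dyadic sequence of scales converging to a random point of a randomly chosen island.
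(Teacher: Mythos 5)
The paper offers no proof here—it simply cites \cite{MR3936643}—so you are attempting a proof from scratch. Your variance identity
\(\textnormal{Var}(h_A(z))=\tfrac1{2\pi}\,\mathbb E[\log(R(z,D)/R(z,O(z)))]\)
is correct (it follows from $G_D(z,z')=\mathbb E[h_A(z)h_A(z')]+\mathbb E[G_{O(z)}(z,z')\mathbf 1_{O(z)=O(z')}]$ by letting $z'\to z$ and cancelling the $\log|z-z'|$ singularities), and the bound $|h_A|\le K$ does give $\mathbb E[\log(R(z,D)/R(z,O(z)))]\le 2\pi K^2$ uniformly in $z$. The technicalities around the event $\{d(z,A)\le\varepsilon\}$ can be handled. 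But the step that produces a contradiction is the one that does not work, and the obstacle you flagged is fundamental, not technical.

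Two concrete problems. First, the Fubini bound $\mathbb E\int_V\log(R(z,D)/R(z,O(z)))\,dz\le 2\pi K^2|V|$ is never violated by an island: by Koebe, $\log(1/R(z,O(z)))\lesssim \log(1/d(z,A))$, so for an island $S$ of diameter $d$ the relevant integral is of order $d^2\log(1/d)$, which tends to $0$ (not $\infty$) as $d\to 0$, and is simply a finite $O(1)$ quantity at a fixed scale. There is no blowup, hence no contradiction. Likewise, ``logarithmic capacity \dots forced arbitrarily large by extracting islands of arbitrarily small diameter'' is backwards: capacity decreases with diameter. Second, the proposed absolute-continuity/dyadic rescue does not rescue it, because the hypothesis $|h_A|\le K$ does not survive restriction: if $B\subset D$ is a small ball, then $A\cap \overline B$ is a local set of the GFF $h$ restricted to $B$, but the harmonic part of this restricted local set is $h_A$ plus the harmonic extension into $B$ of the field outside $B$, and the latter is unbounded. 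So one cannot transport a fixed-scale island configuration to all dyadic scales while retaining a uniform $K$. There is also a more basic gap you don't address: the variance inequality only sees islands that enclose a region (so that $R(z,O(z))$ is forced small for $z$ inside the hole); a floating connected compact $S$ with $\Cb\setminus S$ connected produces no small conformal radius anywhere, and your argument is silent about it. To fix the proof one would need a genuinely different mechanism for extracting a contradiction from the existence of a floating piece of $A$; I would suggest consulting the proof of Proposition~4 in \cite{MR3936643} directly rather than trying to push the variance bound further.
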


A two-valued set is defined to be a BTLS $A$ such that $h_A\in\{-a,b\}$ for $a, b>0$. 
The family of two-valued sets satisfies the properties of the following lemma.

\begin{lemma}[Proposition 2 in \cite{MR3936643}]\label{lem:tvs}
Let $-a<0<b$.
\begin{itemize}
\item When $a+b<2\lambda$, it is not possible to construct a BTLS $A$ such that $h_A \in\{-a, b\}$ a.s.
\item When $a+b\ge 2\lambda$, there is a unique BTLS $A$ coupled with $h$ such that $h_A\in\{-a,b\}$  a.s. We denote this set $A$ by $\Ab_{-a,b}$. 
\item If $[a,b]\subseteq [a', b']$, then  $\Ab_{-a,b} \subseteq \Ab_{-a', b'}$ a.s.
\end{itemize}
\end{lemma}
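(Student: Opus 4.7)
The plan is to establish the three items in sequence, leveraging the coupling between level lines of the GFF and SLE$_4$-type curves from Schramm--Sheffield~\cite{MR3101840} and the height-gap theorem of the same paper.

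For the first item I would argue by contradiction. Suppose $A$ is a BTLS with $h_A\in\{-a,b\}$ and $a+b<2\lambda$. By Lemma~\ref{lem:btls_connected}, $A\cup\partial D$ is connected, and (excluding the trivial case $A=\partial D$, which forces $h_A\equiv 0$) there must be a portion of $A$ separating a component of $D\setminus A$ carrying boundary value $-a$ from one carrying $b$. Along such a portion, $A$ is locally a level line of $h$ with a height jump of $a+b$, and the height-gap theorem forces $a+b\ge 2\lambda$, contradicting the assumption.

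For existence when $a+b\ge 2\lambda$, I would construct $\mathbb A_{-a,b}$ by an iterative exploration. Set $A_0:=\partial D$. Given $A_n$ and a connected component $U$ of $D\setminus A_n$ on which the boundary value $c\in(-a,b)$, extend $A_n$ inside $U$ by sampling the next layer of the CLE$_4$ of $h|_U^0$; for every subcomponent whose new boundary value $c+2\lambda\epsilon$ (with $\epsilon=\pm1$) lies in $[-a,b]$ one continues, and otherwise one stops by truncating the last level loop against a level line that brings the boundary value precisely to $-a$ or $b$ (which is possible exactly because $a+b\ge 2\lambda$). For each fixed $z\in D$, the sequence of boundary values of the component containing $z$ is a bounded martingale with jumps of size at most $2\lambda$, hence a.s.\ terminates in $\{-a,b\}$ in finitely many steps. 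The increasing limit $\mathbb A_{-a,b}:=\overline{\bigcup_n A_n}$ is then a local set of $h$ (as an increasing limit of local sets) with $h_{\mathbb A_{-a,b}}\in\{-a,b\}$; its thinness is inherited from that of each $A_n$ together with the vanishing Lebesgue measure of the exploration trace.

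For uniqueness, given two such BTLS $A$ and $A'$ coupled with the same $h$, I would consider the closure $A\vee A'$ of their union, which remains a local set (by the union lemma of~\cite{MR3101840}). The boundary value of $A\vee A'$ on any complementary component $U$ must simultaneously be consistent with that of $A$ and of $A'$ along the corresponding portions of $\partial U$, and one deduces $h_{A\vee A'}\in\{-a,b\}$; iterating the exploration above starting from $A\vee A'$ shows no further growth is possible, so $A=A'=A\vee A'$. The monotonicity statement is then immediate from the iterative construction: a smaller admissible interval terminates the exploration earlier. The main obstacle will be the uniqueness argument, which requires the delicate facts that BTLS are measurable functions of $h$ and that one can consistently compare boundary values of $A$, $A'$ and $A\vee A'$ along shared pieces of interface.
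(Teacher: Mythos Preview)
The paper does not prove this lemma; it is quoted verbatim from \cite{MR3936643} as background, so there is no ``paper's own proof'' to compare against. Your sketch is in the spirit of the arguments in \cite{MR3936643}, but two of the three steps have genuine gaps.

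For the non-existence item, you assume that a portion of $A$ separating a $(-a)$-component from a $b$-component is ``locally a level line of $h$ with a height jump of $a+b$'', and then invoke the height-gap theorem. But nothing in the definition of a BTLS tells you that $\partial A$ is an SLE$_4$-type curve, or even a curve at all; that regularity is essentially part of what one is trying to establish. The actual argument in \cite{MR3936643} runs in the other direction: one launches a genuine level line of $h$ (at a height strictly between $-a+\lambda$ and $b-\lambda$, which exists precisely when $a+b<2\lambda$) from a boundary point and shows, using the boundary-hitting rules for SLE$_4(\rho)$ processes, that this level line cannot touch $A$. This forces $A$ to be squeezed to $\partial D$, contradicting $h_A\in\{-a,b\}$.

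For uniqueness, you form $A\vee A'$, argue that $h_{A\vee A'}\in\{-a,b\}$, and then write ``no further growth is possible, so $A=A'=A\vee A'$''. The last implication does not follow: knowing that $A\vee A'$ is again a two-valued set only says $A\subseteq A\vee A'$ and $A'\subseteq A\vee A'$, which is tautological. You need an additional argument that a BTLS with values in $\{-a,b\}$ cannot be strictly contained in another one; in \cite{MR3936643} this is done by looking inside a component $O$ of $D\setminus A$ (where $h_A\equiv b$, say) and observing that $A'\cap O$, if non-empty, would be a BTLS of the GFF $h^0|_O$ with the single value $-a-b$, which is impossible by Lemma~\ref{lem:btls_connected} and a short argument. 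Your existence sketch via iterated CLE$_4$ layers is closer to the mark, though the ``truncation'' step that lands exactly on $-a$ or $b$ deserves a precise description (in \cite{MR3936643} this is handled via generalized level lines / BCLE rather than CLE$_4$ plus an ad hoc truncation).
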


This lemma shows that two-valued sets are deterministic functions of the GFF $h$ (when they exist), and this property will be instrumental in our proof.

When $a+b=2\lambda$, the set $\lp_{-a,b}$ is equal to BCLE$_4(\rho)$ (where $\rho=-a/\lambda$) and can be constructed using the branching  SLE$_4(\rho, -2-\rho)$ process (\cite{MR3708206,MR3936643}). The loops in $\lp^+_{-a,b}$ (resp.\ $\lp^-_{-a,b}$) correspond to the loops traced in the clockwise (resp.\ counterclockwise) direction by the branching SLE$_4(\rho, -2-\rho)$.
Properties of such SLE processes directly imply the following lemma.

\begin{lemma}[\cite{MR3708206,MR3936643}]
\label{lem:2lambda_intersect}
If $a+b=2\lambda$, every loop  in $ \lp_{-a,b}$ intersects $\partial D$. The loops in $\Lc^-_{-a,b}$ are equal to the outer boundaries of the connected components of $D \setminus \ol{\cup_{\gamma\in \Lc^+_{-a,b}} O(\gamma)}$.
\end{lemma}

For other values of $a,b$, $\Ab_{-a,b}$ is constructed by iterating the branching  SLE$_4(\rho, -2-\rho)$ processes.
Using properties of the SLE$_4(\rho, -2-\rho)$ processes, \cite{MR3827968} has deduced  the following intersecting behavior of the loops in $\lp_{-a,b}$, which will be useful for us later.

\begin{lemma}[Intersecting behavior of the loops \cite{MR3827968}]\label{lem:adjacency}

\begin{enumerate}
\item There exists a loop in $ \lp^+_{-a,b}$ (resp.\ $\lp^-_{-a,b}$) which intersects $\partial D$ if and only if $b<2\lambda$ (resp.\ $a<2\lambda$).
\item If $a+b<4\lambda$, then one can connect any two loops $\eta_1$ and $\eta_2$  in $\lp_{-a,b}$ by a finite sequence of loops $(\gamma_1, \dots, \gamma_n)$ so that $\gamma_1=\eta_1$, $\gamma_n=\eta_2$ and $\gamma_{k+1}$ intersects $\gamma_k$ for each $1\le k \le n-1$.
Only loops with different boundary values can intersect each other.
\end{enumerate}
\end{lemma}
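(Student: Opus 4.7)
The plan is to derive both parts from the iterative construction of $\Ab_{-a,b}$ developed in~\cite{MR3936643,MR3708206,MR3827968}. Recall that whenever $a'+b'=2\lambda$ with $a',b'>0$, the set $\Ab_{-a',b'}$ equals the gasket of some $\mathrm{BCLE}_4(\rho)$ whose loops all meet $\partial D$ (Lemma~\ref{lem:2lambda_intersect}); and for general $(a,b)$ with $a+b\ge 2\lambda$, one obtains $\Ab_{-a,b}$ by sampling successive two-valued sets of sum $2\lambda$ inside each complementary component until the boundary value enters the target set $\{-a,b\}$. This makes the boundary value along any sequence of nested components perform a $\pm\lambda$ random walk started at $0$ and stopped on $\{-a,b\}$.

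For Part~1, I would take first the direction $b<2\lambda$. I would initialize the iteration with $\Ab_{-(2\lambda-b),b}$, which is a valid first step since $a\ge 2\lambda-b>0$; its $+b$-loops are already at the target height, so they belong to $\lp^+_{-a,b}$, and they touch $\partial D$ by Lemma~\ref{lem:2lambda_intersect}. Subsequent refinement takes place only inside the $-(2\lambda-b)$-components and leaves these loops intact. Conversely, if $b\ge 2\lambda$, every $+b$-loop must arise after at least two steps of the iteration (a single $\pm\lambda$ step from $0$ lands in $\{-\lambda,\lambda\}$, never at $b$), so it lies strictly inside some first-step loop $L$; since the $+b$-loop is contained in the open domain $O(L)\subseteq D$ and the subsequent construction inside $O(L)$ is governed by a conditional GFF independent of the position of $L\cap\partial D$, it almost surely avoids $\partial D$. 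The statement for $\lp^-_{-a,b}$ follows by symmetry $h\mapsto-h$.

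For Part~2, I would first show that two $+b$-loops cannot intersect. In the iterative construction, two such loops are either strictly nested---in which case the random walk on boundary values has to descend to $-a$ before returning to $b$ and therefore a $-a$-loop sits strictly between them---or they live in different components of a common ancestor, in which case the ancestor's boundary is a loop of the opposite sign separating them; in both cases their closures are disjoint. For the connectivity statement, I would use that $\Ab_{-a,b}$ is thin and that its complement in $D$ is the disjoint union of the open regions enclosed by the loops of $\lp_{-a,b}$, so $\Ab_{-a,b}=\overline{\bigcup_{\gamma\in\lp_{-a,b}}\gamma}$. Combining this with Lemma~\ref{lem:btls_connected} gives that the union of loop closures and $\partial D$ is a connected, locally finite collection of closed Jordan curves; any separation of the corresponding intersection graph would yield a separation of this union, which forces the intersection graph to be connected and produces the desired chain.

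The main obstacle is making the iterative argument rigorous when $a+b\in[2\lambda,4\lambda)$ is not an integer multiple of $\lambda$, since then the heuristic $\pm\lambda$ random walk picture must be replaced by the genuine branching $\mathrm{SLE}_4(\rho,-2-\rho)$ description of $\lp_{-a,b}$. The hypothesis $a+b<4\lambda$ translates into the $\rho$-parameters at every branching step lying in $(-2,0)$, which is exactly the $\mathrm{SLE}_4$ regime in which the driving curve interacts with both boundary arcs and loops of opposite sign are forced to be adjacent, while nested loops of the same sign remain strictly separated; this is where the bulk of the technical work from~\cite{MR3827968} is needed to control the fine boundary behaviour.
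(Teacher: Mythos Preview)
The paper does not give its own proof of this lemma; it is quoted from \cite{MR3827968} and used as a black box. So there is no in-house argument to compare against, and the honest benchmark is whether your sketch could be completed to an independent proof.

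Your outline has the right architecture (iterated BCLE layers; $\mathrm{SLE}_4(\rho,-2-\rho)$ boundary behaviour), but several steps are not yet arguments.

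\textbf{Part 1, converse direction.} When $b\ge 2\lambda$ you place every $+b$-loop inside a first-level BCLE loop $L$ and assert it avoids $\partial D$ because the construction inside $O(L)$ is ``independent of the position of $L\cap\partial D$''. This is not enough: $L$ itself meets $\partial D$, so $\partial O(L)$ contains arcs of $\partial D$, and the inner loop is allowed to touch $\partial O(L)$. You still have to rule out that it does so precisely on $L\cap\partial D$. The actual mechanism is the $\mathrm{SLE}_4(\rho)$ boundary-hitting dichotomy: a level line at height $b-\lambda$ hits a boundary arc carrying value $0$ iff $|b-\lambda|<\lambda$, i.e.\ iff $b<2\lambda$. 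Your ``$\pm\lambda$ random walk'' picture is also not the construction used in \cite{MR3936643,MR3827968}; the increments at each layer are $-a',b'$ with $a'+b'=2\lambda$, not symmetric $\pm\lambda$, and the choice of split matters for the argument.

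\textbf{Part 2, same-sign loops.} Your case split begins with ``two such loops are either strictly nested\ldots''. Loops of $\lp_{-a,b}$ bound disjoint complementary components of a single closed set; none of them is nested inside another. So that branch is vacuous, and the other branch (``different components of a common ancestor'') does not by itself prevent two $+b$-loops from sharing a boundary point. What is actually used is that two level lines at the same height that touch would have to merge, which comes from the $\mathrm{SLE}_4(\rho)$ interaction rules, not from the tree structure of the iteration.

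\textbf{Part 2, connectivity.} You invoke Lemma~\ref{lem:btls_connected} and then claim the loop family is a ``locally finite collection of closed Jordan curves'', so that a disconnection of the intersection graph would disconnect the union. The family is \emph{not} locally finite: infinitely many small loops accumulate on $\partial D$ and along every loop. Connectedness of $\Ab_{-a,b}\cup\partial D$ therefore does not translate into finite chains of touching loops without additional input---this is exactly where \cite{MR3827968} uses the explicit branching-SLE description and the hypothesis $a+b<4\lambda$ (so that at every layer both force points have $\rho\in(-2,0)$ and adjacent opposite-sign loops genuinely touch).

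You correctly flag in your last paragraph that the real work lives in \cite{MR3827968}; the point is that each of the three gaps above is where that work is spent, not merely where it is optional polish.
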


We will also use the following lemmas to identify uniquely the law of the limiting interfaces in Sect.~\ref{sec:mainres}.

\begin{lemma}[Lemma 3.8, \cite{MR3827968}]
\label{lem:loop_intersect}
Let $a, b>0$ with $a+b> 2\lambda$. Then almost surely, a loop $\ell$ of $\Lc_{-a,b}$ labelled $-a$ touches the boundary if and only if $a<2\lambda$ and $\ell$ is a loop of $\Lc_{-a, -a+2\lambda}$ labelled $-a$.
Moreover, the loops of $\Lc_{-a,b}$ which do not touch the boundary and are surrounded by a loop $\gamma\in \Lc_{-a, -a+2\lambda}$ labelled $-a+2\lambda$ are exactly the loops of
$ \Lc_{-2\lambda, a+b-2\lambda}(h^0|_{O(\gamma)})$.
\end{lemma}

\begin{lemma} \label{lem:nestedident}
Let $A_1,A_2,\ldots$ be an increasing sequence of thin local set of a GFF $h$ in a domain $D$ with $A_1=A_{-\sqrt 2 \lambda, \sqrt 2 \lambda}$, and such that for each $k$ and each $\ell\in \mathcal L(A_k)$ 
with boundary value $m\sqrt 2 \lambda$, each loop in $\mathcal L(A_{k+1})$ encircled by $\ell$ has boundary value either $(m-1)\sqrt 2 \lambda$ or $(m+1)\sqrt 2 \lambda$.
Then, for each $k$ and each $\ell\in \mathcal L(A_k)$, the loops in $\mathcal L(A_{k+1})$ encircled by $\ell$ are exactly $\mathcal L_{-\sqrt 2 \lambda, \sqrt 2 \lambda}(h^0_{O(\ell)})$.
\end{lemma}
\begin{proof}
Suppose that $\ell\in \mathcal L(A_k)$ has boundary value $m\sqrt{2}\lambda$.
Since  $A_1,A_2,\ldots$ is an increasing sequence, every loop $\gamma\in \Lc(A_{k+1})$ is either encircled by $\ell$ or $O(\gamma) \cap O(\ell)=\emptyset$.
Since $A_{k+1}$ is a thin local set of $h$, we have  for any smooth function $f$
\begin{align*}
\int_{O(\ell)} h_{O(\ell)}(x) f(x) dx =\int_{O(\ell)} h(x) f(x) dx = \sum_{\gamma\in \Lc (A_{k+1}), O(\gamma)\subseteq O(\ell) } \int_{O(\gamma)} h_{O(\gamma)} (x) f(x) dx.
\end{align*}
Since $\ell$ has boundary value $m\sqrt{2}\lambda$ and each $\gamma\in \Lc (A_{k+1})$ encircled by $\ell$ has boundary value either $(m-1)\sqrt 2 \lambda$ or $(m+1)\sqrt 2 \lambda$, we can conclude the proof.
\end{proof}

\begin{lemma}\label{lem:CLEiteration}
Consider the following collection of loops defined iteratively. 
\begin{itemize}
\item Let $\Lc_0(h)$ be the collection of loops resulting from replacing each $\ell \in \Lc^{+}_{-\sqrt 2\lambda, \sqrt 2\lambda}(h)$ (resp.~$\ell \in \Lc^{-}_{-\sqrt 2\lambda, \sqrt 2\lambda}(h)$) by an independent (conditionally on $\ell$) $\Lc_{- \sqrt 2\lambda,  (2 -\sqrt 2)\lambda}(h|_{O(\ell)})$
(resp.~$\Lc_{-(2 -\sqrt 2)\lambda,\sqrt 2\lambda}(h|_{O(\ell)}  )$). 
\item Given $\Lc_k(h)$, define $\Lc_{k+1}(h)$ by replacing each $\ell \in \Lc_k(h)$ with boundary value $0$ by an independent (conditionally on $\ell$) copy of $\Lc_0(h|_{O(\ell)})$.
\end{itemize}
Then, $\liminf_{k\to \infty}\Lc_k(h)=\limsup_{k\to \infty}\Lc_k(h)=\mathcal L_{- 2\lambda, 2 \lambda}(h) =\textnormal{CLE}_4(h)$.
\end{lemma}
\begin{proof}

For each $k$, $\gask(\Lc_k(h))$ is clearly a thin local set of $h$ with boundary values in $\{ -2\lambda, 0,2\lambda\}$. It remains to prove that $\gask(\lim_{k\to \infty} \Lc_k(h))$ is a thin local set of $h$ with
boundary values in $\{ -2\lambda, 2\lambda\}$. For this purpose, it is enough to prove that $\lim_{k\to \infty} \Lc_k(h)$ a.s.\ does not contain any loop with boundary value $0$. 

Let $D$ be the domain on which $h$ is defined. For $z\in D$, if $z$ is encircled by a loop in $\Lc_k(h)$ with boundary value in $\{-2\lambda, 2\lambda\}$ for some $k\ge 0$, then $z$ cannot be encircled by a loop in $\lim_{k\to \infty}  \Lc_k(h)$ with boundary value $0$. Let $E(z)$ be the event that $z$ is encircled by a loop  $\ell_k\in \Lc_k(h)$ with boundary value $0$ for every $k\ge 0$.
Then $z$ is encircled by a loop $\ell \in \lim_{k\to \infty}  \Lc_k(h)$ if and only if $E(z)$ occurs. On this event, $\ell$ is a.s.\ encircled by $\ell_k$ for every $k\ge 0$.

On $E(z)$, for $k\ge 1$, let  $r_k(z)$ be the conformal radius of $\ell_{k-1}$ seen from $z$. Let $r_0(z)$ be the conformal radius of $\partial D$ seen from $z$. Then for $k\ge 1$, conditionally on $E(z)$, the random variables $r_k(z)/r_{k-1}(z)$ are i.i.d.\ and their law does not depend on $z$ (due to conformal invariance of $\Lc_0(h)$). Moreover,  $r_k(z)/r_{k-1}(z)$ is a.s.\ strictly less than $1$, since $\gask(\Lc_0(h))$ is a.s.\ non-empty. It follows that $r_k(z)\to 0$ as $k\to \infty$ a.s., hence $\ell$ a.s.\ has conformal radius $0$, which is impossible. Therefore, $z$  is a.s.\ not encircled by a loop in $\lim_{k\to \infty}  \Lc_k(h)$ with boundary value $0$. Since this is true for all $z$, we have proved that $\lim_{k\to \infty} \Lc_k(h)$ a.s.\ does not contain any loop with boundary value $0$.
\end{proof}

\section{Scaling limit of the double random current clusters}
\label{sec:bah}
In this section, we identify the scaling limit of the double random current clusters with free and wired boundary conditions. 
More precisely, we prove Theorems~\ref{thm:Qh} and~\ref{thm:ABh} which imply Theorems~\ref{thm:cvg_free_drc_v1} and~\ref{thm:cvg_wired_drc_v1}. 
As we have pointed out at the end of Section~\ref{intro:drc},  Theorems~\ref{thm:Qh} and~\ref{thm:ABh} contain more information than Theorems~\ref{thm:cvg_free_drc_v1} and~\ref{thm:cvg_wired_drc_v1}.

Our proof crucially relies on the height function as defined in the master coupling in Theorem~\ref{thm:mastercoupling} which  
satisfies a strong form of spatial Markov property at the inner boundaries of the double random current clusters, namely one has an independent height function (which converges to a GFF) inside each domain encircled by the inner boundary of a cluster. The boundary values $\sqrt{2}\lambda$ and $2\sqrt{2}\lambda$ at the inner boundaries of the clusters come from the discrete height function (in the discrete, 
the height changes by $\pm 1$ or $\pm 1/2$ between neighbouring sites and faces but the limiting field is $({2\sqrt 2\lambda})^{-1}$ times the GFF, hence the values of the continuum field on the scaling limit of such inner boundaries are multiples of $\sqrt 2\lambda$).
For example, the scaling limit of the inner boundaries of the outermost cluster in a double random current model with wired boundary conditions follow directly from this spatial Markov property and the characterization of two-valued sets (Lemma~\ref{lem:tvs}) \cite{MR3936643}.

In contrast, the discrete height function does \emph{not} have this form of spatial Markov property at the outer boundaries of the clusters. However, we establish this spatial Markov property in the continuum limit, using additional information on the geometric properties of these loops and their interaction with other interfaces of the primal and dual models coupled through Theorem~\ref{thm:mastercoupling}. 
More precisely, we show that the outer boundaries of the clusters in a free boundary double random current model converge to the CLE$_4$ coupled with the limiting GFF, so that each limiting loop has boundary value $-2\lambda$ or $2\lambda$. The value $2\lambda$ cannot be found in the height function of the discrete model, but only appears in the continuum limit. This is the same value as the height gap at the two sides of a level line, identified in \cite{MR2486487}.

\smallskip

Throughout, let $D$ be a Jordan domain. Let $U_1$ and $U_2$ be two open and simply connected sets.
We say that two contours $\partial U_1$ and $\partial U_2$ cross each other if $U_1 \not\subseteq U_2$, $U_2 \not\subseteq U_1$ and $U_1\cap U_2\not=\emptyset$. We say that a contour $\partial U_1$ encircles another contour $\partial U_2$ if $U_2 \subseteq U_1$, and we say $\partial U_1$ strictly encircles $\partial U_2$ if $U_2 \subsetneq U_1$.

\subsection{Main results} \label{sec:mainres}

In this section, we state the main Theorems~\ref{thm:Qh} and~\ref{thm:ABh}, which can be seen as enhanced versions of Theorems~\ref{thm:cvg_free_drc_v1} and~\ref{thm:cvg_wired_drc_v1} presented in the introduction.

Let $D\subsetneq \mathbb C$ be a Jordan domain. Recall that we say that simply connected graphs $D^\delta \subset \delta \mathbb Z^2$ \emph{approximate} $D$ if 
$d(\partial D^\delta , \partial D)\to 0$ as $\delta \to 0 $, where $d$ is as in \eqref{eq:curvedist}.
We consider a critical double random current $\mathbf n^\delta$ on $D^\delta$ with free boundary conditions, and the dual double random current $(\n^\dagger)^\delta$ on $(D^\delta)^\dagger$ with wired boundary conditions, 
coupled together as in Theorem~\ref{thm:mastercoupling}. Let $\mathbb P_{D^\delta}$ be this coupling that encodes also the joint height function $H^\delta$ composed of the nesting field $h^\delta$ of $\n^\delta$, and the nesting field $(h^\dagger)^\delta$ of $(\n^\dagger)^\delta$.
The following collections of loops will be relevant in our proofs.

\begin{itemize}
\item $ Q_0^\delta$ is the collection of loops in the inner boundary of the cluster of the ghost vertex of~$(\n^\dagger)^\delta$.
We proceed inductively. Having defined $ Q_k^\delta$, we define $ Q_{k+1}^\delta$ in the following way. Recall that by property~\eqref{itm:bc} of the master coupling from Theorem~\ref{thm:mastercoupling},
if $k$ is even, then in each loop $\ell$ of $ Q_k^\delta$, $\n^\delta$ restricted to the domain encircled by $\ell$ has wired boundary conditions. 
We modify the current by setting $\n^\delta_e=2$ (the only important property is that the value is nonzero and even) for every primal edge $e$ whose both endpoints are adjacent to $\ell$ from the inside.
We denote this modified current restricted to $\ell$ by $\n^\delta_\ell$.
We then define $ Q_{k+1}^\delta(\ell)$ as the union of all the loops in the inner boundary of the external most (touching $\ell$) cluster of $\n^\delta_\ell$ (see Fig.~\ref{fig:nestedMarkov} for an illustration). 
Finally we set $ Q_{k+1}^\delta=\bigcup_{\ell \in Q_k^\delta}  Q_{k+1}^\delta(\ell)$. If $k$ is odd, then we proceed analogously with $\n^\delta$ replaced by $(\n^\dagger)^\delta$, and the primal graph replaced by the dual graph. In particular, the loops in $Q_k$ are on the primal (resp.~dual) lattice for $k$ even (resp.~odd). We define $Q^\delta=\bigcup_{k=0}^{\infty} Q_k^\delta$.
\item $B_k^\delta$, for $k$ even, is the collection of outer boundaries of the clusters of $\n^\delta$ that touch a loop of $Q_k^\delta$ from the inside.
Moreover, for each loop $\ell \in B_k^\delta$, let $\mathcal C(\ell)$ be the cluster of $\n^\delta$ with outer boundary $\ell$, and let $A_k^\delta(\ell)$ be the collection of loops in the inner boundary of $\mathcal C(\ell)$, and $A_k^\delta:=\bigcup_{\ell \in B_k^\delta} A_k^\delta(\ell)$. The collection of loops $B_k^\delta$, for $k$ odd, is defined in the same way but with $\n^\delta$ exchanged for $(\n^\dagger)^\delta$.
Finally let $B^\delta=\bigcup_{k=0}^\infty B_k^\delta$ and $A^\delta=\bigcup_{k=0}^\infty A_k^\delta$. 
\end{itemize}

\begin{remark} \label{rem:looptracing}
Note that $A_k^\delta(\ell)\subset Q_{k+1}^\delta(\ell)$, and hence $A_k^\delta\subset Q_{k+1}^\delta$. 
Moreover, every loop in $ Q_{k+1}^\delta(\ell) \setminus A_k^\delta(\ell) $ traces pieces of loops in $B_k^\delta$ that touch $\ell$ and/or the loop $\ell$ itself (see Fig.~\ref{fig:nestedMarkov} for an illustration).
We also note that the outermost loops both in $B^\delta$ and $A^\delta$ can be of arbitrary level, i.e. belong to $B_k^{{\delta}}$ and $A_k^{{\delta}}$ for any $k$.  
\end{remark}

For $k$ odd (resp.~even) and each $\gamma\in Q_k^\delta(\ell)$, we say that $\gamma$ is the boundary of an {\em odd hole} if 
$\n^\delta_\ell$ (resp.~$(\n^\dagger)^\delta_\ell$) is odd around every face encircled by $\gamma$ (see definition in Section~\ref{subsec:2cvg_gff}). Otherwise we say that $\gamma$ is the boundary of an {\em even hole}.
We define $ c^\delta(\ell)=1$ (resp.\ $ c^\delta(\ell)=-1$) if $\ell$ is the boundary of an odd (resp.\ even) hole.
Note that every loop in $ Q_{k+1}^\delta(\ell) \setminus A_k^\delta(\ell) $ is the boundary of an even hole in by construction (since we modified the current by adding edges with value $2$).
Moreover, for each loop $\ell \in B^\delta$, let $\epsilon^\delta(\ell)$ be the label of the cluster $\mathcal C(\ell)$ of $\n^\delta$ with outer boundary $\ell$. The label is defined by the coupling with the nesting field $h^\delta$ as in Theorem~\ref{thm:mastercoupling}.

We will prove the following theorems which clearly imply Theorem~\ref{thm:cvg_free_drc_v1} and Theorem~\ref{thm:cvg_wired_drc_v1}.
\begin{theorem}
\label{thm:Qh}
Let $D$ and $D^\delta$ be as above, and such that $\partial D$ is $C^1$. Let $\coin^\delta_{{\mathfrak g}}$ be the label of the cluster of the boundary in $(\n^\dagger)^{\delta}$. Then, as $\delta\to 0$, the family 
$(H^\delta, Q^\delta, c^\delta, \coin^\delta_{{\mathfrak g}})$
defined above converges in distribution to a limit $(\tfrac{1}{\sqrt \pi}h, Q, c,\coin_{{\mathfrak g}})$ satisfying:
\begin{itemize}
\item $h$ is a GFF with zero boundary conditions in $D$.
\item For $k \ge 0$, let $Q_k$ be the scaling limit of the loops in $Q_k^\delta$. Then $Q_0$
is equal to $\lp_{-\sqrt 2 \lambda, \sqrt 2 \lambda}(h)$. 
Moreover, for every loop $\gamma\in Q_0$, $h$ restricted to $O(\gamma)$ has boundary value $\coin_{{\mathfrak g}} c(\gamma) \sqrt{2}\lambda$.

\item This picture repeats iteratively: if $\ell$ is a loop in $Q_k$, then all the loops in $Q_{k+1}$ directly encircled by $\ell$ form $\lp_{-\sqrt 2 \lambda, \sqrt 2 \lambda}(h^0|_{O(\ell)})$, and for each
such loop $\gamma$, $h^0|_{O(\ell)}$ restricted to $O(\gamma)$ has boundary value 
\[
(-1)^kc(\gamma) c(\ell) \sqrt{2}\lambda.
\]
\end{itemize}
\end{theorem}

\begin{remark} \label{rem:increments}
The difference in the gaps between the first layer and the remaining layers ($\coin_{{\mathfrak g}} c(\gamma) \sqrt{2}\lambda$ and $(-1)^k c(\gamma)c(\ell) \sqrt{2}\lambda$ respectively) comes from the fact that in the master coupling of Theorem~\ref{thm:mastercoupling}, the label of the external most cluster of $\n^\dagger$ is chosen uniformly at random, whereas the increment of the heights between loops in consecutive layers is given 
by Property~\eqref{itm:cp6}. Here we also use Property~\eqref{itm:cp3} to see that for a primal cluster $\mathcal C$, one has $\epsilon_{\mathcal C}=-c(\gamma)$, where $\gamma$ is the loop in $Q^\delta$ that surrounds and touches $\mathcal C$. An analogous formula holds for dual clusters.
The alternating sign $(-1)^{k}$ appears since $Q^\delta_k$ alternate between primal and dual interfaces, and the formula in Property~\eqref{itm:cp6} changes sign depending if we compute the increment from a face or from a vertex of the original graph.
\end{remark}

\begin{figure}[t]
\centering
\includegraphics[width=0.48\textwidth]{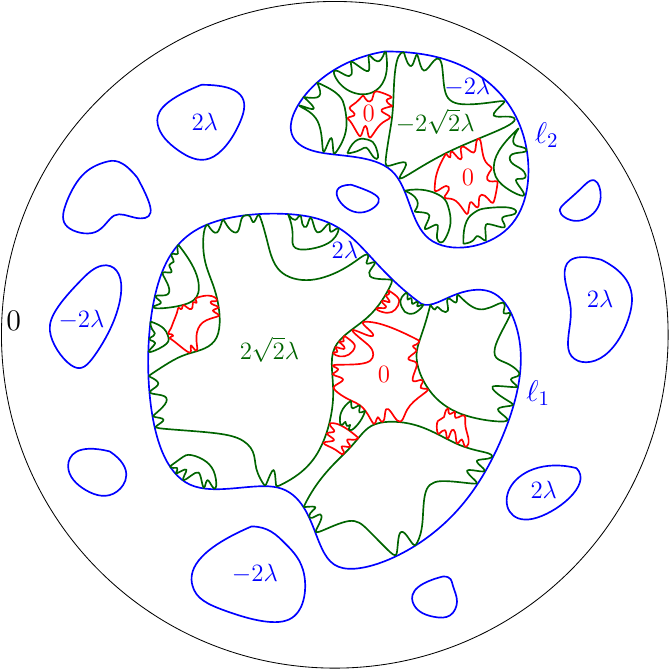}
\caption{Scaling limit of the boundaries of the outermost clusters on the primal graph. We depicted the outermost loops of $B$ in blue. For each blue loop $\ell$, the loops in $A(\ell)$ have boundary value either $0$ or $\epsilon(\ell) 2\sqrt{2}\lambda$.
For two blue loops $\ell_1$ and $ \ell_2$ with labels $\epsilon(\ell_1)=1$ and $\epsilon(\ell_2)=-1$, we depict the loops in $A(\ell_1)$ and $A(\ell_2)$. For $i=1,2$, we draw the loops in $A(\ell_i)$ with boundary value $0$ (resp.\ $\epsilon(\ell_i) 2\sqrt{2}\lambda$) in red (resp.\ green). Each green (resp.\ red) loop is the limit of the boundary of an odd (resp.\ even) hole.}  
\label{fig:RCC}
\end{figure}

\begin{theorem}
\label{thm:ABh}
Let $D$ and $D^\delta$ be as above and such that $\partial D$ is $C^1$.
As $\delta\to 0$, the family 
\[
(H^\delta, B^\delta, A^\delta,  \epsilon^\delta, c^\delta)
\] 
defined above converges in distribution to a limit $(\tfrac{1}{\sqrt \pi}h,B, A,\epsilon, c)$ satisfying (see Fig.~\ref{fig:RCC}):\begin{itemize}
\item $h$ is a GFF with zero boundary conditions in $D$.
\item The collection of outermost loops in $B$ is equal to CLE$_4(h)$. For each such loop $\ell$, $h|_{O(\ell)}$ is equal to an independent zero-boundary GFF $h^0|_{O(\ell)}$ plus the constant $\epsilon(\ell) 2\lambda$.

\item For each such outermost loop $\ell$ of $B$, let $A(\ell)$ denote the collection of loops $\gamma$ in $A$ that are directly encircled by $\ell$ (no other loop in $A$ encircles $\gamma$).
\begin{itemize}
\item If $\epsilon(\ell)=1$, then $A(\ell)$ is equal to $\lp_{-2\lambda, (2\sqrt{2}-2)\lambda}(h^0|_{O(\ell)})$. 
\item If  $\epsilon(\ell)=-1$, then $A(\ell)$ is equal to $\lp_{-(2\sqrt{2}-2)\lambda, 2\lambda}(h^0|_{O(\ell)})$.
\item Each loop $\gamma\in A(\ell)$ has boundary value $\epsilon(\ell) (c(\gamma) +1)\sqrt{2} \lambda$.
\end{itemize}
\item This picture repeats iteratively in each outermost loop $\ell$ of $A$ (with $\partial D:=\ell$, and with the loops of $B$ and $A$ encircled by $\ell$).
\end{itemize}
\end{theorem}

The relation between the loops in $Q$ an $A, B$ is illustrated in Fig.~\ref{fig:RCC_coupling}.

\begin{figure}[htbp]
\centering
\begin{subfigure}{0.8\textwidth}
\centering
\includegraphics[width=.59\textwidth]{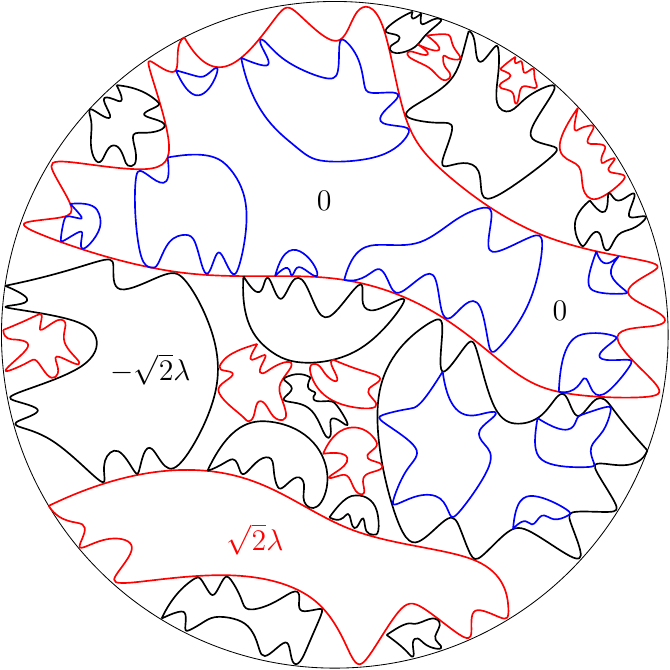}
\caption{The red and black loops represent the limit of the wired d.r.c.\ interfaces, i.e., the loops in $Q_0$. The red (resp.\ black) loops are the limit of even (resp.\ odd) holes, and are distributed as $\Lc^+_{-\sqrt{2}\lambda, \sqrt{2}\lambda}(h)$ (resp.\ $\Lc^-_{-\sqrt{2}\lambda, \sqrt{2}\lambda}(h)$). The blue loops are the loops in $B_1$ that touch a loop in $Q_0$.}
\end{subfigure}
\medbreak

\begin{subfigure}{0.8\textwidth}
\centering
\includegraphics[width=.6\textwidth]{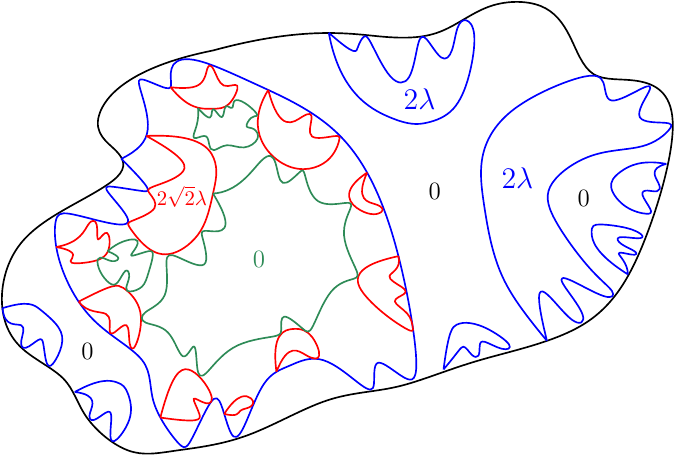}
\caption{The exterior black loop represents a loop $\ell$ in $Q_0$ with boundary value $\sqrt{2}\lambda$. The blue loops are the loops in $B_1$ that touch $\ell$, and are distributed as $\lp^+_{-\sqrt{2}\lambda, (2-\sqrt{2})\lambda} (h^0|_{O(\ell)})$ (each of the blue loops has boundary value $2\lambda$). The complement of the blue loops constitutes the set $Q_1(\ell)\setminus A_1(\ell)$, and is distributed as $\lp^-_{-\sqrt{2}\lambda, (2-\sqrt{2})\lambda} (h^0|_{O(\ell)})$ (each loop in this set has boundary value $0$). Each blue loop is further split into a two-valued set with loops of boundary values $0$ (in green) and $2\sqrt{2}\lambda$ (in red). The green and red loops constitute the set $A_1(\ell)$.}
\end{subfigure}
\caption{The nesting between the loops in $Q, A, B$ and their coupling with $h$. For each set of discrete loops at meshsize $\delta$, we take away the superscript $\delta$ to denote its scaling limit in the continuum. For example, $A_k$ and $B_k$ denote respectively the scaling limit of the loops in $A_k^\delta$ and $B_k^\delta$. We point out that an outermost loop in $B$ or $A$ can be at an arbitrary level, i.e.\ belongs to $B_k$ or $A_k$ for any $k$.}
\label{fig:RCC_coupling}
\end{figure}

\begin{remark}
We can deduce using crossing estimates from \cite{DumLisQia21} that for each loop $\ell \in B_k$, two loops in $A_k(\ell)$ of the same parity (hence of the same boundary value  and drawn in the same color in Fig.~\ref{fig:RCC}) never touch each other. Moreover, only the limit of the boundaries of odd holes can touch $\ell$. This is consistent with Theorem~\ref{thm:ABh} and the adjacency properties of the loops in a two-valued set (Lemma~\ref{lem:adjacency}). Furthermore, Theorem~\ref{thm:ABh} implies that each loop in $A_k(\ell)$ is connected to $\ell$ via a finite chain of loops of alternating parities (hence the length of this chain always has a fixed parity). In particular, the parity of the holes are determined by the shape of the clusters.
\end{remark}

\begin{remark}\label{rmk:touch}
We can deduce using crossing estimates from \cite{DumLisQia21} that two loops in $Q_0$ of the same parity (hence of the same boundary value  and drawn in the same color in Fig.~\ref{fig:RCC_coupling}) never touch each other. This is consistent with Theorem~\ref{thm:Qh} and the adjacency properties of the loops in a two-valued set (Lemma~\ref{lem:adjacency}).
\end{remark}

\subsection{Precompactness and first properties of limiting curves}
We now proceed to proving the two theorems. To this end, recall the tightness criterion \cite[{\bf H1}]{AizBur99}: a family of random variables $\mathcal F^\delta$ (with law $\mathbb P_\delta$) taking values in $\mathfrak C(\Omega)$ satisfies condition {\bf H1} if for every $k<\infty$ and every annulus $\mathrm{Ann}(x,r,R)$ with $\delta\le r\le R\le 1$, the following bound holds uniformly
in $\delta>0$:
\begin{equation} \label{eq:H1}
\mathbb P_\delta[N_{\mathcal F^\delta}(\mathrm{Ann}(x,r,R))\geq k]\le C(k)(\tfrac rR)^{\lambda(k)},
\end{equation}
with $C(k)>0$ and $\lambda(k)$ tending to infinity as $k\to \infty$, and
where
\begin{align}\label{eq:H12}
N_{\mathcal F^\delta}(\mathbf{A}) =\{k \text{ distinct pieces of curves in $\mathcal F^\delta$ cross the annulus $\mathbf{A}$}  \}.
\end{align}
Here by a \emph{piece} (in $\mathbf{A}$) of a curve we mean a connected component of the curve resulting from a restriction of the curve to the annulus $\mathbf{A}$.  
If $\mathcal F^\delta$ contains only one curve $\ell$, we will simply write $N_{\ell}$ for $N_{\{\ell\}}$.
Theorem~1.2 of \cite{AizBur99} says that if $\mathcal F^\delta$ satisfies condition {\bf H1}, then $\mathcal F^\delta$ is precompact for the topology of weak convergence 
 with respect to the distance~\eqref{eq:CNdistance}.

\begin{proposition} \label{prop:etaH1}
Let $D$, $D^\delta$ and $\mathbb P_{D^\delta}$ be as above.
Let $\eta^\delta$ (resp.~$\widehat \eta^\delta$) be the nested boundaries interface configuration of $\n^\delta$ (resp.~$(\n^\dagger)^\delta$) as defined in Section~\ref{intro:drc}. We view $\eta^\delta$ and $\widehat \eta^\delta$ as collections of loops, so that $\eta^\delta\cup \widehat \eta^\delta=A^\delta\cup B^\delta $. Then $\eta^\delta$ satisfies condition {\bf H1} under $\mathbb P_{D^\delta}$.
Moreover if $\partial D$ is $C^1$, then $\widehat \eta^\delta$ also satisfies condition {\bf H1} under~$\mathbb P_{D^\delta}$.
\end{proposition}
\begin{proof}
We apply criterion~{\bf H1} to the families $\eta^\delta$ and $\widehat \eta^\delta$.
The event that $\mathrm{Ann}(x,r,R)$ is crossed by $k$ separate pieces of interfaces in $\eta^\delta$ (resp.~$\widehat \eta^\delta$) is included in the (rescaled version of the) event $A_{2k}(r/\delta,R/\delta)$ for $\n$ 
(resp.~$(\n^\delta)^\dagger$), so that we may apply Theorem~\ref{prop:tight number crossings} and Remark~7.3 of \cite{DumLisQia21}.
 This concludes the proof.
\end{proof}

\begin{lemma} \label{lem:QH1}
Let $D$, $D^\delta$,  $\mathbb P_{D^\delta}$, and $Q_k^\delta$ be as above. Assume moreover that $\partial D$ is $C^1$. Then for each $k\geq 0$, $Q_k^\delta$ satisfies condition {\bf H1} under $\mathbb P_{D^\delta}$. 
\end{lemma}

\begin{proof}
We will say that two (pieces of) loops are \emph{adjacent} if {either} they are both subsets of the same graph (primal or dual) and moreover they intersect, 
or they are subsets of mutually dual graphs and moreover they visit {at least one} same corner (pair of vertex and face) of the primal graph. 

We will proceed inductively. By Proposition~\ref{prop:etaH1}, $Q_0^\delta$ satisfies {\bf H1} since it is a subset of $\hat \eta^\delta$.
Let us hence assume that $Q_k^\delta$ satisfies {\bf H1}. Suppose $k$ is even (the case of $k$ odd is treated analogously).
Let us show that $Q_{k+1}^\delta$ also satisfies {\bf H1} (with properly adjusted constants in~\eqref{eq:H12}).
For $\ell \in Q_k^\delta$, let $L(\ell)=Q_{k+1}^\delta(\ell) \setminus \eta^\delta$, and $L=Q_{k+1}^\delta\setminus \eta^\delta=\bigcup_{\ell \in Q_k^\delta}L(\ell)$.
Note that by Proposition~\ref{prop:etaH1}, it is enough to prove that $L$ satisfies {\bf H1}.

\begin{figure}[t]
\centering
\includegraphics[width=0.65\textwidth]{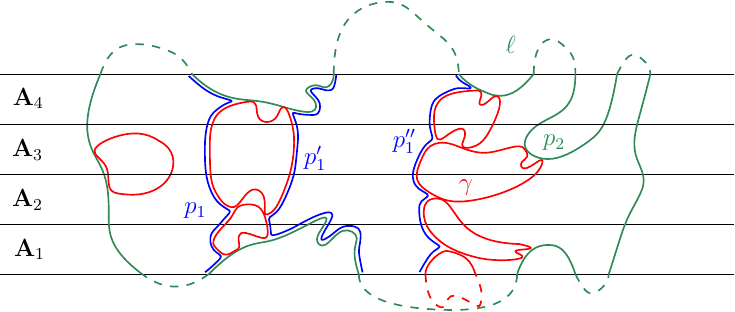}
\caption{Illustration of the proof of \eqref{eq:Nbound}. The proof is on the discrete lattice, but we depict curves in the continuum for convenience. Adjacent pieces of loops are depicted as curves that touch or trace each other.
We depict in green the loop $\ell$, and in red the loops in $\eta^\delta(\ell)$. Note that in the discrete, the loops in $\eta^\delta(\ell)$ can be adjacent to each other (i.e., the red loops can touch each other), even though Theorem~\ref{thm:Qh} (that we will prove later) implies that the scaling limit of the loops in $\eta^\delta(\ell)$ a.s.\ do not touch each other (see Remark~\ref{rmk:touch}).
The pieces in $P_2(\ell)$ are drawn in solid (green or red) curves. We depict in blue $3$ pieces $p_1, p_1', p_1''$ in $P_1(\ell)$ (among several others). We can see that $p_1$ and $p_1'$ are adjacent to the same pieces in $P_2(\ell)$, which is why the constant $2$ in \eqref{eq:Nbound} is needed. In this picture, there are $5$ pieces in $P_2(\ell)$ which are adjacent to $p_1''$, but none of them makes the required crossing across $\mathbf{A}_1, \ldots, \mathbf{A}_4$. However, in this case the pieces in $P_2(\ell)$ adjacent to $p_1''$ must contain at least one full loop $\gamma$ from $\eta^\delta(\ell)$ which is strictly contained in $\mathbf{A}_2 \cup \mathbf{A}_3$. Then $\gamma$ must be adjacent to some piece $p_2$ of the loop $\ell$ which is connected to $\partial \mathbf{A}$. Here $p_2$ crosses $\mathbf{A}_4$.  Note that $p_2$ is not directly adjacent to $p_1''$, but is adjacent to some piece adjacent to $p_1''$, so it is contained in $P_2(p_1'')$ by our definition.}  
\label{fig:tightness}
\end{figure}

To this end, we will use the fact that the loops in $L$ are constructed from (pieces of) a loop $\ell\in Q_k^\delta$ or/and pieces of the loops in $\eta^\delta$ that are adjacent to $\ell$ from the inside (see also Remark~\ref{rem:looptracing}).
Let us denote the latter collection of loops by $\eta^\delta(\ell)$.
Consider annuli $\mathbf{A}=\mathrm{Ann}(x,r,R)$, and $\mathbf{A}_i=\mathrm{Ann}(x,rs^{i-1},rs^i)$, where $i=1,2,3,4$ and $s=\sqrt[4]{R/r}$, so that $\mathbf{A}=\mathbf{A}_1\cup \mathbf{A}_2\cup \mathbf{A}_3
\cup \mathbf{A}_4$.
Since $\eta^\delta(\ell)$ and $\eta^\delta(\ell')$ are disjoint for $\ell\neq \ell'$,
by Proposition~\ref{prop:etaH1} and the induction assumption, it is enough to show that for each $\ell \in Q_k^\delta$, 
\begin{align}\label{eq:Nbound}
N_{L(\ell)}(\mathbf{A})  \leq 2 (N_{ \eta^\delta(\ell) \cup\{ \ell \}}(\mathbf{A}_1) + N_{\eta^\delta(\ell)  }(\mathbf{A}_2) +N_{\eta^\delta(\ell)  }(\mathbf{A}_3)+ N_{ \eta^\delta(\ell) \cup\{ \ell \}}(\mathbf{A}_4) ) .
\end{align}

To this end, let $P_1(\ell)$ be the set of all pieces in $\mathbf{A}$ of the loops in $L(\ell)$ that cross $\mathbf{A}$ (as defined above). Then the cardinality of $P_1(\ell)$ is equal to $N_{L(\ell)}(\mathbf{A}) $.
Moreover let $P_2(\ell)$ be the set of all pieces in $\mathbf{A}$ of the loops in $\eta^\delta(\ell) \cup \{ \ell\}$ (not necessarily crossing $\mathbf{A}$). Here if a loop is fully contained in $\mathbf{A}$, then 
there is one piece which is equal to this loop.
Furthermore, for each $p_1\in P_1(\ell)$, let $P_2(p_1)\subset P_2(\ell)$ be the set of pieces in $P_2(\ell)$ that are adjacent to $p_1$, or are adjacent to another piece adjacent to $p_1$. As we mentioned earlier, the union of the pieces in $P_2(p_1)$ should entirely cover $p_1$, by Remark~\ref{rem:looptracing}.
The pieces in $P_2(\ell)$ are of two kinds: either they come from $\ell$ or $\eta^\delta(\ell)$. 
See Fig.~\ref{fig:tightness} for an illustration.
Let $p_1\in P_1(\ell)$. Since $\ell$ encircles all loops in $\eta^\delta(\ell)$,
every piece in $P_2(p_1)$ that is a piece of a loop in $\eta^\delta(\ell)$ is either itself connected to the boundary of $\mathbf{A}$ or it is connected to it via a single piece of $\ell$ in $P_2(p_1)$.
This means that there are two possibilities: 
either there exists a piece in $P_2(p_1)$ 
that crosses $\mathbf{A}_1$ or $\mathbf{A}_4$, or there exists a piece in $P_2(p_1)$ that is a full loop and crosses either $\mathbf{A}_2$ or $\mathbf{A}_3$. Indeed, suppose that none of the two possibilities is true, then there must exist a full loop $\gamma$ in $P_2(p_1)$ that is entirely contained in $\mathbf{A}_2\cup \mathbf{A}_3$. Note that $\gamma$ must be connected to the boundary of $\mathbf{A}$ via a single piece of $\ell$ in $P_2(p_1)$. The latter piece in $P_2(p_1)$ then has to cross either $\mathbf{A}_1$ or $\mathbf{A}_4$, leading to a contradiction.
Observe moreover that by planarity each $p_2\in P_2(p_1)$ belongs to at most two sets of the form $P_2(p'_1)$ for some $p'_1\in P_1(\ell)$, since a crossing piece $p_1$ can
follow $p_2$ from at most two sides (this bound is not optimal, but sufficient for our purpose), hence the constant $2$ in \eqref{eq:Nbound}. This shows~\eqref{eq:Nbound} and finishes the proof of the lemma.
\end{proof}

Finally we will need the following intersection properties of the limiting interfaces.

\begin{lemma}  \label{lem:intersectionprop}
Let $(A,B,c)$ be any subsequantial limit of $(A^\delta,B^\delta,c^\delta)$. Then
\begin{itemize}
\item the loops in $B$ are simple and do not intersect each other,
\item the outermost loops in $B$ do not intersect the outermost loops in $A$ with $c=-1$.
\end{itemize}
\end{lemma}
\begin{proof}
The fact that the loops in $B$ do not intersect each other is a direct consequence of Theorem~\ref{thm:absence closed pivotal expectation white}. Indeed, fix $\alpha,\beta,\ep>0$. For two loops of $B^{\delta}$ of diameter at least $\alpha$ to come within distance $\beta$ of each other, there must be $x\in \Omega^{\delta}$ such that the translate by $x$ of the rescaled version of the event $A_4^\square(\beta/\delta,\alpha/\delta)$ occurs. Yet, Theorem~\ref{thm:absence closed pivotal expectation white} implies that provided that $\beta\le \beta_0(\alpha,\ep)$, this occurs with probability smaller than $\ep$. 
The fact that the loops in $A$ and $B$ are simple is also direct consequence of Theorem~\ref{thm:absence closed pivotal expectation white}. Indeed, the event that a single loop comes within distance $\beta$ of itself after going away to distance $\alpha$ also implies the same event. Letting $\beta$ tend to zero, then $\alpha$, and finally $\ep$, we obtain the result.

Moreover, for a loop of $A^{\delta}$ of diameter at least $\alpha$ and with boundary value zero (and hence $c=-1$) to come within a distance $\beta$ of an outermost loop in $B^{\delta}$ of diameter at least $\alpha$, there must be $x\in D^{\delta}$ such that the translate by $x$ of the rescaled version of the event $A_4^\blacksquare(\beta/\delta,\alpha/\delta)$ occurs. Yet, Theorem~\ref{thm:absence closed pivotal expectation black} implies that provided that $\beta\le \beta_0(\alpha,\ep)$, this occurs with probability smaller than $\ep$. Letting $\beta$ tend to zero, then $\alpha$, and finally $\ep$, we obtain the result.
\end{proof}
 
\subsection{Identification of limits}

We start with a lemma that proves the first two bullets of Theorem~\ref{thm:Qh}.
 
  \begin{lemma} \label{lem:Qfirst} Let $D$, $D^\delta$,  $\mathbb P_{D^\delta}$, and $Q_0^\delta$ be as above.  Assume moreover that $\partial D$ is $C^1$. 
 Let $\coin^\delta_{{\mathfrak g}}$ be the label of the cluster of the boundary in $(\n^\dagger)^{\delta}$.
Then the family $((h^\dagger)^\delta,Q_0^{\delta},c^\delta,\coin^\delta_{{\mathfrak g}})$ converges weakly to $(\frac{1}{\sqrt \pi}h,Q_0,c,\coin_{{\mathfrak g}} )$ as $\delta\to 0$, where 
\begin{itemize}
\item $h$ is a GFF with zero boundary conditions in $D$. 
\item $Q_0=\mathcal L_{-\sqrt 2 \lambda, \sqrt 2 \lambda}(h)$.
\item For each $\ell \in Q_0$, $h$ restricted to $O(\ell)$ has boundary value $\coin_{{\mathfrak g}}  c(\ell) \sqrt{2}\lambda$.
\end{itemize}
 \end{lemma}
\begin{proof}
By Lemma~\ref{lem:QH1}, Theorem~\ref{thm:NFconv}, and the compactness of $\{-1,1\}^\mathbb N$, $((h^\dagger)^\delta,Q_0^{\delta},c^\delta,\coin^\delta_{{\mathfrak g}})$ is precompact in the topology of week convergence.
Let $(\frac{1}{\sqrt \pi}h,Q_0,c,\coin_{{\mathfrak g}})$ be a limit along a subsequence~$\delta_n$. We also know from Theorem~\ref{thm:NFconv} that $h$ is the GFF in $D$ with zero boundary conditions. 

We will identify $\gask(Q_0)$ as the only two-valued set of $h$ with boundary values $\pm \sqrt 2 \lambda$. To this end we need to show that $\gask(Q_0)$ is thin for $h$, i.e. for any smooth bounded function~$g$,
we have
\begin{align*}
\int_D g(x) h(x) dx= \sum_{\gamma\in Q_0} \int_{O(\gamma)} g(x) h|_{O(\gamma)}(x) dx.
\end{align*}

Note that $\gask(Q^\delta_0)\subset\gask(A^\delta_0)$ by the master coupling from Theorem~\ref{thm:mastercoupling}, and moreover $h^\delta$ is zero on $\gask(A^\delta_0)$.
Furthermore $(h^\dagger)^\delta$ and $h^\delta$ have a common scaling limit $\frac{1}{\sqrt \pi}h$ by Theorem~\ref{thm:NFconv}. Therefore it is enough to show the following (here we prefer to look at 
$\gask(A^\delta_0)$ as it deals with double random currents with free boundary conditions, and these are more amenable to analysis as already mentioned)
\begin{equation}\label{eq:ah1}
\lim_{\alpha\rightarrow 0}\lim_{n\rightarrow\infty}\int_{D^{\delta_n}}g(x)h^{\delta_n}(x)\mathbf 1_{x\in E_\alpha^{\delta_n}}dx=0,
\end{equation}
where, if $\Lambda_\alpha(y):=y+[-\alpha,\alpha]^2$, 
\[
E_\alpha^{\delta_n}:=\text{ union of the $\Lambda_\alpha(y)$ for $y\in \alpha\mathbb Z^2$ such that $\Lambda_{2\alpha}(y)\text{ intersects some }\gamma\in A_0^{\delta_n}$}
\]  (note that in particular every $x$ that is within a distance $\alpha$ of some $\gamma$ in $A_0^{\delta_n}$ must be in $E_\alpha^{\delta_n}$). 

In order to prove this statement, we fix $\ep>0$ and see that
\begin{align*}
\ep\mathbb P_{\delta_n}\Big[\int_{D^{\delta_n}}g(x)h^{\delta_n}(x)\mathbf 1_{x\in E_\alpha^{\delta_n}}dx\ge \ep\Big]&\le \mathbb E_{\delta_n}\Big[\Big|\int_{D^{\delta_n}}g(x)h^{\delta_n}(x)\mathbf 1_{x\in E_\alpha^{\delta_n}}dx\Big|\Big]\\
&\le \sum_{y\in \alpha\mathbb Z^2}\mathbb E_{\delta_n}\Big[\Big|\int_{\Lambda_\alpha(y)}g(x)h^{\delta_n}(x)\mathbf 1_{x\in E_\alpha^{\delta_n}}dx \Big|\Big]\\
&=\sum_{y\in \alpha\mathbb Z^2}\mathbb E_{\delta_n}\Big[\mathbf 1_{y\in E_\alpha^{\delta_n}}\Big|\int_{\Lambda_\alpha(y)}g(x)h^{\delta_n}(x)dx \Big|\Big]\\
&\le\sum_{y\in \alpha\mathbb Z^2}\mathbb P_{\delta_n}[y\in E_\alpha^{\delta_n}]^{1/2}\mathbb E_{\delta_n}\Big[\Big(\int_{\Lambda_\alpha(y)}g(x)h^{\delta_n}(x) dx \Big)^2\Big]^{1/2}\\
&\le \sum_{y\in \alpha\mathbb Z^2}\alpha^c \times C(g)\alpha^2\log(1/\alpha)\\
&\le C(g,D)\log(1/\alpha)\alpha^{c}.\end{align*}
Above, we used Markov's inequality in the first inequality, the triangle inequality in the second, the fact that $x\in E_\alpha^{\delta_n}$ is equivalent to $y\in E_\alpha^{\delta_n}$ in the third, and Cauchy--Schwarz in the fourth. In the fifth, we combine an easy estimate on the second moment of $\int_{\Lambda_\alpha(y)}g(x)h^{\delta_n}(x)dx$ based on the definition of the nesting field and RSW type estimates from \cite{DumLisQia21}, together with the fact that for $\Lambda_\alpha(y)$ to intersect a loop $\gamma$ in $A^{\delta_n}$, there must be a primal path in $\n^\delta$ from $\Lambda_{\alpha}(x)$ to $\Lambda_\beta(x)$ or a path in $(\n^\delta)^*$ (the dual complement) from $\partial\Lambda_\beta(x)$ to $\partial\Lambda_{d(x,\partial D)}(x)$, where $\beta:=\sqrt{\alpha d(x,\partial D)}$.
This proves that $\gask(Q^\delta_0)$ is thin for $h$.

Moreover by the Markov property of the nesting field with wired boundary conditions~\eqref{def:nestingfield+} and Theorem~\ref{thm:NFconv} applied inside each loop of $Q_0^\delta$, we know that $\gask(Q_0)$ is a local set of~$h$, and that for each $\gamma \in Q_0$, the restriction of $h$ to $O(\gamma)$ has boundary value equal to $ \coin_{{\mathfrak g}}  c(\ell) \sqrt{2} \lambda\in \{-\sqrt 2\lambda, \sqrt 2\lambda\}$ 
(since in the discrete the boundary value is equal to $\pm \frac12$ and the scaling limit of $h^\delta$ is $ \frac{1}{\sqrt{\pi}}  h = \frac{1}{2\sqrt{2}\lambda}h$). By Lemma~\ref{lem:tvs} this uniquely characterizes $Q_0$ as the two-valued set $\lp_{-\sqrt 2\lambda, \sqrt 2\lambda}(h)$.
\end{proof}

\begin{proof}[Proof of Theorem~\ref{thm:Qh}]
By the lemma above, we are left with proving the third bullet from the statement. By the definition of $Q^\delta_k$ and by the Markov property~\eqref{itm:bc} of the master coupling from Theorem~\ref{thm:mastercoupling} we know that 
the loops of $Q^\delta_{k+1}$ contained in a single loop $\ell$ of $Q^\delta_k$, have the same distribution as $Q^\delta_0$ in a domain $D^\delta$ whose outer boundary is $\ell$.
However, we cannot directly apply Lemma~\ref{lem:Qfirst} since the assumption on the boundary of the domain being smooth is not satisfied by the scaling limits of the loops from $Q_0^\delta$ (as they are fractal loops by Lemma~\ref{lem:Qfirst}). Nonetheless, this assumption is only used to obtain subsequantial limits of the loops. Indeed, the proof of convergence of the height function in Theorem~\ref{thm:NFconv}
and of the fact that the gasket of the limiting collection of loops is thin in Lemma~\ref{lem:Qfirst}
works for Jordan domains with arbitrary boundaries as it goes through currents with free boundary conditions (and we have more control on them as already mentioned). The remaining ingredient of the proof
is the Markov property that is the same both for random currents with free and wired boundary conditions.

Therefore to prove the third bullet it is enough to use precompactness of $Q^\delta_k$ (which follows directly from Lemma~\ref{lem:QH1}) 
and show that every subsequential limit of $\gask(Q^\delta_k)$ is a thin local set (as in the proof of Lemma~\ref{lem:Qfirst}).
Then use Remark~\ref{rem:increments} to identify the signs of boundary values of the field on consecutive loops in the continuum, and use Lemma~\ref{lem:nestedident} to identify the limit uniquely.
\end{proof}

\begin{proof}[Proof of Theorem~\ref{thm:ABh}]
By Theorem~\ref{thm:NFconv}, Proposition~\ref{prop:etaH1} and compactness of $\{\pm 1\}^{\mathbb N}$, we know that $(A^\delta,B^\delta,h^\delta,c^\delta,\epsilon^\delta)$ is precompact in the topology of weak convergence. Let $(A,B,h,c,\epsilon)$ be any subsequential limit. 

Note that from Theorem~\ref{thm:Qh} and Remark~\ref{rem:looptracing} we already know that the loops in $A$ are a subset of all the loops in the union of nested iterations of $\mathcal L_{-\sqrt 2 \lambda, \sqrt 2 \lambda}$.
However, we need an additional argument to uniquely determine exactly which subset they are.
To be more precise, recall from Remark~\ref{rem:looptracing} that $A_k^\delta\subset Q_{k+1}^\delta$.
Theorem~\ref{thm:Qh} implies that if $\ell$ is a scaling limit of $\ell^\delta\in Q_{k}^\delta$, then the scaling limits of loops in $Q_{k+1}^\delta(\ell^\delta)$ is
$\mathcal L_{-\sqrt 2 \lambda, \sqrt2 \lambda}(h^0|_{O(\ell)})$. We claim that the scaling limit of $Q_{k+1}^\delta(\ell^\delta) \setminus A_k^\delta(\ell^\delta)$
is exactly the set of loops in $\mathcal L_{-\sqrt 2 \lambda, \sqrt2 \lambda}(h^0|_{O(\ell)})$ that have label $(-1)^{k+1}c(\ell)\sqrt 2 \lambda$ and moreover intersect $\ell$. Equivalently, by Lemma~\ref{lem:loop_intersect} 
this is exactly $\mathcal L^-_{-\sqrt 2 \lambda, (2-\sqrt2) \lambda}(h^0|_{O(\ell)})$ if $c(\ell)=(-1)^k$, and $\mathcal L^+_{ -(2-\sqrt2) \lambda, \sqrt 2 \lambda}(h^0|_{O(\ell)})$ if $c(\ell)=(-1)^{k+1}$.
Indeed by 
property~\ref{itm:cp6} of the master coupling from Theorem~\ref{thm:mastercoupling}
the increment of the nesting field between $\ell^\delta$ and $\gamma^\delta\in Q_{k+1}^\delta(\ell^\delta)$ is $(-1)^kc(\ell^\delta)c(\gamma^\delta)$.
The loops in $Q_{k+1}^\delta(\ell^\delta) \setminus A_k^\delta(\ell^\delta)$ are boundaries of even holes as mentioned below Remark~\ref{rem:looptracing}, and hence $c(\gamma^\delta)=-1$ for every such loop $\gamma^\delta$.
 Altogether this means that all loops in the scaling limit of $Q_{k+1}^\delta(\ell^\delta) \setminus A_k^\delta(\ell^\delta)$ have label $(-1)^{k+1}c(\ell)\sqrt 2 \lambda$. 
To prove the claim we still need to show that they are boundaries of exactly those even holes in $Q_{k+1}^\delta(\ell^\delta)$ whose scaling limit intersects~$\ell$.
Here is where we use the intersection properties from Lemma~\ref{lem:intersectionprop}. First of all, every loop $\gamma^\delta\in  A_k^\delta(\ell^\delta)$ is by definition encircled by a loop in $B_k^\delta$,
which in turn is encircled by $\ell^\delta$. If $\gamma^\delta$ is the boundary of an even hole, then its scaling limit cannot intersect~$\ell$, as in this case it would intersect the scaling limit of the corresponding loop in 
 $B_k^\delta$, which is forbidden by bullet two of Lemma~\ref{lem:intersectionprop}. Hence, it is enough to show that every loop in $Q_{k+1}^\delta(\ell^\delta) \setminus A_k^\delta(\ell^\delta)$ has a scaling limit that intersects~$\ell$. To this end recall from Remark~\ref{rem:looptracing} that each such loop traces pieces of loops in $B^\delta_k$ that touch $\ell^\delta$ and/or pieces of $\ell^\delta$ itself. If its scaling limit does not intersect~$\ell$ it means that it can only trace pieces of scaling limits of loops from $B^\delta_k$ that intersect $\ell$. However, that would imply that these loops either touch each other or self-touch which is forbidden by bullet one of Lemma~\ref{lem:intersectionprop}.
 
 We now move on to the identification of the scaling limit of the outermost loops of $B$ as CLE$_4(h)$ using Lemmas~\ref{lem:2lambda_intersect} and~\ref{lem:CLEiteration}. 
Our aim is to show that the continuum construction of Lemma~\ref{lem:CLEiteration} is mirrored in the discrete. Since we look at the outer boundaries of only the primal current $\n^\delta$,
the relevant auxiliary collections of loops will be $Q^\delta_{2k}$, $k=0,1,\ldots$.
Let $\ell^\delta\in Q^\delta_{2k}$, and recall that $B_{2k}(\ell^\delta)$ is the set of outer boundaries of $\n^\delta$ that touch $\ell^\delta$ from the inside, and let $B_{2k}(\ell)$ be the set of their scaling limits, where $\ell$ is the scaling limit of $\ell^\delta$.
 We claim that the restriction of the loops in 
 $B_{2k}(\ell)$ to $O(\ell)$ agrees with the restriction of $\mathcal L^-_{-\sqrt 2 \lambda, (2-\sqrt2) \lambda}(h^0|_{O(\ell)})$ to $O(\ell)$ if $c(\ell^\delta)=1$, and with the restriction of 
 $\mathcal L^+_{- (2-\sqrt2) \lambda,\sqrt 2 \lambda}(h^0|_{O(\ell)})$ if $c(\ell^\delta)=-1$. Without loss of generality let us assume that $c(\ell^\delta)=1$. Indeed by definition, the loops in $Q_{2k+1}^\delta(\ell^\delta) \setminus A_{2k}^\delta(\ell^\delta)$ restricted to the inside of $\ell^\delta$ follow pieces of loops from $B_{2k}(\ell^\delta)$. Reversely, the loops in $B_{2k}(\ell^\delta)$ follow pieces of loops in 
 $Q_{2k+1}^\delta(\ell^\delta) \setminus A_{2k}^\delta(\ell^\delta)$ unless the loops in $B_{2k}(\ell^\delta)$ come to distance one (see Fig.~\ref{fig:nestedMarkov} for an example). Since $B_{2k}(\ell)$ do not intersect each other, and by the paragraph above, inside $O(\ell)$ all loops from $B_{2k}(\ell)$ follow pieces of  $O(\ell)$ if $c(\ell^\delta)=1$. On the other hand, again by definition, the restriction of the loops in 
 $B_{2k}(\ell)$ to $\ell$ is the closure of the complement of the restriction of $\mathcal L^-_{-\sqrt 2 \lambda, (2-\sqrt2) \lambda}(h^0|_{O(\ell)})$ to $\ell$. Hence, by Lemma~\ref{lem:2lambda_intersect},
 $B_{2k}(\ell)$ is equal to $\mathcal L^+_{-\sqrt 2 \lambda, (2-\sqrt2) \lambda}(h^0|_{O(\ell)})$. This together with the construction of Lemma~\ref{lem:CLEiteration} that extracts the outermost loops from 
 $B$ proves that these outermost loops are CLE$_4(h)$.
 
 The fact that $A(\gamma)$ for every outermost loop $\gamma\in B$ is equal to $\lp_{-2\lambda, (2\sqrt{2}-2)\lambda}(h^0|_{O(\gamma)})$ if $\epsilon(\gamma)=1$, and to $\lp_{-(2\sqrt{2}-2)\lambda, 2\lambda}(h^0|_{O(\gamma)})$ 
 if $\epsilon(\gamma)=-1$ follows directly from the discussion above and the second part of Lemma~\ref{lem:loop_intersect}.
\end{proof}

\subsection{Asymptotic behavior of the number of clusters}\label{subsec:asym}
Let us now prove a lemma which leads to the asymptotic numbers of clusters in the double random current models that surround the origin. 

\begin{lemma}\label{lem:drc_radii}
In the scaling limit of the double random current model in the unit disk (with either the free or wired boundary conditions), let $N(\eps)$ be the number of clusters surrounding the origin such that their outer boundaries have a conformal radius w.r.t.\ the origin at least $\eps$. Then
\begin{align*}
N(\eps)/ \log(\eps^{-1}) \underset{\eps\to0}{\longrightarrow} 1/ (\sqrt{2}\pi^2).
\end{align*}
\end{lemma}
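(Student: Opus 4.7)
The plan is to deduce the lemma from a strong law of large numbers, relying on the characterization of the scaling limit established in Theorems~\ref{thm:cvg_free_drc} and~\ref{thm:cvg_wired_drc_v1}. Let $\gamma_1\supset\gamma_2\supset\cdots$ denote the outer boundaries of the successive clusters surrounding the origin (ordered from outside in), and let $r_k$ be the conformal radius of $O(\gamma_k)$ seen from $0$. By definition $N(\eps)=\max\{k:r_k\ge\eps\}$, so setting $S_k:=-\log r_k$ the claim reduces to showing that $S_k/k\to\sqrt 2\,\pi^2$ almost surely and then invoking the elementary renewal theorem, which gives $N(\eps)/\log(\eps^{-1})\to 1/\mathbb{E}[X_1]$ for $X_k:=S_k-S_{k-1}$.

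The first step will be to check that the increments $(X_k)_{k\ge 1}$ are i.i.d. This is a direct consequence of the nested Markov property built into the scaling limit: Theorem~\ref{thm:cvg_free_drc} shows that inside each $O(\gamma_k)$ one has an independent zero-boundary GFF $h^0|_{O(\gamma_k)}$, from which $\gamma_{k+1}$ is reconstructed as a CLE$_4(h^0|_{O(\gamma_k)})$-loop surrounding $0$, after passing through the intermediate loop $\eta_k\in \lp_{-2\lambda,(2\sqrt 2-2)\lambda}(h^0|_{O(\gamma_k)})$ encircling~$0$. By conformal invariance, the law of $X_k=\log(r_k/r_{k+1})$ depends only on this local nested structure and not on $k$, so the sequence $(X_k)$ is i.i.d. The wired case is identical after replacing the very first nested layer by one coming from $\lp_{-\sqrt 2\lambda,\sqrt 2\lambda}$, which affects only an a.s.\ finite initial segment and therefore does not alter the asymptotic rate.

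The second step will identify the common distribution of $X_k$. Decomposing $X_k=Y_k+Z_k$ with $Y_k:=\log(r_k/r(\eta_k))$ and $Z_k:=\log(r(\eta_k)/r_{k+1})$, one has by \cite[Proposition~20]{MR3936643} (recalled in the third bullet of Section~\ref{intro:drc}) that $Y_k$ is distributed as the first exit time $T_1$ of a standard Brownian motion from $[-\pi,(\sqrt 2-1)\pi]$, while $Z_k$ is distributed as the first exit time $T_2$ from $[-\pi,\pi]$. Applying optional stopping to $B_t^2-t$ yields $\mathbb{E}[T_1]=(\sqrt 2-1)\pi^2$ and $\mathbb{E}[T_2]=\pi^2$, so $\mathbb{E}[X_k]=\sqrt 2\,\pi^2$. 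The strong law of large numbers then gives $S_k/k\to\sqrt 2\,\pi^2$ almost surely, and the renewal argument in the first paragraph completes the proof, with limit $1/(\sqrt 2\,\pi^2)$.

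The hard part of a fully written-up proof is purely organizational: one has to check that the nested Markov-type decomposition packaged in Theorems~\ref{thm:cvg_free_drc} and~\ref{thm:cvg_wired_drc_v1}, combined with the uniqueness of two-valued sets (Lemma~\ref{lem:tvs}), truly produces i.i.d.\ increments $(X_k)$ with the claimed common law. Since the relevant two-valued sets are deterministic functions of the GFF and the latter satisfies the usual Markov property across the local sets $\gamma_k$ and $\eta_k$, this is routine once the scaling limit has been identified, and no new probabilistic input beyond \cite{MR3936643} is needed.
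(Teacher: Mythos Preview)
Your proposal is correct and follows essentially the same approach as the paper: both identify the successive log-conformal-radius increments as i.i.d.\ with law $T_1+T_2$ (via the nested Markov structure of the scaling limit and \cite[Proposition~20]{MR3936643}), compute $\mathbb{E}[T_1+T_2]=(\sqrt 2-1)\pi^2+\pi^2=\sqrt 2\,\pi^2$, and conclude by the law of large numbers/renewal argument. The paper's version is slightly terser (it bundles $Y_k+Z_k$ into a single variable $R$ and does the renewal step by hand rather than invoking the renewal theorem), but there is no substantive difference.
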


\begin{proof}
By Theorems~\ref{thm:cvg_free_drc_v1} and~\ref{thm:cvg_wired_drc_v1} and \cite[Proposition 20]{MR3936643}, we know that the difference of log conformal radii between the outer boundaries of two successive double random current clusters that encircle the origin is given by $R:=T_1+T_2$, where $T_1$ is the first time that a standard Brownian motion exits $[-\pi, (\sqrt{2}-1)\pi]$ and $T_2$ is the first time that a standard Brownian motion exits $[-\pi, \pi]$. We have
$$\Eb(T_1+ T_2) = (\sqrt{2}-1) \pi^2 + \pi^2= \sqrt{2}\pi^2. $$
The $n$-th cluster which encircles the origin has log conformal radius equal to $-S_n$ where $S_n:= -(R_1 + \cdots + R_n)$ and $R_i$ are i.i.d.\ random variables distributed like $R$. 
Then $N(\eps)$ is the smallest $n\ge 1$ such that $S_{n+1}\ge \log(\eps^{-1})$.
By the law of large numbers, we know that $S_n/n$ converges to $\Eb(R)$ a.s.\ as $n\to\infty$.
Since $N(\eps)\to\infty$ as $\eps\to 0$, we also have that 
$$S_{N(\eps)+1}/(N(\eps)+1) \to \Eb(R) \text{ a.s.  as } \eps\to 0.$$
Note that $\log(\eps^{-1}) \le S_{N(\eps)+1} \le \log(\eps^{-1}) + R_{N(\eps)}$. It follows that 
$$\lim_{\eps\to 0}  \log(\eps^{-1})/N(\eps) = \Eb(R) =\sqrt{2}\pi^2.$$
The inverse of the above equation proves the lemma.
\end{proof}

\bibliographystyle{amsplain}
\begin{bibdiv}
\begin{biblist}

\bib{AizBarFer87}{article}{
      author={Aizenman, M.},
      author={Barsky, D.~J.},
      author={Fern{{\'a}}ndez, R.},
       title={The phase \mbox{transition} in a general class of {I}sing-type
  models is sharp},
        date={1987},
        ISSN={0022-4715},
     journal={J. Statist. Phys.},
      volume={47},
      number={3-4},
       pages={343\ndash 374},
         url={http://dx.doi.org/10.1007/BF01007515},
}

\bib{AizBur99}{article}{
  title={H{\"o}lder regularity and dimension bounds for random curves},
  author={Aizenman, M.},
  author={Burchard, A.},
  journal={Duke mathematical journal},
  volume={99},
  number={3},
  pages={419--453},
  year={1999},
}


\bib{AizDumSid15}{article}{
      author={Aizenman, M.},
      author={{Duminil-Copin}, H.},
      author={Sidoravicius, V.},
       title={Random {C}urrents and {C}ontinuity of {I}sing {M}odel's
  {S}pontaneous {M}agnetization},
        date={2015},
     journal={Communications in Mathematical Physics},
      volume={334},
       pages={719\ndash 742},
}

\bib{ADTW}{article}{
      author={Aizenman, Michael},
      author={Duminil-Copin, Hugo},
      author={Tassion, Vincent},
      author={Warzel, Simone},
       title={Emergent planarity in two-dimensional ising models with
  finite-range interactions},
        date={2019},
     journal={Inventiones mathematicae},
      volume={216},
      number={3},
       pages={661\ndash 743},
         url={https://doi.org/10.1007/s00222-018-00851-4},
}

\bib{ALS}{article}{
	author = {Aru, J.},
	author={Lupu, Titus},
	author={Sep{\'u}lveda, Avelio},
	date = {2020/05/01},
	date-added = {2023-05-12 09:50:08 +0200},
	date-modified = {2023-05-12 09:50:08 +0200},
	doi = {10.1007/s00220-020-03718-z},
	id = {Aru2020},
	isbn = {1432-0916},
	journal = {Communications in Mathematical Physics},
	number = {3},
	pages = {1885--1929},
	title = {The First Passage Sets of the 2D Gaussian Free Field: Convergence and Isomorphisms},
	url = {https://doi.org/10.1007/s00220-020-03718-z},
	volume = {375},
	year = {2020},
	bdsk-url-1 = {https://doi.org/10.1007/s00220-020-03718-z}}

\bib{MR3827968}{article}{
    AUTHOR = {Aru, J.},
    author={Sep\'{u}lveda, A.},
     TITLE = {Two-valued local sets of the 2{D} continuum {G}aussian free
              field: connectivity, labels, and induced metrics},
   JOURNAL = {Electron. J. Probab.},
    VOLUME = {23},
      DATE = {2018},
     PAGES = {Paper No. 61, 35},
}

\bib{MR3936643}{article}{
    AUTHOR = {Aru, J.},
    author = {Sep\'{u}lveda, A.},
    author = {Werner, W.},
     TITLE = {On bounded-type thin local sets of the two-dimensional Gaussian Free Field},
   JOURNAL = {J. Inst. Math. Jussieu},
    VOLUME = {18},
      YEAR = {2019},
    NUMBER = {3},
     PAGES = {591--618},
}

\bib{BasChe18}{article}{
  title={Tau-functions a la Dub\'edat and probabilities of cylindrical events for double-dimers and CLE (4)},
  author={Basok, M.},
  author={Chelkak, D.},
  journal={J. Eur. Math. Soc.},
  year={2021},
  pages={2787--2832},
  volume={23},
  number={8},
}


\bib{BoudeT}{article}{
      author={Boutillier, C\'edric},
      author={de~Tili\`ere, B\'eatrice},
       title={Height representation of {XOR}-{I}sing loops via bipartite
  dimers},
        date={2014},
        ISSN={1083-6489},
     journal={Electron. J. Probab.},
      volume={19},
       pages={no. 80, 33},
         url={http://dx.doi.org/10.1214/EJP.v19-2449},
}

\bib{BCL}{article}{
  title={Random walk loop soups and conformal loop ensembles},
  author={van de Brug, Tim},
  author={Camia, Federico},
  author={Lis, Marcin},
  journal={Probability Theory and Related Fields},
  volume={166},
  number={1},
  pages={553--584},
  year={2016},
  publisher={Springer}
}

\bib{Che17}{inproceedings}{
      author={Chelkak, D.},
       title={{Planar Ising model at criticality: state-of-the-art and
  perspectives}},
        date={2019},
   booktitle={{Proceedings of the International Congress of Mathematicians (ICM
  2018)}},
       pages={2801\ndash 2828},
  url={https://www.worldscientific.com/doi/abs/10.1142/9789813272880_0161},
}



\bib{Che20}{article}{
      author={Chelkak, D.},
       title={{Ising model and s-embeddings of planar graphs}},
        date={2020},
     journal={arXiv:2006.14559},
}

\bib{CCK}{unpublished}{
      author={Chelkak, D.},
      author={Cimasoni, D.},
      author={Kassel, A.},
       title={{Revisiting the combinatorics of the 2D Ising model}},
         how={private communication},
        date={2015},
        note={to appear in {Ann. Inst. Henri Poincar{\'e} Comb. Phys.
  Interact.}},
}

\bib{CHI}{article}{
      author={Chelkak, Dmitry},
      author={Hongler, Cl{\'e}ment},
      author={Izyurov, Konstantin},
       title={Conformal invariance of spin correlations in the planar {I}sing
  model},
        date={2015},
        ISSN={0003-486X},
     journal={Ann. of Math. (2)},
      volume={181},
      number={3},
       pages={1087\ndash 1138},
}

\bib{CheSmi12}{article}{
      author={Chelkak, Dmitry},
      author={Smirnov, Stanislav},
       title={Universality in the 2{D} {I}sing model and conformal invariance
  of fermionic observables},
        date={2012},
        ISSN={0020-9910},
     journal={Invent. Math.},
      volume={189},
      number={3},
       pages={515\ndash 580},
}

\bib{DeT07}{article}{
      author={de~Tili{\`e}re, B{\'e}atrice},
       title={Scaling limit of isoradial dimer models and the case of
  triangular quadri-tilings},
        date={2007},
        ISSN={0246-0203},
     journal={Annales de l'Institut Henri Poincare (B) Probability and
  Statistics},
      volume={43},
      number={6},
       pages={729 \ndash  750},
}

\bib{Dub}{unpublished}{
      author={Dub\'{e}dat, J.},
       title={{Exact bosonization of the Ising model}},
        date={2011},
        note={arXiv:1112.4399},
}

\bib{Dub18}{article}{
  title={Double dimers, conformal loop ensembles and isomonodromic deformations},
  author={Dub{\'e}dat, Julien},
  journal={Journal of the European Mathematical Society},
  volume={21},
  number={1},
  pages={1--54},
  year={2018}
}

\bib{MR2525778}{article}{
    AUTHOR = {Dub{\'e}dat, J.},
     TITLE = {S{LE} and the free field: partition functions and couplings},
   JOURNAL = {J. Amer. Math. Soc.},
    VOLUME = {22},
      YEAR = {2009},
    NUMBER = {4},
     PAGES = {995--1054},
}


\bib{Dum16}{unpublished}{
      author={{Duminil-Copin}, H.},
       title={Random currents expansion of the {I}sing model},
        date={2016},
        note={arXiv:1607:06933},
}

\bib{DumLis}{article}{
      author={Duminil-Copin, H.},
      author={Lis, M.},
       title={On the double random current nesting field},
        date={2019},
     journal={Probability Theory and Related Fields},
      volume={175},
      number={3-4},
       pages={937\ndash 955},
}

\bib{DumLisQia21}{article}{
     author={Duminil-Copin, H.},
      author={Lis, M.},
author={Qian, W.},
title={Conformal invariance of double random currents and XOR-Ising II: tightness and properties in the discrete},
date={2021},
        note={preprint},
}

\bib{DumGosRao20}{article}{
  title={Exponential decay of truncated correlations for the Ising model in any dimension for all but the critical temperature},
  author={Duminil-Copin, H.},
  author={ Goswami, S.},
  author={ Raoufi, A.},
  journal={Communications in Mathematical Physics},
  volume={374},
  number={2},
  pages={891--921},
  year={2020},
  publisher={Springer}
}


\bib{DumSidTas17}{article}{
  title={Continuity of the Phase Transition for Planar Random-Cluster and Potts Models with $1\le q\le4$},
  author={Duminil-Copin, H.},
  author={Sidoravicius, V.},
  author={Tassion, V.},
  journal={Communications in Mathematical Physics},
  volume={349},
  number={1},
  pages={47--107},
  year={2017}
}

\bib{DumManTas20}{article}{
  title={Planar random-cluster model: fractal properties of the critical phase},
  author={Duminil-Copin, H.},
  author ={Manolescu, I.},
  author ={Tassion, V.},
  journal={arXiv:2007.14707},
  year={2020}
}

\bib{DumTas15}{article}{
      author={{Duminil-Copin}, H.},
      author={Tassion, V.},
       title={A new proof of the sharpness of the phase transition for
  {B}ernoulli percolation and the {I}sing model},
        date={2016},
     journal={Communications in {M}athematical {P}hysics},
      volume={343},
      number={2},
       pages={725\ndash 745},
}

\bib{FanWu}{article}{
  title = {General Lattice Model of Phase Transitions},
  author = {Fan, Chungpeng and Wu, F. Y.},
  journal = {Phys. Rev. B},
  volume = {2},
  issue = {3},
  pages = {723--733},
  numpages = {0},
  year = {1970},
  month = {Aug},
  publisher = {American Physical Society},
  doi = {10.1103/PhysRevB.2.723},
  url = {https://link.aps.org/doi/10.1103/PhysRevB.2.723}
}


\bib{GiuMasTon15}{article}{
  title={Height fluctuations in non-integrable classical dimers},
  author={Giuliani, A.},
  author={Mastropietro, V.},
  author={Toninelli, F.},
  journal={EPL (Europhysics Letters)},
  volume={109},
  number={6},
  pages={60004},
  year={2015},
  publisher={IOP Publishing}
}

\bib{GHS}{article}{
      author={Griffiths, R.~B.},
      author={Hurst, C.~A.},
      author={Sherman, S.},
       title={{Concavity of Magnetization of an Ising Ferromagnet in a Positive
  External Field}},
        date={1970},
     journal={Journal of Mathematical Physics},
      volume={11},
      number={3},
       pages={790\ndash 795},
  url={http://scitation.aip.org/content/aip/journal/jmp/11/3/10.1063/1.1665211},
}

\bib{hongler}{thesis}{
      author={Hongler, C.},
       title={{Conformal Invariance of Ising Model Correlations}},
        type={Ph.D. Thesis},
        date={2010},
}

\bib{HonSmi}{article}{
      author={Hongler, Cl\'{e}ment},
      author={Smirnov, Stanislav},
       title={The energy density in the planar {I}sing model},
        date={2013},
        ISSN={0001-5962},
     journal={Acta Math.},
      volume={211},
      number={2},
       pages={191\ndash 225},
         url={https://doi.org/10.1007/s11511-013-0102-1},
}

\bib{KadCev}{article}{
      author={Kadanoff, L.~P.},
      author={Ceva, H.},
       title={{Determination of an operator algebra for the two-dimensional
  Ising model}},
        date={1971},
     journal={Physical Review B},
      volume={3},
      number={11},
       pages={3918},
}

\bib{Kasteleyn}{article}{
      author={Kasteleyn, P.},
       title={{The statistics of dimers on a lattice. I. The number of dimer
  arrangements on a quadratic lattice }},
        date={1961},
     journal={Physica},
      number={27},
       pages={1209\ndash 1225},
}



\bib{Ken00}{article}{
      author={Kenyon, Richard},
       title={Conformal invariance of domino tiling},
        date={2000},
        ISSN={0091-1798},
     journal={Ann. Probab.},
      volume={28},
      number={2},
       pages={759\ndash 795},
         url={http://dx.doi.org/10.1214/aop/1019160260},
}

\bib{Ken01}{article}{
      author={Kenyon, Richard},
       title={Dominos and the {G}aussian free field},
        date={2001},
        ISSN={0091-1798},
     journal={Ann. Probab.},
      volume={29},
      number={3},
       pages={1128\ndash 1137},
         url={http://dx.doi.org/10.1214/aop/1015345599},
}

\bib{KenLaplacian}{article}{
  title={The Laplacian and Dirac operators on critical planar graphs},
  author={Kenyon, Richard},
  journal={Inventiones mathematicae},
  volume={150},
  number={2},
  pages={409--439},
  year={2002},
  publisher={Springer}
}

\bib{Ken14}{article}{
      author={Kenyon, Richard},
       title={{Conformal Invariance of Loops in the Double-Dimer Model}},
        date={2014Mar},
        ISSN={1432-0916},
     journal={Communications in Mathematical Physics},
      volume={326},
      number={2},
       pages={477\ndash 497},
         url={http://dx.doi.org/10.1007/s00220-013-1881-0},
}

\bib{KPW}{article}{
      author={Kenyon, Richard~W.},
      author={Propp, James~G.},
      author={Wilson, David~B.},
       title={Trees and matchings},
        date={2000},
        ISSN={1077-8926},
     journal={Electron. J. Combin.},
      volume={7},
       pages={Research Paper 25, 34},
         url={http://www.combinatorics.org/Volume_7/Abstracts/v7i1r25.html},
      review={\MR{1756162}},
}

\bib{KenShe}{article}{
      author={Kenyon, Richard~W.},
      author={Sheffield, Scott},
       title={Dimers, tilings and trees},
        date={2004},
        ISSN={0095-8956},
     journal={Journal of Combinatorial Theory, Series B},
      volume={92},
      number={2},
       pages={295 \ndash  317},
  url={http://www.sciencedirect.com/science/article/pii/S0095895604000632},
        note={Special Issue Dedicated to Professor W.T. Tutte},
}

\bib{LSW04}{article}{
      author={Lawler, Gregory F.},
      author={Schramm, Oded},
      author={Werner, Wendelin},
       title={Conformal invariance of planar loop-erased random walks and uniform spanning trees
},
        date={2004},
     journal={Ann. Probab.},
      volume={32},
      number={1B},
       pages={939\ndash 995},
}


\bib{LisAT}{article}{
    author = {Lis, Marcin},
    title = {On Boundary Correlations in Planar Ashkin–Teller Models},
    journal = {International Mathematics Research Notices},
    year = {2021},
    month = {02},
    issn = {1073-7928},
    doi = {10.1093/imrn/rnaa380},
    note = {rnaa380},
}

\bib{LisHF}{article}{
      author={{Lis}, Marcin},
       title={Spins, percolation and height functions},
        volume = {27},
journal = {Electronic Journal of Probability},
      year = {2022},
doi = {10.1214/22-EJP761},
pages = {1 -- 21},
}

\bib{LisT}{article}{
      author={Lis, Marcin},
       title={The planar {I}sing model and total positivity},
        date={2017},
        ISSN={0022-4715},
     journal={J. Stat. Phys.},
      volume={166},
      number={1},
       pages={72\ndash 89},
         url={https://doi.org/10.1007/s10955-016-1690-x},
}

\bib{Lupu}{article}{
  title={Convergence of the two-dimensional random walk loop-soup clusters to CLE},
  author={Lupu, Titus},
  journal={Journal of the European Mathematical Society},
  volume={21},
  number={4},
  pages={1201--1227},
  year={2018}
}

\bib{LupWer}{article}{
      author={Lupu, T.},
      author={Werner, W.},
       title={{A note on Ising random currents, Ising-FK, loop-soups and the
  Gaussian free field}},
        date={2016},
     journal={Electron. Commun. Probab.},
      volume={21},
       pages={7 pp.},
       }

\bib{jason1}{article}{
 Author={Miller, J.},
 title={Fluctuations for the Ginzburg-Landau interface model on a bounded domain},
  journal={Communications in Mathematical Physics},
  volume={308},
  number={3},
  pages={591--639},
  year={2011},
  publisher={Springer}
}

\bib{jason2}{article}{
  title={Universality for SLE (4)},
  author={Miller, Jason},
  journal={arXiv:1010.1356},
  year={2010}
}

\bib{MS}{article}{
   AUTHOR = {Miller, J.},
    author = {Sheffield, S.}, 
     TITLE = {{CLE}(4) and the {G}aussian {F}ree {F}ield},
   JOURNAL = {In preparation},
}

\bib{MR3477777}{article}{
   AUTHOR = {Miller, J.},
    author = {Sheffield, S.},
     TITLE = {Imaginary geometry {I}: interacting {SLE}s},
   JOURNAL = {Probab. Theory Related Fields},
    VOLUME = {164},
      YEAR = {2016},
    NUMBER = {3-4},
     PAGES = {553--705},
}

\bib{MR3502592}{article}{
   AUTHOR = {Miller, J.},
    author = {Sheffield, S.},
     TITLE = {Imaginary geometry {II}: {R}eversibility of
              {$\operatorname{SLE}_\kappa(\rho_1;\rho_2)$} for
              {$\kappa\in(0,4)$}},
   JOURNAL = {Ann. Probab.},
    VOLUME = {44},
      YEAR = {2016},
    NUMBER = {3},
     PAGES = {1647--1722},
}

\bib{MR3548530}{article}{
     AUTHOR = {Miller, J.},
    author = {Sheffield, S.},
     TITLE = {Imaginary geometry {III}: reversibility of {$\rm SLE_\kappa$} for {$\kappa\in(4,8)$}},
   JOURNAL = {Ann. of Math. (2)},
    VOLUME = {184},
      YEAR = {2016},
    NUMBER = {2},
     PAGES = {455--486},
}

\bib{MR3719057}{article}{
     AUTHOR = {Miller, J.},
    author = {Sheffield, S.},
     TITLE = {Imaginary geometry {IV}: interior rays, whole-plane
              reversibility, and space-filling trees},
   JOURNAL = {Probab. Theory Related Fields},
    VOLUME = {169},
      YEAR = {2017},
    NUMBER = {3-4},
     PAGES = {729--869},
}

\bib{MR3708206}{article}{
    AUTHOR = {Miller, J.},
    author = {Sheffield, S.},
    author = {Werner, W.},
     TITLE = {C{LE} percolations},
   JOURNAL = {Forum Math. Pi},
    VOLUME = {5},
      YEAR = {2017},
     PAGES = {e4, 102},
}




\bib{Rao17}{article}{
author = {Raoufi, Aran},
title = {{Translation-invariant Gibbs states of the Ising model: General setting}},
volume = {48},
journal = {The Annals of Probability},
number = {2},
publisher = {Institute of Mathematical Statistics},
pages = {760 -- 777},
keywords = {amenable graphs, Gibss states, Ising model, percolation},
year = {2020},
}

\bib{Rus}{article}{
  title={Dominos in hedgehog domains},
  author={Russkikh, Marianna},
  journal={Annales de l'Institut Henri Poincar{\'e} D},
  volume={8},
  number={1},
  pages={1--33},
  year={2020}
}

\bib{SSV}{article}{
    author = {Schoug, Lukas},
    author = {Sep\'{u}lveda, Avelio},
    author= {Viklund, Fredrik},
    title = {Dimensions of Two-Valued Sets via Imaginary Chaos},
    journal = {International Mathematics Research Notices},
    year = {2020},
    month = {10},
    issn = {1073-7928},
    doi = {10.1093/imrn/rnaa250},
    url = {https://doi.org/10.1093/imrn/rnaa250},
    note = {rnaa250},
}

\bib{MR1776084}{article}{,
    AUTHOR = {Schramm, Oded},
     TITLE = {Scaling limits of loop-erased random walks and uniform
              spanning trees},
   JOURNAL = {Israel J. Math.},
    VOLUME = {118},
      YEAR = {2000},
     PAGES = {221--288},
}

\bib{MR2486487}{article}{
    AUTHOR = {Schramm, O.},
    author ={Sheffield, S.},
     TITLE = {Contour lines of the two-dimensional discrete {G}aussian free
              field},
   JOURNAL = {Acta Math.},
    VOLUME = {202},
      YEAR = {2009},
    NUMBER = {1},
     PAGES = {21--137},
}

\bib{MR3101840}{article}{
    AUTHOR = {Schramm, O.},
    author ={Sheffield, S.},
     TITLE = {A contour line of the continuum {G}aussian free field},
   JOURNAL = {Probab. Theory Related Fields},
    VOLUME = {157},
      YEAR = {2013},
    NUMBER = {1-2},
     PAGES = {47--80},
}

\bib{MR4010952}{article}{
    AUTHOR = {Sep\'{u}lveda, A.},
     TITLE = {On thin local sets of the {G}aussian free field},
   JOURNAL = {Ann. Inst. Henri Poincar\'{e} Probab. Stat.},
    VOLUME = {55},
      YEAR = {2019},
    NUMBER = {3},
     PAGES = {1797--1813},
}

\bib{SchShe05}{article}{
  title={Harmonic explorer and its convergence to SLE4},
  author={Schramm, O.},
  author={Sheffield, S.},
  journal={The Annals of Probability},
  volume={33},
  number={6},
  pages={2127--2148},
  year={2005},
  publisher={Institute of Mathematical Statistics}
}


\bib{MR2494457}{article}{
    AUTHOR = {Sheffield, Scott},
     TITLE = {Exploration trees and conformal loop ensembles},
   JOURNAL = {Duke Math. J.},
    VOLUME = {147},
      YEAR = {2009},
    NUMBER = {1},
     PAGES = {79--129},
}

\bib{MR2979861}{article}{
    AUTHOR = {Sheffield, S.}
    author ={Werner, W.},
     TITLE = {Conformal loop ensembles: the {M}arkovian characterization and
              the loop-soup construction},
   JOURNAL = {Ann. of Math. (2)},
    VOLUME = {176},
      YEAR = {2012},
    NUMBER = {3},
     PAGES = {1827--1917},
}

\bib{Smi01}{article}{
  title={Critical percolation in the plane: conformal invariance, Cardy's formula, scaling limits},
  author={Smirnov, Stanislav},
  journal={Comptes Rendus de l'Acad{\'e}mie des Sciences-Series I-Mathematics},
  volume={333},
  number={3},
  pages={239--244},
  year={2001},
  publisher={Elsevier}
}

\bib{smirnov}{article}{
      author={Smirnov, Stanislav},
       title={Conformal invariance in random cluster models. {I}. {H}olomorphic
  fermions in the {I}sing model},
        date={2010},
        ISSN={0003-486X},
     journal={Ann. of Math. (2)},
      volume={172},
      number={2},
       pages={1435\ndash 1467},
}


\bib{Temp}{incollection}{
      author={Temperley, H. N.~V.},
        date={1972},
   booktitle={Combinatorics: Being the proceedings of the conference on
  combinatorial mathematics held at the mathematical institute},
   publisher={Oxford},
       pages={356\ndash 357},
}


\bib{Whi}{article}{
      author={Whitney, Hassler},
       title={On regular closed curves in the plane},
        date={1937},
     journal={Compositio Mathematica},
      volume={4},
       pages={276\ndash 284},
}


\end{biblist}
\end{bibdiv}

\end{document}